\newtheorem{theorem}{Theorem}[subsection]
\numberwithin{equation}{theorem}
\newtheorem{lemma}[theorem]{Lemma}
\newtheorem{cor}[theorem]{Corollary}
\newtheorem{prop}[theorem]{Proposition}
\theoremstyle{definition}
\newtheorem{defn}[theorem]{Definition}
\newtheorem{example}[theorem]{Example}
\newtheorem{remark}[theorem]{Remark}
\newtheorem{hypothesis}[theorem]{Hypothesis}
\newtheorem{notation}[theorem]{Notation}
\newcommand{\calD}{\mathcal{D}}
\newcommand{\calE}{\mathcal{E}}
\newcommand{\calF}{\mathcal{F}}
\newcommand{\calI}{\mathcal{I}}
\newcommand{\calO}{\mathcal{O}}
\newcommand{\calR}{\mathcal{R}}
\newcommand{\gothm}{\mathfrak{m}}
\newcommand{\gotho}{\mathfrak{o}}
\newcommand{\gothp}{\mathfrak{p}}
\newcommand{\gothq}{\mathfrak{q}}
\newcommand{\CC}{\mathbb{C}}
\newcommand{\DD}{\mathbb{D}}
\newcommand{\PP}{\mathbb{P}}
\newcommand{\QQ}{\mathbb{Q}}
\newcommand{\RR}{\mathbb{R}}
\newcommand{\ZZ}{\mathbb{Z}}
\newcommand{\del}{\partial}
\newcommand{\be}{\mathbf{e}}
\newcommand{\bv}{\mathbf{v}}
\newcommand{\dual}{\vee}
\DeclareMathOperator{\Aut}{Aut}
\DeclareMathOperator{\codim}{codim}
\DeclareMathOperator{\divis}{div}
\DeclareMathOperator{\End}{End}
\DeclareMathOperator{\Frac}{Frac}
\DeclareMathOperator{\height}{height}
\DeclareMathOperator{\Irr}{Irr}
\DeclareMathOperator{\rank}{rank}
\DeclareMathOperator{\ratrank}{ratrank}
\DeclareMathOperator{\Real}{Re}
\DeclareMathOperator{\Reg}{Reg}
\DeclareMathOperator{\RZ}{RZ}
\DeclareMathOperator{\semis}{ss}
\DeclareMathOperator{\Spec}{Spec}
\DeclareMathOperator{\Supp}{Supp}
\DeclareMathOperator{\trdefect}{trdefect}
\DeclareMathOperator{\trdeg}{trdeg}
\begin{document}

\title{Good formal structures for flat meromorphic connections, II:
Excellent schemes}
\author{Kiran S. Kedlaya}
\date{July 30, 2010}
\maketitle

\begin{abstract}
Given a flat meromorphic connection 
on an excellent scheme over a field of characteristic zero,
we prove existence of good formal structures after blowing up;
this extends a theorem of Mochizuki for algebraic varieties.
The argument combines a numerical criterion for good formal structures
from a previous paper, with an analysis based on the geometry of
an associated valuation space (Riemann-Zariski space).
We obtain a similar result
over the formal completion of an excellent scheme along a closed subscheme.
If we replace the excellent scheme by a complex analytic variety,
we obtain a similar but weaker result in which the blowup can only
be constructed in a suitably small neighborhood of a prescribed point.
\end{abstract}

\section*{Introduction}

The Hukuhara-Levelt-Turrittin decomposition theorem
gives a classification of differential modules over the field
$\CC((z))$ of formal Laurent series resembling the decomposition of a
finite-dimensional vector space equipped with a linear endomorphism
into generalized eigenspaces. It implies that after adjoining
a suitable root of $z$, one can express any differential module as a successive
extension of one-dimensional modules.
This classification serves as the basis for
the asymptotic analysis of meromorphic connections around a 
(not necessarily regular) singular point.
In particular, it leads to a coherent description of the \emph{Stokes phenomenon},
i.e., the fact that the asymptotic growth of horizontal sections 
near a singularity must be described using different asymptotic series depending
on the direction along which one approaches the singularity.
(See \cite{varadarajan} for a beautiful exposition of this material.)

In our previous paper \cite{kedlaya-goodformal1}, we gave an
analogue of the Hukuhara-Levelt-Turrittin decomposition
for \emph{irregular} flat formal meromorphic connections on complex
analytic or algebraic surfaces. 
(The regular case is already well understood in all dimensions,
by work of Deligne \cite{deligne}.)
The result \cite[Theorem~6.4.1]{kedlaya-goodformal1} states that 
given a connection, one can find a blowup of its underlying space
and a cover of that blowup ramified along the pole locus of the connection,
such that after passing to 
the formal completion at any point of the cover, the connection
admits a \emph{good decomposition} in the sense of Malgrange
\cite[\S 3.2]{malgrange-reseau}.
This implies that one gets
(formally at each point) a successive extension of connections of rank 1;
one also has some control over the pole loci of these connections.
The precise statement had been conjectured by Sabbah 
\cite[Conjecture~2.5.1]{sabbah}, and was proved in the algebraic case by
Mochizuki \cite[Theorem~1.1]{mochizuki}. The methods of
\cite{mochizuki} and \cite{kedlaya-goodformal1} are quite different;
Mochizuki uses reduction to positive characteristic and some study of
$p$-curvatures, whereas we use properties of differential modules
over one-dimensional nonarchimedean analytic spaces.

The purpose of this paper
is to extend our previous theorem from surfaces to
complex analytic or algebraic varieties of arbitrary dimension.
Most of the hard work concerning differential modules over
nonarchimedean analytic spaces was already carried out in
\cite{kedlaya-goodformal1}; consequently, this paper consists largely
of arguments of a more traditional algebro-geometric nature.
As in \cite{kedlaya-goodformal1},
we do not discuss asymptotic analysis or the Stokes
phenomenon; these have been treated in the two-dimensional case by Sabbah
\cite{sabbah} (building on work of Majima \cite{majima}), 
and one expects the higher-dimensional case to behave similarly.

The paper divides roughly into three parts.
In the remainder of this introduction, we describe the contents of these
parts in more detail, then conclude with some remarks about what remains
for a subsequent paper.

\subsection{Birational geometry}

In the first part of the paper (\S~\ref{sec:prelim}--\ref{sec:valuation}),
we gather some standard tools from the birational geometry of schemes.
One of these is Grothendieck's notion of an \emph{excellent ring}, which
encompasses rings of finite type over a field, local rings of 
complex analytic varieties, and their formal completions.
Using excellent rings and schemes, we can give a unified treatment of
differential modules in both the algebraic and analytic categories,
without having to keep track of formal completions.

Another key tool we introduce is the theory of Krull valuations
and Riemann-Zariski spaces. The compactness of the latter will be the key
to translating a local decomposition theorem for flat meromorphic connections
into a global result.

\subsection{Local structure theory}

In the second part of the paper
(\S~\ref{sec:excellent}--\ref{sec:good formal define}), we 
continue the local study of differential modules
from \cite{kedlaya-goodformal1}.
(Note that this part of the paper can be read almost entirely independently from the first
part, except for one reference to the definition of an excellent ring.)
We first define the notion of a \emph{nondegenerate differential ring},
which includes global coordinate rings of smooth algebraic varieties,
local rings of smooth complex analytic varieties, and 
formal completions of these.
We prove an equivalence between different notions of good
formal structures, which is needed to ensure that our results really do
address a generalization of Sabbah's conjecture.
We then collect some descent arguments to transfer
good formal structures between a power series ring over
a domain and the corresponding series ring over the fraction field of that
domain. 
We finally translate the local algebraic calculations into geometric
consequences for differential modules on nondegenerate differential schemes
and complex analytic varieties. 

It simplifies matters greatly that the numerical criterion
for good formal structures established in the first part of
\cite{kedlaya-goodformal1} is not limited to surfaces, but rather
applies in any dimension.
We incorporate that result
\cite[Theorem~4.4.2]{kedlaya-goodformal1} in a more geometric formulation
(see Theorem~\ref{T:criterion} and Proposition~\ref{P:geom numerical}):
a connection on a nondegenerate differential scheme admits a good formal
structure precisely at points where the irregularity is measured by
a suitable Cartier divisor. In other words, there exist a closed subscheme
(the \emph{turning locus}) and a Cartier divisor defined away
from the turning locus (the \emph{irregularity divisor}) such that
for any divisorial valuation not supported entirely in the turning locus,
the irregularity of the connection along that valuation equals the
multiplicity of the irregularity divisor along that valuation.

\subsection{Valuation-theoretic analysis and global results}

In the third part of the paper 
(\S~\ref{sec:berkovich}--\ref{sec:good formal}),
we attack the higher-dimensional analogue of the
aforementioned 
conjecture of Sabbah  \cite[Conjecture~2.5.1]{sabbah}
concerning good formal structures for connections on surfaces.
Before discussing the techniques used, let us recall briefly
how Sabbah's original conjecture was resolved in \cite{kedlaya-goodformal1},
and why the method used there is not suitable for the higher-dimensional case.

As noted earlier, the first part of \cite{kedlaya-goodformal1} provides a
numerical criterion for the existence of good formal structures.
In the second part of \cite{kedlaya-goodformal1}, it is verified that the
numerical criterion can be satisfied on surfaces after suitable blowing up.
This verification involves a combinatorial analysis of the variation of
irregularity on a certain space of valuations; that space is essentially
an infinitely ramified tree. (More precisely, it is a
one-dimensional nonarchimedean analytic space in the sense of
Berkovich \cite{berkovich}.) Copying this analysis directly 
in a higher-dimensional setting involves replacing the tree by a
higher-dimensional polyhedral complex whose geometry is extremely difficult
to describe; it seems difficult to simulate on such spaces
the elementary arguments concerning convex functions which appear in
\cite[\S 5]{kedlaya-goodformal1}.

We instead take an approach more in the spirit of
birational geometry (after Zariski). 
Given a connection on a nondegenerate differential scheme,
we seek to construct a blowup on which the turning locus is empty.
To do this, it suffices to check that 
for each centered valuation on the scheme, there is a blowup on which the
turning locus misses the center of the valuation. The same blowup then satisfies
the same condition for all valuations in some neighborhood of the given
valuation in the Riemann-Zariski space of the base scheme. 
Since the Riemann-Zariski space is quasicompact, there are finitely many
blowups which together eliminate the turning locus; taking a single
blowup which dominates them all achieves the desired result.

The obstruction in executing this approach is a standard bugbear
in birational geometry: it is very difficult to classify valuations
on schemes of dimension greater than 2. We overcome this difficulty
using a new idea, drawn
from our work on semistable reduction for overconvergent $F$-isocrystals
\cite{kedlaya-part1, kedlaya-part2, kedlaya-part3, kedlaya-part4},
and from Temkin's proof of inseparable local uniformization
for function fields in positive characteristic \cite{temkin-unif}.
The idea is to quantify the difficulty of describing a valuation in local 
coordinates using a numerical invariant called the
\emph{transcendence defect}.
A valuation of transcendence defect zero (i.e., an
\emph{Abhyankar valuation}) can be described completely in local coordinates.
A valuation of positive transcendence defect cannot be so described, but it
can be given a good \emph{relative} description
in terms of the Berkovich open unit disc over
a complete field of lower transcendence defect. This 
constitutes a valuation-theoretic formulation of the standard
algebro-geometric technique of fibering a variety in curves.

By returning the argument to the study of Berkovich discs,
we forge a much closer link with the combinatorial
analysis in \cite[\S 5]{kedlaya-goodformal1} than may have been evident
at the start of the discussion. One apparent difference from 
\cite{kedlaya-goodformal1} is that we are now forced
to consider discs over complete fields which are not discretely valued,
so we need some more detailed analysis of differential modules on Berkovich
discs than was used in \cite{kedlaya-goodformal1}.
However, this difference is ultimately illusory: the analysis
in question (from our book on $p$-adic differential
equations \cite{kedlaya-course}) is already used heavily in our
joint paper with Xiao on differential modules on nonarchimedean
polyannuli \cite{kedlaya-xiao}, on which the first part of
\cite{kedlaya-goodformal1} is heavily dependent.

In any case, using this fibration technique, we obtain
an analogue of the Hukuhara-Levelt-Turrittin 
decomposition for a flat meromorphic connection on an integral 
nondegenerate differential
scheme,
after blowing up in a manner dictated by an initial choice of a valuation 
on the scheme (Theorem~\ref{T:higher HLT}).
As noted above, thanks to the quasicompactness of Riemann-Zariski spaces,
this resolves a form of Sabbah's conjecture applicable to flat meromorphic
connections on any nondegenerate differential scheme (Theorem~\ref{T:global}).
When restricted to the case of an algebraic variety, this result
reproduces a theorem of Mochizuki \cite[Theorem~19.5]{mochizuki2},
which was proved using a sophisticated combination of
algebraic and analytic methods.

\subsection{Further remarks}

Using the aforementioned theorem, we also resolve the
higher-dimensional analogue of 
Sabbah's conjecture for \emph{formal} flat meromorphic
connections on excellent schemes (Theorem~\ref{T:global formal1});
this case is not covered by Mochizuki's results even in the case of
a formal completion of an algebraic variety.
We obtain a similar result for complex analytic varieties
(Theorem~\ref{T:global formal2}), but it is somewhat weaker:
in the analytic case,
we only obtain blowups producing good formal
structures which are \emph{locally} defined. That is,
the local blowups need not patch together to give a global blowup.
To eliminate this defect, 
one needs a more quantitative form of Theorem~\ref{T:global},
in which one produces a blowup which is in some sense \emph{functorial}.
This functoriality is meant in the sense of functorial resolution of
singularities for algebraic varieties, where the functoriality is defined
with
respect to smooth morphisms; when working with excellent schemes, one should
instead allow morphisms which are
\emph{regular} (flat with geometrically regular fibres).
We plan to address this point in a subsequent paper.

We mention in passing that while the valuation-theoretic fibration
argument described above is not original to this paper,
its prior use has been somewhat limited.
We suspect that there are additional problems susceptible to this technique,
e.g., in the valuation-theoretic study of plurisubharmonic singularities
\cite{boucksom-favre-jonsson}.

\subsection*{Acknowledgments}
Thanks to Bernard Teissier, Michael Temkin, and Liang Xiao
for helpful discussions.
Financial support was provided by NSF CAREER grant DMS-0545904,
DARPA grant HR0011-09-1-0048, MIT (NEC Fund,
Cecil and Ida Green Career Development Professorship),
and the Institute for Advanced Study (NSF grant DMS-0635607, James D. Wolfensohn Fund).

\section{Preliminaries from birational geometry}
\label{sec:prelim}

We begin by introducing some notions from birational geometry,
notably including Grothendieck's definition of excellent schemes.

\setcounter{theorem}{0}
\begin{notation}
For $X$ an integral separated scheme, let $K(X)$ denote the function field of $X$.
\end{notation}

\subsection{Flatification}

\begin{defn}
Let $f: Y \to X$ be a morphism of integral separated schemes. We say 
$f$ is \emph{dominant}
if the image of $f$ is dense in $X$; it is equivalent to require that
the generic
point of $Y$ must map onto the generic point of $X$.
We say $f$ is \emph{birational} if there exists an open dense subscheme
$U$ of $X$ such that the base change of $f$ to $U$ is an isomorphism
$Y \times_X U \to U$; this implies that $f$ is dominant.
We say $f$ is a \emph{modification} (of $X$) if it is proper and birational.
\end{defn}

\begin{defn}
Let $f: Y \to X$ be a modification of an integral separated scheme $X$, 
and let $g: Z \to X$ be 
a dominant morphism.
Let $U$ be an open dense subscheme of $X$ over which $f$ is an isomorphism.
The \emph{proper transform} of $g$ under $f$
is defined as the morphism $W \to Y$, where $W$ is the Zariski
closure of $Z \times_X U$ in $Z \times_X Y$;
this does not depend on the choice of $U$.
\end{defn}

We will use the following special case of 
Raynaud-Gruson flatification \cite[premi\`ere partie,
\S 5.2]{raynaud-gruson}.

\begin{theorem} \label{T:flatification}
Let $g: X \to S$ be a dominant
morphism of finite presentation of finite-dimensional
noetherian integral separated schemes. 
Then there exists a modification $f: T \to S$ such that
the proper transform of $g$ under $f$
is a flat morphism.
\end{theorem}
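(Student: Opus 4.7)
The plan is to deduce this statement as a direct specialization of the general Raynaud--Gruson flatification theorem \cite[premi\`ere partie, \S 5.2]{raynaud-gruson}, whose general form produces, for any morphism of finite presentation that is flat over a quasi-compact open $U \subseteq S$, a $U$-admissible blow-up $f: T \to S$ along which the strict transform of the source becomes flat over $T$. The work therefore reduces to (i) producing a suitable dense open $U$ over which $g$ is already flat, (ii) checking that the resulting $U$-admissible blow-up qualifies as a modification in the sense of this paper, and (iii) reconciling the notion of \emph{strict transform} used by Raynaud--Gruson with that of \emph{proper transform} defined above.

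For step (i), I would invoke generic flatness: since $S$ is noetherian integral and $g$ is of finite type, there exists a nonempty open $U \subseteq S$ over which the restriction of $g$ is flat. Because $g$ is dominant, $U$ meets the image of $g$; in fact the generic point of $X$ maps to the generic point of $S$, which lies in $U$. Applying the general theorem with this choice of $U$ produces a $U$-admissible blow-up $f: T \to S$, i.e., the blow-up of $S$ along a closed subscheme $Z$ disjoint from $U$.

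For step (ii), a blow-up is automatically proper, so I only need birationality: since $Z$ is disjoint from the dense open $U$, and in particular does not contain the generic point of $S$, the map $f$ is an isomorphism over $U$, hence birational. For step (iii), both the strict transform in Raynaud--Gruson's sense and the proper transform as defined here are obtained as the scheme-theoretic closure of $X \times_S U$ inside $X \times_S T$, so the two notions coincide; thus the flatness of the Raynaud--Gruson strict transform is exactly the desired flatness of the proper transform.

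The substantive content is entirely packaged in the Raynaud--Gruson theorem, so there is no genuine obstacle of a mathematical nature; the only point requiring mild care is the last compatibility check, and one also wants to observe that the finite-dimensionality and noetherian hypotheses ensure all the quasi-compactness assumptions in \cite{raynaud-gruson} are automatically satisfied.
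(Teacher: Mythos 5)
Your argument is correct and matches the paper's treatment, which simply quotes this statement as a special case of Raynaud--Gruson flatification \cite[premi\`ere partie, \S 5.2]{raynaud-gruson} without further proof; your use of generic flatness to produce the dense open $U$, the observation that a $U$-admissible blow-up of an integral scheme is a modification, and the identification of the strict transform with the proper transform (both being the closure of $X \times_S U$ in $X \times_S T$) are exactly the routine specializations implicit in that citation.
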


\subsection{Excellent rings and schemes}

The class of excellent schemes was introduced by Grothendieck
\cite[\S 7.8]{ega4-2} in order to capture 
the sort of algebro-geometric objects that occur most commonly
in practice, while excluding some pathological examples that appear
in the category of locally noetherian schemes.
The exact definition is less important than the stability of excellence
under some natural operations; the impatient reader may wish to
skip immediately to Proposition~\ref{P:excellent}.
On the other hand, the reader interested in more details
may consult either \cite[\S 7.8]{ega4-2} or
\cite[\S 34]{matsumura-alg}.

\begin{defn}
A morphism of schemes is \emph{regular} if it is flat with geometrically
regular fibres. A ring $A$ is a \emph{G-ring} if for
any prime ideal $\gothp$ of $A$, the morphism
$\Spec(\widehat{A}_\gothp) \to \Spec(A_\gothp)$ is regular.
(Here $\widehat{A}_\gothp$ denotes the completion of the local ring 
$A_\gothp$ with respect to its maximal ideal $\gothp A_\gothp$.)
\end{defn}

\begin{defn}
The \emph{regular locus} of a locally noetherian 
scheme $X$, denoted $\Reg(X)$, is the set of points $x \in X$
for which the local ring $\calO_{X,x}$ of $X$ at $x$ is regular.
A noetherian 
ring $A$ is 
\emph{J-1} if $\Reg(\Spec(A))$ is open in $A$.
We say $A$ is \emph{J-2} if every finitely generated $A$-algebra 
is J-1;
it suffices to check this condition for finite $A$-algebras
\cite[Theorem~73]{matsumura-alg}.
\end{defn}

\begin{defn}
A ring $A$ is \emph{catenary} if 
for any prime ideals $\gothp \subseteq \gothq$ in $A$,
all maximal chains of prime ideals from $\gothp$ to $\gothq$
have the same finite length. (The finiteness of the length of each maximal
chain is automatic if $A$ is noetherian.)
A ring $A$ is \emph{universally catenary} if any finitely generated $A$-algebra
is catenary.
\end{defn}

\begin{defn} \label{D:quasi-excellent}
A ring $A$ is \emph{quasi-excellent} if it is noetherian, a G-ring,
and J-2. A quasi-excellent ring is \emph{excellent} if it is also
universally catenary.
A scheme is \emph{(quasi-)excellent} if it is locally noetherian and
covered by open subsets isomorphic to the spectra of (quasi-)excellent rings. 
Note that an affine scheme
is (quasi-)excellent if and only if its coordinate ring is.
\end{defn}

As suggested earlier, the class of excellent rings is broad enough to
cover most typical cases of interest in algebraic geometry.

\begin{prop} \label{P:excellent}
The class of (quasi-)excellent rings is stable under formations of
localizations and finitely generated algebras (including quotients).
Moreover, a noetherian ring is (quasi-)ex\-cellent if and only if its maximal reduced
quotient is.
\end{prop}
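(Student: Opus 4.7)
The plan is to verify each of the four defining conditions of (quasi-)excellence --- noetherian, G-ring, J-2, and (in the excellent case) universally catenary --- separately under each of the two operations (localization and passage to finitely generated algebras). Noetherianness is preserved by localization trivially and by finite generation via the Hilbert basis theorem. Universal catenarity is preserved under localization (catenary quotients and localizations of catenary rings are catenary) and under finite generation essentially by its very definition. The substantive work is in the G-ring and J-2 conditions; for these I would invoke the standard results of Matsumura \cite[Theorems 73 and 77]{matsumura-alg}, which give exactly the required stability assertions (along with the fact that the J-2 condition can be checked on finite $A$-algebras, already cited in the definition).

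For the second assertion, the implication from $A$ to $A_{\mathrm{red}} := A/\mathrm{nil}(A)$ is immediate, since $A_{\mathrm{red}}$ is a quotient of $A$, hence a finitely generated $A$-algebra, and the first assertion applies. For the converse, the key geometric observation is that $\Spec A$ and $\Spec A_{\mathrm{red}}$ share the same underlying topological space, and that since $\mathrm{nil}(A)$ is a finitely generated ideal (as $A$ is noetherian), the locus in $\Spec A$ where $A$ is already reduced is open, being the complement of the support of $\mathrm{nil}(A)$. I would handle the three conditions separately. Universal catenarity descends directly because it is a topological condition on chains of primes, and chains in $\Spec A$ and $\Spec A_{\mathrm{red}}$ coincide. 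For J-2: given a finitely generated $A$-algebra $B$, the algebra $B_{\mathrm{red}}$ is a finitely generated $A_{\mathrm{red}}$-algebra, so $\Reg(\Spec B_{\mathrm{red}})$ is open; then $\Reg(\Spec B)$ equals the intersection of this open set with the open reduced locus of $B$ (a regular local ring is a domain, so a non-reduced local ring cannot be regular, while a reduced local ring of $B$ is regular precisely when the corresponding local ring of $B_{\mathrm{red}}$ is).

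The G-ring condition is the most delicate step. Given a prime $\gothp$ of $A$ with image $\gothp_{\mathrm{red}}$ in $A_{\mathrm{red}}$, one needs regularity of $A_\gothp \to \widehat{A_\gothp}$; the strategy is to observe that this morphism becomes the corresponding morphism for $(A_{\mathrm{red}})_{\gothp_{\mathrm{red}}}$ after killing the (nilpotent) ideal generated by $\mathrm{nil}(A)$, and to invoke the standard fact that regularity of a flat morphism of noetherian rings can be detected after killing a nilpotent ideal in the source (since this affects neither fibres nor flatness in an essential way). This reduction step, together with Matsumura's theorem, completes the argument. I expect this last reduction to be the only place where genuine care is required; everything else is bookkeeping against the references.
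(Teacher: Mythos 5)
The paper's proof is a single citation (``See \cite[Definition~34.A]{matsumura-alg}''), so there is no argument in the paper to compare against in detail; you have instead sketched the standard proof that the cited reference encapsulates, and your sketch is correct. A few of your steps deserve a note. The reduction of the G-ring condition from $A$ to $A_{\mathrm{red}}$ is right, but the cleanest way to phrase it is at the level of fibres: since $\mathrm{nil}(A)$ is nilpotent, every prime of $A_\gothp$ contains $\mathrm{nil}(A)A_\gothp$, so the fibre ring of $A_\gothp \to \widehat{A_\gothp}$ over any prime $\gothq$ is literally equal to the fibre ring of $(A_{\mathrm{red}})_{\gothp} \to \widehat{(A_{\mathrm{red}})_\gothp}$ over the corresponding prime (using that completion commutes with the quotient by $\mathrm{nil}(A)A_\gothp$), and flatness of completion is automatic for noetherian local rings; no separate lemma about ``detecting regularity after killing nilpotents'' is needed. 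Your J-2 argument is fine once you observe that the non-reduced locus of a noetherian ring $B$ is $\Supp(\mathrm{nil}(B))$, which is closed, and that for $\gothp$ in the reduced locus one has $B_\gothp = (B_{\mathrm{red}})_\gothp$. Your catenarity argument implicitly uses that $B_{\mathrm{red}}$ is an $A_{\mathrm{red}}$-algebra, which holds because nilpotents of $A$ map to nilpotents of $B$; this is worth saying explicitly. Modulo these small points of presentation, the proposal is sound and matches the standard references the paper defers to.
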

\begin{proof}
See \cite[Definition~34.A]{matsumura-alg}.
\end{proof}
\begin{cor}
Any scheme locally of finite type over a field is excellent.
\end{cor}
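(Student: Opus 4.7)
The plan is to reduce the statement, via Proposition~\ref{P:excellent}, to the claim that every field is an excellent ring. A scheme $X$ locally of finite type over a field $k$ is, by definition, covered by open affines of the form $\Spec(A)$ with $A$ a finitely generated $k$-algebra. Once $k$ is known to be excellent, the stability of excellence under formation of finitely generated algebras (Proposition~\ref{P:excellent}) yields that each such $A$ is excellent, whence $X$ is excellent by Definition~\ref{D:quasi-excellent}.

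It remains to verify the four conditions of Definition~\ref{D:quasi-excellent} for a field $k$. First, $k$ is trivially noetherian. Second, $k$ is a G-ring: its only prime ideal is $(0)$, and the completion map $k = k_{(0)} \to \widehat{k}_{(0)} = k$ is the identity, so $\Spec(\widehat{k}_{(0)}) \to \Spec(k_{(0)})$ is trivially flat with geometrically regular fibres. Third, $k$ is J-2: by the remark following the definition of J-2, it suffices to check J-1 for finite $k$-algebras, and any such algebra is Artinian with finite spectrum in the discrete topology, so its regular locus (being any subset of a discrete space) is automatically open.

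The only substantive content is the fourth condition, that $k$ is universally catenary, i.e., every finitely generated $k$-algebra is a catenary ring; this is the classical dimension formula for finitely generated algebras over a field and can be invoked directly from \cite[\S 34]{matsumura-alg}. With all four conditions in hand, $k$ is excellent, which combined with the reduction above yields the corollary. The only step requiring more than a line is thus the universally catenary property, but this is standard enough to be cited rather than reproved here.
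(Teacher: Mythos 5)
Your proposal is correct and takes essentially the same route as the paper: verify that a field $k$ is excellent (the paper dismisses noetherian, G-ring, and J-2 as evident and handles universal catenarity via Noether normalization and the dimension formula, citing Eisenbud rather than Matsumura), then apply Proposition~\ref{P:excellent} to conclude for finitely generated $k$-algebras. Your extra detail on the G-ring and J-2 conditions is sound but just unpacks what the paper asserts in a word.
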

\begin{proof}
A field $k$ is evidently noetherian, a G-ring, and J-2. It is also universally
catenary because for any finitely generated integral $k$-algebra $A$,
the dimension of $A$ equals the transcendence degree of $\Frac(A)$ over $k$,
by Noether normalization (see \cite[\S 8.2.1]{eisenbud}).
Hence $k$ is excellent. By Proposition~\ref{P:excellent}, any finitely
generated $k$-algebra is also excellent. This proves the claim.
\end{proof}
\begin{cor}
Any modification of a (quasi-)excellent scheme
is again (quasi-)excellent.
\end{cor}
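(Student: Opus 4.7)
The plan is to reduce the statement to the stability result in Proposition~\ref{P:excellent}. A modification $f\colon Y \to X$ is by definition proper, hence in particular locally of finite type. Cover $X$ by open affines $\Spec A_i$ with each $A_i$ (quasi-)excellent. Since $X$ is locally noetherian and $f$ is locally of finite type, $Y$ is also locally noetherian, and we may cover each preimage $f^{-1}(\Spec A_i)$ by finitely many affine opens $\Spec B_{ij}$ with $B_{ij}$ a finitely generated $A_i$-algebra.

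The main step is then simply to invoke Proposition~\ref{P:excellent}: the class of (quasi-)excellent rings is stable under passage to finitely generated algebras, so each $B_{ij}$ is (quasi-)excellent. By Definition~\ref{D:quasi-excellent}, this is enough to conclude that the scheme $Y$ is (quasi-)excellent. In particular, universal catenarity descends automatically through the same route in the excellent case, since it is part of what Proposition~\ref{P:excellent} packages together.

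There is essentially no obstacle; the only mild subtlety is remembering that ``modification'' (being proper and birational) forces the morphism to be locally of finite type, which is exactly the hypothesis needed to apply the finitely-generated-algebras half of Proposition~\ref{P:excellent}. Noetherianness of $Y$ likewise follows from this, since finite type over a noetherian base yields a noetherian total space on affines.
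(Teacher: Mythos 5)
Your argument is correct and is exactly the intended one: the paper leaves this corollary without proof precisely because, as you observe, a modification is proper and hence locally of finite type, so the claim follows immediately from the stability of (quasi-)excellence under finitely generated algebras in Proposition~\ref{P:excellent} together with Definition~\ref{D:quasi-excellent}. No gaps.
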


\begin{remark}  \label{R:qe props}
The classes of excellent and quasi-excellent rings enjoy many additional
properties which we will not be using. For completeness, we mention
a few of these.
See \cite[Definition~34.A]{matsumura-alg} for omitted references.
\begin{itemize}
\item
If a local ring is noetherian and a G-ring, then it is J-2 and hence
quasi-excellent.

\item
Any Dedekind domain of characteristic $0$, such as $\ZZ$, is excellent.
However, this  fails in positive characteristic \cite[34.B]{matsumura-alg}.

\item
Any quasi-excellent ring is a \emph{Nagata ring}, i.e., a noetherian
ring which is universally Japanese.
(A ring $A$ is \emph{universally Japanese} if 
for any finitely generated integral $A$-algebra $B$
and any finite extension $L$ of $\Frac(B)$, the integral closure of
$B$ in $L$ is a finite $B$-module.)

\end{itemize}
\end{remark}

\begin{remark}
It has been recently shown by Gabber using
a weak form of local uniformization (unpublished)
that excellence is preserved under completion with respect to an ideal.
This answers an old question of Grothendieck
\cite[Remarque~7.4.8]{ega4-2}; the special case for 
excellent $\QQ$-algebras of finite dimension
had been established previously by Rotthaus \cite{rotthaus}.
However, we will not need Gabber's result because we will establish excellence
of the rings we consider using derivations; see Lemma~\ref{L:nondegenerate}.
\end{remark}

\subsection{Resolution of singularities for quasi-excellent schemes}

Upon introducing the class of quasi-excellent schemes,
Grothendieck showed that it is in some sense the maximal class of
schemes for which resolution of singularities is possible.
\begin{prop}[Grothendieck]
Let $X$ be a locally noetherian scheme. Suppose that for any integral separated
scheme $Y$ finite over $X$, there exists a modification
$f: Z \to Y$ with $Z$ regular. Then $X$ is quasi-excellent.
\end{prop}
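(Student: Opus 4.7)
The assertion is affine-local, and the resolution hypothesis descends to open subschemes (any integral finite scheme over an affine open $\Spec(A) \subseteq X$ remains integral finite over $X$), so I may fix such an affine open and aim to show that $A$ is quasi-excellent. Since $A$ is noetherian by hypothesis, the remaining tasks are to verify that $A$ is a G-ring and that $A$ is J-2.

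For J-2, the criterion noted after Definition of J-2 reduces the problem to showing that each finite $A$-algebra $B$ is J-1, and by passing to irreducible components I may take $B$ to be a domain. The hypothesis supplies a regular modification $f: Z \to Y := \Spec(B)$. Let $E \subseteq Z$ denote the closed locus where $f$ fails to be a local isomorphism; properness of $f$ makes $f(E)$ closed in $Y$, and birationality makes $f(E)$ a proper closed subset. At any point $y \notin f(E)$, the stalk $\calO_{Y,y}$ coincides with $\calO_{Z,z}$ for the unique preimage $z$, so $y \in \Reg(Y)$. Hence $\Sing(Y) \subseteq f(E)$. I then proceed by Noetherian induction on $Y$: the resolution hypothesis is inherited by the reduced closed subscheme $f(E)_{\mathrm{red}}$ (whose components are finite integral $A$-algebras), so one may assume $\Reg$ is open on each proper closed subscheme of $Y$; combining this with the inclusion $\Sing(Y) \subseteq f(E)$ and a local analysis of how regularity is detected along the closed subscheme $f(E)$, one concludes that $\Reg(Y)$ itself is open.

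The G-ring condition is more delicate: I must show that for every prime $\gothp$ of $A$, the morphism $\Spec(\widehat{A}_\gothp) \to \Spec(A_\gothp)$ is flat with geometrically regular fibres. Flatness comes for free from completion. For the geometric regularity of the formal fibres, the plan is to apply the resolution hypothesis to a suitable finite integral extension of $A$, use Theorem~\ref{T:flatification} to render the resulting dominant morphism flat on a modification, and then transfer regularity from the regular total space of the modification up to the completion via flat base change. Note that this formulation is consistent with the remark that local noetherian G-rings are automatically J-2 (Remark~\ref{R:qe props}), so the argument would in fact also reprove J-2 were it carried out first.

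The main obstacle is this last step: formal completion does not in general commute with the formation of the exceptional locus of a resolution, so controlling the formal fibres in terms of $Z$ requires either an inductive argument on $\dim A$ (using that the hypothesis is stable under passage to proper closed subschemes) or a careful use of Chevalley-type flatness results applied to the formal completion itself. By contrast, the J-2 step is essentially formal once the resolution is available on all finite integral covers, which is why the hypothesis is framed in that generality.
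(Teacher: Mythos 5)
The paper does not actually prove this proposition; it simply cites \cite[Proposition~7.9.5]{ega4-2}, so there is no in-paper argument for you to match. Measured on its own terms, your sketch identifies the right structure (reduce to an affine piece, then verify G-ring and J-2 separately), but neither branch is actually closed, and one of the two contains a genuine logical gap beyond the incompleteness you yourself flag.

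The J-2 branch has a real error in the Noetherian induction. You establish $\Sing(Y) \subseteq f(E)$ and then say that because the inductive hypothesis makes $\Reg$ open on $f(E)_{\mathrm{red}}$, one can conclude $\Reg(Y)$ is open ``by a local analysis of how regularity is detected along the closed subscheme $f(E)$.'' But the regular locus of the scheme $f(E)_{\mathrm{red}}$ (an intrinsic condition on the local rings of that subscheme) carries no direct information about which points of $f(E)$ are regular points of $Y$. For instance, if $Y$ is a surface and $f(E)$ is a curve, the inductive hypothesis tells you where the curve is regular as a curve; it does not tell you where the ambient surface is regular. The inclusion $\Sing(Y) \subseteq f(E)$ gives openness of $\Reg(Y)$ over the complement of $f(E)$ for free, but the points of $f(E)$ that are nonetheless in $\Reg(Y)$ are not controlled by your induction. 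Some additional mechanism is needed to show $\Sing(Y) \cap f(E)$ is closed in $f(E)$, and you have not supplied one. (This is precisely why the standard route, consistent with Remark~\ref{R:qe props}, is to establish the G-ring condition first and deduce J-2 from it, rather than the other way around.)

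The G-ring branch you explicitly leave open --- ``the main obstacle is this last step'' --- and that obstacle is the heart of the proposition. The plan of ``apply flatification, then transfer regularity to the completion via flat base change'' does not work as stated: flatness of completion alone does not propagate regularity from the source of a modification to the formal fibres of $A_\gothp$, since the modification is not flat over $A_\gothp$ and the formal fibre does not sit inside the regular total space in any obvious way. The actual argument in EGA (7.9.4 and 7.9.5, together with the local criterion 7.8.3(iv) for formal fibres) is considerably more delicate, and involves a careful analysis of what happens to formal fibres under finite morphisms and suitable birational modifications. What you have written correctly diagnoses where the difficulty lives, but it is a statement of the problem rather than a proof.
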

\begin{proof}
See \cite[Proposition~7.9.5]{ega4-2}.
\end{proof}

Grothendieck then suggested that Hironaka's proof of resolution of 
singularities for varieties over a field of characteristic zero
could be adapted to check that any quasi-excellent scheme
over a field of characteristic zero admits a resolution of 
singularities. To the best of our knowledge, this claim was never verified.
However, an analogous statement has
been established more recently by Temkin,
using an alternative proof of Hironaka's theorem due to Bierstone and Milman.

\begin{defn}
A \emph{regular pair} is a pair $(X,Z)$, in which $X$ is a regular scheme,
and $Z$ is a closed subscheme of $X$ which is a \emph{normal crossings
divisor}. The latter means that \'etale locally, $Z$ is the zero
locus on $X$ of a regular function of the form $t_1^{e_1} \cdots t_n^{e_n}$, for
$t_1,\dots,t_n$ a regular sequence of parameters and $e_1,\dots,e_n$
some nonnegative integers.
\end{defn}

\begin{theorem} \label{T:desing1}
For every noetherian quasi-excellent integral scheme $X$ over $\Spec(\QQ)$, 
and every closed proper subscheme $Z$ of $X$, there
exists a modification $f: Y \to X$ such that $(Y, f^{-1}(Z))$
is a regular pair.
\end{theorem}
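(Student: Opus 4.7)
The plan is to follow Temkin's strategy: reduce resolution on the quasi-excellent $\QQ$-scheme $X$ to resolution on varieties of finite type over $\QQ$, where Hironaka's theorem in the strongly functorial form due to Bierstone--Milman applies, and then glue the local answers together. The J-2 property guarantees that $\Reg(X)$ is open, so the singular locus together with $Z$ forms a closed proper subscheme $W \subset X$ whose total transform under a suitable blowup we wish to render a normal crossings divisor inside a regular scheme.

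First, I would work locally: reduce to the affine case $X = \Spec A$ and fix a point $x \in W$. By the G-ring property, the map $\Spec \widehat{\calO}_{X,x} \to \Spec \calO_{X,x}$ is regular, and by N\'eron--Popescu desingularization $\widehat{\calO}_{X,x}$ can be written as a filtered colimit of smooth $\calO_{X,x}$-algebras. A standard Artin approximation argument then yields an \'etale neighborhood $U \to X$ of $x$ equipped with a smooth morphism $h: U \to V$ onto a finite-type $\QQ$-scheme $V$, chosen so that the Bierstone--Milman singularity invariants of $U$ along $W|_U$ are pulled back from those of $V$ along some closed subscheme $W_V$.

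Second, I would apply the Bierstone--Milman algorithm to the pair $(V, W_V)$, producing a canonical sequence of blowups $\widetilde{V} \to V$ with $\widetilde{V}$ regular and the total transform of $W_V$ a normal crossings divisor. The crucial property is that the algorithm is \emph{functorial} with respect to smooth morphisms: the blowup centers depend only on the Hilbert--Samuel function, orders of ideals along successive strata, and maximal contact data, all of which commute with smooth base change. Pulling this sequence back along $h$ therefore resolves $U$ in a Zariski neighborhood of the fibre over $x$. Using quasi-compactness of $X$ to reduce to finitely many such local data, the same functoriality property ensures the local resolutions agree on overlaps and assemble into a single global modification $f: Y \to X$ with $(Y, f^{-1}(Z))$ a regular pair.

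The main obstacle is the rigorous verification in the second step that the full package of Bierstone--Milman invariants behaves compatibly with completion and with smooth descent, so that blowup centers computed downstairs on $V$ really do pull back to centers with the intended meaning on $X$. This compatibility---whose proof requires a careful bookkeeping analysis of the combinatorial resolution invariants under regular morphisms---is precisely Temkin's extension of Hironaka's theorem to quasi-excellent schemes of characteristic zero, and with it in hand the conclusion of the theorem follows.
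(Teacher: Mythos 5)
The paper offers no argument for this statement at all: it is quoted verbatim from Temkin \cite[Theorem~1.1]{temkin-excellent}, so your proposal, which ends by discharging its ``main obstacle'' via ``Temkin's extension of Hironaka's theorem to quasi-excellent schemes,'' is in substance the same move --- an appeal to the literature --- wrapped in a proof sketch. Two caveats. First, read as an independent proof your write-up is circular: the compatibility of the resolution invariants with completion and with descent along regular morphisms is not an auxiliary technical point but is essentially the content of the theorem being proved, and you resolve it by citing that theorem. Second, the strategy you sketch (N\'eron--Popescu plus Artin approximation to produce smooth maps to finite-type $\QQ$-schemes, then the Bierstone--Milman algorithm pulled back and glued using its functoriality for smooth morphisms) is the route of the later \emph{functorial} desingularization papers \cite{bmt, temkin1, temkin2}, which the present paper only plans to invoke in a sequel (see Remark~\ref{R:functorial}); the 2008 proof actually cited for Theorem~\ref{T:desing1} is organized differently, deducing quasi-excellent desingularization from resolution of varieties taken as a black box, by an induction using localization, completion and algebraization, precisely so as not to require any functoriality of the variety-level algorithm. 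Note also that smooth functoriality alone cannot handle the completion step in your first paragraph, since $\Spec \widehat{\calO}_{X,x} \to X$ is regular but not smooth --- this is exactly why the reduction is delicate and why the later work insists on functoriality for all regular morphisms.
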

\begin{proof}
See \cite[Theorem~1.1]{temkin-excellent}.
\end{proof}

\begin{remark}
One can further ask for a desingularization procedure which is
\emph{functorial} for regular morphisms.
This question has been addressed by
Bierstone, Milman, and Temkin \cite{bmt, temkin1, temkin2}.
We will need this in a subsequent paper; see Remark~\ref{R:functorial}.
\end{remark}

\subsection{Alterations}

It will be convenient to use a slightly larger class of morphisms
than just modifications.

\begin{defn}
An \emph{alteration} of an integral separated scheme $X$
is a proper, dominant, generically finite 
morphism $f: Y \to X$ with $Y$ integral.
If $X$ is a scheme over $\Spec(\QQ)$,
this implies that there is an open dense subscheme
$U$ of $X$ such that $Y \times_X U \to U$ is finite \'etale.
If $X$ is excellent, then so is $Y$ by Proposition~\ref{P:excellent}.
\end{defn}

\begin{remark}
Alterations were introduced by de Jong to give a weak form of resolution
of singularities in positive characteristic and for arithmetic schemes;
see \cite[Theorem~4.1]{dejong}. Since here we only consider schemes
over a field of characteristic 0, this benefit is not relevant for us;
the reason we consider alterations is because the valuation-theoretic
arguments of \S~\ref{sec:local analysis} are easier to state in terms
of alterations than modifications. Otherwise, one must work not just
with the Berkovich unit disc, but also with more general one-dimensional
analytic spaces, as in Temkin's proof of inseparable local uniformization
\cite{temkin-unif}.
\end{remark}

\begin{lemma} \label{L:flatify}
Let $g: Z \to X$ be an alteration of a finite-dimensional noetherian
integral separated scheme $X$. Then there exists a modification
$f: Y \to X$ such that the proper transform of $g$ under $f$ is finite flat.
\end{lemma}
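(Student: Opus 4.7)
The plan is to apply Raynaud-Gruson flatification (Theorem~\ref{T:flatification}) to the alteration $g$ to obtain a modification $f : Y \to X$ over which the proper transform $g' : W \to Y$ is flat, and then to upgrade ``flat $+$ proper $+$ generically finite'' to ``finite flat.''

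To invoke Theorem~\ref{T:flatification} I would first verify its hypotheses for $g$: the morphism is dominant by definition of alteration, and since $X$ is noetherian and $g$ is proper, $g$ is of finite type and hence of finite presentation. The theorem then produces $f : Y \to X$ with $g' : W \to Y$ flat. Properness of $g'$ is automatic, since $W$ is by construction a closed subscheme of $Z \times_X Y$, which is proper over $Y$ as a base change of $g$. Generic finiteness is also inherited: fixing a dense open $U \subseteq X$ over which $f$ is an isomorphism and $g$ is finite, the restriction of $g'$ to $f^{-1}(U) \cong U$ coincides with the finite morphism $Z \times_X U \to U$, so in particular the generic fiber of $g'$ is $0$-dimensional.

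It remains to show that every fiber of $g'$ is $0$-dimensional. For this I would invoke the standard equidimensionality consequence of flatness: since $g'$ is flat and of finite type between integral locally noetherian schemes, the function $w \mapsto \dim_w (g')^{-1}(g'(w))$ is locally constant on $W$. Because $W$ is integral (hence connected) and this function vanishes on the nonempty open preimage of $U$, it vanishes identically on $W$. Hence $g'$ is proper and quasi-finite, so finite by Zariski's main theorem, and also flat, as required. The main obstacle is precisely this last passage: Raynaud-Gruson supplies only flatness, and one must combine it with the generic finiteness built into the definition of an alteration to promote flat + proper to finite; without generic finiteness the proper transform could perfectly well be flat of positive relative dimension.
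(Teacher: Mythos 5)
Your proposal is correct and follows essentially the same route as the paper: apply Raynaud--Gruson flatification (Theorem~\ref{T:flatification}), use flatness together with the generic finiteness built into the definition of an alteration to see that the fibres of the proper transform are zero-dimensional (the paper cites the flat local dimension equality, your locally constant fibre-dimension statement is the same fact), and then conclude finiteness from properness plus quasi-finiteness via Zariski's main theorem.
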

\begin{proof}
By Theorem~\ref{T:flatification},
we can choose $f$ so that the proper transform $g'$ 
of $g$ under $f$ is flat.
Since $g'$ is flat, it has equidimensional fibres by
\cite[Theorem~19]{matsumura-alg}.
Hence $g'$ is locally of finite type with finite fibres, i.e., $g'$
is quasifinite.
Since any proper quasifinite morphism is finite by Zariski's main theorem
\cite[Th\'eor\`eme~8.11.1]{ega4-3},
we conclude that $g'$ is finite.
\end{proof}

\subsection{Complex analytic spaces}
\label{subsec:complex}

We formally introduce the category of complex analytic spaces,
and the notion of a modification in that category. One can also define
alterations of complex analytic spaces, but we will not need them here.

\begin{defn}
For $X$ a locally ringed space, a \emph{closed subspace} of $X$ is a subset of the
form $\Supp(\calO_X/\calI)$ for some ideal sheaf $\calI$ on $X$, equipped with
the restriction of the sheaf $\calO_X/\calI$.
\end{defn}

\begin{defn}
A \emph{complex analytic space} is a locally ringed space $X$ which is locally isomorphic
to a closed subspace of an affine space carrying the sheaf of holomorphic functions.
We define morphisms of complex analytic spaces, and closed subspaces of a complex analytic space,
using the corresponding definitions
of the underlying locally ringed spaces.
\end{defn}

\begin{defn}
A \emph{modification} of irreducible reduced separated complex analytic spaces
is a morphism $f: Y \to X$ which is proper (as a map of topological spaces)
and surjective, and which
restricts to an isomorphism on the complement of 
a closed subspace of $X$. For instance,
the analytification of a modification of complex algebraic varieties
is again a modification; the hard part of this statement is the fact
that algebraic properness implies topological properness, for which 
see \cite[Expos\'e~XII, Proposition~3.2]{sga1}.
\end{defn}

\begin{defn}
In the category of complex analytic spaces, a \emph{regular pair} 
will denote a pair $(X,Z)$ in which $X$ is a complex analytic space,
$Z$ is a closed subspace of $X$,
and for each $x \in X$, $\calO_{X,x}$ is regular (i.e., $X$ is smooth at $x$)
and the ideal sheaf defining $Z$ defines 
a normal crossings divisor on $\Spec(\calO_{X,x})$.
\end{defn}

The relevant form of resolution of singularities for complex analytic
spaces is due to Aroca, Hironaka, and Vicente
\cite{ahv1, ahv2}.

\begin{theorem} \label{T:desing complex}
For every irreducible reduced separated complex analytic space $X$ and every closed proper subspace $Z$ of $X$, there
exists a modification $f: Y \to X$ such that $(Y, f^{-1}(Z))$
is a regular pair.
\end{theorem}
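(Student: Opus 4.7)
The proof reduces to invoking the complex analytic versions of Hironaka's desingularization theorems due to Aroca, Hironaka, and Vicente. The plan is to mimic the two-step strategy used in the algebraic case (Theorem~\ref{T:desing1}): first resolve the singularities of $X$ itself, then perform an embedded resolution to make the preimage of $Z$ a normal crossings divisor.

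First I would apply the complex analytic resolution of singularities of \cite{ahv1} to obtain a modification $X_1 \to X$ with $X_1$ smooth. Replacing $(X, Z)$ by $(X_1, Z \times_X X_1)$, we may assume $X$ is smooth (the preimage remains a proper closed subspace because modifications are surjective and isomorphisms off the exceptional locus). Next, I would invoke the embedded resolution of \cite{ahv2} to produce a modification $Y \to X_1$ by a (locally finite) sequence of blowups along smooth centers contained in the locus where the successive total transforms of $Z$ fail to have normal crossings, arranged so that the final total transform is a normal crossings divisor. Termination of the procedure is driven, as in Hironaka's original argument, by a local numerical invariant (Hilbert-Samuel function together with auxiliary combinatorial data from maximal contact and standard bases) that strictly decreases under each admissible blowup in characteristic zero. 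Composing the two modifications yields the desired $f \colon Y \to X$.

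The main obstacle, and the principal difference from the algebraic setting, is that a complex analytic space need not be quasicompact, so neither the coherent gluing of local blowup centers nor finite termination of the procedure is automatic. Aroca, Hironaka, and Vicente overcome this by making the construction canonical: the numerical invariants driving the induction are intrinsic to $\calO_{X,x}$ and independent of local embeddings, so the centers they prescribe are unambiguously defined globally. Properness of the final morphism then follows from properness of each individual blowup together with the fact that over any relatively compact open subset of $X$, only finitely many blowups are needed, which in turn comes from the strict decrease of the invariant combined with the constructibility of its level sets.
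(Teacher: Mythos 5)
Your proposal is correct and takes the same route as the paper, which simply cites Aroca–Hironaka–Vicente \cite{ahv1, ahv2} for this statement without supplying an argument. Your expansion of what those references provide (non-embedded resolution, then embedded/principalization, with canonicity of centers controlling the non-quasicompact analytic setting) is an accurate account of the mechanism the paper is implicitly relying on.
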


\section{Valuation theory}
\label{sec:valuation}

We need some basic notions from the classical theory
of Krull valuations.
Our blanket reference for valuation theory is \cite{vaquie}.

\subsection{Krull valuations}

\begin{defn} \label{D:val}
A \emph{valuation} (or \emph{Krull valuation})
on a field $F$ with values in a totally ordered group $\Gamma$ 
is a function $v: F \to \Gamma \cup \{+\infty\}$ satisfying the
following conditions.
\begin{enumerate}
\item[(a)] For $x,y \in F$, $v(xy) = v(x) + v(y)$.
\item[(b)] For $x,y \in F$, $v(x+y) \geq \min\{v(x), v(y)\}$.
\item[(c)] We have $v(1) = 0$ and $v(0) = +\infty$.
\end{enumerate}
We say $v$ is \emph{trivial} if $v(x) = 0$ for all $x \in F^\times$.
A \emph{real valuation} is a Krull valuation with $\Gamma = \RR$.

We say the valuations $v_1, v_2$ are \emph{equivalent} if 
for all $x,y \in F$,
\[
v_1(x) \geq v_1(y) \qquad \Longleftrightarrow \qquad
v_2(x) \geq v_2(y).
\]
The isomorphism classes of the following objects associated to $v$ are equivalence invariants:
\begin{align*}
\mbox{value group:} & \quad \Gamma_v = v(F^\times) \\
\mbox{valuation ring:} & \quad \gotho_v = \{x \in F: v(x) \geq 0\} \\
\mbox{maximal ideal:} & \quad \gothm_v = \{x \in F: v(x) > 0 \} \\
\mbox{residue field:} & \quad \kappa_v = \gotho_v / \gothm_v.
\end{align*}
Note that the equivalence classes of valuations on $F$ are in bijection
with the \emph{valuation rings} of the field $F$, i.e., the subrings $\gotho$
of $F$ such that for any $x \in F^\times$, at least one of
$x$ or $x^{-1}$ belongs to $\gotho$. (Given a valuation ring $\gotho$,
the natural map $v: F \to (F^\times/\gotho^\times) \cup \{+\infty\}$ 
sending $0$ to $+\infty$ is a valuation with $\gotho_v = \gotho$.)
\end{defn}

\begin{defn}
Let $R$ be an integral domain, and let $v$ be a valuation on $\Frac(R)$.
We say $v$ is \emph{centered on $R$} if $v$ takes nonnegative values on $R$,
i.e., if $R \subseteq \gotho_v$.
In this case, $R \cap \gothm_v$ is a prime ideal of $R$, called the
\emph{center} of $v$ on $R$.

Similarly, let $X$ be an integral separated scheme with function field $K(X)$,
and let $v$ be a valuation on $K(X)$.
The \emph{center} of $v$ on $X$ is the set of $x \in X$ with
$\gotho_{X,x} \subseteq \gotho_v$; it is either empty or an irreducible
closed subset of $X$ (see \cite[Proposition~6.2]{vaquie}, keeping in mind
that the hypothesis of separatedness is needed to reduce to the affine case).
In the latter case, we say $v$ is \emph{centered on $X$},
and we refer to the generic point of the center as the \emph{generic center}
of $v$. In fact, we will refer so often to valuations on $K(X)$ centered on $X$
that we will simply call them \emph{centered valuations on $X$}.

If $X = \Spec R$, then $v$ is centered on $X$ if and only if
$v$ is centered on $R$, in which case the center of $v$ on $R$ is the generic
center of $v$ on $X$.
\end{defn}

\begin{lemma} \label{L:proper}
\begin{enumerate}
\item[(a)]
Let $F$ be a field and let $v$ be a valuation on $F$.
Then for any field $E$ containing $F$, there exists an extension of $v$
to a valuation on $E$.
\item[(b)]
Let $f: Y \to X$ be a proper dominant
morphism of integral separated schemes. For any centered valuation $v$ on $X$,
any extension of $v$ to $K(Y)$ is centered on $Y$.
(Such an extension exists by (a).)
\end{enumerate}
\end{lemma}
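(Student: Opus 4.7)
The plan is to handle (a) and (b) by separate standard techniques.

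For part (a), the cleanest approach reformulates valuation extension in terms of valuation rings and applies Zorn's lemma. I would let $\gotho_v \subseteq F$ be the valuation ring associated to $v$, with maximal ideal $\gothm_v$, and consider the collection $\mathcal{S}$ of subrings $R \subseteq E$ containing $\gotho_v$ and satisfying $\gothm_v R \neq R$, ordered by inclusion; this collection is nonempty (it contains $\gotho_v$) and every chain has an upper bound given by its union, so Zorn's lemma produces a maximal element $R_{\max}$. The key step would then be to show $R_{\max}$ is a valuation ring of $E$: for any $x \in E^\times$, at least one of $R_{\max}[x]$, $R_{\max}[x^{-1}]$ still satisfies $\gothm_v R \neq R$ (otherwise one combines the two relations of the form $1 \in \gothm_v R_{\max}[x] \cap \gothm_v R_{\max}[x^{-1}]$ to force $1 \in \gothm_v R_{\max}$), and maximality then forces $x$ or $x^{-1}$ to lie in $R_{\max}$ already. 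The resulting valuation on $E$ restricts to $v$ on $F$; this is classical and may simply be cited from \cite{vaquie}.

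For part (b), the natural tool is the valuative criterion of properness. Given a centered valuation $v$ on $X$ with center $x$ and any extension $w$ of $v$ to $K(Y)$, I have nested inclusions $\gotho_{X,x} \subseteq \gotho_v \subseteq \gotho_w$, where $\gotho_w$ is a valuation ring of $K(Y)$ with fraction field $K(Y)$. These inclusions assemble into a commutative square
\[
\begin{array}{ccc}
\Spec(K(Y)) & \longrightarrow & Y \\
\downarrow & & \downarrow f \\
\Spec(\gotho_w) & \longrightarrow & X,
\end{array}
\]
in which the top arrow is the inclusion of the generic point and the bottom arrow is induced by the ring inclusion $\gotho_{X,x} \hookrightarrow \gotho_w$; commutativity expresses that both compositions $\Spec(K(Y)) \to X$ factor as the map from the generic point of $Y$ to the generic point of $X$ induced by the field embedding $K(X) \hookrightarrow K(Y)$ coming from $f$. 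Since $f$ is proper, the valuative criterion of properness supplies a unique lift $\Spec(\gotho_w) \to Y$. Taking $y \in Y$ to be the image of the closed point of $\Spec(\gotho_w)$, the locality of the lift gives $\gotho_{Y,y} \subseteq \gotho_w$, exhibiting $y$ as a point of the center of $w$ on $Y$.

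Neither part involves genuinely new ideas: (a) is classical, and (b) is essentially the geometric translation of (a) through properness. The one piece of bookkeeping that needs care is verifying commutativity of the square in (b), which comes down to tracing the field embedding $K(X) \hookrightarrow K(Y)$ through the inclusions of valuation rings and local rings. The main conceptual point is that the valuative criterion of properness is precisely the device that turns the algebraic extension provided by (a) into the geometric statement about centers demanded by (b), so the argument is little more than setting up the correct diagram.
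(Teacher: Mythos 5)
Your proof is correct and follows essentially the same path as the paper's: for (a) the paper simply cites the classical fact that a local subring of $E$ is dominated by a valuation ring of $E$ (from \cite[\S 1]{vaquie}), which is what your Zorn's lemma construction proves; for (b) the paper likewise just invokes the valuative criterion of properness (\cite[Th\'eor\`eme~7.3.8]{ega2}), the same device you use, with your square being exactly the diagram that criterion requires. You supply more detail than the paper, but the underlying argument is identical.
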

\begin{proof}
We may deduce (a) by applying to $\gotho_v$
the fact that every local subring of $E$ is dominated by a valuation
ring \cite[\S 1]{vaquie}.
Part (b) is the valuative criterion for properness; see
\cite[Th\'eor\`eme~7.3.8]{ega2}.
\end{proof}

\subsection{Numerical invariants}
\label{subsec:numerical}

Here are a few basic numerical invariants attached to valuations.

\begin{defn} \label{D:composite}
Let $v$ be a valuation on a field $F$.
An \emph{isolated subgroup} (or \emph{convex subgroup})
of $\Gamma_v$ is a subgroup
$\Gamma'$ 
such that for all $\alpha \in \Gamma', \beta \in \Gamma_v$ with
$-\alpha \leq \beta \leq \alpha$, we have $\beta \in \Gamma'$.
In this case, the quotient group $\Gamma_v/\Gamma'$ inherits a total ordering
from $\Gamma_v$, and we obtain a valuation $v'$ on $F$ with value group
$\Gamma_v/\Gamma'$ by projection. We also obtain a valuation
$\overline{v}$ on $\kappa_{v'}$ with value group $\Gamma'$.
The valuation $v$ is said to be a \emph{composite} of
$v'$ and $\overline{v}$.

Let $\ratrank(v) = \dim_{\QQ} (\Gamma_v \otimes_{\ZZ} \QQ)$ denote
the \emph{rational rank} of $v$.
For $k$ a subfield of $\kappa_v$, 
let $\trdeg(\kappa_v/k)$ denote the \emph{transcendence degree} of 
$\kappa_v$ over $k$.

The \emph{height} (or \emph{real rank}) of $v$, denoted $\height(v)$, is the
maximum length of a chain of proper isolated subgroups of $\Gamma_v$;
note that $\height(v) \leq \ratrank(v)$
\cite[Proposition~3.5]{vaquie}.
By definition, $\height(v) > 1$ if and only if $\Gamma_v$ admits a nonzero 
proper isolated subgroup, in which case $v$ can be described as a 
composite valuation as above.
On the other hand, 
$\height(v) \leq 1$ if and only if $v$ is equivalent to a real valuation
\cite[Proposition~3.3, Exemple~3]{vaquie}.
\end{defn}

There is a fundamental inequality due to Abhyankar (generalizing
a result of Zariski), which gives rise to an additional numerical invariant
for valuations centered on noetherian schemes. This invariant
quantifies the difficulty
of describing the valuation in local coordinates.

\begin{defn} \label{D:transcendence defect}
Let $X$ be a noetherian integral separated scheme.
Let $v$ be a centered valuation on $X$, with center $Z$.
Define the \emph{transcendence defect} of $v$ as
\[
\trdefect(v) = \codim(Z,X) - \ratrank(v) - \trdeg(\kappa_v/K(Z)).
\]
(The term \emph{defect rank} is used in 
\cite[Remark~2.1.1]{temkin-unif} for the
same quantity.)
We say $v$ is an \emph{Abhyankar valuation} if $\trdefect(v) = 0$.
\end{defn}

\begin{theorem}[Zariski-Abhyankar inequality]  \label{T:Abhyankar}
Let $X$ be a noetherian integral separated scheme.
Then for any centered valuation $v$ on $X$ with center $Z$,
$\trdefect(v) \geq 0$.
Moreover, if equality occurs, then $\Gamma_v$ is a finitely generated
abelian group, and $\kappa_v$ is a finitely generated extension
of $K(Z)$.
\end{theorem}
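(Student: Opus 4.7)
The plan is to reduce to the local case and then induct on the dimension, using algebraically independent residue elements to strip off the residue-field transcendence degree, reducing eventually to the classical Zariski inequality $\ratrank(v) \leq \dim A$.

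First, replace $X$ by $\Spec \calO_{X, \eta_Z}$, where $\eta_Z$ is the generic point of $Z$. This produces a Noetherian local domain $(A, \gothm)$ of dimension $d := \codim(Z, X)$, dominated by $v$, with residue field $k = K(Z)$; one must show $d \geq r + s$ where $r := \ratrank(v)$ and $s := \trdeg(\kappa_v / k)$. Fix finite $r, s$ and pick $x_1, \ldots, x_r \in \gothm$ with $v(x_1), \ldots, v(x_r)$ $\QQ$-linearly independent in $\Gamma_v$, and $y_1, \ldots, y_s \in \calO_v \cap K(X)$ with residues $\bar y_1, \ldots, \bar y_s \in \kappa_v$ algebraically independent over $k$. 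Induct on $d$ (base case $d=0$ trivial). When $s \geq 1$, write $y_s = u/z$ with $u, z \in A$; then $z \in \gothm$ (otherwise $\bar y_s \in k$, contradicting independence). The surjection $A[T] \twoheadrightarrow A[y_s]$, $T \mapsto y_s$, has kernel $\gothq$ containing the nonzero principal ideal $(zT - u)$, hence of height $\geq 1$ in $A[T]$; and algebraic independence of $\bar y_s$ over $k$ forces $\gothq \subseteq \gothm A[T]$, so $\gothp := A[y_s] \cap \gothm_v$ equals $\gothm A[y_s]$, with quotient $A[y_s]/\gothp \cong k[T]$. Since $\height(\gothm A[T]) = d$ in $A[T]$ (by flatness of the polynomial extension), the general inequality $\height(\gothq) + \height(\gothp) \leq d$ yields $\height(\gothp) \leq d-1$. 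The localization $A_1 := A[y_s]_{\gothp}$ is a Noetherian local domain dominated by $v$, of dimension at most $d-1$, with residue field $k(\bar y_s)$; the inductive hypothesis applied to $(A_1, v)$ with invariants $r, s-1$ gives $d-1 \geq r + (s-1)$, hence $d \geq r + s$.

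When $s = 0$, the reduced statement $d \geq r$ is Zariski's classical inequality, proved by showing $\height(x_1, \ldots, x_r) \geq r$ in $A$: any minimal prime of $(x_1, \ldots, x_{i-1})$ containing $x_i$ yields a relation $s_0 x_i^n \in (x_1, \ldots, x_{i-1})$ with $s_0$ outside that prime, whose valuation-theoretic analysis contradicts the $\QQ$-linear independence of $v(x_1), \ldots, v(x_i)$. For the equality case, if $d = r + s$ then each inequality above must be sharp: $\height(\gothq) = 1$ and $\dim A_1 = d - 1$ at each inductive step, and $\height(x_1, \ldots, x_r) = r$ at the base. Tracing these through, one deduces that $\Gamma_v$ is finitely generated (contained in the divisible hull of $\ZZ v(x_1) + \cdots + \ZZ v(x_r)$ modulo finite torsion) and $\kappa_v$ is a finite algebraic extension of $k(\bar y_1, \ldots, \bar y_s)$, hence finitely generated over $k$.

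The main obstacle I anticipate is the Zariski base case, which is where the valuation-theoretic hypothesis on $v(x_i)$ is genuinely invoked; the standard approach is a delicate induction on $r$ comparing valuations on both sides of the relation $s_0 x_i^n = \sum a_j x_j$. The inductive step above is routine dimension theory for Noetherian domains and requires no catenary hypothesis, thanks to the general height inequality $\height(\gothq) + \height(\gothp) \leq \height(\gothm A[T])$.
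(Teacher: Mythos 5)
Your overall architecture (reduce to the local domain $\calO_{X,\eta_Z}$ dominated by $v$, then strip off $\trdeg(\kappa_v/K(Z))$ one residue-transcendental element at a time via $A_1 = A[y_s]_{\gothp}$, using the non-catenary-safe inequality $\height(\gothq)+\height(\gothp)\le\height(\gothm A[T])$) is sound and is essentially the classical Abhyankar reduction; note that the paper itself only performs the reduction to the local case and then cites Zariski--Samuel (Appendix~2) and Vaqui\'e (Th\'eor\`eme~9.2) for the rest, so you are attempting strictly more than the paper does.

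The genuine gap is in your base case $s=0$, and it is not merely a matter of missing details: the statement you propose to prove, $\height((x_1,\dots,x_r))\ge r$ for $x_i\in\gothm$ with $\QQ$-linearly independent values, is false, and so is the intermediate claim that $x_i$ cannot lie in a minimal prime of $(x_1,\dots,x_{i-1})$. Take $A=\bigl(\CC[x,y,z]/(y^2-xz)\bigr)_{(x,y,z)}$, a two-dimensional normal excellent local domain with $\Frac(A)=\CC(x,y)$, and let $v$ be the monomial valuation with $v(x)=1$, $v(y)=\sqrt{2}$; then $v(z)=v(y^2/x)=2\sqrt{2}-1>0$, so $v$ dominates $A$, and $v(x),v(y)$ are $\QQ$-linearly independent. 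Yet $y^2=zx$, so $y$ lies in $(x,y)$, the unique minimal prime of $(x)$, and $\height((x,y))=1<2$. The promised ``valuation-theoretic contradiction'' from a relation $s_0x_i^n=\sum_j a_jx_j$ cannot exist, because the values $v(s_0),v(a_j)$ are arbitrary elements of $\Gamma_v$ and need not lie in the subgroup generated by $v(x_1),\dots,v(x_i)$: here $2\sqrt{2}=v(z)+v(x)$ with no violation of independence. So Zariski's inequality $\ratrank(v)\le\dim A$, which carries the real content of the theorem, does not follow by this route. A correct classical argument is, for instance, the valuation-ideal counting: the $\binom{n+r}{r}$ monomials $x^\alpha$ with $|\alpha|\le n$ have pairwise distinct values, so the ideals $\{a\in A: v(a)\ge v(x^\alpha)\}$ form a strictly decreasing chain; since $\gothm$ is finitely generated, $v$ is bounded below on $\gothm$ by a positive constant, so all these ideals contain $\gothm^N$ with $N$ growing linearly in $n$, and comparing chain length with $\mathrm{length}(A/\gothm^N)=O(N^d)$ forces $r\le d$. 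Your equality-case discussion inherits the same problem: finite generation of $\Gamma_v$ and of $\kappa_v$ over $K(Z)$ does not follow formally from ``sharpness'' of your height inequalities and needs the refined form of this counting (or graded-ring) analysis, as in the sources the paper cites.
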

\begin{proof}
We may reduce immediately to the case where $X = \Spec(R)$ for $R$
a local ring, and $Z$ is the closed point of $X$. 
In this case,
see \cite[Appendix~2, Corollary, p.\ 334]{zariski-samuel}
or \cite[Th\'eor\`eme~9.2]{vaquie}.
\end{proof}

These invariants behave nicely under alterations,
in the following sense.
\begin{lemma}  \label{L:extension}
Let $f: Y \to X$ be an alteration of a noetherian integral separated scheme $X$.
Let $v$ be a centered valuation on $X$.
\begin{enumerate}
\item[(a)]
There exists at least one extension $w$ of $v$ to
a centered valuation on $Y$.
\item[(b)]
For any $w$ as in (a), we have $\height(w) = \height(v)$,
$\ratrank(w) = \ratrank(v)$, and $\trdefect(w) \leq \trdefect(v)$
with equality if $X$ is excellent.
\end{enumerate}
\end{lemma}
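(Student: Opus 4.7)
Part (a) follows immediately from Lemma~\ref{L:proper}: dominance of $f$ gives an inclusion $K(X) \hookrightarrow K(Y)$, so by part (a) of that lemma $v$ extends to a valuation $w$ on $K(Y)$, and by part (b) applied to the proper dominant morphism $f$, any such $w$ is automatically centered on $Y$.

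For part (b), fix such a $w$; since $f$ is generically finite, $K(Y)/K(X)$ is a finite extension. The fundamental inequality of Krull valuation theory (see \cite[\S 8]{vaquie}) then bounds $[\Gamma_w:\Gamma_v]$ and $[\kappa_w:\kappa_v]$ by $[K(Y):K(X)]$. In particular $\Gamma_w/\Gamma_v$ is torsion, giving $\ratrank(w)=\ratrank(v)$, and $\kappa_w/\kappa_v$ is algebraic. For the height, the inclusion-preserving assignment $\Gamma' \mapsto \{g \in \Gamma_w : ng \in \Gamma' \text{ for some } n \geq 1\}$, with inverse $\Gamma'' \mapsto \Gamma'' \cap \Gamma_v$, is a bijection between isolated subgroups of $\Gamma_v$ and of $\Gamma_w$, so $\height(w)=\height(v)$.

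For the transcendence defect, let $Z$ and $Z'$ denote the centers of $v$ on $X$ and of $w$ on $Y$, respectively. The key geometric input is that $f(\eta_{Z'}) = \eta_Z$: by the defining property of the generic center, $\gotho_w$ dominates $\gotho_{Y,\eta_{Z'}}$, and restriction through the local homomorphism $\gotho_{X,f(\eta_{Z'})} \to \gotho_{Y,\eta_{Z'}}$ shows that $\gotho_v$ dominates $\gotho_{X,f(\eta_{Z'})}$, which forces $f(\eta_{Z'}) = \eta_Z$ by the uniqueness of the dominated point on $X$. Now apply the Matsumura dimension formula for finite-type extensions of noetherian local domains (\cite[\S 14]{matsumura-alg}) to $\gotho_{X,\eta_Z} \to \gotho_{Y,\eta_{Z'}}$ (finite type since $f$ is proper, hence of finite type):
\[
\codim(Z',Y) + \trdeg(K(Z')/K(Z)) \leq \codim(Z,X) + \trdeg(K(Y)/K(X)) = \codim(Z,X),
\]
with equality whenever $X$ is universally catenary, hence in particular when $X$ is excellent. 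Additivity of transcendence degree in the tower $K(Z) \subseteq K(Z') \subseteq \kappa_w$, together with $\trdeg(\kappa_w/\kappa_v) = 0$, yields $\trdeg(\kappa_v/K(Z)) - \trdeg(\kappa_w/K(Z')) = \trdeg(K(Z')/K(Z))$; substituting this into the definitions of $\trdefect(v)$ and $\trdefect(w)$ and combining with $\ratrank(v) = \ratrank(w)$ produces
\[
\trdefect(v) - \trdefect(w) = \codim(Z,X) - \codim(Z',Y) - \trdeg(K(Z')/K(Z)) \geq 0,
\]
with equality under excellence. The main technical point is the identification $f(\eta_{Z'}) = \eta_Z$: in the paper's convention every point of $Z$ has local ring contained in $\gotho_v$, and it is domination (not mere containment) that pins down the generic center, so one must track the behavior of maximal ideals carefully when restricting $\gotho_w$ along $\gotho_{X,f(\eta_{Z'})} \to \gotho_{Y,\eta_{Z'}}$.
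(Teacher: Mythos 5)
Your proof is correct and follows essentially the same route as the paper: part (a) via Lemma~\ref{L:proper}, invariance of height and rational rank reduced to the finite extension $K(Y)/K(X)$, and the transcendence-defect comparison via the dimension formula \cite[Theorem~23]{matsumura-alg} applied at the generic centers, with equality under universal catenarity and hence under excellence. The extra details you supply---the bijection of isolated subgroups and the verification that the generic center of $w$ maps to the generic center of $v$---are points the paper leaves implicit or delegates to \cite{vaquie}.
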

\begin{proof}
For (a), apply Lemma~\ref{L:proper}.
For (b), we may check the equality of heights and rational ranks at the level
of the function fields.
To check the inequality of transcendence defects,
let $Z$ be the center of $v$ on $X$, and let $W$ be the center of $w$
on $Y$; then
\begin{align*}
\trdefect(v) - \trdefect(w) &=
\codim(Z,X) - \codim(W,Y) + \trdeg(\kappa_w/K(W)) - 
\trdeg(\kappa_v/K(Z)) \\
&= \codim(Z,X) - \codim(W,Y) + \trdeg(\kappa_w/\kappa_v)
- \trdeg(K(W)/K(Z)).
\end{align*}
We may check that $\trdeg(\kappa_w/\kappa_v) = 0$ 
at the level of function fields;
see \cite[\S 5]{vaquie} for this verification.
After replacing $X$ by the spectrum of a local ring, we may apply
\cite[Th\'eor\`eme~5.5.8]{ega4-2} 
or
\cite[Theorem~23]{matsumura-alg}
to obtain the inequality
\[
\codim(Z,X) + \trdeg(K(Y)/K(X))
\geq \codim(W,Y) + \trdeg(K(W)/K(Z)),
\]
with equality in case $X$ is excellent. Since $f$ is an alteration,
$K(Y)$ is algebraic over $K(X)$ and so $\trdeg(K(Y)/K(X)) = 0$.
This yields the desired comparison.
\end{proof}

Here is another useful property of Abhyankar valuations.
\begin{remark} \label{R:extend valuation}
Let $R$ be a noetherian local ring with completion
$\widehat{R}$. Let $v$ be a centered real valuation
on $R$. Then $v$ extends by continuity to a function $\hat{v}:
\widehat{R} \to \Gamma_v \cup \{+\infty\}$.
This function is in general a \emph{semivaluation}, in that it satisfies
all of the conditions defining a valuation except that 
$\gothp = \hat{v}^{-1}(+\infty)$ is only a prime ideal, not necessarily
the zero ideal.

For instance,
choose $a = a_1 x + a_2x^2 + \cdots \in k \llbracket x \rrbracket$
transcendental over $k(x)$.
Put $R = k[x,y]_{(x,y)}$,
and let $v$ be the restriction of the $x$-adic valuation of
$k \llbracket x \rrbracket$ along the map
$R \to k \llbracket x \rrbracket$ defined by $x \mapsto x,
y \mapsto a$.
Then $v$ is a valuation on $R$, but $\gothp = (y-a) \neq 0$.

On the other hand, suppose $v$ is a real Abhyankar valuation.
Then $\hat{v}$ induces a valuation on $\widehat{R}/\gothp$ 
with the same value group and residue field as $v$. By
the Zariski-Abhyankar inequality, this forces $\dim (\widehat{R}/\gothp)
\geq \dim R$; since $\dim R = \dim \widehat{R}$
\cite[Corollary~10.12]{eisenbud},
this is only possible for $\gothp = 0$.
That is, if $v$ is a real Abhyankar valuation, it extends to
a valuation on $\widehat{R}$.
\end{remark}

\subsection{Riemann-Zariski spaces}

We need a mild generalization of Zariski's original
compactness theorem for spaces of valuations, which we prefer to state
in scheme-theoretic language. See
\cite[\S 2]{temkin-rz} for a much broader generalization.

\begin{defn}
For $R$ a ring, the \emph{patch topology} on 
$\Spec(R)$ is the topology generated by the sets $D(f)
= \{\gothp \in \Spec(R): f \notin \gothp\}$
and their complements. 
Note that for any $f \in R$, the open sets for the patch topology
on $D(f)$ are also open for the patch topology on $R$.
It follows that we can define the patch topology on a scheme $X$
to be the topology generated by the open subsets for the patch topologies
on all open affine subschemes of $X$, and this will agree with the previous
definition for affine schemes. The resulting topology
is evidently finer than the Zariski topology.
\end{defn}

\begin{lemma} \label{L:patch}
Any noetherian scheme is compact for the patch topology.
\end{lemma}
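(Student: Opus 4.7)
The plan is to reduce to the affine case and then apply Alexander's subbase theorem. Any noetherian scheme $X$ admits a finite cover by affine opens $U_1,\dots,U_n$ with each $U_i = \Spec R_i$ for $R_i$ noetherian; by the construction of the patch topology on a scheme, each $U_i$ is patch-open in $X$, and the patch topology on $U_i$ coincides with the subspace topology it inherits from the patch topology on $X$. Since a finite union of compact subspaces is compact, it suffices to prove that $\Spec R$ is patch-compact whenever $R$ is noetherian.

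For the affine case, I would apply Alexander's subbase theorem to the subbase $\{D(f), V(f) : f \in R\}$. Suppose a subbasic cover $\{D(f_i)\}_{i \in I} \cup \{V(g_j)\}_{j \in J}$ admits no finite subcover; the goal is to exhibit a prime of $R$ missed by the entire collection, giving a contradiction. Unpacking the definitions, a finite subcollection $\{D(f_i)\}_{i \in I_0} \cup \{V(g_j)\}_{j \in J_0}$ fails to cover $\Spec R$ precisely when there exists a prime $\gothp$ with $f_i \in \gothp$ for all $i \in I_0$ and $g_j \notin \gothp$ for all $j \in J_0$, i.e., when the multiplicative set generated by $\{g_j : j \in J_0\}$ is disjoint from the ideal $(f_i : i \in I_0)$.

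The noetherian hypothesis enters decisively here: the ideal $(f_i : i \in I)$ is already generated by some finite subfamily $f_{i_1},\dots,f_{i_m}$, so the relevant ideal stabilizes once $I_0$ is taken to contain these indices. The assumption of no finite subcover then says that in $\overline{R} = R/(f_{i_1},\dots,f_{i_m})$, no finite product of the images $\overline{g}_j$ vanishes; the multiplicative set these images generate therefore avoids $0$, so by the standard correspondence between primes and multiplicative sets it is disjoint from some prime ideal of $\overline{R}$. Its preimage in $R$ is a prime containing every $f_i$ and avoiding every $g_j$, which is not covered by any set in the alleged cover, yielding the desired contradiction.

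The argument involves no genuine obstacle; the only delicate point is the clean translation between coverage by subbasic sets and the algebraic condition on multiplicative sets and ideals. The role of the noetherian hypothesis is precisely to collapse the potentially infinite index set $I$ to a finite subset generating $(f_i : i \in I)$, which is what allows the multiplicative-set argument to be applied to the whole family $\{g_j\}_{j \in J}$ simultaneously rather than only to finite subfamilies.
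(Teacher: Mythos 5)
Your proof is correct, but it takes a genuinely different route from the paper's. The paper argues by noetherian induction on closed subsets: after reducing to an irreducible affine noetherian scheme, any patch-open set containing the generic point must contain a nonempty $D(f)$, and the complement of $D(f)$, being a proper closed subset, is patch-compact by the inductive hypothesis, which yields a finite subcover directly with no appeal to Alexander's subbase theorem. You instead reduce to the affine case and run the standard Stone-space argument: Alexander's theorem for the subbase $\{D(f), V(g)\}$, plus the correspondence between primes and multiplicative sets disjoint from an ideal; your translation of ``no finite subcover'' into the algebraic condition and the construction of the missed prime by localizing $R/(f_{i_1},\dots,f_{i_m})$ are sound. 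Two remarks on the comparison. First, the noetherian hypothesis is less decisive in your affine step than you claim: a finite product of the $g_j$ lying in the ideal $(f_i : i \in I)$ already involves only finitely many $f_i$, so the same argument shows $\Spec R$ is patch-compact for an arbitrary commutative ring $R$; the noetherian hypothesis is really used only to obtain the finite affine cover of $X$ (and your appeal to compatibility of the patch topology on an affine open with the subspace topology, while true, deserves a line of justification via opens that are simultaneously distinguished in two affine charts --- a point the paper's own reduction also leaves implicit). Second, what each approach buys: the paper's proof is self-contained and purely scheme-theoretic, while yours isolates the affine statement in its natural generality at the cost of invoking Alexander's subbase theorem.
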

\begin{proof}
By noetherian induction, it suffices to check that if $X$ is a noetherian
scheme such that each closed proper subset of $X$ is compact for the patch topology,
then $X$ is also compact for the patch topology.
Since a noetherian scheme is covered by finitely many affine 
noetherian schemes, each of which has finitely many irreducible
components, 
we may reduce to the case of an irreducible affine noetherian scheme
$\Spec(R)$. 

Given an open cover of $\Spec(R)$ for the patch topology, there must be
an open set covering the generic point. This open set must contain a basic
open set of the form $D(f) \setminus D(g)$ for some $f,g \in R$,
but this only covers the generic point if $D(g)$ is empty. Hence
our open cover includes an open set containing $D(f)$ for some $f \in R$
which is not nilpotent. By hypothesis, the closed set $X \setminus D(f)$
is covered by finitely many opens from the cover, as then is $X$.
\end{proof}

\begin{defn}
Let $X$ be a noetherian integral separated scheme.
The \emph{Riemann-Zariski space} $\RZ(X)$ consists of the
equivalence classes of centered valuations on $X$.
If $X = \Spec(R)$, we also write $\RZ(R)$ instead of $\RZ(X)$.

We may identify $\RZ(X)$ with the inverse limit over modifications
$f: Y \to X$, as follows.
Given $v \in \RZ(X)$, we take the element of the inverse limit
whose component on a modification $f: Y \to X$ 
is the generic center of $v$ on $Y$ (which exists by
Lemma~\ref{L:proper}(a)).
Conversely, given an element of the inverse limit with value $x_Y$
on $Y$, form the direct limit of the local rings $\calO_{Y,x_Y}$; 
this gives a valuation
ring because any $g \in K(X)$ defines a rational map $X \dashrightarrow
\PP^1_\ZZ$, and the Zariski closure of the graph of this rational
map is a modification $W$ of $X$ such that 
one of $g$ or $g^{-1}$ belongs to $\calO_{W,x_W}$.

We equip $\RZ(X)$ with the \emph{Zariski topology}, defined as
the inverse limit of the Zariski topologies on the modifications of $X$.
For any dominant morphism $X \to W$ of noetherian integral schemes, 
we obtain an induced
continuous morphism $\RZ(X) \to \RZ(W)$ using proper transforms.
\end{defn}

\begin{theorem} \label{T:Zariski}
For any noetherian integral separated scheme $X$, the space
$\RZ(X)$ is quasi\-compact.
\end{theorem}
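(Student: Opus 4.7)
The plan is to deduce quasicompactness of $\RZ(X)$ from a Tychonoff-style argument, using the patch topology provided by Lemma~\ref{L:patch}. For each modification $f: Y \to X$, the scheme $Y$ is noetherian, hence compact in its patch topology; moreover, this patch topology is Hausdorff, since for any two distinct points of an affine open one can find an element $h$ of the coordinate ring lying in exactly one of the two corresponding primes, so that $D(h)$ and its complement separate them.

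I would form the product $P = \prod_Y Y$ indexed over the (cofiltered) category of modifications of $X$, equipping each factor with its patch topology. By Tychonoff's theorem, $P$ is compact Hausdorff. Inside $P$, let $L$ denote the subset of families $(x_Y)$ compatible with every morphism $g: Y' \to Y$ of modifications over $X$, in the sense that $g(x_{Y'}) = x_Y$. Any morphism between noetherian schemes is continuous for the patch topology, since preimages of locally closed sets are locally closed and hence preimages of constructible sets are constructible. Combined with the Hausdorff property above, this implies that each compatibility locus $\{(x_Y) : g(x_{Y'}) = x_Y\}$ is closed in $P$, so $L$ is a closed subset of a compact space and therefore compact.

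Using the identification of $\RZ(X)$ with the inverse limit of generic centers recalled just before the theorem, I may view $\RZ(X)$ as the set $L$. The Zariski topology on $\RZ(X)$ is coarser than the topology induced from $P$, since on each modification the patch topology refines the Zariski topology. Thus any Zariski-open cover of $\RZ(X)$ is also a cover of $L$ by patch-opens of $P$, and therefore admits a finite subcover. This proves quasicompactness.

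The main point to check carefully will be the closedness of the compatibility loci in $P$, which is where both the Hausdorff property of the patch topology and the continuity of the transition maps between noetherian schemes are used; everything else is essentially formal once compactness in the patch topology is available from Lemma~\ref{L:patch}.
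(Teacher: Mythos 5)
Your proof is correct and follows essentially the same approach as the paper: the paper also deduces quasicompactness from Tychonoff applied to the patch (constructible) topologies on the modifications (citing Lemma~\ref{L:patch}), and then observes the Zariski topology is coarser. You have usefully spelled out the detail the paper leaves implicit---namely that the patch topology is Hausdorff and the transition maps are patch-continuous, so that the inverse limit is a closed subset of the compact product---which is exactly what justifies the paper's otherwise terse appeal to Tychonoff (and is why the paper remarks that quasicompactness alone would not suffice).
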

\begin{proof}
For each modification $f: Y \to X$, $Y$ is compact for the patch topology
by Lemma~\ref{L:patch}. If we topologize $\RZ(X)$ with the inverse limit of the
patch topologies, the result is compact by Tikhonov's theorem. The Zariski topology
is coarser than this, so $\RZ(X)$ is quasicompact for the Zariski topology.
(Note that we cannot check this directly 
because an inverse limit of quasicompact topological spaces need not be
quasicompact.)
\end{proof}

\begin{prop} \label{P:open map}
Let $f: Y \to X$ be a morphism of finite-dimensional
noetherian integral separated schemes,
which is dominant and of finite presentation.
Then the map $\RZ(f): \RZ(Y) \to \RZ(X)$ is open.
\end{prop}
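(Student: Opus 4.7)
The plan is to use Raynaud--Gruson flatification (Theorem~\ref{T:flatification}) to reduce to a flat morphism of finite type, which is topologically open, and to combine this with a valuation-theoretic lifting argument.

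Openness being local, it suffices to show that the image of each basic open $U$ of $\RZ(Y)$ is open. Any such $U$ is the preimage of an open $V \subseteq Y'$ for some modification $Y' \to Y$; since $\RZ(Y') = \RZ(Y)$ and $Y' \to X$ is still dominant of finite presentation, we may replace $Y$ by $Y'$ and assume $V \subseteq Y$. Applying Theorem~\ref{T:flatification} to $f$ yields a modification $X' \to X$ whose proper transform $W \to X'$ (with $W \subseteq Y \times_X X'$) is flat of finite type. The other projection $W \to Y$ is proper (as a base change of $X' \to X$) and an isomorphism over the dense open $f^{-1}(U_0) \subseteq Y$, where $U_0 \subseteq X$ is the isomorphism locus of $X' \to X$; hence $W \to Y$ is itself a modification and $\RZ(W) = \RZ(Y)$. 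Let $V' \subseteq W$ be the preimage of $V$, and $V'' \subseteq X'$ its image; since $W \to X'$ is flat of finite type between noetherian schemes it is topologically open, so $V''$ is open and defines a basic open in $\RZ(X') = \RZ(X)$.

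I will show that $\RZ(f)$ sends $U$ onto this basic open. The easy inclusion: if $v \in U$ has generic center $w \in V'$, then the image $x'$ of $w$ in $X'$ lies in $V''$ and is exactly the generic center of $v|_{K(X)}$, since the local inclusions $\gotho_{X',x'} \hookrightarrow \gotho_{W,w} \hookrightarrow \gotho_v$ force $\gothm_v \cap \gotho_{X',x'} = \gothm_{X',x'}$, preventing any proper generization of $x'$ from lying in the center. For the reverse inclusion, given $u \in \RZ(X)$ with generic center $x'_u \in V''$, choose $w_1 \in V'$ above $x'_u$ and put $A = \gotho_{X',x'_u}$, $B = \gotho_{W,w_1}$, so $A \to B$ is a flat local homomorphism. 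Then $\gotho_u \otimes_A B$ is faithfully flat over $\gotho_u$ (its closed fiber $\kappa_u \otimes_{\kappa_A} (B/\gothm_A B)$ is visibly nonzero), and a further residue-field calculation produces a prime $\gothp$ of $\gotho_u \otimes_A B$ lying simultaneously above $\gothm_u$ and $\gothm_B$. Flatness over the domain $\gotho_u$, combined with integrality of the generic fiber $W \times_{X'} \Spec K(X')$ (a nonempty open of $W$), ensures that $\gotho_u \otimes_A B$ is an integral domain with fraction field $K(W)$. Dominating $(\gotho_u \otimes_A B)_\gothp$ by a valuation ring $\gotho_v$ of $K(W)$, via the standard result \cite[\S 1]{vaquie}, produces a valuation $v$ on $K(W)$ that restricts to $u$ and is centered at $w_1$; since $V'$ is open, the generic center of $v$ lies in $V'$, giving $v \in U$ with $\RZ(f)(v) = u$.

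The main obstacle is this last extension step: flatness is used not only to make $\gotho_u \otimes_A B$ faithfully flat over $\gotho_u$, but also to realize this tensor product as a subring of $K(W)$, so that the local ring at the chosen prime admits domination by a genuine valuation ring of $K(W)$ with both the correct restriction and the correct center.
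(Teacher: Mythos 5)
Your proof is correct and follows the same strategy as the paper: reduce via Raynaud--Gruson flatification to a flat morphism of finite presentation, then exploit openness of such morphisms. The paper's own proof is extremely terse (it simply cites flatification and openness of flat morphisms of finite presentation), whereas you carefully spell out the valuation-lifting step---forming $\gotho_u \otimes_A B$, locating a prime over both maximal ideals, and dominating by a valuation ring---that the paper treats as implicit; this filled-in detail is the genuinely nontrivial content and is handled correctly.
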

\begin{proof}
By Theorem~\ref{T:flatification},
there exists a modification $g: Z \to X$ such that the proper transform
$h: W \to Z$ of $f$ under $g$ is flat. The claim now follows from the fact
that a morphism which is flat and locally of finite presentation is open
\cite[Th\'eor\`eme~2.4.6]{ega4-2}.
\end{proof}

\section{Nondegenerate differential schemes}
\label{sec:excellent}

We now explain how the notion of an excellent scheme 
interacts with derivations,
and with our discussion of good formal structures 
in \cite{kedlaya-goodformal1}.

\setcounter{theorem}{0}
\begin{defn}
Let $R \hookrightarrow S$ be an inclusion of domains,
and let $M$ be a finite $S$-module.
By an \emph{$R$-lattice} in $M$, we mean a finite $R$-submodule $L$ of $M$
such that the induced map $L \otimes_R S \to M$ is surjective.
\end{defn}

\subsection{Nondegenerate differential local rings}

We first introduce a special class of differential local rings.

\begin{defn}
A \emph{differential (local) ring}
is a (local) ring $R$ equipped with an $R$-module $\Delta_R$ acting on
$R$ via derivations, together with a Lie algebra structure on $\Delta_R$
compatible with the Lie bracket on derivations.
\end{defn}

\begin{defn} \label{D:nondegenerate}
Let $(R, \Delta_R)$ be a differential local ring
with maximal ideal $\gothm$ and residue field $\kappa = R/\gothm$.
We say $R$ is \emph{nondegenerate} if it satisfies the following conditions.
\begin{enumerate}
\item[(a)]
The ring $R$ is a regular (hence noetherian) local $\QQ$-algebra.
\item[(b)]
The $R$-module $\Delta_R$ is coherent.
\item[(c)]
For some regular sequence of parameters $x_1,\dots,x_n$ of $R$,
there exists a sequence $\del_1,\dots,\del_n \in \Delta_R$ of derivations of \emph{rational type}
with respect to $x_1,\dots,x_n$. That is, $\del_1,\dots,\del_n$ must commute pairwise, and must
satisfy
\begin{equation} \label{eq:rational type}
\del_i(x_j) = \begin{cases} 1 & (i=j) \\ 0 & (i \neq j).
\end{cases}
\end{equation}
The existence of such derivations for a single regular sequence of parameters implies the same 
for any other regular sequence of parameters; see
Corollary~\ref{C:regular sequence}.
\end{enumerate}
\end{defn}

\begin{remark}
Note that \eqref{eq:rational type} by itself does not force $\del_1,\dots,\del_n$ to
commute pairwise. For instance, if $R = \CC(t)\llbracket x_1,x_2 \rrbracket$,
we can satisfy \eqref{eq:rational type} by taking
\[
\del_1 = \frac{\del}{\del x_1} + \frac{\del}{\del t}, \qquad
\del_2 = \frac{\del}{\del x_2} + t \frac{\del}{\del t}.
\]
\end{remark}

\begin{example} \label{exa:nondegenerate}
The following are all examples of nondegenerate local rings.
\begin{enumerate}
\item[(a)]
Any local ring of a smooth scheme over a field of characteristic 0.
\item[(b)]
Any local ring of a smooth complex analytic space.
\item[(c)]
Any completion of a nondegenerate local ring with respect to a prime ideal. (By 
Lemma~\ref{L:localize complete} below, we may reduce to the case of completion with respect
to the maximal ideal, for which the claim is trivial.)
\end{enumerate}
\end{example}

We insert some frequently invoked remarks concerning the regularity condition.
\begin{remark} \label{R:regular local}
Let $R$ be a regular local ring. Let $\widehat{R}$ be the completion of $R$ with 
respect to its maximal ideal; then the morphism $R \to \widehat{R}$ is faithfully flat
\cite[Theorem~8.14]{matsumura}. 
Moreover, since \'etaleness descends down
faithfully flat quasicompact morphisms of schemes
\cite[Expos\'e~IX, Proposition~4.1]{sga1}, any element of $\widehat{R}$ which is algebraic
over $R$ generates a finite \'etale extension of $R$ within $\widehat{R}$
with the same residue field.
\end{remark}

\begin{lemma} \label{L:localize complete}
Let $(R, \Delta_R)$ be a nondegenerate differential local ring with maximal ideal $\gothm$.
For any prime ideal $\gothq$ contained in $\gothm$, the differential ring
$(R_{\gothq}, \Delta_R \otimes_R R_{\gothq})$ is again nondegenerate.
\end{lemma}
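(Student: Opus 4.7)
The plan is to verify the three conditions of Definition~\ref{D:nondegenerate} for $(R_\gothq, \Delta_R \otimes_R R_\gothq)$. Condition (a) follows from two standard facts: a localization of a regular local ring at a prime is regular, and a localization of a $\QQ$-algebra is a $\QQ$-algebra. Condition (b) is immediate, since $\Delta_R \otimes_R R_\gothq$ is the localization of a coherent module over a noetherian ring and $R_\gothq$ is noetherian. The real substance of the proof lies in condition (c).

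For (c), I will set $m = \height(\gothq)$ and begin by choosing $y_1, \ldots, y_m \in \gothq$ whose images minimally generate $\gothq R_\gothq$. The given commuting derivations $\del_1, \ldots, \del_n \in \Delta_R$ localize via the quotient rule to commuting elements of $\Delta_R \otimes_R R_\gothq$. My proposed construction proceeds in three steps: (i) show that the Jacobian $M = [\del_i(y_j)]$ with entries in $R_\gothq$ has rank $m$ modulo $\gothq R_\gothq$; (ii) apply Nakayama's lemma to select an $m \times m$ submatrix $M_0$ of $M$ which is invertible over $R_\gothq$, and set $\del''_i = \sum_k (M_0^{-1})_{ik} \del_k$, obtaining derivations in $\Delta_R \otimes_R R_\gothq$ satisfying $\del''_i(y_j) = \delta_{ij}$; (iii) promote the $\del''_i$ to commuting derivations $\del'_i = \del''_i + \nu_i$, where $\nu_i$ lies in the submodule of $\Delta_R \otimes_R R_\gothq$ annihilating every $y_k$. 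Step (iii) should go through in parallel with the commutation verification in Corollary~\ref{C:regular sequence}: the commutator $[\del''_i, \del''_j]$ applied to any $y_k$ vanishes identically, and the remaining perturbative adjustment to kill the commutators can be carried out order-by-order in the completion $\widehat{R_\gothq} \cong \kappa_v\llbracket y_1, \ldots, y_m \rrbracket$ (available by Cohen's structure theorem since $R$ is a $\QQ$-algebra, where $\kappa_v$ denotes the residue field of $R_\gothq$) and then descended.

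The main obstacle is step (i). A direct computation in $\widehat{R_\gothq}$ shows that for $z = \sum_I c_I y^I$ with $c_0 = 0$, the image of $\del_i(z)$ in the residue field of $R_\gothq$ equals $\sum_j c_{e_j} \overline{\del_i(y_j)}$, so the rank-$m$ property of $\bar M$ is equivalent to injectivity of the map $\gothq R_\gothq / \gothq^2 R_\gothq \to (R_\gothq/\gothq R_\gothq)^n$, $\bar z \mapsto (\overline{\del_i(z)})_i$. To avoid circularity in establishing this injectivity, I would lift the question to the $\gothm$-adic completion $\widehat R$ and exploit the rational-type structure on $R$ itself: a Leibniz-rule induction on $\gothm$-adic degree, using $\del_i(x_j) = \delta_{ij}$, shows that $\bigcap_i \ker(\del_i) \subseteq \widehat R$ meets every power of $\gothm$ trivially, and hence is a coefficient field $K \subseteq \widehat R$ giving an identification $\widehat R \cong K\llbracket x_1, \ldots, x_n \rrbracket$ in which $\del_i = \del/\del x_i$. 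In these coordinates, the desired injectivity becomes a Jacobian statement about generators of $\gothq$ expressed as power series in the $x_i$; the case $\gothq = \gothm$ is immediate and the case $\gothq \subseteq \gothm^2$, exemplified by the cuspidal prime $(x_1^2 - x_2^3) \subset k\llbracket x_1, x_2 \rrbracket$, is handled by observing that entries of $M$ such as $\del_1(x_1^2 - x_2^3) = 2 x_1$ are units in $R_\gothq$ even though they lie in $\gothm$.
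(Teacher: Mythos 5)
Your reduction of condition (c) to the full-rank claim in step (i), followed by inverting an $m\times m$ submatrix as in step (ii), is a reasonable plan, but the proposal has a genuine gap exactly where you yourself locate ``the main obstacle'': the rank-$m$ statement is never proved. After correctly translating it into injectivity of the map $\gothq R_\gothq/\gothq^2 R_\gothq \to \kappa(\gothq)^n$, $\bar z \mapsto (\overline{\del_i(z)})_i$, and passing to $\widehat{R} \cong K\llbracket x_1,\dots,x_n\rrbracket$, you verify only the trivial case $\gothq=\gothm$ and the single example $(x_1^2-x_2^3)$; these two cases do not even exhaust the possibilities, let alone prove the general ``Jacobian statement,'' which is the entire content of the lemma (arbitrary height, $\gothq\subseteq\gothm^2$ allowed, $\kappa(\gothq)$ of positive transcendence degree over $K$). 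Moreover, the lift to $\widehat{R}$ hides a real issue you do not address: $\gothq\widehat{R}$ need not be prime, so one must pass to a minimal prime $\mathfrak{Q}$ of $\gothq\widehat{R}$ and show that $\gothq R_\gothq/\gothq^2R_\gothq$ injects (after the field extension $\kappa(\gothq)\to\kappa(\mathfrak{Q})$) into $\mathfrak{Q}\widehat{R}_{\mathfrak{Q}}/\mathfrak{Q}^2\widehat{R}_{\mathfrak{Q}}$ --- e.g.\ using that the formal fibres of $R$ are geometrically regular (excellence, Lemma~\ref{L:nondegenerate}(a)), which gives $\gothq\widehat{R}_{\mathfrak{Q}}=\mathfrak{Q}\widehat{R}_{\mathfrak{Q}}$ --- and only then invoke a characteristic-zero Jacobian criterion for primes of a formal power series ring, which is itself a theorem requiring proof or citation, not an observation about one cusp. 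Without this, steps (ii)--(iii) have nothing to stand on. (By contrast, the paper's own proof is a one-line appeal to a structure result of Matsumura about $R_\gothq$ and its parameters, with no Jacobian computation.)

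Separately, step (iii) is both unnecessary and, as written, not a proof. Unnecessary: each $\del''_i$ is an $R_\gothq$-linear combination of the $m$ pairwise commuting derivations $\del_k$, $k\in S$, indexing the invertible submatrix $M_0$; hence $[\del''_i,\del''_j]$ is again an $R_\gothq$-linear combination $\sum_{l\in S}b_l\del_l$, and since it kills every $y_p$ we get $\sum_{l\in S}b_l\del_l(y_p)=0$ for all $p$, i.e.\ the row vector $(b_l)$ is annihilated by the invertible matrix $M_0$, so all $b_l=0$ and the $\del''_i$ already commute. Not a proof: an ``order-by-order'' correction carried out in $\widehat{R_\gothq}$ produces derivations of the completion, and ``then descended'' is unjustified --- there is no a priori reason such a correction lies in (the image of) $\Delta_R\otimes_R R_\gothq$, and the analogy with Corollary~\ref{C:regular sequence} does not apply, since there both coordinate systems are full regular systems of parameters of the same complete local ring, which is precisely what fails after localizing at $\gothq$.
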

\begin{proof}
By \cite[Theorem~45, Corollary]{matsumura-alg}, $R_{\gothq}$ is regular,
and any regular sequence of parameters of $R$ contains a regular sequence of parameters
for $R_{\gothq}$. From these assertions, the claim is evident.
\end{proof}
The following partly generalizes \cite[Lemma~2.1.3]{kedlaya-goodformal1}.
As the proof is identical, we omit details.
\begin{lemma} \label{L:complete nondegenerate}
Let $(R, \Delta_R)$ be a nondegenerate differential local ring.
Suppose further that for some $i \in \{1,\dots,n\}$,
$R$ is $x_i$-adically complete.
\begin{enumerate}
\item[(a)]
For $e \in \ZZ$,
$x_i \del_i$ acts on $x_i^e R/x_i^{e+1} R$
via multiplication by $e$.
\item[(b)]
The action of $x_i \del_i$ on $x_i R$ is bijective.
\item[(c)]
The kernel $R_i$ of $\del_i$ on $R[x_i^{-1}]$
is contained in $R$ and projects bijectively onto $R/x_i R$. There thus
exists an isomorphism $R \cong R_i \llbracket x_i \rrbracket$
under which $\del_i$ corresponds to $\frac{\del}{\del x_i}$.
\end{enumerate}
\end{lemma}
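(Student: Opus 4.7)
The plan is to exploit the Euler-type identity $(x_i \del_i)(x_i^e r) = e x_i^e r + x_i^{e+1} \del_i(r)$, which shows that $x_i \del_i$ preserves the $x_i$-adic filtration on $R$ and acts on the associated graded piece of degree $e$ as multiplication by $e$. Part (a) is just this Leibniz computation applied to an arbitrary element $f = x_i^e r$ of $x_i^e R$. Note that since $R$ is a $\QQ$-algebra, every positive integer is a unit of $R$, so multiplication by $e$ is invertible on each $x_i^e R / x_i^{e+1} R$ for $e \geq 1$.

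For part (b), I would use (a) together with the $x_i$-adic completeness of $R$. For injectivity, if $f \in x_i R$ is nonzero with $x_i \del_i(f) = 0$, then by Krull's intersection theorem (valid because $x_i \in \gothm$ and $R$ is noetherian) there is a unique $e \geq 1$ with $f \in x_i^e R \setminus x_i^{e+1} R$; by (a) this forces $x_i \del_i(f) \equiv e f \not\equiv 0 \pmod{x_i^{e+1} R}$, a contradiction. For surjectivity, given $g \in x_i R$, I would build $f$ as the $x_i$-adic limit of a Cauchy sequence $f^{(N)} \in x_i R$ with $f^{(N)} - f^{(N-1)} \in x_i^N R$ and $g - x_i \del_i(f^{(N)}) \in x_i^{N+1} R$. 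At each step one has a residue $h \in x_i^N R$ to be corrected; set $\delta = h/N \in x_i^N R$, so that by (a), $x_i \del_i(\delta) \equiv N \delta = h \pmod{x_i^{N+1} R}$, and let $f^{(N+1)} = f^{(N)} + \delta$. The limit exists by $x_i$-adic completeness of $R$, and $x_i \del_i(f) = g$ by continuity of $x_i \del_i$ for the $x_i$-adic topology (immediate from Leibniz).

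Part (c) splits into three subclaims. First, the containment $R_i \subseteq R$: if $h = x_i^{-e} r \in R[x_i^{-1}]$ satisfies $\del_i(h) = 0$ with $e \geq 1$ chosen minimal (so $r \notin x_i R$), then rearranging $\del_i(x_i^{-e} r) = 0$ yields $x_i \del_i(r) = e r$; but (a) forces $x_i \del_i(r) \in x_i R$, so $e r \in x_i R$, contradicting $e \in R^\times$ and $r \notin x_i R$. Next, the bijection $R_i \to R/x_i R$: injectivity is again immediate from (a) applied to any nonzero element of $R_i \cap x_i R$, and surjectivity uses (b). Namely, given any lift $a \in R$ of a residue class, set $c = -\del_i(a) \in R$; by (b) there is $b \in x_i R$ with $x_i \del_i(b) = x_i c$, and since $x_i$ is not a zero divisor (as $R$ is a regular local ring, hence a domain), this gives $\del_i(b) = c$, so $h = a + b \in R_i$ with $h \equiv a \pmod{x_i R}$. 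Finally, the splitting $R_i \hookrightarrow R \twoheadrightarrow R/x_i R$ determines an evident ring homomorphism $R_i \llbracket x_i \rrbracket \to R$ via $x_i$-adic completeness; bijectivity follows by the same successive-approximation construction, lifting each coefficient through the splitting. Under this identification, $\del_i(a_e) = 0$ for $a_e \in R_i$ and the Leibniz rule immediately give $\del_i = \frac{\del}{\del x_i}$.

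The argument is essentially formal once (a) is in hand; the only mild subtlety is keeping track of the interaction between the $x_i$-adic filtration and the operator $x_i \del_i$ in the successive-approximation step of (b), but this requires no more than the observation that $\del_i$ shifts the $x_i$-adic filtration by at most one step, so that $x_i \del_i$ preserves it exactly.
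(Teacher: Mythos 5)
Your proof is correct and follows the natural Euler-operator argument; the paper does not spell out its own proof but simply cites the analogous Lemma 2.1.3 from the prequel as ``identical,'' and the formal reasoning there is of exactly this kind (Leibniz rule plus invertibility of integers plus $x_i$-adic completeness). There is a cosmetic off-by-one in the indexing of your successive-approximation step in (b) --- the residue after constructing $f^{(N)}$ lies in $x_i^{N+1}R$, so the correction should be $\delta = h/(N+1)$ to produce $f^{(N+1)}$ --- but the underlying idea and all the substantive steps (Krull intersection for injectivity, the rearrangement $x_i\del_i(r) = er$ for the containment $R_i \subseteq R$, and using (b) plus $R$ being a domain to split off $R_i$) are sound.
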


\begin{cor} \label{C:complete nondegenerate}
Let $(R, \Delta_R)$ be a nondegenerate differential local ring
which is complete with respect to its maximal ideal $\gothm$.
Then there exists an isomorphism
$R \cong k \llbracket x_1,\dots,x_n \rrbracket$ for $k = R/\gothm$,
under which $\del_i$ corresponds to $\frac{\del}{\del x_i}$ for
$i=1,\dots,n$.
\end{cor}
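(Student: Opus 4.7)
The plan is to induct on the number $n$ of regular parameters, using Lemma~\ref{L:complete nondegenerate}(c) to peel off one variable at a time. The base case $n=0$ is trivial, since then $R = k$ is its own residue field.

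For the inductive step, first observe that $R$ is $x_1$-adically complete: any ideal in a noetherian $\gothm$-adically complete local ring is $\gothm$-adically closed (by Krull's intersection theorem applied to the relevant quotients), so the natural map $R \to \varprojlim R/(x_1^k)$ is an isomorphism. We may thus apply Lemma~\ref{L:complete nondegenerate}(c) with $i = 1$: the subring $R_1 = \ker(\del_1) \subseteq R$ satisfies $R \cong R_1 \llbracket x_1 \rrbracket$ under an isomorphism identifying $\del_1$ with $\del/\del x_1$. In particular, $R_1 \cap x_1 R = 0$ inside $R$, since elements of $R_1$ correspond to constant-term series while elements of $x_1 R$ have trivial constant term.

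Next I verify that $R_1$ carries the structure of an $(n-1)$-dimensional nondegenerate differential local ring. Since the $\del_i$ commute pairwise, the restriction of each $\del_i$ ($i \geq 2$) to $R_1$ is a derivation of $R_1$. Writing $x_j = x_j' + x_1 r_j$ with $x_j' \in R_1$ and $r_j \in R$ for $j = 2, \dots, n$, the identity
\[
\delta_{ij} = \del_i(x_j) = \del_i(x_j') + x_1 \del_i(r_j)
\]
together with $R_1 \cap x_1 R = 0$ forces $\del_i(x_j') = \delta_{ij}$ for all $i,j \in \{2,\dots,n\}$. The isomorphism $R_1 \cong R/x_1 R$ from Lemma~\ref{L:complete nondegenerate}(c) sends $x_j'$ to the image of $x_j$, so $x_2',\dots,x_n'$ form a regular sequence of parameters for $R_1$, a regular local ring of dimension $n-1$ complete with respect to its maximal ideal. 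The remaining ingredients of nondegeneracy (the $\QQ$-algebra structure, the coherence of the module of derivations, pairwise commutativity, and Lie bracket compatibility) transfer automatically.

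Applying the inductive hypothesis to $R_1$, I obtain an isomorphism $R_1 \cong k\llbracket x_2',\dots,x_n'\rrbracket$ with $k = R_1/\gothm_{R_1} = R/\gothm$, sending $\del_i$ to $\del/\del x_i'$ for $i = 2,\dots,n$. Combining with $R \cong R_1\llbracket x_1\rrbracket$ produces the required isomorphism $R \cong k\llbracket x_1, x_2',\dots,x_n'\rrbracket$ with all derivations matching. The main subtlety is the shift from the parameters $x_j$ to their constant terms $x_j'$ when passing from $R$ to $R_1$; this is resolved cleanly by the identity $R_1 \cap x_1 R = 0$, which pins down the rational-type behavior of the restricted derivations and keeps the induction running.
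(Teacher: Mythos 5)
Your proof is correct and follows essentially the paper's intended route: the corollary is meant to be obtained by iterating Lemma~\ref{L:complete nondegenerate}(c), peeling off one coordinate at a time exactly as you do, after noting that $R$ is $x_1$-adically complete and that $R_1$ inherits a nondegenerate differential structure. One small simplification: since $\del_1(x_j)=0$ for $j\geq 2$, the parameters $x_2,\dots,x_n$ already lie in $R_1=\ker(\del_1)$, so your $x_j'$ coincide with $x_j$ and the decomposition $x_j=x_j'+x_1r_j$ is not actually needed.
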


\begin{cor} \label{C:regular sequence}
Let $(R, \Delta_R)$ be a (not necessarily complete)
nondegenerate differential local ring. Then condition (c) 
of Definition~\ref{D:nondegenerate} holds
for any regular sequence of parameters.
\end{cor}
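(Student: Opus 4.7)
The plan is to pass to the completion, where Cohen's structure theorem identifies the desired derivations with standard partial derivatives. Let $y_1,\dots,y_n$ be an arbitrary regular sequence of parameters of $R$. The Jacobian $J=(\del_i(y_k))_{i,k}\in M_n(R)$ has reduction modulo $\gothm$ equal to the matrix expressing $\{\overline{y}_k\}$ in terms of the $k$-basis $\{\overline{x}_i\}$ of $\gothm/\gothm^2$; since both sequences project to bases, this reduction lies in $\mathrm{GL}_n(k)$, and therefore $J\in\mathrm{GL}_n(R)$. Let $C=(c_{ij})\in\mathrm{GL}_n(R)$ be the matrix with $C^T=J^{-1}$, so that $\sum_i c_{ij}J_{ik}=\delta_{jk}$, and set $\del'_j:=\sum_i c_{ij}\del_i$. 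Since $\Delta_R$ is an $R$-module, $\del'_j\in\Delta_R$, and $\del'_j(y_k)=\delta_{jk}$ by construction.

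It remains to prove $[\del'_j,\del'_\ell]=0$. Since any derivation carries $\gothm^N$ into $\gothm^{N-1}$, the derivations $\del_i,\del'_j$ extend by continuity to derivations $\widehat{\del}_i,\widehat{\del}'_j$ on the $\gothm$-adic completion $\widehat{R}$. By Example~\ref{exa:nondegenerate}(c), $(\widehat{R},\Delta_R\otimes_R\widehat{R})$ is nondegenerate and complete, so Corollary~\ref{C:complete nondegenerate} yields an isomorphism $\widehat{R}\cong k\llbracket x_1,\dots,x_n\rrbracket$ identifying $\widehat{\del}_i$ with $\del/\del x_i$. In particular each $\widehat{\del}_i$ annihilates the coefficient field $k\subset\widehat{R}$, and consequently so does the $\widehat{R}$-linear combination $\widehat{\del}'_j$.

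Since $y_1,\dots,y_n$ remains a regular sequence of parameters in $\widehat{R}$, Cohen's structure theorem (applied with the same coefficient field $k$) produces a second isomorphism $\widehat{R}\cong k\llbracket y_1,\dots,y_n\rrbracket$ sending $y_j\mapsto y_j$. The partial derivatives $\del/\del y_j$ associated to this presentation are continuous $k$-derivations on $\widehat{R}$ which commute pairwise and satisfy $(\del/\del y_j)(y_k)=\delta_{jk}$. Because any continuous $k$-derivation of $k\llbracket y_1,\dots,y_n\rrbracket$ is uniquely determined by its values on the generators $y_k$, we conclude $\widehat{\del}'_j=\del/\del y_j$, whence $[\widehat{\del}'_j,\widehat{\del}'_\ell]=0$ in $\widehat{R}$. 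Since $R\to\widehat{R}$ is faithfully flat (Remark~\ref{R:regular local}), hence injective, we obtain $[\del'_j,\del'_\ell]=0$ in $\Delta_R$ as desired.

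The main obstacle is this commutativity step. One can in principle avoid the completion by applying $\del'_\ell$ directly to $\sum_i c_{ij}J_{ik}=\delta_{jk}$ and using $[\del_i,\del_m]=0$ to derive the symmetry $\del'_\ell(c_{ij})=\del'_j(c_{i\ell})$, from which $[\del'_j,\del'_\ell]=0$ follows by a short Leibniz computation. Passing to $\widehat{R}$ is cleaner because there the $\widehat{\del}'_j$ are simply partial derivatives in the coordinates $y_1,\dots,y_n$, so their commutativity becomes the standard symmetry of mixed partials on a power series ring.
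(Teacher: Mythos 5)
Your proof is essentially identical to the paper's: both define $\del'_j$ by inverting the matrix $(\del_i(y_k))$, then pass to the $\gothm$-adic completion where Corollary~\ref{C:complete nondegenerate} gives $\widehat{R} \cong k\llbracket x_1,\dots,x_n\rrbracket$, observe that the $\del'_j$ kill the coefficient field $k$, and conclude that under the induced isomorphism $\widehat{R} \cong k\llbracket y_1,\dots,y_n\rrbracket$ each $\del'_j$ becomes $\del/\del y_j$, hence they commute. You merely spell out a few steps the paper leaves implicit (continuity of derivations for the $\gothm$-adic topology, uniqueness of continuous $k$-derivations given their values on $y_1,\dots,y_n$, and descent of the relation $[\del'_j,\del'_\ell]=0$ from $\widehat{R}$ to $R$ via injectivity of $R\to\widehat{R}$).
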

\begin{proof}
Let $y_1,\dots,y_n$ be a second regular sequence of parameters.
Define the matrix $A$ by putting $A_{ij} = \del_i(y_j)$; then
$\det(A)$ is not in the maximal ideal $\gothm$ of $R$, and so is a unit in $R$.
We may then define the derivations
\[
\del'_j = \sum_i (A^{-1})_{ij} \del_i \qquad (j=1,\dots,n),
\]
and these will satisfy
\[
\del'_i(y_j) = \begin{cases} 1 & (i=j) \\ 0 & (i \neq j).
\end{cases}
\]
It remains to check that the $\del'_i$ commute pairwise; for this, it is harmless to pass
to the case where $R$ is complete with respect to $\gothm$.
In this case, by Corollary~\ref{C:complete nondegenerate}, we have an isomorphism
$R \cong k\llbracket x_1,\dots,x_n \rrbracket$ under which each $\del_i$ corresponds to
the formal partial derivative in $x_i$. 
That isomorphism induces an embedding of $k$ into $R$ whose image is killed by 
$\del_1,\dots,\del_n$ and hence also by $\del'_1,\dots,\del'_n$. We thus obtain a second isomorphism
$R \cong k \llbracket y_1,\dots,y_n \rrbracket$ under which each $\del'_i$
corresponds to the formal partial derivative in $y_i$. In particular, these 
commute pairwise, as desired.
\end{proof}

\subsection{Nondegenerate differential schemes}

We now consider more general differential rings and schemes,
following  \cite[\S 1]{kedlaya-goodformal1}.
We introduce the nondegeneracy condition for these and show that it implies
excellence.

\begin{defn}
A \emph{differential scheme}
is a scheme $X$ equipped with a quasicoherent
$\calO_X$-module $\calD_X$ acting on
$\calO_X$ via derivations, together with a Lie algebra structure on
$\calD_X$ compatible
with the Lie bracket on derivations. 
Note that the category of differential affine schemes is equivalent
to the category of differential rings in the obvious fashion.
\end{defn}

\begin{defn} \label{D:nondegenerate scheme}
We say a differential scheme $(X, \calD_X)$ is \emph{nondegenerate}
if $X$ is separated and noetherian of finite Krull dimension,
$\calD_X$ is coherent over $\calO_X$,
and each local ring of $X$ is nondegenerate. We say a differential ring $R$ is 
nondegenerate if $\Spec(R)$ is nondegenerate; this agrees with the previous definition
in the local case.
\end{defn}

\begin{remark} \label{R:extend nondegenerate}
Let $(R, \Delta_R)$ be a differential domain. For several types of ring 
homomorphisms
$f: R \to S$ with $S$ also a domain, 
there is a canonical way to extend the differential structure
on $R$ to a differential structure on $S$, provided we insist that the
differential structure be \emph{saturated}. That is, we equip $S$ with
the subset of $\Delta_{\Frac(R)} \otimes_R S$ consisting of elements which
act as derivations on $\Frac(S)$ preserving $S$.

To be specific, we may perform such a canonical extension for $f$
of the following types.
\begin{enumerate}
\item[(a)]
A generically finite morphism of finite type.
\item[(b)]
A localization.
\item[(c)]
A morphism from $R$ to its completion with respect to some ideal.
\end{enumerate}
If $R$ is nondegenerate, it is clear in cases (b) and (c) that $S$ is also
nondegenerate. In case (a), one can only expect this if $S$ is regular,
in which case it is true but not immediate; this is the content of the
following lemma.
\end{remark}

\begin{lemma} \label{L:extend nondegenerate}
Let $X$ be a nondegenerate differential scheme, and let $f: Y \to X$
be an alteration with $Y$ regular. Then the canonical differential
scheme structure on $Y$ (see Remark~\ref{R:extend nondegenerate})
is again nondegenerate.
\end{lemma}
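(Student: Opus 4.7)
The plan is to split the verification into scheme-theoretic conditions on $Y$ and the main local condition of nondegeneracy of each stalk. The scheme-theoretic conditions follow routinely: separatedness from properness of $f$ over separated $X$; noetherianness and finite Krull dimension from the fact that alterations of excellent schemes are again excellent (noted in the definition of alteration using Proposition~\ref{P:excellent}) and preserve Krull dimension; coherence of $\calD_Y$ from the local observation that on each affine chart $\Spec S$ over $\Spec R$, the saturated $\Delta_S$ is an $S$-submodule of a module of finite rank over $\Frac(S)$ (namely $\Delta_R \otimes_R \Frac(S)$) intersected with $\operatorname{Der}(S)$, and is therefore finite over the noetherian ring $S$.

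For the local conditions at $y \in Y$ with image $x = f(y)$, set $R = \calO_{X,x}$, $S = \calO_{Y,y}$, $m = \dim S$. Regularity of $S$ and the $\QQ$-algebra property are immediate from $Y$ being regular in characteristic zero. It remains to produce commuting rational-type derivations in $\Delta_S$. Choose a regular sequence of parameters $y_1, \ldots, y_m$ of $S$; by Cohen's structure theorem, $\widehat S \cong \kappa \llbracket y_1, \ldots, y_m \rrbracket$ with $\kappa = S/\gothm_S$, and the formal partials $\widetilde \del_j = \del/\del y_j$ commute pairwise and are of rational type on $\widehat S$. My aim is to find $\del'_1, \ldots, \del'_m \in \Delta_S$ with $\del'_i(y_j) = \delta_{ij}$ whose images in $\widehat{\Delta_S} = \Delta_S \otimes_S \widehat S$ coincide with the formal partials.

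To construct the $\del'_j$, I would use that $R \to S$ is generically \'etale (characteristic zero plus generic finiteness): the rational-type derivations $\del_1, \ldots, \del_n \in \Delta_R$ extend uniquely to derivations of $\Frac(S)$, and the resulting $n \times m$ Jacobian matrix $A = (\del_i(y_j))$ has rank $m$ over $\Frac(S)$ (since the $y_j$ are regular parameters and the $\del_i$ are dual to a regular sequence of parameters of $R$). Inverting a suitable $m \times m$ submatrix of $A$ produces $\Frac(S)$-linear combinations of the $\del_i$ satisfying $\del'_i(y_j) = \delta_{ij}$; the freedom in this construction (via the kernel of the Jacobian evaluation map) allows adjustment so that each $\del'_j$ lies in $\Delta_S$ (preserves $S$), using the finiteness of $\Delta_S$ over $S$ and the power-series structure of $\widehat S$.

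The main obstacle is commutativity. By construction, the completions $\widetilde{\del'_j} \in \widehat{\Delta_S}$ satisfy $\widetilde{\del'_j}(y_k) = \delta_{jk}$ and therefore differ from the commuting formal partials $\widetilde \del_j$ by ``vertical'' derivations of $\widehat S$ (those vanishing on $y_1, \ldots, y_m$, i.e., continuous derivations extending derivations of the Cohen subring $\kappa$). One can arrange for these vertical corrections to vanish by a further refinement of the choice within $\Delta_S$, using the decomposition $\widehat{\Delta_S} = \Delta_S \otimes_S \widehat S$ and the $\widehat S$-module structure of the continuous derivations on $\widehat S$. Once $\widetilde{\del'_j} = \widetilde \del_j$, the commutator $[\del'_i, \del'_j] \in \Delta_S$ vanishes after completion, and faithful flatness of $\widehat S$ over $S$ forces $[\del'_i, \del'_j] = 0$ on $S$. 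This establishes condition (c) of Definition~\ref{D:nondegenerate} and completes the proof.
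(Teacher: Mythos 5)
The proposal has a genuine gap. The paper's proof begins with a reduction that you skip: since the nondegenerate locus is stable under generization (Lemma~\ref{L:localize complete}), it suffices to verify nondegeneracy at those $y\in Y$ for which $\dim\calO_{Y,y}=\dim\calO_{X,f(y)}$, as such points are dense in every fibre. This reduction is what makes the Jacobian $A=(\del_i(y_j))$ a \emph{square} $n\times n$ matrix rather than the rectangular $n\times m$ matrix you work with. Moreover, it implies (via the dimension formula for excellent schemes, as in Lemma~\ref{L:extension}) that the residue field $\ell$ of $y$ is \emph{finite} over the residue field $k$ of $x$, which lets one identify $\widehat{\calO_{Y,y}}\cong\ell\llbracket y_1,\dots,y_n\rrbracket$ with the same $n$ in such a way that $k\hookrightarrow\ell$. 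That identification is what forces $\det A$ to be a unit and puts the $\del'_j=\sum_i(A^{-1})_{ij}\del_i$ automatically in $\Delta_S$, with no adjustment needed.

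Your approach tries to avoid this reduction, and the two places where it breaks down are precisely the ones the reduction is designed to handle. First, you invert an $m\times m$ submatrix of a rectangular Jacobian over $\Frac(S)$ and then assert that ``the freedom in this construction ... allows adjustment so that each $\del'_j$ lies in $\Delta_S$''; this is not substantiated, and it is not clear that additions from the kernel of the Jacobian can turn a derivation of $\Frac(S)$ into one preserving $S$. Second, and more seriously, for commutativity you posit a ``further refinement'' cancelling the ``vertical corrections'' extending derivations of $\kappa$. In the paper this issue never arises: since $\ell/k$ is algebraic and the $\del_i$ kill $k$, they (and hence the $\del'_j$) automatically kill $\ell$ in characteristic zero, so the $\del'_j$ are \emph{exactly} the formal partials in the $(y_1,\dots,y_n)$-coordinate system on $\ell\llbracket y_1,\dots,y_n\rrbracket$ and commute for free. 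There is no freedom to adjust and no correction to cancel; the vanishing is forced, and your construction (which alters the $\del'_j$ in ways that may destroy this) does not recover it. To repair the proof, adopt the paper's reduction via generization stability and density of equal-codimension points, after which the argument of Corollary~\ref{C:regular sequence} applies verbatim.
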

\begin{proof}
What is needed is to check that every local ring of $Y$ is nondegenerate,
so we may fix
$y \in Y$ and $x = f(y) \in X$. Since the nondegenerate locus is stable
under generization by Lemma~\ref{L:localize complete}, it suffices
to consider cases where $y$ and $x$ have the same codimension, as 
such $y$ are dense in each fibre of $f$.

Choose a regular system of parameters $x_1,\dots,x_n$ for $X$ at $x$.
Since $\calO_{X,x}$ is nondegenerate, we can choose derivations
$\del_1,\dots,\del_n$ acting on some neighborhood $U$ of $x$
which are of rational type with respect to $x_1, \dots, x_n$.
By Lemma~\ref{L:complete nondegenerate},
we can write $\widehat{\calO_{X,x}} \cong k \llbracket x_1,\dots,x_n \rrbracket$
in such a way that $\del_1,\dots,\del_n$ correspond to $\frac{\del}{\del x_1},
\dots, \frac{\del}{\del x_n}$.

Choose a regular system of parameters $y_1,\dots,y_n$ for $Y$ at $y$.
By our choice of $y$, the residue field $\ell$ of $y$
is finite over $k$. We may thus identify
$\widehat{\calO_{Y,y}}$ with $\ell \llbracket y_1,\dots, y_n \rrbracket$
\emph{for the same $n$} in such a way that the morphism
$\widehat{\calO_{X,x}} \to \widehat{\calO_{Y,y}}$ carries $k$ into $\ell$.
Using such an identification, we see (as in the proof of Corollary~\ref{C:regular sequence})
that if we define the matrix $A$ over $\calO_{Y,y}$ by $A_{ij} = \del_i(y_j)$, 
then
put $\del'_j = \sum_i (A^{-1})_{ij} \del_i$, we obtain derivations of rational type
with respect to $y_1,\dots,y_n$. Hence $\calO_{Y,y}$ is also nondegenerate, as desired.
\end{proof}

\begin{lemma} \label{L:nondegenerate}
Let $(X, \calD_X)$ be a nondegenerate differential scheme.
\begin{enumerate}
\item[(a)]
The scheme $X$ is excellent.
\item[(b)]
For each $x \in X$, the differential local ring $\calO_{X,x}$ is simple.
\item[(c)]
If $X$ is integral, then 
the subring of $\Gamma(X, \calO_X)$ killed by the action of $\calD_X$
is a field.
\end{enumerate}
\end{lemma}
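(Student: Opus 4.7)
My plan is to prove (b) first by a differentiation argument in the completion, deduce (c) pointwise from (b), and treat (a) separately as the main technical step.

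\textbf{(b).} Let $R = \calO_{X,x}$ with maximal ideal $\gothm$ and derivation module $\Delta_R$, and let $I \subset R$ be a nonzero $\Delta_R$-stable ideal. Since each $\del_i$ satisfies $\del_i(\gothm^k) \subset \gothm^{k-1}$, the $\del_i$ extend continuously to derivations of the completion $\widehat R$; by Corollary~\ref{C:complete nondegenerate} (applied to $\widehat R$, which is nondegenerate by Example~\ref{exa:nondegenerate}(c)), under some isomorphism $\widehat R \cong k\llbracket x_1,\dots,x_n\rrbracket$ these extensions are exactly the formal partials. The ideal $I \widehat R$ is nonzero by faithful flatness of $R \to \widehat R$ and stable under the partials. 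Pick $f \in I \widehat R \setminus \{0\}$, write $f = \sum_\alpha c_\alpha x^\alpha$ with $c_\alpha \in k$, and choose $\alpha$ of minimal total degree with $c_\alpha \neq 0$. Then $\del_1^{\alpha_1} \cdots \del_n^{\alpha_n}(f) \in I \widehat R$ reduces modulo $\gothm \widehat R$ to $\alpha_1! \cdots \alpha_n!\, c_\alpha$, which is nonzero because $\mathrm{char}(R) = 0$. So $I \widehat R$ contains a unit, forcing $I \widehat R = \widehat R$; faithful flatness then gives $I = I\widehat R \cap R = R$.

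\textbf{(c).} Let $r \in \Gamma(X, \calO_X)$ be nonzero and killed by every section of $\calD_X$. Since $X$ is integral, $r$ is nonzero in each stalk $\calO_{X,x}$. The Leibniz rule makes $r\calO_{X,x}$ a nonzero $\calD_{X,x}$-stable ideal, so by (b) it equals $\calO_{X,x}$. Hence $r$ is a unit at every stalk, yielding a global inverse; the constant subring is therefore a field.

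\textbf{(a).} Excellence requires noetherian (given by Definition~\ref{D:nondegenerate scheme}), universally catenary, G-ring, and J-2. Regularity of every local ring of $X$ gives Cohen--Macaulay, hence universal catenarity; J-2 will follow from the G-ring property together with preservation of the G-ring property under finitely generated algebras of noetherian rings (so every finitely generated algebra over an affine open of $X$ is again a G-ring, hence J-1). The heart of the matter is therefore to show every nondegenerate local ring $R$ is a G-ring, i.e., $R_\gothp \to \widehat{R_\gothp}$ is regular for every prime $\gothp$ of $R$. By Lemma~\ref{L:localize complete} (which keeps $R_\gothp$ nondegenerate) we may reduce to $\gothp = \gothm$, and Corollary~\ref{C:complete nondegenerate} supplies the explicit structure $\widehat R \cong k\llbracket x_1,\dots,x_n\rrbracket$ with $\del_i \mapsto \partial/\partial x_i$. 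The main obstacle is to leverage this completion structure into geometric regularity of the fibres of $\Spec\widehat R \to \Spec R$. I would attempt this by induction on $n$: use Lemma~\ref{L:complete nondegenerate} on the $x_n$-adic completion of $R$ to obtain a splitting $R' \cong R'_n \llbracket x_n\rrbracket$ whose first factor is nondegenerate of dimension $n-1$, and close the induction using preservation of geometric regularity under formal power series extensions.
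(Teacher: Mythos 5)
Your arguments for (b) and (c) are correct: the completion argument in (b) is the standard simplicity argument (the paper instead observes that nondegeneracy makes the pairing $\Delta_{\calO_{X,x}} \times \gothm_{X,x}/\gothm_{X,x}^2 \to \kappa_x$ nondegenerate on the right and cites \cite[Proposition~1.2.3]{kedlaya-goodformal1}), and your deduction of (c) from (b) matches the paper's.

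Part (a), however, contains a genuine gap, and it sits exactly at the point you yourself flag as ``the main obstacle'': the regularity of $R \to \widehat{R}$ (the G-ring property) for a nondegenerate local ring $R$ is never proved, only planned. The plan does not close. Factoring through the $x_n$-adic completion $R' \cong R'_n \llbracket x_n \rrbracket$ still leaves you needing regularity of $R \to R'$, which is a completion with respect to a \emph{non-maximal} ideal -- precisely the hard case -- and it is not supplied by an induction on dimension whose hypothesis only concerns maximal-adic completions of lower-dimensional rings. The step you invoke to finish, ``preservation of geometric regularity under formal power series extensions'' (i.e.\ that $R'_n \to \widehat{R'_n}$ regular implies $R'_n\llbracket x_n\rrbracket \to \widehat{R'_n}\llbracket x_n\rrbracket$ regular), is not a standard fact; statements of this kind (ascent of regular formal fibres along completions and power series extensions) are exactly the deep Rotthaus--Gabber results that the paper explicitly says it avoids by using derivations instead. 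The paper's proof of (a) is a citation of Matsumura's theorem that a regular ring of characteristic zero admitting, near each point, derivations of rational type with respect to a regular system of parameters is excellent (\cite[Theorem~101]{matsumura}); that theorem is tailored to Definition~\ref{D:nondegenerate}, and its proof uses the derivations directly (via a Jacobian-type criterion for geometric regularity of the formal fibres) rather than any power-series ascent. You should either cite it or reproduce that argument; your sketch as it stands does not substitute for it.

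Two further points in your outline also need justification rather than assertion. First, stability of the G-ring property under finitely generated algebras, and the implication from the G-ring property to openness of regular loci (J-1/J-2), are themselves nontrivial theorems; the paper's Remark~\ref{R:qe props} records only the local case (a noetherian local G-ring is J-2), so for the non-local coordinate rings you must quote the appropriate general results. Second, excellence of $X$ in the sense of Definition~\ref{D:quasi-excellent} requires exhibiting an affine cover by spectra of excellent rings; since J-2 is not a stalk-local condition, establishing excellence of all local rings $\calO_{X,x}$ does not by itself yield excellence of the scheme. Matsumura's theorem sidesteps this because it applies directly to the (non-local) coordinate ring of a small affine open, using coherence of $\calD_X$ to furnish the required derivations after shrinking.
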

\begin{proof}
For (a), see \cite[Theorem~101]{matsumura}.
For (b), note that for $\gothm_{X,x}$ the maximal ideal of $\calO_{X,x}$
and $\kappa_x$ the residue field, the nondegeneracy condition 
forces the pairing
\begin{equation} \label{eq:pairing}
\Delta_{\calO_{X,x}} \times \gothm_{X,x}/\gothm_{X,x}^2 \to \kappa_x
\end{equation}
to be nondegenerate on the right. The differential ring $\calO_{X,x}$
is then simple by \cite[Proposition~1.2.3]{kedlaya-goodformal1}.
For (c), note that the nondegeneracy condition prevents any
$r \in \Gamma(X, \calO_X)$ killed by the action of $\calD_X$ from
belonging to the maximal ideal of any local ring of $X$ unless it vanishes in
the local ring.
\end{proof}
\begin{cor}
Every local ring of an algebraic variety over a field of characteristic
$0$, or of a complex analytic variety, is excellent. Moreover, the completion
of any such ring with respect to any ideal is again excellent.
\end{cor}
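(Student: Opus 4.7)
The plan is to combine the excellence of nondegenerate differential local rings (Lemma \ref{L:nondegenerate}(a)) with the stability of excellence under quotients (Proposition \ref{P:excellent}).

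For the first assertion, any local ring of a smooth algebraic or complex analytic variety is a nondegenerate differential local ring by Example \ref{exa:nondegenerate}(a)--(b), hence excellent by Lemma \ref{L:nondegenerate}(a). A general local ring $R = \calO_{X,x}$ of an algebraic variety over a field of characteristic $0$ (respectively of a complex analytic variety) can be presented as $R = A/J$ with $A$ a local ring of an ambient smooth variety, by realizing a neighborhood of $x$ as a closed subscheme (respectively closed subspace) of an affine space (respectively a polydisc). Then $A$ is excellent by the smooth case, and Proposition \ref{P:excellent} delivers excellence of $R$.

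For the second assertion, I would write $R = A/J$ as above, fix an ideal $I$ of $R$, and choose a lift $I' \subseteq A$ containing $J$. Flatness of $I'$-adic completion over the noetherian ring $A$, applied to the finitely generated module $A/J$, yields the identification $\widehat{R}_I = \widehat{A}_{I'}/J\widehat{A}_{I'}$, so by Proposition \ref{P:excellent} it suffices to show $\widehat{A}_{I'}$ is excellent. I would do so by checking that $\widehat{A}_{I'}$ is itself a nondegenerate differential local ring, then invoking Lemma \ref{L:nondegenerate}(a). The three checks are: (i) $\widehat{A}_{I'}$ is local, because its maximal ideals correspond bijectively with those of $\widehat{A}_{I'}/I'\widehat{A}_{I'} = A/I'$, which is local; (ii) the $\gothm_A \widehat{A}_{I'}$-adic completion of $\widehat{A}_{I'}$ coincides with the $\gothm_A$-adic completion of $A$ (since each $A/\gothm_A^n$ is already $I'$-adically complete), which is a regular local ring by Remark \ref{R:regular local}, forcing $\widehat{A}_{I'}$ itself to be regular; (iii) a collection of derivations $\del_1,\dots,\del_n$ of rational type on $A$ with respect to a regular sequence $x_1,\dots,x_n$ of parameters satisfies $\del_i(I'^k) \subseteq I'^{k-1}$ by the Leibniz rule, so each $\del_i$ is continuous in the $I'$-adic topology and extends uniquely to $\widehat{A}_{I'}$. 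The extensions generate a coherent $\widehat{A}_{I'}$-module, still satisfy $\del_i(x_j) = \delta_{ij}$, and the $x_j$'s still form a regular sequence of parameters since they generate $\gothm_A \widehat{A}_{I'}$.

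The main obstacle is step (iii). Example \ref{exa:nondegenerate}(c) handles completions only with respect to prime ideals, and there the reduction to the maximal-ideal case is effected by first localizing via Lemma \ref{L:localize complete}; this move is unavailable for a general, possibly non-prime, ideal $I'$. One must instead propagate the differential structure across the $I'$-adic completion directly, using the Leibniz estimate $\del_i(I'^k) \subseteq I'^{k-1}$ for continuity of the derivations, the preservation of the regular sequence of parameters, and the compatibility of $\widehat{A}_{I'}$ with its further $\gothm_A$-adic completion for regularity.
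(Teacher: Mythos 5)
Your proposal is correct and follows essentially the paper's route: reduce via Proposition~\ref{P:excellent} to a local ring of (algebraic or analytic) affine space, then conclude by exhibiting a nondegenerate differential structure and invoking Lemma~\ref{L:nondegenerate}(a). Your explicit check that the $I'$-adic completion is again a nondegenerate differential local ring (continuity of the rational-type derivations, regularity via comparison with the maximal-adic completion) fills in the step the paper leaves to Remark~\ref{R:extend nondegenerate}(c), which asserts nondegeneracy of completions with respect to an arbitrary ideal without proof, whereas Example~\ref{exa:nondegenerate}(c) only treats prime ideals.
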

\begin{proof}
By Proposition~\ref{P:excellent}, it is enough to check both claims
when the variety in question is the (algebraic or analytic) affine space
of some dimension. In particular, such a space has regular local rings,
so Lemma~\ref{L:nondegenerate}(a) applies to yield the conclusion.
\end{proof}
\begin{cor} \label{C:Stein}
Let $X$ be a smooth complex analytic space which is Stein. Let $K$ be a compact
subset of $X$. Then the localization of $\Gamma(X, \calO_X)$ at $K$
(i.e., the localization by the multiplicative set of functions which do not
vanish on $K$) is excellent, as is any completion thereof.
\end{cor}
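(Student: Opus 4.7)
The plan is to exhibit $\Spec(R)$ and the spectrum of any completion $\widehat{R}_I$ as nondegenerate differential schemes, whereupon excellence follows immediately from Lemma~\ref{L:nondegenerate}(a). Write $R = \Gamma(X,\calO_X)_K$.

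The main external input, and the main obstacle, is that $R$ should be noetherian of finite Krull dimension. To obtain this I would identify $R$ with the ring $\calO(K) = \varinjlim_{U \supset K} \Gamma(U,\calO_U)$ of germs of holomorphic functions near $K$ (any global function on $X$ invertible on $K$ extends to a unit on some Stein neighborhood of $K$, using Cartan's Theorems A and B), and then invoke Frisch's noetherianness theorem. Granting this, I would equip $\Spec(R)$ with the coherent differential module $\Delta_R := \Gamma(X,\calD_X) \otimes_{\Gamma(X,\calO_X)} R$, where $\calD_X$ is the tangent sheaf; Theorem A ensures global generation of $\calD_X$, so $\Delta_R$ is indeed coherent and inherits a Lie bracket.

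To verify nondegeneracy of each local ring, I would fix a maximal ideal $\gothm_x$ (corresponding to some $x \in K$) and use Theorem A to choose global holomorphic functions $y_1,\dots,y_n \in R$ restricting to local coordinates at $x$, together with global derivations $\del_1,\dots,\del_n \in \Gamma(X,\calD_X)$ whose Jacobian $(\del_i(y_j))$ is a unit of $R_{\gothm_x}$. After inverting this Jacobian I obtain a family of derivations of rational type with respect to $y_1,\dots,y_n$ on $R_{\gothm_x}$; pairwise commutativity can then be checked after passing to the completion, as in the proof of Corollary~\ref{C:regular sequence}. Since the completion of $R_{\gothm_x}$ coincides with $\widehat{\calO_{X,x}} \cong \CC\llbracket y_1,\dots,y_n\rrbracket$, faithful flatness of completion yields regularity of $R_{\gothm_x}$. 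Hence each $R_{\gothm_x}$ is nondegenerate; by Lemma~\ref{L:localize complete}, so is $R_\gothp$ for every prime $\gothp \subset \gothm_x$. Since every prime of $R$ lies in some such maximal ideal, $\Spec(R)$ is nondegenerate and $R$ is excellent.

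For the completion $S = \widehat{R}_I$ at an arbitrary ideal $I$: $S$ is noetherian because $R$ is, the derivations extend to $S$ by continuity, and every maximal ideal of $S$ has the form $\gothm S$ for some maximal ideal $\gothm \supset I$ of $R$, with corresponding local ring equal to the $I R_\gothm$-adic completion of $R_\gothm$. By Example~\ref{exa:nondegenerate}(c) this completion is nondegenerate, and Lemma~\ref{L:localize complete} propagates nondegeneracy to all primes of $S$. A second application of Lemma~\ref{L:nondegenerate}(a) then completes the proof. Beyond the Stein-theoretic noetherianness step, the only other point demanding care is the identification of local rings of $\widehat{R}_I$ with completions of $R_\gothm$, which relies on the standard commutation of localization and adic completion in the noetherian setting.
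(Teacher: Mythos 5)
Your reduction to Lemma~\ref{L:nondegenerate} is the same strategy as the paper's (the paper's proof is exactly: the localization is noetherian, then apply Lemma~\ref{L:nondegenerate}), but your route to the noetherianness input has a genuine gap. You identify $R=\Gamma(X,\calO_X)_K$ with the ring of germs $\calO(K)=\varinjlim_{U\supset K}\Gamma(U,\calO_U)$ and invoke Frisch. That identification is false in general: a germ near $K$ need not be a quotient $f/g$ of global sections with $g$ nonvanishing on $K$ (already for $X=\CC$, $K=\{0\}$, the localization is the ring of entire meromorphic quotients regular at $0$, a proper subring of $\CC\{z\}$; a function with the unit circle as natural boundary is not such a quotient). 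Worse, Frisch's theorem does not apply to an arbitrary compact $K$ as in the statement: it requires $K$ to be a semianalytic (holomorphically convex) compactum, and $\calO(K)$ genuinely fails to be noetherian for general compact sets --- e.g.\ for $K=\{0\}\cup\{1/n:n\geq 1\}\subset\CC$ the ideals $J_m$ of germs vanishing at $1/n$ for all $n\geq m$ form a strictly increasing chain. Finally, even if $\calO(K)$ were noetherian, that would not transfer to the subring $R$. The paper instead quotes Hironaka's flattening paper \cite[Theorem~1.1]{hironaka-flattening}, which asserts noetherianness of the \emph{localization} itself for arbitrary compact $K$ in the Stein setting; this is precisely why the corollary is stated for the localization rather than for the ring of germs, and it is the one external input you cannot replace by Frisch's theorem.

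Two smaller points. First, your claim that every maximal ideal of $R$ is of the form $\gothm_x$ with $x\in K$ is used but not justified; it needs an argument (finitely many generators of a proper ideal with no common zero on $K$ must be shown to generate the unit ideal of $R$, via Cartan/Oka-type approximation, together with noetherianness). Second, in the completion step the local ring of $S=\widehat{R}_I$ at $\gothm S$ is \emph{not} the $IR_\gothm$-adic completion of $R_\gothm$ (localization does not commute with $I$-adic completion); what is true, and suffices, is that $S_{\gothm S}$ and $R_\gothm$ have the same $\gothm$-adic completion, from which regularity and the existence of rational-type derivations can be deduced. The differential-geometric verification of nondegeneracy at points of $K$ (global coordinates and derivations via Theorem A, inverting the Jacobian, commutativity checked after completion, then Lemma~\ref{L:localize complete} for smaller primes) is sound and fills in what the paper leaves implicit.
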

\begin{proof}
The localization is noetherian by \cite[Theorem~1.1]{hironaka-flattening};
the claim then follows from Lemma~\ref{L:nondegenerate}.
\end{proof}

\begin{remark}
The terminology \emph{nondegenerate} for differential rings arises from the fact we had
originally intended condition (c) of Definition~\ref{D:nondegenerate} to state that
the pairing \eqref{eq:pairing} must be nondegenerate on the right. However, it is unclear whether this suffices to ensure the existence
of derivations of rational type with respect to a regular sequence of parameters.
Without such derivations, it is more difficult to work in local coordinates.
To handle this situation,
one would need to rework significant sections of both 
\cite{kedlaya-goodformal1} and \cite{kedlaya-xiao}; we opted against this approach 
because it is not necessary in the
applications of greatest interest, to algebraic and analytic varieties.
\end{remark}

\subsection{$\nabla$-modules}

We next consider differential modules.

\begin{defn}
A \emph{$\nabla$-module} over a differential scheme $X$ is a 
coherent $\calO_X$-module $\calF$ equipped with an action of
$\calD_X$ compatible with the action on $\calO_X$.
Over an affine differential scheme with noetherian underlying scheme, 
this is the same as a finite
differential module over the coordinate ring.
\end{defn}

\begin{defn}
For $(X, \calD_X)$ a differential scheme and $\phi \in \Gamma(X, \calO_X)$,
let $E(\phi)$ be the $\nabla$-module free on one generator $\bv$
satisfying $\del(\bv) = \del(\phi)\bv$ for any open subscheme $U$
of $X$ and any $\del \in \Gamma(U, \calD_X)$.
\end{defn}
\begin{lemma}
Let $X$ be a nondegenerate differential scheme. Then 
every $\nabla$-module over $X$ is locally free over $\calO_X$
(and hence projective).
\end{lemma}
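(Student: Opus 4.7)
The plan is to work stalkwise, pass to the completion at each point (where by Corollary~\ref{C:complete nondegenerate} the local structure becomes explicit), and construct an $\hat{R}$-basis of horizontal sections by formal Taylor expansion.

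Since $\calF$ is coherent, local freeness can be verified at each stalk, so fix $x \in X$ and set $R = \calO_{X,x}$, $M = \calF_x$. A finitely generated module over a noetherian local ring is free iff flat, and flatness descends along the faithfully flat completion map $R \to \hat{R}$; it therefore suffices to show $\hat{M} := M \otimes_R \hat{R}$ is free over $\hat{R}$. By Corollary~\ref{C:complete nondegenerate}, we may identify $\hat{R}$ with $k \llbracket x_1,\dots,x_n \rrbracket$ (where $k = R/\gothm$) so that the derivations $\del_1,\dots,\del_n$ of rational type correspond to $\partial/\partial x_i$; these lift compatibly to commuting derivations of $\hat{M}$, and $\hat{M}$ is $\gothm$-adically complete as a finitely generated $\hat{R}$-module.

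Choose $\bar{m}_1,\dots,\bar{m}_r \in \hat{M}/\gothm \hat{M}$ forming a $k$-basis, lift them to $m_i \in \hat{M}$, and for each $i$ define
$$m_i' := \sum_{\alpha \in \ZZ_{\geq 0}^n} \frac{(-x)^\alpha}{\alpha!}\,\del^\alpha(m_i),$$
with the usual multi-index conventions $x^\alpha = x_1^{\alpha_1}\cdots x_n^{\alpha_n}$, $\alpha! = \alpha_1!\cdots \alpha_n!$, $\del^\alpha = \del_1^{\alpha_1}\cdots \del_n^{\alpha_n}$. Since $x^\alpha \in \gothm^{|\alpha|}$, the series converges in $\hat{M}$. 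A direct index-shift computation, valid because the $\del_j$ commute, shows $\del_j(m_i') = 0$ for every $j$, and clearly $m_i' \equiv \bar{m}_i \pmod{\gothm \hat{M}}$. By Nakayama the $m_i'$ generate $\hat{M}$. For $\hat{R}$-linear independence, suppose $\sum_i c_i m_i' = 0$ with $c_i \in \hat{R}$; applying an arbitrary $\del^\beta$ and using horizontality yields $\sum_i \del^\beta(c_i) m_i' = 0$, and reducing modulo $\gothm$ and using linear independence of the $\bar m_i$ gives $\del^\beta(c_i)(0) = 0$ for all $i, \beta$. Since $k$ has characteristic zero, the Taylor coefficients of each $c_i$ all vanish, so $c_i = 0$.

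The main technical point is the verification of horizontality for the formal exponential; this is a standard calculation but depends essentially on characteristic zero (to invert the factorials) and on the commutativity of the $\del_j$, both of which are guaranteed by the nondegeneracy hypothesis. Freeness of $\hat{M}$ then yields freeness of $M$ by faithfully flat descent of flatness for finitely generated modules over noetherian local rings.
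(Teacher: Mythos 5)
Your proof is correct but takes a genuinely different route from the paper's. The paper invokes Lemma~\ref{L:nondegenerate}(b), which shows that each local ring $\calO_{X,x}$ is a \emph{simple} differential ring (using nondegeneracy of the pairing $\Delta_{\calO_{X,x}} \times \gothm_{X,x}/\gothm_{X,x}^2 \to \kappa_x$), and then cites \cite[Proposition~1.2.6]{kedlaya-goodformal1}, which is the abstract statement that a finite differential module over a simple noetherian differential local ring is free; the typical argument there is that the Fitting ideals of $M$ are differential ideals and hence trivial. Your proof instead passes to the completed local ring via Corollary~\ref{C:complete nondegenerate}, constructs an explicit basis of horizontal sections of $\hat{M}$ by the formal Taylor expansion $m_i' = \sum_\alpha \frac{(-x)^\alpha}{\alpha!}\,\del^\alpha(m_i)$, and descends freeness along the faithfully flat map $R \to \hat{R}$. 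Both arguments are valid; your verification of horizontality (by an index shift using commutativity of the $\del_j$) and of linear independence (by evaluating $\del^\beta(c_i)$ at the closed point and inverting $\beta!$ in characteristic zero) are correct. The paper's route is shorter given the infrastructure of differential simplicity and avoids passing to completions; your route is more explicit and proves something slightly stronger, namely that the completed stalk is not merely free but is \emph{trivial} as a differential module (it admits a full set of horizontal sections), which is the formal integrability of a connection with no pole. Note that your use of characteristic zero and commutativity of the $\del_j$ is essential and is exactly what the nondegeneracy hypothesis and Corollary~\ref{C:complete nondegenerate} supply.
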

\begin{proof}
This follows from Lemma~\ref{L:nondegenerate}(b) via 
\cite[Proposition~1.2.6]{kedlaya-goodformal1}.
\end{proof}

\begin{remark} \label{R:locally free}
Let $R$ be a nondegenerate differential domain,
and let $M$ be a finite differential module over $R$.
By Lemma~\ref{L:nondegenerate},
$M$ is projective over $R$, and so is a direct summand of a free $R$-module.
Hence 
for any (not necessarily differential)
domains $S,T,U$ with $R \subseteq S,T \subseteq U$,
within $M \otimes_R U$ we have
\[
(M \otimes_{R} S) \cap
(M \otimes_{R} T) =
M \otimes_{R} (S \cap T).
\]
\end{remark}

\subsection{Admissible and good decompositions}
\label{subsec:good decomp}

We next reintroduce the notions of good decompositions
and good formal structures
from \cite{kedlaya-goodformal1}, in the language of nondegenerate
differential rings. Remember that these notions do not quite match the
ones used by Mochizuki; see 
\cite[Remark~4.3.3, Remark~6.4.3]{kedlaya-goodformal1}.

\begin{hypothesis} \label{H:local nondegenerate}
Throughout \S~\ref{subsec:good decomp}--\ref{subsec:criterion},
let $R$ be a nondegenerate differential local ring.
Let $x_1,\dots,x_n$ be a regular sequence of parameters for $R$,
and put $S = R[x_1^{-1},\dots,x_m^{-1}]$ for some
$m \in \{0,\dots,n\}$.
Let $\widehat{R}$ be the completion of $R$ with respect to its maximal
ideal, and put $\widehat{S} = \widehat{R}[x_1^{-1},\dots,x_m^{-1}]$.
Let $M$ be a finite differential module over $S$.
\end{hypothesis}

\begin{defn}
Let $\Delta_R^{\log}$ be the subset of $\Delta_R$ consisting of derivations
under which the ideals $(x_1),\dots,(x_m)$ are stable.
We say $M$ is \emph{regular} if there exists a free $R$-lattice
$M_0$ in $M$ stable under the action of $\Delta_R^{\log}$.
We say $M$ is \emph{twist-regular} if $\End(M) = M^\dual \otimes_R M$
is regular.
\end{defn}

\begin{example}
For any $\phi \in R$, $E(\phi)$
is regular. For any $\phi \in S$, $E(\phi)$ is twist-regular.
\end{example}

\begin{remark} \label{R:reconcile regular}
In case $R = \widehat{R}$,
when checking regularity of $M$,
we may choose an isomorphism $R \cong k \llbracket x_1,\dots,x_n \rrbracket$
as in Corollary~\ref{C:complete nondegenerate} and check stability of the
lattice just under $x_1\del_1,\dots,x_m \del_m$, $\del_{m+1},\dots,\del_n$,
as then \cite[Proposition~2.2.8]{kedlaya-goodformal1} implies stability
under all of $\Delta_R^{\log}$. This means that in case $R = \widehat{R}$,
our definitions of regularity
and twist-regularity match the definitions from \cite{kedlaya-goodformal1},
so we may invoke results from \cite{kedlaya-goodformal1} referring to these
notions without having to worry about our extra level of generality.
\end{remark}

\begin{prop} \label{P:twist-regular}
Suppose $R = \widehat{R}$.
Then the differential module $M$ is twist-regular if and only if $M = E(\phi) \otimes_S N$
for some $\phi \in S$ and some regular differential module $N$ over $S$.
\end{prop}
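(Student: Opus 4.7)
The plan is to handle the two directions separately, since they have quite different flavors.

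For the ``if'' direction, assume $M \cong E(\phi) \otimes_S N$ with $N$ regular. Since $E(\phi)$ has rank one with $E(\phi)^\dual \cong E(-\phi)$, I would compute
\[
\End(M) \cong E(-\phi) \otimes_S E(\phi) \otimes_S \End(N) \cong \End(N),
\]
using that $E(\phi) \otimes_S E(-\phi) \cong E(0) \cong S$ (a trivial one-dimensional $\nabla$-module). To see that $\End(N)$ is regular, note that if $N_0 \subset N$ is a $\Delta_R^{\log}$-stable free $R$-lattice, then $N_0^\dual \otimes_R N_0 \subset \End(N)$ is again free over $R$ and $\Delta_R^{\log}$-stable (derivations act via the Leibniz rule). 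Hence $\End(M)$ is regular and $M$ is twist-regular.

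For the ``only if'' direction, assume $\End(M)$ is regular. By Remark~\ref{R:reconcile regular}, the hypothesis $R = \widehat{R}$ puts us squarely in the framework of \cite{kedlaya-goodformal1}, so I may invoke the Hukuhara--Levelt--Turrittin-type decomposition established there: after base change along a suitable tamely ramified cover $S \to S'$ (adjoining $e$-th roots of $x_1,\dots,x_m$), the module $M \otimes_S S'$ decomposes as $\bigoplus_\alpha E(\phi_\alpha) \otimes_{S'} R_\alpha$ with each $\phi_\alpha \in S'$ and each $R_\alpha$ regular over $S'$. Computing the endomorphism module with respect to this decomposition gives
\[
\End(M) \otimes_S S' \;\cong\; \bigoplus_{\alpha,\beta} E(\phi_\alpha - \phi_\beta) \otimes_{S'} (R_\alpha^\dual \otimes_{S'} R_\beta).
\]
Since regularity is preserved by base change to the cover and is detected summand by summand, each $E(\phi_\alpha - \phi_\beta)$ must be regular over $S'$. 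Equivalently, the $\phi_\alpha$'s all lie in a single equivalence class modulo the subgroup of $\phi$'s giving regular $E(\phi)$'s.

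The main obstacle is choosing a single representative $\phi$ that already lives in $S$ rather than only in $S'$. For this I would use Galois descent along $S \to S'$: the Galois group $G = \mathrm{Gal}(S'/S)$ acts on $M \otimes_S S'$, hence permutes the summands in the HLT decomposition, and therefore permutes the multiset $\{[\phi_\alpha]\}$ of equivalence classes. But by the previous paragraph this multiset consists of a single class $[\phi_0]$, which is therefore $G$-invariant. Since the subgroup of ``regular shifts'' is itself $G$-stable, an averaging argument over $G$ (using that we are in characteristic zero, so $|G|$ is invertible) produces a $G$-invariant representative $\phi$, which then lies in $S$. Setting $N := M \otimes_S E(-\phi)$, the decomposition over $S'$ shows that $N \otimes_S S'$ is a direct sum of regular modules and hence is regular over $S'$; regularity then descends to $S$ along the faithfully flat morphism $S \to S'$, since a $\Delta_{R'}^{\log}$-stable free lattice in $N \otimes_S S'$ may be obtained from one over $S$ by base change and conversely detected by descent. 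This yields the desired decomposition $M \cong E(\phi) \otimes_S N$.
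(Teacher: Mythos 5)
Your ``if'' direction is fine: $\End(E(\phi)\otimes_S N)\cong\End(N)$, and a $\Delta_R^{\log}$-stable lattice of $N$ yields one of $\End(N)$, so twist-regularity follows. The problem is the very first step of your ``only if'' direction. You invoke ``the Hukuhara--Levelt--Turrittin-type decomposition'' to write $M\otimes_S S'\cong\bigoplus_\alpha E(\phi_\alpha)\otimes_{S'}R_\alpha$ after merely adjoining roots of $x_1,\dots,x_m$. No such theorem exists for an arbitrary finite differential module over $S$ once $n\geq 2$: the classical HLT decomposition is one-dimensional, and in higher dimension the existence of an admissible (let alone good) decomposition after a cover \'etale over $S$ can genuinely fail --- this is exactly the phenomenon of turning points, which is why Theorem~\ref{T:criterion} gives a numerical criterion for it and why the whole paper is about achieving it only after blowing up. (For example, the pushforward of $E(x_2^{1/2}x_1^{-1})$ along the double cover ramified along $x_2=0$ admits no such decomposition over any $S'$ ramified only along the pole locus.) Under the standing hypothesis of twist-regularity the existence of such a structure for $M$ is essentially the statement to be proven, so as written the argument is circular; the subsequent Galois-averaging and descent steps, while plausible, rest entirely on this unavailable decomposition.

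By contrast, the paper does not reprove this at all: it cites \cite[Theorem~4.2.3]{kedlaya-goodformal1} together with Remark~\ref{R:reconcile regular}, and the proof there extracts the single twist $\phi$ from the regularity of $\End(M)$ directly (working with the module itself rather than assuming any a priori decomposition of $M$). If you want a self-contained argument, you must make the hypothesis that $\End(M)$ is regular do real work at the outset --- e.g., produce $\phi$ from rank-one data attached to $M$ such as its determinant, or follow the structure-theoretic analysis of the cited proof --- rather than appeal to a decomposition of $M$ that only exists as a consequence of the proposition.
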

\begin{proof}
This is \cite[Theorem~4.2.3]{kedlaya-goodformal1}
(plus Remark~\ref{R:reconcile regular}).
\end{proof}

\begin{defn} \label{D:local model1}
An \emph{admissible decomposition} of $M$ is an isomorphism
\begin{equation} \label{eq:local model1}
M \cong \bigoplus_{\alpha \in I} E(\phi_\alpha) \otimes_{S} 
\calR_\alpha
\end{equation}
for some $\phi_\alpha \in S$
(indexed by an arbitrary set $I$)
and some regular differential modules $\calR_\alpha$ over $S$.
An admissible decomposition is \emph{good} if it satisfies the following
two additional conditions.
\begin{enumerate}
\item[(a)]
For $\alpha \in I$, if $\phi_\alpha \notin R$, then
$\phi_\alpha$ has the form $u x_1^{-i_1} \cdots x_m^{-i_m}$
for some unit $u$ in $R$ 
and some nonnegative integers $i_1,\dots, i_m$.
\item[(b)]
For $\alpha, \beta \in I$, 
if $\phi_\alpha - \phi_\beta \notin R$,
then $\phi_\alpha - \phi_\beta$ has the form $u x_1^{-i_1} \cdots x_m^{-i_m}$
for some unit $u$ in $R$ and some nonnegative integers $i_1,\dots, i_m$.
\end{enumerate}
A \emph{ramified good decomposition} of $M$ is a good decomposition
of $M \otimes_R R'$ for some connected finite 
integral extension $R'$ of $R$
such that $R' \otimes_R S$ is \'etale over $S$. By Abhyankar's lemma
\cite[Expos\'e XIII, Proposition~5.2]{sga1},
any such extension is contained in $R''[x_1^{1/h},\dots,x_m^{1/h}]$
for some connected finite \'etale extension $R''$ of $R$ and 
some positive integer $h$.
A \emph{good formal structure} of $M$ is a ramified good decomposition
of $M \otimes_R R'$ for $R'$ the completion of $R$ with respect
to $(x_1,\dots,x_m)$. 
This is not the same as a ramified good decomposition
of $M \otimes_R \widehat{R}$ (since $\widehat{R}$ is the completion
with respect to the larger ideal $\gothm$), but any such decomposition
does in fact induce a good formal structure
(see Proposition~\ref{P:good formal}).
\end{defn}

\begin{remark} \label{R:minimal}
In Definition~\ref{D:local model1}, an admissible decomposition need
not be unique if it exists. However, there is a unique \emph{minimal}
admissible decomposition, obtained by combining the terms indexed by
$\alpha$ and $\beta$ whenever $\phi_\alpha - \phi_\beta \in R$.
The resulting minimal admissible decomposition is good if and only if
the original admissible decomposition is good.
\end{remark}

The following limited descent argument will crop up several times.
\begin{prop} \label{P:descend good ring}
Suppose that $R$ is henselian, and that
$M \otimes_S \widehat{S}$ admits a filtration 
$0 = M_0 \subset \cdots \subset M_\ell = M \otimes_S \widehat{S}$
by differential submodules,
in which each successive quotient $M_{j+1}/M_{j}$
admits an admissible decomposition
$\oplus_{\alpha \in I_j} E(\phi_\alpha) \otimes_{\widehat{S}} \calR_\alpha$.
Then the
$\phi_\alpha$ always belong to $S + \widehat{R}$; in particular, they can be
chosen in $S$ if desired.
\end{prop}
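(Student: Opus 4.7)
My approach is to argue by induction on $\ell$ and, at each step, reduce to showing that for a single differential module $N$ over $S$ whose extension to $\widehat{S}$ admits an admissible decomposition $\bigoplus_\alpha E(\phi_\alpha) \otimes \calR_\alpha$, each $\phi_\alpha$ lies in $S + \widehat{R}$. First I would invoke Remark~\ref{R:minimal} to pass to the minimal admissible decomposition, so the classes $\overline{\phi}_\alpha := \phi_\alpha + \widehat{R} \in \widehat{S}/\widehat{R}$ are pairwise distinct and canonically attached to $N \otimes_S \widehat{S}$. Since $R$ is a regular local ring with $\widehat{R}$ faithfully flat over it, one checks $S \cap \widehat{R} = R$, so the natural map $S/R \hookrightarrow \widehat{S}/\widehat{R}$ is injective; the statement then amounts to showing each $\overline{\phi}_\alpha$ lies in the image of this injection.

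The next reduction is to rank one via determinants. The summand $E(\phi_\alpha) \otimes \calR_\alpha$ has determinant $E(r_\alpha \phi_\alpha) \otimes \det \calR_\alpha$, where $r_\alpha = \rank \calR_\alpha$. Since $R$ is a $\QQ$-algebra, one may divide by $r_\alpha$; by applying this to the determinants of appropriate isotypic factors (cut out by the distinct classes $\overline{\phi}_\alpha$), it suffices to prove: if $N$ is rank one over $S$ and $N \otimes_S \widehat{S} \cong E(\psi) \otimes \calR$ with $\calR$ regular of rank one, then $\psi \in S + \widehat{R}$.

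For the rank-one case, use Corollary~\ref{C:complete nondegenerate} to identify $\widehat{R}$ with $k\llbracket x_1,\dots,x_n \rrbracket$ so that $\del_i$ corresponds to $\del/\del x_i$. Pick a generator $\bv$ of $N$, so $\del_i(\bv) = f_i \bv$ with $f_i \in S$; if $\bw = u\bv$ is a corresponding generator of $E(\psi) \otimes \calR$ with $u \in \widehat{S}^\times$, the compatibility yields
\[
f_i = \del_i(\psi) + \frac{\del_i(u)}{u} + g_i,
\]
where $g_i$ comes from the $\calR$-action and lies in $\widehat{R}$ (up to a $\log$-type correction for $i \leq m$, absorbed by replacing $\del_i$ with $x_i \del_i$). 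Now extract the strictly polar part of $\psi$ in the monomials $x_1^{-i_1} \cdots x_m^{-i_m}$ coefficient-by-coefficient: the polar part of $f_i$ lies in $S$, hence its coefficients in $R$, while by adjusting $u$ within its $\widehat{R}^\times$-ambiguity one can arrange that $\del_i(u)/u + g_i$ contributes only to $\widehat{R}$. The henselian hypothesis on $R$ enters here: it allows the iterative lifting of unit structure from the residue field to $R$ compatibly across multi-indices, so that the gauge factor $u$ can be chosen to kill all polar contributions not already present in $f_i$.

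The main technical obstacle is the polar-extraction step, specifically in managing the gauge freedom in $u$ and checking that the successive matching of coefficients of $x^I$ on both sides can always be carried out inside $R$ rather than $\widehat{R}$. The key input will be that the ambiguity in $u$ at each stage is parametrized by units in $\widehat{R}$, and the henselian property of $R$ guarantees that the necessary units lift to $R^\times$ modulo higher-order corrections; a careful inductive bookkeeping on $|I|$ then yields $\psi \in S + \widehat{R}$, completing the argument.
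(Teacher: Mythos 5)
Your proposal is a genuinely different strategy from the paper, but it has several structural gaps that I do not see how to repair along the lines you describe.

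\textbf{The reductions to $S$-modules are unjustified.} Both your induction on $\ell$ and your determinant/isotypic-factor reduction assume you have a differential module \emph{over $S$} whose extension to $\widehat{S}$ is a single summand $E(\psi)\otimes\calR$. But the filtration $0 = M_0 \subset \cdots \subset M_\ell$ is given only over $\widehat{S}$: the subquotients $M_{j+1}/M_j$ need not descend to $S$. Likewise the "isotypic factors" cut out by the classes $\overline{\phi}_\alpha \in \widehat{S}/\widehat{R}$ are direct summands of $M\otimes_S\widehat{S}$, so they too live only over $\widehat{S}$, as do their determinants. Thus the rank-one case you reduce to --- $N$ rank one over $S$ with $N\otimes_S\widehat{S}\cong E(\psi)\otimes\calR$ --- is not a case the general statement actually produces. (Exterior powers of $M$ itself do descend, but $\wedge^{r_\alpha} M$ is not rank one and contains many cross terms; your sketch does not explain how to isolate $r_\alpha\phi_\alpha$ from it.)

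\textbf{The polar-extraction step is not a proof.} In the rank-one case, writing $f_i = \del_i(\psi) + \del_i(u)/u + g_i$ and "matching coefficients of $x^I$" is an infinite simultaneous system over the multi-index lattice with no evident termination. You identify managing the gauge freedom in $u\in\widehat{S}^\times$ as "the main technical obstacle" but leave it open. Moreover, the $\calR$-contribution $g_i$ has the form $\lambda_i/x_i$ with $\lambda_i$ in the coefficient field $k\subset\widehat{R}$, and it is not automatic that this lies in $S+\widehat{R}$; some argument is needed even there. Nor is it clear that knowing $\del_i(\psi)\in S+\widehat{R}$ for all $i$ forces $\psi\in S+\widehat{R}$ without further work.

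\textbf{The henselian hypothesis is doing something different in the paper.} Kedlaya's proof runs a double descending induction: first on $i$ to show $\phi_\alpha\in S+\widehat{S}_i$ where $\widehat{S}_i = \widehat{R}[x_1^{-1},\dots,x_i^{-1}]$, then on the pole order $h$ in $x_i$. At each step he passes to the $x_i$-adic completions $T$ of $\Frac(S)$ and $U$ of $\Frac(\widehat{S})$, applies the one-dimensional Hukuhara--Levelt--Turrittin decomposition to a twist of $M$ over a finite extension $T'$, and uses the henselian property of $R$ (and of $R/x_iR$) twice: to show $T'\otimes_T U$ is a field (linear disjointness of residue fields), and to descend an element of $\widehat{R}/x_i\widehat{R}$ that is algebraic over $\Frac(R/x_iR)$ down to $R/x_iR$. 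This one-variable-at-a-time mechanism sidesteps exactly the multi-index bookkeeping you run into, and it also explains why the filtration over $\widehat{S}$ suffices: over the field $U'$, the admissible decompositions of the subquotients merge with the HLT decomposition, without any need to descend the filtration to $S$. Your invocation of henselianness --- lifting units from $\kappa$ to $R$ to kill polar contributions --- is not the mechanism that makes the proof go, and I do not see how to make it rigorous.

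In short: the idea of reducing to rank one by determinants and then matching coefficients is attractive, but the objects you need do not descend to $S$, and the coefficient-matching is unbounded. You will want to emulate the paper's reduction to one variable at a time via $x_i$-adic completion and the one-dimensional theory.
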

\begin{proof}
For $i=0,\dots,m$, put $\widehat{S}_i = \widehat{R}[x_1^{-1},\dots,x_i^{-1}]$.
We show that 
\begin{equation} \label{eq:descend}
\phi_\alpha \in S + \widehat{S}_i \qquad (i=m,\dots,0; \, \alpha \in \bigcup_j I_j)
\end{equation}
by descending induction on $i$, the case $i=m$ being evident
and the case $i=0$ yielding the desired result.
Given \eqref{eq:descend} for some $i>0$, 
we can choose a nonnegative integer $h$
such that
\begin{equation} \label{eq:descend2}
x_i^h \phi_\alpha \in S + \widehat{S}_{i-1} \qquad (\alpha \in \bigcup_j I_j).
\end{equation}
We wish to achieve this for $h=0$, which we accomplish using a second
descending induction on $h$. If $h>0$, write each
$x_i^{h} \phi_\alpha$ as 
$f_\alpha + g_\alpha$ with $f_\alpha \in S$ and $g_\alpha \in 
\widehat{S}_{i-1}$.

Choose $\alpha \in I_j$ for some $j$.
Let $T$ and $U$ denote the $x_i$-adic completions of $\Frac(S)$
and $\Frac(\widehat{S})$, respectively.
Put $N = M \otimes_S E(-x_i^{-h} f_\alpha)$. 
We may then apply \cite[Theorem~2.3.3]{kedlaya-goodformal1}
to obtain a Hukuhara-Levelt-Turrittin decomposition of 
$N \otimes_S T'$ for some finite extension $T'$ of $T$.

We claim that $U' = T' \otimes_T U$ is a field extension of $U$, from which it follows
that $T'$ is the integral closure of $T$ in $U'$. It suffices to check this after adjoining
$x_i^{1/m}$ for some positive integer $m$, so we may assume $T'$ is unramified over $T$.
In that case, we must show that the residue fields of $T'$ and $U$ are linearly disjoint
over the residue field of $T$, i.e., that for any finite extension $\ell$ of
$\Frac(R/x_i R)$, $\ell$ and $\Frac(\widehat{R}/x_i \widehat{R})$ have no common
subfield strictly larger than $\Frac(R/x_iR)$. This holds because by
Remark~\ref{R:regular local}, such a subfield would induce a finite \'etale extension of $R/x_i R$
with the same residue field; however, any such extension must equal $R/x_i R$ because the latter
ring is henselian (because $R$ is). This proves the claim.

We may extend scalars to obtain a Hukuhara-Levelt-Turrittin decomposition of
$N \otimes_S U'$ in which the factors $E(r)$ all have $r \in T'$. 
However, since $\alpha \in I_j$,
$(N \otimes_S U') \otimes_{U'} E(-x_i^{-h} g_\alpha)$ 
has a nonzero regular subquotient. 
This is only possible
if one of the factors $E(r)$ in the decomposition of $N \otimes_S U'$
satisfies $r \equiv x_i^{-h} g_\alpha \pmod{\gotho_{U'}}$.

In particular, if we choose $e_1,\dots,e_{i-1}$ so that
$g'_\alpha = x_1^{e_1}\dots x_{i-1}^{e_{i-1}} g_\alpha$ belongs to
$\widehat{R}$, then the image of $g'_\alpha$ in $\widehat{R}/x_i \widehat{R}$
is algebraic over $\Frac (R/x_i R)$. We again use the henselian property
of $R/x_i R$ to deduce that the image of $g'_\alpha$
in $\widehat{R}/x_i \widehat{R}$ must in fact belong to $R/x_i R$.
 This allows 
us to replace $h$ by $h-1$ in \eqref{eq:descend2}, completing both
inductions.
\end{proof}

\begin{remark} \label{R:descend good ring}
Proposition~\ref{P:descend good ring} implies that if $R$ is henselian and
$M \otimes_S \widehat{S}$ admits a good decomposition, then the terms
$\phi_\alpha$ appearing in \eqref{eq:local model1} can be defined over $S$.
Using Theorem~\ref{T:regular} below, we can also realize the regular modules
$\calR_\alpha$ over $S$ provided that we can identify 
$\widehat{R}$ with $k \llbracket x_1,\dots,x_n \rrbracket$
in such a way that $k$ embeds into $R$.
In such cases, $M \otimes_S \widehat{S}$ admits
a good decomposition
in the sense of Sabbah \cite[I.2.1.5]{sabbah}. This observation generalizes
an argument of Sabbah for surfaces \cite[Proposition~I.2.4.1]{sabbah}
and fulfills a promise made in \cite[Remark~6.2.5]{kedlaya-goodformal1}.

On the other hand, Proposition~\ref{P:descend good ring} \emph{does not} 
imply that any good decomposition of $M \otimes_S \widehat{S}$
descends to a good decomposition of $M$ itself.
This requires two additional steps which cannot always be carried out. One must descend
the projectors cutting out the summands $E(\phi_\alpha) \otimes_{\widehat{S}} \calR_\alpha$
of the minimal good decomposition.
If this can be achieved, then by virtue of Proposition~\ref{P:descend good ring},
each summand can be twisted to give a differential module $N_\alpha$ over $S$ such that
$N_\alpha \otimes_S \widehat{S}$ is regular.
One must then check that each $N_\alpha$ itself is regular.
For a typical situation where both steps can be executed, see 
Theorem~\ref{T:descend}.
\end{remark}

\subsection{Good decompositions over complete rings}
\label{subsec:criterion}

We now examine more closely the case of a nondegenerate
differential \emph{complete} local ring, recalling some of the
key results from \cite{kedlaya-goodformal1}.
Throughout \S~\ref{subsec:criterion}, continue to retain
Hypothesis~\ref{H:local nondegenerate}.

\begin{defn}
Use Corollary~\ref{C:complete nondegenerate}
to identify $\widehat{R}$ with $k \llbracket x_1,\dots,x_n \rrbracket$
for $k$ the residue field of $R$, using some derivations
$\del_1,\dots,\del_n$ of rational type with respect to $x_1,\dots,x_n$.
For $r = (r_1,\dots,r_n) \in [0, +\infty)^n$, let $|\cdot|_r$ be the 
$(e^{-r_1},\dots,e^{-r_n})$-Gauss norm on $\widehat{R}$; note that this does 
not depend on the choice of the isomorphism 
$\widehat{R} \cong k \llbracket x_1,\dots,x_n \rrbracket$.
Let $F_r$ be the completion of $\Frac(\widehat{R})$ with respect to
$|\cdot|_r$. Let $F(M,r)$ be the irregularity of $M \otimes_S F_r$,
as defined in \cite[Definition~1.4.8]{kedlaya-goodformal1}.
We say $M$ is \emph{numerical} if $F(M,r)$ is a linear function of $r$.
\end{defn}

The following is a consequence of \cite[Theorem~3.2.2]{kedlaya-goodformal1}.
\begin{theorem} \label{T:convex}
The function $F(M,r)$ is
continuous, convex, and piecewise linear.
Moreover, for $j \in \{m+1,\dots,n\}$, if we fix $r_i$ for $i \neq j$,
then $F(M,r)$ is nonincreasing as a function of $r_j$ alone.
\end{theorem}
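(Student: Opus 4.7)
The plan is to reduce Theorem~\ref{T:convex} to \cite[Theorem~3.2.2]{kedlaya-goodformal1}, which establishes precisely these convexity and piecewise linearity properties in the setting where the base is already a power series ring over a field.

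First, I observe that $F(M,r)$ depends on $M$ only through $M \otimes_S F_r$, and since $F_r$ is a field extension of $\Frac(\widehat R)$, we have
\[
M \otimes_S F_r \;=\; (M \otimes_S \widehat S) \otimes_{\widehat S} F_r,
\]
so $F(M,r) = F(M \otimes_S \widehat S, r)$. Thus I may replace $M$ by $M \otimes_S \widehat S$ and work over $\widehat S$. Using Corollary~\ref{C:complete nondegenerate}, I identify $\widehat R$ with $k \llbracket x_1,\dots,x_n \rrbracket$ in such a way that the chosen derivations $\del_1,\dots,\del_n$ of rational type correspond to the formal partial derivatives $\frac{\del}{\del x_i}$. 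Under this identification, $\widehat S = k\llbracket x_1,\dots,x_n\rrbracket[x_1^{-1},\dots,x_m^{-1}]$ and $F_r$ is the completion of its fraction field under the standard $(e^{-r_1},\dots,e^{-r_n})$-Gauss norm, which is exactly the setup of \cite[\S 3]{kedlaya-goodformal1}.

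Having made this identification, continuity, convexity, and piecewise linearity of $r \mapsto F(M,r)$ are immediate consequences of \cite[Theorem~3.2.2]{kedlaya-goodformal1} applied to $M \otimes_S \widehat S$. The extra assertion to verify is the monotonicity of $F(M,r)$ in $r_j$ for $j \in \{m+1,\dots,n\}$ (the coordinates \emph{not} inverted in $S$). For such $j$, the differential module $M \otimes_S \widehat S$ extends across the divisor $x_j = 0$ (since $x_j$ is not inverted in $\widehat S$). Translated into nonarchimedean language, this means that with $r_i$ ($i \neq j$) fixed, we may view $F(M,r)$ as the irregularity function of a differential module on a \emph{disc} in the $x_j$-coordinate (not merely an annulus), parametrized by the radius $e^{-r_j}$. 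The standard fact that irregularity on a disc is nonincreasing as the radius decreases---that is, as $r_j$ increases---then gives the desired monotonicity; this fact is part of the same analysis in \cite[\S 3]{kedlaya-goodformal1} and ultimately \cite{kedlaya-xiao}.

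The main obstacle is the bookkeeping to match our setup with the one in \cite{kedlaya-goodformal1}: one must check that the definition of the Gauss norm $|\cdot|_r$ is independent of the chosen isomorphism $\widehat R \cong k\llbracket x_1,\dots,x_n\rrbracket$ (as claimed in the definition itself), and that the irregularity of a module over $\widehat S$ computed via our derivations agrees with the one computed in \cite{kedlaya-goodformal1} using the formal partials. Both of these follow because under the identification provided by Corollary~\ref{C:complete nondegenerate}, the two sets of derivations coincide. Once these identifications are in place, no genuinely new analytic input is required beyond what is cited.
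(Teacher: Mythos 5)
Your proposal is correct and takes essentially the same approach as the paper: the paper simply states that Theorem~\ref{T:convex} is a consequence of \cite[Theorem~3.2.2]{kedlaya-goodformal1}, which already includes the monotonicity assertion for the non-inverted coordinates. Your added bookkeeping (passing to $\widehat S$, invoking Corollary~\ref{C:complete nondegenerate} to line up the derivations and Gauss norms, and explaining the monotonicity via the disc interpretation) is exactly the reduction implicit in the paper's terse citation.
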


We have the following numerical criterion for regularity
by \cite[Theorem~4.1.4]{kedlaya-goodformal1} (plus Remark~\ref{R:reconcile
regular}).
\begin{theorem} \label{T:regular}
Assume that $R = \widehat{R}$. 
Then the following conditions are equivalent.
\begin{enumerate}
\item[(a)]
$M$ is regular.
\item[(b)]
There exists a basis of $M$ on which $x_1 \del_1,\dots,x_m\del_m$
act via commuting matrices over $k$ with prepared eigenvalues
(i.e., no eigenvalue or difference between two eigenvalues
equals a nonzero integer),
and $\del_{m+1},\dots,\del_n$ act via the zero matrix.
\item[(c)]
We have $F(M,r) = 0$ for all $r$.
\end{enumerate}
\end{theorem}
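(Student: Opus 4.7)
The plan is to recognize this as essentially an importation of \cite[Theorem~4.1.4]{kedlaya-goodformal1}, with the only genuinely new ingredient being the reconciliation of definitions. Under the hypothesis $R = \widehat{R}$, Corollary~\ref{C:complete nondegenerate} identifies $R$ with $k\llbracket x_1,\dots,x_n\rrbracket$ with the $\del_i$ as the formal partial derivatives, which is precisely the setting of the previous paper. By Remark~\ref{R:reconcile regular} (which in turn rests on \cite[Proposition~2.2.8]{kedlaya-goodformal1}), condition (a) here is equivalent to the existence of a free $\widehat{R}$-lattice stable under only the $2n-m$ specific derivations $x_1\del_1,\dots,x_m\del_m,\del_{m+1},\dots,\del_n$. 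This matches the ambient definition of regularity in \cite{kedlaya-goodformal1}, so the theorem reduces to invoking its Theorem~4.1.4 verbatim.

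For the two easier directions, the implication (b) $\Rightarrow$ (a) is immediate: the $\widehat{R}$-span of the given basis is visibly a free lattice stable under $x_i\del_i$ (acting by matrices with entries in $k \subset \widehat{R}$) and under $\del_j$ for $j>m$ (acting by zero). The implication (b) $\Rightarrow$ (c) is a direct Gauss-norm computation: the operator $x_i\del_i$ acts on the basis with spectral norm $\leq 1$ at every $r$, and $\del_j$ for $j>m$ acts as zero, so the definitions of $F(M,r)$ from \cite[Definition~1.4.8]{kedlaya-goodformal1} collapse to zero. For (a) $\Rightarrow$ (b), after fixing a stable lattice one first uses the $(x_{m+1},\dots,x_n)$-adic completeness of $\widehat{R}$ to find a gauge transformation killing the $\del_j$ for $j>m$ (iteratively integrating the formal power-series expansion in each $x_j$), and then applies the classical Levelt lemma to the mutually commuting reductions of $x_1\del_1,\dots,x_m\del_m$ modulo $\gothm$ to simultaneously block-diagonalize them with prepared eigenvalues.

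The genuinely hard direction is (c) $\Rightarrow$ (a), and this is exactly where one must invoke the substantive content of \cite[Theorem~4.1.4]{kedlaya-goodformal1}. The proof there uses Theorem~\ref{T:convex} crucially: vanishing of $F(M,r)$ at all $r$ forces the spectral norms of $\del_{m+1},\dots,\del_n$ and the ``irregular parts'' of $x_1\del_1,\dots,x_m\del_m$ to attain their minimal values simultaneously, and a cyclic-vector/Newton-polygon argument then produces a stable lattice. The main obstacle, if one wanted a self-contained argument, would be re-proving this implication in the present level of generality; but since the previous paper works over exactly the ring $k\llbracket x_1,\dots,x_n\rrbracket$ with the formal partial derivatives, and since Remark~\ref{R:reconcile regular} aligns the two notions of regularity, no rework is required. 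The full proof therefore consists of one sentence invoking \cite[Theorem~4.1.4]{kedlaya-goodformal1} together with Remark~\ref{R:reconcile regular}.
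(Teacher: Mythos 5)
Your proposal is correct and matches the paper exactly: the paper's proof of this theorem is simply the one-sentence citation of \cite[Theorem~4.1.4]{kedlaya-goodformal1} together with Remark~\ref{R:reconcile regular}, which is precisely your conclusion. The additional sketches you give for the individual implications are reasonable background commentary but are not part of the paper's argument, which delegates everything to the cited result.
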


We also have the following numerical criterion for existence of a 
ramified good decomposition in the complete case.
\begin{theorem} \label{T:criterion}
Assume that $R = \widehat{R}$. The following conditions are equivalent.
\begin{enumerate}
\item[(a)]
The module $M$ admits a ramified good decomposition.
\item[(b)]
Both $M$ and $\End(M)$ are numerical.
\end{enumerate}
\end{theorem}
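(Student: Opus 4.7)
The plan is to deduce the statement from \cite[Theorem~4.4.2]{kedlaya-goodformal1}, the analogous numerical criterion established in the previous paper in the setting of a power series ring over a field. Since $R = \widehat R$ is nondegenerate, Corollary~\ref{C:complete nondegenerate} gives an isomorphism $R \cong k \llbracket x_1,\dots,x_n \rrbracket$ under which $\del_1,\dots,\del_n$ correspond to the formal partial derivatives, placing us in exactly the setup of \cite{kedlaya-goodformal1}. By Remark~\ref{R:reconcile regular} the notions of regular and twist-regular module agree with those used there, and the Gauss-norm invariant $F(M,r)$ and hence the notion of numericality also match.

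For (a) $\Rightarrow$ (b), I would pass to a connected finite extension $R' = R''[x_1^{1/h},\dots,x_m^{1/h}]$ (with $R''$ finite \'etale over $R$) over which $M$ admits a good decomposition $M \otimes_R R' \cong \bigoplus_{\alpha \in I} E(\phi_\alpha) \otimes \calR_\alpha$. The Gauss norm on $R'$ is related to that on $R$ in an explicit way involving the ramification index $h$, from which linearity of $F(M,r)$ follows from linearity of $F(M \otimes_R R', r')$. Additivity of $F$ over direct sums, together with Theorem~\ref{T:regular} to eliminate the regular factors $\calR_\alpha$, reduces the problem to showing that $F(E(\phi_\alpha), r')$ is linear in $r'$ for each $\alpha$. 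When $\phi_\alpha \in R'$ the module $E(\phi_\alpha)$ is regular and $F$ vanishes; when $\phi_\alpha$ has good monomial form $u x_1^{-i_1}\cdots x_m^{-i_m}$ with $u$ a unit, a direct Gauss-norm computation gives a positive linear functional of $r'$. The same argument applied to the induced good decomposition $\End(M) \otimes_R R' \cong \bigoplus_{\alpha,\beta} E(\phi_\alpha - \phi_\beta) \otimes (\calR_\alpha^\dual \otimes \calR_\beta)$ — whose factors satisfy the goodness conditions by clause~(b) of Definition~\ref{D:local model1} — yields numericality of $\End(M)$.

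For (b) $\Rightarrow$ (a), I would invoke \cite[Theorem~4.4.2]{kedlaya-goodformal1} directly. That result, proved via a detailed study of the variation of slopes of differential modules and the assembly of a good decomposition from the resulting numerical data, does exactly the work required. The reconciliation carried out in the first paragraph reduces the present claim to this citation.

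The substantive obstacle therefore lies entirely in the previous paper; the main work here is the more modest task of confirming that the numerical invariants and definitions of regularity transport cleanly from the power-series setting to the general complete nondegenerate setting, so that the earlier criterion can subsequently be globalized in the remainder of this paper.
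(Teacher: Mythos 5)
Your overall route is the same as the paper's: reconcile the complete nondegenerate setting with the power-series setting of \cite{kedlaya-goodformal1} via Corollary~\ref{C:complete nondegenerate} and Remark~\ref{R:reconcile regular}, and then quote \cite[Theorem~4.4.2]{kedlaya-goodformal1}. Your direct Gauss-norm computation for (a)~$\Rightarrow$~(b) is fine (if somewhat more than is needed, since the cited theorem is an equivalence). But there is one genuine gap, and it is precisely the point on which the paper's proof does not simply defer to the earlier theorem: the claim that the identification $R \cong k\llbracket x_1,\dots,x_n\rrbracket$ places you ``in exactly the setup of \cite{kedlaya-goodformal1}'' is not accurate, because here $\Delta_R$ may contain derivations acting nontrivially on the coefficient field $k$ (e.g.\ $R = \CC(t)\llbracket x_1,\dots,x_n\rrbracket$ with $\frac{\del}{\del t} \in \Delta_R$), whereas \cite[Theorem~4.4.2]{kedlaya-goodformal1} is proved only for the derivations $\del_1,\dots,\del_n$. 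Consequently, in the direction (b)~$\Rightarrow$~(a) the cited theorem only produces a (ramified) decomposition whose summands are stable under $\del_1,\dots,\del_n$; it does not by itself give a decomposition of $M$ into differential submodules for the full differential structure, which is what Definition~\ref{D:local model1} requires.

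The missing step is supplied in the paper by canonicity: replace the decomposition furnished by \cite[Theorem~4.4.2]{kedlaya-goodformal1} with the \emph{minimal} admissible decomposition of Remark~\ref{R:minimal}, obtained by merging the summands indexed by $\alpha,\beta$ whenever $\phi_\alpha - \phi_\beta \in R$. Because this minimal decomposition is intrinsically characterized, it is preserved by every element of $\Delta_R$, including the derivations acting on $k$, and it remains good if the original decomposition was good. Adding this one observation (or some equivalent argument that the projectors onto the summands are horizontal for all of $\Delta_R$) closes the gap; without it, condition (a) as stated has not been verified.
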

\begin{proof}
This holds by \cite[Theorem~4.4.2]{kedlaya-goodformal1} modulo
one minor point: 
since \cite[Theorem~4.4.2]{kedlaya-goodformal1} does not allow for
derivations on the residue field of $R$,
it only gives a good decomposition with respect to the action of $\del_1,
\dots,\del_n$. However, if we form the minimal good decomposition
as in Remark~\ref{R:minimal}, this decomposition must be 
preserved by the actions of the other derivations.
\end{proof}

We will find useful 
the following consequence of the existence of a good decomposition.
\begin{prop} \label{P:filtration}
Assume that $R = \widehat{R}$ and that $M$ admits a good decomposition.
Then for some finite \'etale extension $R'$ of $R$,
$M \otimes_R R'$ admits a filtration 
$0 = M_0 \subset \cdots \subset M_d = M \otimes_R R'$ by differential
submodules, with the following properties.
\begin{enumerate}
\item[(a)]
We have $\rank(M_i) = i$ for $i=0,\dots,d$.
\item[(b)]
For $i=1,\dots,d-1$, there exists an endomorphism of 
$\wedge^i M$ as a differential module with image $\wedge^i M_i$.
\end{enumerate}
\end{prop}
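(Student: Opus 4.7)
The plan is to construct the filtration using the good decomposition of $M$ together with a complete flag on each regular summand, and then to verify (b) by functorially decomposing $\wedge^i M$ and reducing to a question about commutants of commuting operators on a finite-dimensional vector space.

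By hypothesis, $M \cong \bigoplus_\alpha E(\phi_\alpha) \otimes_S \calR_\alpha$ with each $\calR_\alpha$ regular. Using Corollary~\ref{C:complete nondegenerate}, I would identify $R$ with $k\llbracket x_1,\dots,x_n\rrbracket$; by Theorem~\ref{T:regular}(b), each $\calR_\alpha$ admits a basis spanning a $k$-vector space $V_\alpha$ on which $x_1\del_1,\dots,x_m\del_m$ act via commuting matrices $T_1^{(\alpha)},\dots,T_m^{(\alpha)}$ over $k$ with prepared eigenvalues, and $\del_{m+1},\dots,\del_n$ act trivially. Next, pass to a finite Galois extension $k'$ of $k$ containing all eigenvalues of all $T_i^{(\alpha)}$; by Remark~\ref{R:regular local}, $k'$ is the residue field of a unique finite \'etale extension $R'$ of $R$. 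Over $R'$ the matrices $T_i^{(\alpha)}$ can be simultaneously upper triangularized, yielding for each $\alpha$ a complete flag $0 = W^{(\alpha)}_0 \subset \cdots \subset W^{(\alpha)}_{r_\alpha} = \calR_\alpha \otimes_R R'$ by regular differential submodules with rank-$1$ quotients. Fix a total ordering of the pairs $(\alpha,j)$ with $1 \leq j \leq r_\alpha$ that respects the $j$-order within each $\alpha$, and let $M_i \subseteq M \otimes_R R'$ be the sum of the submodules $E(\phi_\alpha) \otimes W^{(\alpha)}_{f_i(\alpha)}$, where $f_i(\alpha)$ counts the pairs $(\alpha,\cdot)$ appearing among the first $i$ in the ordering. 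This establishes (a).

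For (b), the functoriality of the good decomposition yields a direct sum decomposition
\[
\wedge^i(M\otimes_R R') = \bigoplus_{g} E\Bigl(\sum_\alpha g(\alpha)\phi_\alpha\Bigr) \otimes \bigotimes_\alpha \wedge^{g(\alpha)}(\calR_\alpha\otimes_R R')
\]
as differential modules, summed over functions $g$ with $\sum_\alpha g(\alpha) = i$, and the submodule $\wedge^i M_i$ sits inside the single summand corresponding to $g = f_i$. Projecting onto that summand via the differential projector from the direct sum, then using that $E(\psi) \otimes (-)$ is an autoequivalence of differential modules and that wedges behave well with tensor products, the task reduces to: for a regular differential module $\calR$ over $S$ equipped with an invariant flag $W_\bullet$ and an integer $j \leq \rank \calR$, produce a differential endomorphism of $\wedge^j \calR$ with image $\wedge^j W_j$.

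The main obstacle is this last step. Via Theorem~\ref{T:regular}(b), present $\calR$ as $S \otimes_{k'} V$ with $V$ a finite-dimensional $k'$-vector space equipped with commuting operators $T_1, \dots, T_m$ of prepared eigenvalues; any $k'$-linear endomorphism of $V$ commuting with all $T_i$'s extends $S$-linearly to a differential endomorphism of $\calR$, and analogously on exterior powers. Write $T_i = D_i + N_i$ in commuting semisimple-plus-nilpotent form. The 1-dimensional line $L := \wedge^j V_j \subseteq \wedge^j V$ is a joint eigenvector of the extended $D_i$'s, hence lies in a single joint eigenspace $U$; the spectral projector onto $U$ is a polynomial in the $D_i$'s and thus commutes with every $T_i$. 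On $U$, the $D_i$'s act by scalars and the $N_i$'s as commuting nilpotents, and $L \subseteq \bigcap_i \ker(N_i|_U)$ since each $N_i$ acts as zero on the invariant line $L$. By Nakayama's lemma applied to the finite-dimensional module $U$ over the local algebra generated by the $N_i$'s, the quotient $U/\sum_i N_i U$ is nonzero; I would choose a nonzero $k'$-linear functional $\lambda$ on it and a generator $\omega$ of $L$, and take the endomorphism that sends $v \in U$ to $\lambda(\bar v)\,\omega$ and vanishes on the complementary joint eigenspaces. This commutes with all $T_i$'s (since $\lambda$ vanishes on $\sum_i N_i U$ and $N_i\omega = 0$) and has image $L$, yielding the required differential endomorphism.
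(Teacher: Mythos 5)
Your proof is correct and follows the same strategy as the paper's: use Theorem~\ref{T:regular} to realize the regular summands $\calR_\alpha$ via commuting matrices over the residue field, extend to a finite $k'$ containing their eigenvalues (yielding the finite \'etale $R'$), build a complete invariant flag on each $\calR_\alpha\otimes_R R'$, merge the flags into the filtration $M_\bullet$, and obtain the endomorphism of $\wedge^i(M\otimes_R R')$ by projecting onto the summand of its natural direct-sum decomposition that contains $\wedge^i M_i$. You spell out two steps the paper compresses: the direct-sum bookkeeping behind ``we reduce first to the case where $M$ is twist-regular, then to the case where $M$ is regular,'' and the production of a commuting projector onto the line $\wedge^j W_j$, which the paper achieves by twisting the single eigenvalue to zero and invoking Engel's theorem (sketched only ``for instance'' when $i=1$) while you isolate the joint $D$-eigenspace with spectral projectors and then apply Nakayama's lemma to $U/\sum_i N_i U$; the two are equivalent, and your version makes the general-$i$ case explicit.
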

\begin{proof}
We reduce first to the case where $M$ is twist-regular, then to the case
where $M$ is regular. 
By Theorem~\ref{T:regular},
there exists a basis of $M$ on which $x_1 \del_1,\dots,x_m\del_m$
acts via commuting matrices over $k$ with prepared eigenvalues,
and $\del_{m+1},\dots,\del_n$ act via the zero matrix.
Let $V$ be the $k$-span of this basis. Choose a finite extension $k'$
of $k$ containing all of the eigenvalues of these matrices, and put
$R' = k' \llbracket x_1,\dots,x_n \rrbracket$.

We can now split $V \otimes_k k'$ as a direct sum such that
on each summand, each $x_i \del_i$ acts with a single eigenvalue;
this splitting induces a splitting of $M$ itself.
After replacing $k$ with $k'$,
we may now reduce to the case where each $x_i \del_i$ acts on $V$
with a single eigenvalue. By twisting, we can force that eigenvalue to be zero.

By Engel's theorem 
\cite[Theorem~9.9]{fulton-harris},
$x_1 \del_1,\dots,x_m \del_m$ act on some
complete flag $0 = V_0 \subset \cdots \subset V_d = V$ in $V$.
The corresponding submodules $0 = M_0 \subset \cdots \subset M_d = M$
of $M$ have the desired property: for instance, $M_1$ occurs as the image
of the composition of the projection $M \to M/M_{d-1}$,
an isomorphism $M/M_{d-1} \to M_1$, and the inclusion $M_1 \to M$.
\end{proof}

\section{Descent arguments}
\label{sec:descent}

In this section, we make some crucial descent arguments for differential
modules over a localized power series ring with coefficients in
a base ring.

\subsection{Hensel's lemma in noncommutative rings}

We need to recall a technical tool in the study of differential modules
over nonarchimedean rings, a form of Hensel's lemma for noncommutative
rings introduced by Robba.
Our presentation follows Christol \cite{christol}.
\begin{theorem}[Robba, Christol] \label{T:master factor}
Let $R$ be a nonarchimedean, not necessarily commutative ring.
Suppose the nonzero elements $a,b,c \in R$ and the additive subgroups $U,V,W 
\subseteq R$ satisfy the following conditions.
\begin{enumerate}
\item[(a)]
The spaces $U,V$ are complete under the norm, and $UV \subseteq W$.
\item[(b)]
The map $f(u,v) = av + ub$ is a surjection of $U \times V$ onto $W$.
\item[(c)]
There exists $\lambda > 0$ such that 
\[
|f(u,v)| \geq \lambda \max\{|a||v|, |b||u|\} \qquad (u \in U, v \in V).
\]
(Note that this forces $\lambda \leq 1$.)
\item[(d)]
We have $ab - c \in W$ and
\[
|ab-c| < \lambda^2 |c|.
\]
\end{enumerate}
Then there exists a unique pair $(x,y) \in U \times V$ such that
\[
c = (a+x)(b+y), \qquad |x| < \lambda |a|, \qquad |y| < \lambda |b|.
\]
For this $x,y$, we also have
\[
|x| \leq \lambda^{-1} |ab-c| |b|^{-1}, \qquad
|y| \leq \lambda^{-1} |ab-c| |a|^{-1}.
\]
\end{theorem}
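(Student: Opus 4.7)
The plan is a noncommutative Newton iteration: starting from $(x_0, y_0) = (0, 0)$, I would successively solve the linearized equation $a y + x b = (\text{current residual})$ via (b) and (c), and use (d) to force the residuals to decay geometrically. Concretely, define $r_n := c - (a+x_n)(b+y_n)$, so $r_0 = c - ab \in W$ by (d). At step $n$, assuming $r_n \in W$, surjectivity in (b) produces $(x^{(n)}, y^{(n)}) \in U \times V$ with $a y^{(n)} + x^{(n)} b = r_n$, and (c) lets me choose this pair to satisfy $|x^{(n)}| \leq \lambda^{-1} |r_n| |b|^{-1}$ and $|y^{(n)}| \leq \lambda^{-1} |r_n| |a|^{-1}$. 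Setting $x_{n+1} = x_n + x^{(n)}$, $y_{n+1} = y_n + y^{(n)}$ and expanding shows
\[
r_{n+1} = -\bigl(x_n y^{(n)} + x^{(n)} y_n + x^{(n)} y^{(n)}\bigr),
\]
which lies in $UV \subseteq W$ by (a), so the iteration is well-defined.

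To prove convergence, I first note that (d) together with the ultrametric inequality forces $|c| = |ab| \leq |a||b|$. Setting $\delta_n := |r_n|/|c|$, the update formula and the bounds on $|x^{(n)}|, |y^{(n)}|$ give $\delta_{n+1} \leq \lambda^{-2} \bigl(\max_{k \leq n} \delta_k\bigr)\delta_n$. Since $\delta_0 < \lambda^2$ by (d), an induction shows $\max_{k \leq n} \delta_k = \delta_0$ for all $n$, whence $\delta_n \leq (\lambda^{-2}\delta_0)^n \delta_0 \to 0$ geometrically. Consequently $|x^{(n)}|, |y^{(n)}| \to 0$, and by completeness of $U$ and $V$ the partial sums converge to elements $x, y$ with $c = (a+x)(b+y)$. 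The ultrametric bound $|x| \leq \max_n |x^{(n)}| = |x^{(0)}| \leq \lambda^{-1}|ab-c||b|^{-1}$ (and its twin for $y$) gives the quantitative estimates in the statement; combining them with $|ab-c| < \lambda^2|c| \leq \lambda^2|a||b|$ yields the strict inequalities $|x| < \lambda |a|$ and $|y| < \lambda |b|$.

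For uniqueness, suppose $(x', y')$ is a second solution with $|x'| < \lambda|a|$ and $|y'| < \lambda|b|$. Put $u := x - x' \in U$ and $v := y - y' \in V$; expanding $(a+x)(b+y) = (a+x')(b+y')$ yields $f(u, v) = -\bigl(x' v + u y' + u v\bigr)$. Each of $|x'||v|$, $|u||y'|$, $|u||v|$ is \emph{strictly} less than $\lambda \max\{|a||v|, |b||u|\}$ by the strict bounds on $|x'|$, $|y'|$, and $|y|$, so if $(u,v) \neq (0,0)$ then the right-hand side has norm strictly smaller than $\lambda \max\{|a||v|, |b||u|\} \leq |f(u,v)|$, contradicting the equation. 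Hence $u = v = 0$.

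The main obstacle I anticipate is the bookkeeping for the cross term $x_n y^{(n)}$ in the residual update: because $x_n$ is a running partial sum rather than just the most recent correction, the recurrence for $\delta_n$ couples to $\max_{k \leq n} \delta_k$, not just $\delta_n$, so one cannot expect the clean quadratic decay $\delta_{n+1} \lesssim \delta_n^2$ of classical Newton iteration. The role of hypothesis (d) is precisely to pin this maximum at $\delta_0$ and convert the recurrence into a geometric one; once this is secured, the rest is routine ultrametric manipulation.
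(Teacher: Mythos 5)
Your proof is correct and is essentially the standard Newton/Hensel iteration argument; the paper itself gives no proof of this statement but only cites Christol (Proposition~1.5.1) and \cite[Theorem~2.2.2]{kedlaya-course}, and the argument in those references is the same iteration you carry out. The setup (linearize via (b), bound the correction via (c), show the residual $r_{n+1} = -(x_n y^{(n)} + x^{(n)} y_n + x^{(n)} y^{(n)})$ lies in $W$ via (a), and use (d) to make the recursion for $\delta_n = |r_n|/|c|$ contract) is exactly what one wants, and your observation that the coupling to $\max_{k \leq n}\delta_k$ is resolved by the pinning induction $\max_{k\leq n}\delta_k = \delta_0$ is the right way to handle the noncommutative cross-terms. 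The uniqueness argument via $f(u,v) = -(x'v + uy' + uv)$ and comparison with the lower bound (c) is also correct.

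One small slip: the claim that $\max_n |x^{(n)}| = |x^{(0)}|$ is not justified (and need not be true); all you have is $|x^{(n)}| \leq \lambda^{-1}|r_n||b|^{-1} \leq \lambda^{-1}|r_0||b|^{-1} = \lambda^{-1}|ab-c||b|^{-1}$ for every $n$, using $\delta_n \leq \delta_0$. This inequality (rather than an equality with $|x^{(0)}|$) is what feeds the ultrametric estimate $|x| \leq \max_n |x^{(n)}| \leq \lambda^{-1}|ab-c||b|^{-1}$, so the conclusion is unaffected. Also worth noting explicitly: condition~(c) holds for \emph{all} $(u,v)\in U\times V$, so the bound on any preimage $(x^{(n)},y^{(n)})$ supplied by surjectivity is automatic; there is no need to ``choose'' a good preimage.
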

\begin{proof}
See \cite[Proposition~1.5.1]{christol} or \cite[Theorem~2.2.2]{kedlaya-course}.
\end{proof}

\subsection{Descent for iterated power series rings}
\label{subsec:iterated}

Using Christol's factorization theorem, we make a decompletion
argument analogous to \cite[\S 2.6]{kedlaya-goodformal1}.
We use the language of Newton polygons and slopes for twisted polynomials,
as presented in \cite[Definition~1.6.1]{kedlaya-goodformal1}.

\begin{hypothesis} \label{H:iterated}
Throughout \S~\ref{subsec:iterated},
let $h$ be a positive integer.
Let $A$ be a differential domain of characteristic $0$,
such that the module of derivations on $K = \Frac(A)$
is finite-dimensional over $K$, and the constant subring $k$ of $A$
is also the constant subring of $K$. (In particular, $k$ must 
be a field.)
Define the ring $R_h(K)$ as $K((x_1))\cdots((x_h))$.
Define the ring $R_h^\dagger(A)$ as
the union of $A((x_1/f))\cdots((x_h/f))[f^{-1}]$
over all nonzero $f \in A$.
Equip $R_h(K)$ and $R_h^\dagger(A)$ 
with the componentwise derivations coming from
$A$, plus the derivations
$\del_1,\dots,\del_h = \frac{\del}{\del x_1},
\dots, \frac{\del}{\del x_h}$.
\end{hypothesis}

\begin{lemma} \label{L:unit}
The rings $R_h(K)$ and $R_h^\dagger(A)$ are fields.
\end{lemma}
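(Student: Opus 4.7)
The plan is to handle both assertions by induction on $h$. For $R_h(K)$, the base case $h=0$ is that $K$ is a field, and the inductive step invokes the classical fact that the Laurent series ring $F((t))$ over a field $F$ is again a field: any nonzero element factors as $t^N$ times a unit of $F[[t]]$, both of which are invertible in $F((t))$.

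For $R_h^\dagger(A)$, the base case $h=0$ gives $R_0^\dagger(A) = \bigcup_{f \neq 0} A[f^{-1}] = \Frac(A)$, which is a field. For the inductive step, fix $0 \neq u \in R_h^\dagger(A)$ and pick $f \in A \setminus \{0\}$ with $u \in S_f := A((x_1/f))\cdots((x_h/f))[f^{-1}]$. Viewing $u$ as a Laurent series in $y := x_h/f$ with coefficients in $B_f := A((x_1/f))\cdots((x_{h-1}/f))$, and multiplying by suitable units of $S_f$ (an integer power of $f$ and of $y$), I reduce to the case $u = \sum_{i \geq 0} a_i y^i$ with $a_0 \neq 0$ in $B_f \subseteq R_{h-1}^\dagger(A)$. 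By the inductive hypothesis, $a_0^{-1}$ exists in $R_{h-1}^\dagger(A)$; write $a_0^{-1} = g_1^{-m_1} \tilde{a}$ with $g_1 \in A \setminus \{0\}$, $m_1 \geq 0$, and $\tilde{a} \in A((x_1/g_1))\cdots((x_{h-1}/g_1))$.

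Setting $g := fg_1$ and using the inclusion $S_f \subseteq S_g$ (which comes from $x_j/f = g_1 \cdot x_j/g$), I re-expand $u = \sum_i b_i (x_h/g)^i$ with $b_0 = a_0$. Formal inversion yields $u^{-1} = \sum_i c_i (x_h/g)^i$, where $c_i$ is a polynomial in $b_0^{-1}, b_1, \dots, b_i$ whose worst negative power of $b_0$ is $b_0^{-(i+1)}$; substituting the expression for $a_0^{-1}$ gives $c_i = g_1^{-m_1(i+1)} R_i$ with $R_i \in B_g$. The key observation, and the main technical obstacle, is that the denominator $g_1^{m_1(i+1)}$ grows linearly in $i$ at precisely the rate that a single rescaling of $x_h$ can absorb: setting $g' := g \cdot g_1^{m_1}$ gives $(x_h/g)^i = g_1^{m_1 i}(x_h/g')^i$, so $c_i (x_h/g)^i = g_1^{-m_1} R_i (x_h/g')^i$, and therefore $u^{-1} = g_1^{-m_1} \sum_i R_i (x_h/g')^i \in S_{g'} \subseteq R_h^\dagger(A)$. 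Once this absorption is noticed, the inductive step is a routine computation; no analogue of Christol's factorization (Theorem~\ref{T:master factor}) is needed for this basic claim.
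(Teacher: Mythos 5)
Your proposal is essentially the paper's own argument: the same induction on $h$, the same reduction to a series in $x_h/f$ whose lowest coefficient is inverted via the induction hypothesis, and the same device of absorbing the linearly growing denominators of the inverse by enlarging the scaling element (the paper packages this by first normalizing the constant term to $1$ and replacing $f$ by a large power, so that the inverse is simply the geometric series $r^{-1}=\sum_{j\ge 0}(1-r)^j$ with no further denominators to track, which is the same bookkeeping as your explicit recursion with worst term $b_0^{-(i+1)}$). Your closing remark is also consistent with the paper -- Theorem~\ref{T:master factor} is not used for this lemma but only later, for Proposition~\ref{P:dagger factor} -- and your passage from $f$ to $g$ and $g'$ (the containment of the rings $A((x_1/f))\cdots((x_h/f))[f^{-1}]$ under enlarging $f$, which does deserve a word of justification since the inner Laurent coefficients must be re-expanded) is exactly the move the paper itself makes when it ``replaces $f$ by a large power.''
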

\begin{proof}
This is clear for $R_h(K)$, so we concentrate on
$R_h^\dagger(A)$.
We proceed by induction on $h$, the case $h=0$ being clear because
$R_0^\dagger(A) = K$.

Given $r \in R^\dagger_h(A)$, write
$r = \sum_i r_i x_h^i$ with $r_i \in R^\dagger_{h-1}(A)$.
Let $m$ be the smallest index such that $r_m \neq 0$.
By the induction hypothesis, $r_m$ is a unit in $R^\dagger_{h-1}(A)$,
so we may reduce to the case where $m=0$ and $r_m = 1$.
In this case, for some nonzero $f \in A$ we have
\[
1-r \in (x_h/f) A((x_1/f))\cdots((x_{h-1}/f))\llbracket x_h/f \rrbracket
[f^{-1}];
\]
by replacing $f$ by a large power, we may force
\[
1-r \in (x_h/f) A((x_1/f))\cdots((x_{h-1}/f))\llbracket x_h/f \rrbracket.
\]
In that case, the formula $r^{-1} = \sum_{j=0}^\infty (1-r)^j$
shows that $r^{-1} \in R^\dagger_h(A)$.
\end{proof}

Using Christol's factorization theorem, we obtain the following.
\begin{prop} \label{P:dagger factor}
Equip $R_h^\dagger(A)$ with the $x_h$-adic norm 
(of arbitrary normalization) and the derivation
$x_h \del_h$. Then any twisted polynomial
$P \in R_h^\dagger(A)\{T\}$ factors uniquely as a product
$Q_1 \cdots Q_m$ with each $Q_i$ having only one slope $r_i$ in its
Newton polygon, and $r_1 < \cdots < r_m$.
\end{prop}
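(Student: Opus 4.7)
The strategy is to apply the Robba--Christol factorization theorem (Theorem~\ref{T:master factor}) once for each consecutive pair of slopes, peeling off the lowest slope each time. By induction on $m$, it suffices to produce, when the Newton polygon of $P$ has at least two distinct slopes, a factorization $P = QR$ in which $Q$ has only slope $r_1$ and $R$ has all its slopes in $\{r_2, \dots, r_m\}$; uniqueness at each step transfers to uniqueness of the full factorization.

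To keep the construction inside $R_h^\dagger(A)\{T\}$ rather than only in a completion, choose a nonzero $f \in A$ large enough that the coefficients of $P$ all lie in $B_f := A((x_1/f)) \cdots ((x_h/f))[f^{-1}]$. The ring $B_f$ is complete for the $x_h$-adic norm: its elements are Laurent series in $x_h/f$ whose coefficients, living in an $x_h$-independent subring, have trivial $x_h$-adic norm. Using the slope decomposition of the Newton polygon of $P$, write down approximate monic factors $a, b \in B_f\{T\}$: $a$ of degree $d_1$ equal to the horizontal length of the slope-$r_1$ segment and of pure slope $r_1$, and $b$ of complementary degree with slopes in $\{r_2,\dots,r_m\}$, arranged so that $ab - P$ has $x_h$-adic norm much smaller than $|a||b|$. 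Apply Theorem~\ref{T:master factor} with this $a, b$ and $c = P$, taking $U = \{u \in B_f\{T\} : \deg u < d_1\}$, $V = \{v \in B_f\{T\} : \deg v < \deg b\}$, and $W$ the space of twisted polynomials in $B_f\{T\}$ of degree less than $\deg P$; completeness of $U$ and $V$ follows from that of $B_f$.

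The main work is verifying hypothesis~(c): the existence of $\lambda > 0$ with $|av + ub| \geq \lambda \max\{|a||v|, |b||u|\}$ for all $(u,v) \in U \times V$. This is a Sylvester-type estimate which holds because the Newton slopes of $a$ and $b$ are strictly separated; the resulting $\lambda$ depends only on the gap $r_2 - r_1$ and is obtained by a direct slope analysis in the spirit of the formalism recalled from \cite[Definition~1.6.1]{kedlaya-goodformal1}. Once (c) is in hand, Theorem~\ref{T:master factor} produces a unique $(x, y) \in U \times V$ with $P = (a+x)(b+y)$ and quantitative bounds $|x| < \lambda |a|$, $|y| < \lambda |b|$, which in turn guarantee that $Q := a + x$ and $R := b + y$ inherit the Newton polygons of $a$ and $b$. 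Since $U, V \subseteq B_f\{T\} \subseteq R_h^\dagger(A)\{T\}$, no enlargement of the overconvergence parameter $f$ is required, and the factors are automatically overconvergent. The principal obstacle is thus the slope-disjointness bound (c), which is a purely combinatorial calculation with Newton polygons of twisted polynomials over the (possibly non-discretely-valued) ring $B_f$.
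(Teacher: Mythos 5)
The central gap is your completeness claim: the ring $B_f = A((x_1/f))\cdots((x_h/f))[f^{-1}]$ is \emph{not} complete for the $x_h$-adic norm, so hypothesis (a) of Theorem~\ref{T:master factor} fails for your $U$ and $V$. Indeed $B_f$ is the increasing union over $N$ of $f^{-N}A((x_1/f))\cdots((x_h/f))$, and an $x_h$-adic Cauchy sequence may require unboundedly many powers of $f^{-1}$: the partial sums of $\sum_{k \geq 0} f^{-k}(x_h/f)^k$ all lie in $B_f$, but the limit does not, since no single power of $f$ clears all the denominators $f^{-k}$ (and $f^{-1} \notin A((x_1/f))\cdots((x_{h-1}/f))$ in general). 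Controlling precisely this phenomenon is the real content of the proposition; it is why your assertion that ``no enlargement of the overconvergence parameter $f$ is required'' is exactly the point that breaks down. The paper's proof handles it by normalizing the coefficient at an internal vertex of the Newton polygon to $1$ (using Lemma~\ref{L:unit}), replacing $f$ by a large power so that the slope-weighted coefficients $x_h^{(i-j)r/s}P_j$ lie in the genuinely complete ring $A((x_1/f))\cdots((x_{h-1}/f))\llbracket (x_h/f)^{1/s}\rrbracket$ with no $f^{-1}$, and defining $U,V,W$ by these weighted integrality conditions (not by degree truncation alone), with the Gauss norm weighted by $|x_h|^{-r/s}$ rather than the plain sup of $x_h$-adic norms of coefficients; note also the fractional powers $(x_h/f)^{1/s}$ needed because the slope $r/s$ is only rational, an issue your setup does not address.

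Two further points. First, with monic approximate factors $a,b$ you also owe a proof of hypothesis (b), surjectivity of $(u,v)\mapsto av+ub$ onto $W$, which is not automatic over a coefficient ring that is not a complete field; the paper sidesteps both (b) and most of (c) by taking $a=1$, $b=T^i$, for which surjectivity is immediate from degree bookkeeping and the estimate (c) is verified as in \cite[Theorem~2.2.1]{kedlaya-course} — whereas your ``Sylvester-type estimate'' for general $a,b$ is deferred entirely, and it is the heart of the matter. Second, the uniqueness clause of Theorem~\ref{T:master factor} only gives uniqueness among factorizations satisfying the proximity bounds $|x|<\lambda|a|$, $|y|<\lambda|b|$, so ``uniqueness at each step'' does not by itself yield uniqueness of the slope factorization; the paper instead observes that any factorization over $R_h^\dagger(A)$ is in particular one over the complete field $R_h(K)$, where existence and uniqueness are already known by \cite[Lemma~1.6.2]{kedlaya-part1}, so only existence needs to be proved over $R_h^\dagger(A)$.
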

\begin{proof}
Over $R_h(K)$, such a factorization exists and is unique 
by \cite[Lemma~1.6.2]{kedlaya-part1}. It thus suffices to check
existence over $R_h^\dagger(A)$. 
For this, we induct on the number of slopes in the Newton
polygon of $P(T) = \sum_i P_i T^i$.
Suppose there is more than one slope; we can then
choose an index $i$ corresponding to
an internal vertex of the Newton polygon. By Lemma~\ref{L:unit},
$P_i$ is a unit in $R_h^\dagger(A)$, so we may reduce to the case $P_i = 1$.

Since $i$ corresponds to an internal vertex of the Newton polygon,
there exists a positive rational number $r/s$ such that
$-\log |P_{i-j}| - j(r/s) \log |x_h| > 0$ for $j \neq 0$.
That is, the norm of $x_h^{jr/s} P_{i-j}$ is less than 1
for each $j \neq 0$.
Since $P_{j} \in R_h^\dagger(A)$ for all $j \neq i$,
we can choose $f \in A$ nonzero so that
\[
P_{j} \in A((x_1/f))\cdots((x_h/f))[f^{-1}] \qquad (j \neq i).
\]
We then have
\[
x_h^{(i-j)r/s} P_{j} \in
(x_h/f)^{1/s} A((x_1/f))\cdots((x_{h-1}/f)) \llbracket (x_h/f)^{1/s}
\rrbracket [f^{-1}] \qquad (j \neq i).
\]
As in the proof of Lemma~\ref{L:unit},
by replacing $f$ by a large power, we may force
\begin{equation} \label{eq:factor}
x_h^{(i-j)r/s} P_{j} \in
(x_h/f)^{1/s} A((x_1/f))\cdots((x_{h-1}/f)) \llbracket (x_h/f)^{1/s}
\rrbracket \qquad (j \neq i).
\end{equation}
Let $U$ be the set of twisted polynomials $Q(T) = \sum_j Q_j T^j \in R_h^\dagger(A)\{T\}$ 
of degree at most $\deg(P) - i$ such that
\[
x_h^{-jr/s} Q_{j} \in
A((x_1/f))\cdots((x_{h-1}/f)) \llbracket (x_h/f)^{1/s}
\rrbracket
\qquad (j=0,\dots,\deg(P)-i).
\]
Let $V$ be the set of twisted polynomials $Q(T) = \sum_j Q_j T^j \in R_h^\dagger(A)\{T\}$ 
of degree at most $i-1$ such that
\[
x_h^{(i-j)r/s} Q_{j} \in
A((x_1/f))\cdots((x_{h-1}/f)) \llbracket (x_h/f)^{1/s}
\rrbracket
\qquad (j=0,\dots,i-1).
\]
Let $W$ be the set of twisted polynomials $Q(T) = \sum_j Q_j T^j \in R_h^\dagger(A)\{T\}$ 
of degree at most $\deg(P)$ such that
\[
x_h^{(i-j)r/s} Q_{j} \in
A((x_1/f))\cdots((x_{h-1}/f)) \llbracket (x_h/f)^{1/s}
\rrbracket
\qquad (j=0,\dots,\deg(P)).
\]
Then $U,V,W$ are complete for the $|x_h|^{-r/s}$-Gauss norm and $UV \subseteq W$.
Put $a = 1, b = T^i, c = P$, so that $(u,v) \mapsto av + bu$ is a surjection
of $U \times V$ onto $W$, and $|ab-c| < |c|$.

We now invoke Theorem~\ref{T:master factor}
to obtain a nontrivial factorization $Q_1 Q_2$ of $P$ in which
all slopes of $Q_1$ are less than $-r/s$ while all slopes
of $Q_2$ are greater than $-r/s$.
(Condition (c) of Theorem~\ref{T:master factor}
may be verified exactly as in \cite[Theorem~2.2.1]{kedlaya-course}.)
We may then invoke the induction hypothesis to conclude.
\end{proof}

\begin{prop} \label{P:dagger decomp}
Equip $R_h^\dagger(A)$ with the $x_h$-adic norm 
(of arbitrary normalization).
Let $M$ be a finite differential module over $R_h^\dagger(A)$. Then
$M$ admits a unique decomposition $M = \oplus_{s \geq 1} M_s$
as a direct sum of differential submodules, such that for each $s \geq 1$,
the scale multiset of $\del_h$ on 
$M_s \otimes_{R_h^\dagger(A)} R_h(K)$ consists entirely of $s$.
\end{prop}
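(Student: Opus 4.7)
The plan is to descend the refined scale decomposition from the complete nonarchimedean differential field $R_h(K)$ (with respect to the $x_h$-adic norm) down to $R_h^\dagger(A)$, using Proposition~\ref{P:dagger factor} as the descent engine, in analogy with \cite[\S 2.6]{kedlaya-goodformal1}. Uniqueness is immediate: any decomposition with the stated property pulls back to a scale decomposition of $M \otimes_{R_h^\dagger(A)} R_h(K)$, and the scale decomposition of a finite differential module over a complete nonarchimedean differential field is unique (see \cite{kedlaya-course}). Hence it suffices to construct any decomposition satisfying the prescribed property.

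For existence, note first that $R_h^\dagger(A)$ is a field by Lemma~\ref{L:unit}, of characteristic zero, on which $\del_h$ acts as a nontrivial derivation (for instance $\del_h(x_h) = 1$). The cyclic vector theorem therefore yields an isomorphism of $\del_h$-modules $M \cong R_h^\dagger(A)\{T\}/(P)$, with $T$ playing the role of $x_h \del_h$ and $P$ a monic twisted polynomial of degree $\rank M$. Apply Proposition~\ref{P:dagger factor} with respect to the $x_h$-adic norm to obtain a factorization $P = Q_1 \cdots Q_m$ with each $Q_i$ having a single slope $r_i$, and $r_1 < \cdots < r_m$.

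The next step is to upgrade this factorization to a genuine direct-sum decomposition. Twisted polynomials of distinct pure slopes are coprime in $R_h^\dagger(A)\{T\}$: any common right divisor of positive degree would itself have a single slope by the uniqueness clause of Proposition~\ref{P:dagger factor}, contradicting $r_i \neq r_j$. A noncommutative Bezout argument then produces orthogonal idempotents in $R_h^\dagger(A)\{T\}/(P)$, giving a direct-sum decomposition $M = \bigoplus_i N_i$ with $N_i$ annihilated by $Q_i$. Upon base change to $R_h(K)$, each $N_i$ has pure slope $r_i$ for $x_h\del_h$, which translates via the standard slope-scale dictionary of \cite[\S 1.6]{kedlaya-goodformal1} into a pure scale $s_i$ for $\del_h$. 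Re-indexing by $s$ (and setting $M_s = 0$ for $s$ not of the form $s_i$) produces the required components. Finally, since all other derivations in our structure commute with $\del_h$, they must preserve the unique scale decomposition over $R_h(K)$, hence preserve each $M_s$ over $R_h^\dagger(A)$.

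The main obstacle is converting the one-sided twisted-polynomial factorization into an honest direct-sum decomposition in a noncommutative setting. While coprimality of factors of distinct slopes is formally analogous to the commutative case, extracting idempotents from a noncommutative Bezout identity in $R_h^\dagger(A)\{T\}$ requires care; one must ensure the resulting elements lie over $R_h^\dagger(A)$ itself and not merely over its $x_h$-adic completion. This is precisely what Proposition~\ref{P:dagger factor} delivers, so the remaining content is largely bookkeeping plus the invocation of the slope-scale correspondence.
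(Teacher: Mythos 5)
Your overall skeleton (cyclic vector, then Proposition~\ref{P:dagger factor}, then an upgrade to a direct sum, with uniqueness checked after base change to $R_h(K)$) is exactly the intended one: the paper's proof is a one-line instruction to run the complete-field argument of \cite[Proposition~1.6.3]{kedlaya-goodformal1} with Proposition~\ref{P:dagger factor} in place of the factorization lemma used there. The gap is in your mechanism for converting the slope factorization $P = Q_1 \cdots Q_m$ into a direct sum. In the twisted polynomial ring $D = R_h^\dagger(A)\{T\}$, left ideals are not two-sided, so $D/DP$ is not a ring and no Chinese-remainder statement follows from coprimality alone: a factorization $P = Q_1 Q_2$ exhibits $D/DQ_1$ only as a submodule and $D/DQ_2$ only as a quotient, and the extension $0 \to D/DQ_1 \to D/DP \to D/DQ_2 \to 0$ splits if and only if one can solve a \emph{mixed} identity $dQ_1 + Q_2 w = 1$ in $D$; a one-sided Bezout identity $uQ_1 + vQ_2 = 1$ is strictly weaker. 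Concretely, over $\CC(x)$ with $\del = d/dx$ there are nonsplit extensions of the trivial rank-one module by $E(x)$ (since $f' - f = 1/x$ has no rational solution); writing such an extension as $D/D(Q_1Q_2)$ via a cyclic vector, the two degree-one factors generate distinct maximal left ideals and hence satisfy a Bezout identity, yet the module does not split. So ``distinct slopes $\Rightarrow$ coprime $\Rightarrow$ orthogonal idempotents'' cannot be the whole argument: the slope gap must be used metrically, not merely formally. (Also, the idempotents you want live in $\End_D(M)$, not in $R_h^\dagger(A)\{T\}/(P)$, which is not a ring.)

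The repair is the one implicit in the paper's citation. Besides the ascending-order factorization of Proposition~\ref{P:dagger factor}, produce the factorization with the pure-slope pieces in the opposite order as well, e.g.\ by rerunning its proof with the roles of $a$ and $b$ in Theorem~\ref{T:master factor} exchanged (so the other factor is split off on the left), or via the formal adjoint anti-automorphism. Then each pure-scale constituent of $M$ is realized both as a quotient and as a submodule of $M$; since over $R_h(K)$ the scale multiset of a submodule is contained in that of the ambient module, submodules that are pure of distinct scales intersect trivially, so their sum is direct, and a rank count shows it equals $M$ (induct on the number of slopes). Note that you cannot instead descend the projectors from the scale decomposition over $R_h(K)$ by invoking Proposition~\ref{P:descend power}: that result comes later and its proof uses the present proposition. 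Your uniqueness argument, and the observation that the remaining (commuting) derivations preserve each $M_s$ because they preserve both $M$ and the canonical decomposition over $R_h(K)$, are fine.
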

\begin{proof}
As in \cite[Proposition~1.6.3]{kedlaya-goodformal1}, except using
Proposition~\ref{P:dagger factor} in place of \cite[Lemma~1.6.2]{kedlaya-goodformal1}.
\end{proof}

The following argument is reminiscent of 
\cite[Lemma~2.6.3]{kedlaya-goodformal1}.
\begin{lemma} \label{L:dagger descend}
Equip $R_h^\dagger(A)$ with the $x_h$-adic norm 
(of arbitrary normalization).
Let $M$ be a finite differential module over $R_h^\dagger(A)$ 
such that the scale of
$\del_h$ on $M \otimes_{R_h^\dagger(A)} R_h(K)$ is equal to $1$.
Then $M$ admits an $R_{h-1}^\dagger(A)$-lattice stable under 
all of the given derivations on $R_h^\dagger(A)$.
\end{lemma}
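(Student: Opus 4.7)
The plan is to produce the required lattice as $L := M^{\del_h}$, the $R_{h-1}^\dagger(A)$-submodule of $M$ consisting of elements killed by $\del_h$. Once we show that $L$ is free of rank $d = \rank_{R_h^\dagger(A)} M$ and spans $M$ over $R_h^\dagger(A)$, stability of $L$ under any other derivation $D$ on $R_h^\dagger(A)$ is automatic: since $D$ commutes with $\del_h$, we have $\del_h(D(v)) = D(\del_h(v)) = 0$ for all $v \in L$, so $D(L) \subseteq L$.

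The first step is to exploit the scale-$1$ hypothesis to find a basis of $M$ over $R_h^\dagger(A)$ on which the matrix $N$ of $\del_h$ has entries in $R_{h-1}^\dagger(A)\llbracket x_h \rrbracket$ (i.e., no negative powers of $x_h$). Over the larger field $R_h(K)$, such a basis is produced by the standard structure theory of solvable differential modules (see \cite{kedlaya-course}). To realize this basis over the dagger ring, I would employ a cyclic vector argument: the twisted polynomial associated to a cyclic vector has Newton polygon (with respect to $x_h \del_h$) consisting of a single slope corresponding to scale $1$, and Proposition~\ref{P:dagger factor} then ensures that the structural constructions needed do not leave $R_h^\dagger(A)$.

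The second step is to construct the horizontal sections by solving the matrix ODE $\del_h U = -N U$ with initial condition $U|_{x_h = 0} = I$. Writing $U = \sum_{k \ge 0} U_k x_h^k$ and $N = \sum_{i \ge 0} N_i x_h^i$ with $U_k, N_i \in R_{h-1}^\dagger(A)$, the equation becomes the recursion
\[
k U_k = -\sum_{i+j = k-1} N_i U_j \qquad (k \ge 1),
\]
which uniquely determines each $U_k$. Setting $e'_j := \sum_i U_{ij} e_i$, one checks directly that $\del_h e'_j = 0$. The $R_{h-1}^\dagger(A)$-span $L$ of $e'_1, \ldots, e'_d$ is free of rank $d$, and the invertibility of $U$ (from $U|_{x_h = 0} = I$) ensures $L \otimes_{R_{h-1}^\dagger(A)} R_h^\dagger(A) = M$. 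Moreover $L$ equals $M^{\del_h}$: any $v = \sum a_j e'_j$ with $a_j \in R_h^\dagger(A)$ satisfies $\del_h(v) = \sum \del_h(a_j) e'_j$, which vanishes exactly when each $a_j \in R_{h-1}^\dagger(A)$.

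The main obstacle is verifying that the formal solution $U$ actually lies in $R_h^\dagger(A)$, not merely in $R_{h-1}^\dagger(A)\llbracket x_h \rrbracket$ or a larger completion. This is where scale $1$ plays its decisive role: being the minimum possible scale value, it forces the spectral norm of $\del_h$ on $M \otimes R_h(K)$ to match that on $R_h(K)$ itself, and standard estimates for linear ODE solutions in characteristic zero (see \cite[Chapter~9]{kedlaya-course}) then show that $U$ inherits a common dagger radius of convergence from the coefficients $N_i$.
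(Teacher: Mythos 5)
Your plan fails at its foundation: the scale-$1$ hypothesis does not make $M^{\del_h}$ a lattice, and it does not provide a basis on which the matrix of $\del_h$ itself is free of negative powers of $x_h$. Scale $1$ for $\del_h$ with respect to the $x_h$-adic norm only says that $M \otimes_{R_h^\dagger(A)} R_h(K)$ is \emph{regular} along $x_h$ (at worst a regular singularity); it does not say the connection is trivial in the $x_h$-direction. Concretely, take $h=1$ and $A=k$, so that $R_1^\dagger(A) = k((x_1))$, and let $M$ be free of rank $1$ on $\be$ with $\del_1(\be) = (\lambda/x_1)\be$ for some $\lambda \in k$ which is not an integer. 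The scale of $\del_1$ is $1$, yet the matrix of $\del_1$ has a pole in every basis, $M^{\del_1} = 0$, and your initial-value problem $\del_1 U = -NU$, $U|_{x_1=0} = I$ cannot even be posed. So both of your first two steps (regularizing $\del_h$ via a cyclic vector, then building a full basis of horizontal sections) are unavailable in general; the lattice one can hope for is not one killed by $\del_h$, but one on which the \emph{logarithmic} derivation $x_h\del_h$ acts by a constant matrix over $k$ with prepared eigenvalues, which is exactly what the downstream uses (Propositions~\ref{P:descend1} and \ref{P:descend power}) require.

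For contrast, the paper's proof passes to $R' = R_{h-1}^\dagger(A)((x_h))$ and invokes the regulating-lattice results of \cite{kedlaya-goodformal1} (Propositions~2.2.10--2.2.12) to obtain a basis of elements of $M$ on which $x_h\del_h$ acts by a matrix $N$ whose reduction $N_0$ modulo $x_h$ is a matrix over $k$ with prepared eigenvalues. In place of your initial-value problem it solves the conjugation equation $NU + x_h\del_h(U) = UN_0$ with $U_0 = I$; the recursion
\[
i U_i = U_i N_0 - N_0 U_i - \sum_{j=1}^{i} N_j U_{i-j}
\]
is uniquely solvable precisely because $N_0$ has prepared eigenvalues ($\mathrm{ad}_{N_0} - i$ is invertible for $i \neq 0$), and the decompletion step--the point you dismissed with an appeal to a ``common dagger radius of convergence''--is handled by an induction showing that each $U_i$ is a $k$-linear combination of the entries of $\sum_{j} N_j U_{i-j}$, hence lies in $A((x_1/f))\cdots((x_{h-1}/f))$ for one fixed $f$, so that $U \in R_h^\dagger(A)$. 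Your closing observation that commutation with $\del_h$ (or with $x_h\del_h$) forces stability under the remaining derivations is sound, but only once a lattice of the correct kind has been produced.
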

\begin{proof}
Put $R = R_h^\dagger(A)$ and $R' = R_{h-1}^\dagger(A)((x_h))$.
By \cite[Proposition~2.2.10]{kedlaya-goodformal1}, we can find 
a regulating lattice $W$ in $M \otimes_R R'$.
By \cite[Proposition~2.2.11]{kedlaya-goodformal1}, the characteristic polynomial
of $x_h \del_h$ on $W/x_h W$ has coefficients in the constant subring of 
$R$, which is $k$.
(Note that in order to satisfy the running 
hypothesis \cite[Hypothesis~2.1.1]{kedlaya-goodformal1}, we need
that the module of derivations on $K$ is finite-dimensional over 
$K$.)

Choose a basis of $W/x_h W$ on which $x_h \del_h$ acts via a matrix over $k$.
Since $R$ is a dense subfield of $R'$, we can lift this basis to 
a basis $\be_1,\dots,\be_d$ of $W$ consisting of elements of $M$.
Let $N$ be the matrix of action of $x_h \del_h$ on this basis.
Then $N$ has entries in 
\[
R_h^\dagger(A) \cap (k + x_h R_{h-1}^\dagger(A)\llbracket x_h \rrbracket).
\]
We can thus choose $f \in A$ nonzero
so that $N$ has entries in 
\[
k + (x_h/f) A((x_1/f))\cdots((x_{h-1}/f))\llbracket x_h/f \rrbracket.
\]
Write $N = \sum_{i=0}^\infty N_i (x_h/f)^i$ with $N_i$ having
entries in $R_{h-1}^\dagger(A)$.
As in \cite[Lemma~2.2.12]{kedlaya-goodformal1},
there exists a unique matrix $U = \sum_{i=0}^\infty U_i (x_h/f)^i$
over $R_{h-1}^\dagger(A) \llbracket x_h/f \rrbracket$
with $U_0$ equal to the identity matrix, such that 
\[
NU + x_h \del_h(U) = U N_0.
\]
Namely, given $U_0,\dots,U_{i-1}$, there is a unique choice of $U_i$
satisfying
\[
i U_i = U_i N_0 - N_0 U_i - \sum_{j=1}^{i} N_j U_{i-j}
\]
because $N_0$ has prepared eigenvalues. More explicitly,
each entry of $U_i$ is a certain $k$-linear combination of entries of 
$\sum_{j=1}^{i} N_j U_{i-j}$.
By induction on $i$, it follows that
\[
U_i \in A((x_1/f))\cdots((x_{h-1}/f)) \qquad (i=1,2,\dots).
\]
That is, $U$ has entries in $A((x_1/f)) \cdots ((x_h/f))$
and thus in $R_h^\dagger(A)$.
The desired result follows.
\end{proof}

\begin{prop} \label{P:descend power}
For any finite differential module $M$ over $R_h^\dagger(A)$, we have
\[
H^0(M) = H^0(M \otimes_{R_h^\dagger(A)} R_h(K)).
\]
\end{prop}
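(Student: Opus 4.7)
The plan is to induct on $h$; the base case $h=0$ is trivial since $R_0^\dagger(A) = K = R_0(K)$. The containment $H^0(M) \subseteq H^0(M \otimes_{R_h^\dagger(A)} R_h(K))$ is immediate from the injectivity of the field extension $R_h^\dagger(A) \hookrightarrow R_h(K)$ (Lemma~\ref{L:unit}), so the task is to show every horizontal section of $M \otimes R_h(K)$ already lies in $M$.

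For the inductive step, I would first apply Proposition~\ref{P:dagger decomp} to write $M = \bigoplus_{s \geq 1} M_s$ according to the $\partial_h$-scales on $M \otimes R_h(K)$. Any horizontal section is annihilated by $\partial_h$, hence generates a sub-differential-module of $\partial_h$-scale $1$; thus no contribution to $H^0$ comes from $M_s$ with $s > 1$, and one may assume $M \otimes R_h(K)$ has uniform $\partial_h$-scale $1$. Lemma~\ref{L:dagger descend} then produces an $R_{h-1}^\dagger(A)$-lattice $L \subseteq M$ stable under all the given derivations. The construction yields an $R_{h-1}^\dagger(A)$-basis of $L$ which remains $R_h^\dagger(A)$-linearly independent in $M$, so the natural map $L \otimes_{R_{h-1}^\dagger(A)} R_h^\dagger(A) \to M$ is an isomorphism, and consequently $M \otimes R_h(K) \cong L'((x_h))$ where $L' = L \otimes_{R_{h-1}^\dagger(A)} R_{h-1}(K)$.

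Given $v \in H^0(M \otimes R_h(K))$, I would expand $v = \sum_i w_i x_h^i$ with $w_i \in L'$ and $w_i = 0$ for $i \ll 0$. For every derivation $\partial$ on $R_h^\dagger(A)$ other than $\partial_h$, one has $\partial(x_h) = 0$, so comparing coefficients in $\partial v = 0$ gives $\partial(w_i) = 0$ for each $i$. Hence each $w_i$ is horizontal for the full derivation structure of $R_{h-1}^\dagger(A)$, and the inductive hypothesis applied to $L$ over $R_{h-1}^\dagger(A)$ yields $w_i \in H^0(L) \subseteq L$.

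The final step is to verify that $\sum_i w_i x_h^i$ actually lies in $M$ (not merely in $L \otimes R_h(K)$). The equation $\partial_h v = 0$ imposes a linear recursion on the $w_i$ whose coefficients come from the action matrix of $\partial_h$ (equivalently $x_h\partial_h$) on $L$; combined with the finite-tail condition this forces $w_i = 0$ for $i < 0$ and determines each $w_i$ for $i \geq 0$ from $w_0$ via iterated application of that matrix (divided by an appropriate factorial, which is a nonzero rational since we are in characteristic zero). Choosing $f \in A$ so that both this matrix and the components of $w_0$ lie in $A((x_1/f))\cdots((x_{h-1}/f))[f^{-1}]$ places all the $w_i$ uniformly in that subring, and the resulting formal series $\sum w_i x_h^i$ lies in $A((x_1/f))\cdots((x_h/f))[f^{-1}] \subseteq R_h^\dagger(A)$, since the outermost Laurent variable imposes no convergence condition on positive powers of $x_h$. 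I expect the main delicacy to be bookkeeping the recursion and pinning down the precise action of $\partial_h$ on $L$ (i.e.\ whether $L$ is genuinely $\partial_h$-stable or only $x_h\partial_h$-stable as is typical for regular singularities); in either interpretation the uniform-coefficient conclusion is what makes the series land in the dagger ring rather than just in $R_h(K)$.
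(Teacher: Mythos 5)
Your argument tracks the paper's proof correctly through the reductions: the scale decomposition from Proposition~\ref{P:dagger decomp}, the observation that horizontal sections land in the scale-$1$ summand, the lattice $L$ from Lemma~\ref{L:dagger descend}, and the expansion $v = \sum_i w_i x_h^i$ with coefficients in $L' = L \otimes R_{h-1}(K)$. The step where your proof goes off the rails is the final one, and the hesitation you flag at the end (``whether $L$ is genuinely $\partial_h$-stable or only $x_h\partial_h$-stable'') is precisely where the missing idea lives. The lattice $L$ is stable under $x_h\del_h$, which acts via a constant matrix $N_0$ with \emph{prepared} eigenvalues (no eigenvalue or eigenvalue difference a nonzero integer). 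The equation $x_h\del_h(v)=0$ therefore does \emph{not} give a recursion linking $w_i$ to $w_{i-1}$; it decouples into the independent linear systems $(iI + N_0)\,w_i = 0$ for each $i$. Because $N_0$ is prepared, $iI + N_0$ is invertible for all $i \neq 0$, so $w_i = 0$ for $i \neq 0$ and the series collapses to $v = w_0 \in L'$. A single application of the inductive hypothesis to $L$ over $R_{h-1}^\dagger(A)$ then gives $w_0 \in L \subseteq M$, finishing the proof.

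Your uniform-$f$ convergence argument cannot be repaired in the form you describe: the statements $w_i \in L$ obtained by applying the inductive hypothesis term-by-term carry no uniform bound on the auxiliary element $f \in A$ needed to place $w_i$ in $A((x_1/f))\cdots((x_{h-1}/f))[f^{-1}]$, and the ``recursion determining each $w_i$ for $i\geq 0$ from $w_0$'' that you invoke to supply that uniformity does not exist (again because the equations decouple). In other words, the convergence of $\sum_i w_i x_h^i$ into the dagger ring is not something to be engineered; it follows from the prepared-eigenvalue property because all but one of the terms vanish. I'd recommend restructuring: establish $w_i = 0$ for $i\neq 0$ \emph{first} via $(iI+N_0)w_i=0$, and only then apply the inductive hypothesis once to $w_0$.
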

\begin{proof}
We induct on $h$ with trivial base case $h=0$.
Suppose that $h > 0$. 
Put $R = R_h^\dagger(A)$.
Pick any $\bv \in M \otimes_{R} R_h(K)$.
Equip $R$ with the $x_h$-adic norm (of arbitrary normalization).
By Proposition~\ref{P:dagger decomp},
we can split $M$ as a direct sum $M_1 \oplus M_2$, in which 
the scale multiset of $\del_h$ on $M_1 \otimes_{R} R_h(K)$
has all elements equal to 1, while the scale multiset of
$\del_h$ on $M_2 \otimes_R R_h(K)$ has all elements greater than 1.
We must then have $\bv \in M_1 \otimes_R R_h(K)$.

By Lemma~\ref{L:dagger descend}, $M_1$ admits an $R_{h-1}^\dagger(A)$-lattice
$N$ stable under all of the given derivations on $R$.
Write $\bv$ formally as $\sum_{i \in \ZZ} \bv_i x_h^i$ with
$\bv_i \in N \otimes_{R_{h-1}^\dagger(A)} R_{h-1}(K)$. 
Since the action of $x_h \del_h$ on $N$ has prepared
eigenvalues, the equality $x_h \del_h(\bv) = 0$ implies that
$\bv_i = 0$ for $i \neq 0$.
Hence $\bv \in N \otimes_{R_{h-1}^\dagger(A)} R_{h-1}(K)$,
so the induction hypothesis implies $\bv \in N$, and the desired
result follows.
\end{proof}

\subsection{Descent for localized power series rings}
\label{subsec:diff mod relative}

Using the iterated power series rings we have just considered, 
we obtain some descent
results for localized power series rings.

\setcounter{theorem}{0}
\begin{hypothesis}
Throughout \S~\ref{subsec:diff mod relative}, let 
$A$ be a differential domain of characteristic $0$,
such that the module of derivations on $K = \Frac(A)$
is finite-dimensional over $K$, and the constant subring $k$ of $A$
is also the constant subring of $K$. 
For integers $n \geq m \geq 0$,
let $R_{n,m}^\dagger(A)$ be the union
of
\[
A \llbracket x_1/f, \dots, x_n/f \rrbracket
[(x_1/f)^{-1},\dots,(x_m/f)^{-1}][f^{-1}]
\]
over all nonzero $f \in A$.
Put $R_{n,m}(K) = K \llbracket x_1,\dots,x_n \rrbracket[x_1^{-1},\dots,x_m^{-1}]$.
Equip $R_{n,m}^\dagger(A)$ and $R_{n,m}(K)$
with the componentwise derivations on $A$
plus the derivations $\del_1,\dots,\del_n = \frac{\del}{\del x_1},
\dots, \frac{\del}{\del x_n}$.
Note that $R_{n,0}^\dagger(A)$ is a henselian local ring
which is nondegenerate as a differential ring.
\end{hypothesis}

\begin{prop} \label{P:descend1}
For any finite differential module $M$ over $R^\dagger_{n,m}(A)$,
\[
H^0(M) = H^0(M \otimes_{R_{n,m}^\dagger(A)} R_{n,m}(K)).
\]
\end{prop}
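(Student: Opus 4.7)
The plan is to bootstrap Proposition~\ref{P:descend power} via the intermediate rings $\tilde R = R_n^\dagger(A)$ and $\tilde R' = R_n(K)$. Writing $R = R_{n,m}^\dagger(A)$, there are derivation-compatible inclusions $R \hookrightarrow \tilde R$ and $R_{n,m}(K) \hookrightarrow \tilde R'$. Given a horizontal section $\bv \in M \otimes_R R_{n,m}(K)$, I would view it inside $M \otimes_R \tilde R' = (M \otimes_R \tilde R) \otimes_{\tilde R} \tilde R'$ and apply Proposition~\ref{P:descend power} to the differential module $M \otimes_R \tilde R$ over $\tilde R$ to conclude that $\bv \in M \otimes_R \tilde R$. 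It would then remain to descend from the intersection of $M \otimes_R \tilde R$ and $M \otimes_R R_{n,m}(K)$ inside $M \otimes_R \tilde R'$ back to $M$ itself.

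For this descent I would invoke Remark~\ref{R:locally free}, but first need to verify its standing hypothesis that $R$ is a nondegenerate differential domain. This holds because $\Spec R$ is an open subscheme of $\Spec R_{n,0}^\dagger(A)$, which is nondegenerate by hypothesis, so each of its local rings is a localization of a nondegenerate local ring and is itself nondegenerate by Lemma~\ref{L:localize complete}. Consequently $M$ is projective over $R$ by Lemma~\ref{L:nondegenerate}(b) combined with \cite[Proposition~1.2.6]{kedlaya-goodformal1}, the inclusion $R_{n,m}(K) \hookrightarrow \tilde R'$ remains injective after tensoring with $M$, and Remark~\ref{R:locally free} gives
\[
(M \otimes_R R_{n,m}(K)) \cap (M \otimes_R \tilde R) = M \otimes_R (R_{n,m}(K) \cap \tilde R)
\]
inside $M \otimes_R \tilde R'$.

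The main obstacle is then the intersection identity $R_{n,m}(K) \cap \tilde R = R$ inside $\tilde R'$. One inclusion is clear. For the other, given $g$ in the intersection, membership in $R_{n,m}(K)$ yields $g = x_1^{-a_1}\cdots x_m^{-a_m} h$ with $h \in K\llbracket x_1, \ldots, x_n \rrbracket$, while membership in $\tilde R$ yields $g = f^{-N} \sum_I c_I (x/f)^I$ for some nonzero $f \in A$, some integer $N$, and coefficients $c_I \in A$ supported on an iteratively bounded-below subset of $\ZZ^n$. Matching the two expansions inside $\tilde R'$ forces $c_I = 0$ unless $i_j \geq -a_j$ for $j \leq m$ and $i_j \geq 0$ for $j > m$; reindexing by $J = I + (a_1,\ldots,a_m,0,\ldots,0)$ then exhibits $g$ as $x_1^{-a_1}\cdots x_m^{-a_m}$ times an element of $R_{n,0}^\dagger(A)$, and hence as an element of $R$. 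The index bookkeeping is elementary, but care is needed to read the ``iterated Laurent with dagger bound'' condition defining $\tilde R$ against the ``finite-pole power series'' condition defining $R_{n,m}(K)$, and to verify that the dagger bound on the $c_I$ transfers intact to the rewritten sum over $J \in \ZZ_{\geq 0}^n$. With the identity in hand, $\bv \in M \otimes_R R = M$.
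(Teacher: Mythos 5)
Your proof matches the paper's own argument: embed $R_{n,m}^\dagger(A)$ into $R_n^\dagger(A)$ inside $R_n(K)$, descend horizontal sections from $R_n(K)$ down to $R_n^\dagger(A)$ along the tower, and conclude via the intersection identity and Remark~\ref{R:locally free}. Your explicit verifications of the intersection $R_{n,m}(K) \cap R_n^\dagger(A) = R_{n,m}^\dagger(A)$ and of the nondegeneracy hypothesis are details the paper leaves implicit, and your citation of Proposition~\ref{P:descend power} for the descent step is the precise reference (the paper's proof points to Lemma~\ref{L:dagger descend}, the lattice lemma underlying Proposition~\ref{P:descend power}, but it is the latter that directly gives $H^0$-descent).
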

\begin{proof}
Embed $R^\dagger_{n,m}(A)$ into the ring $R^\dagger_n(A)$ defined in
Hypothesis~\ref{H:iterated}. Then within $R_n(K)$,
$R^\dagger_{n,m}(A)$ is the intersection of $R_{n,m}(K)$
and $R_n^\dagger(A)$.
By Remark~\ref{R:locally free},
within $M \otimes_{R_{n,m}^\dagger(A)} R_n(K)$ we have
\begin{equation} \label{eq:intersect}
(M \otimes_{R^\dagger_{n,m}(A)} R_{n,m}(K))
\cap
(M \otimes_{R^\dagger_{n,m}(A)} R_n^\dagger(A))
=
M.
\end{equation}
Given any $\bv \in H^0(M \otimes_{R^\dagger_{n,m}(A)} R_{n,m}(K))$,
we have $\bv \in H^0(M \otimes_{R^\dagger_{n,m}(A)} R_n^\dagger(A))$
by Lemma~\ref{L:dagger descend}. By \eqref{eq:intersect},
this implies $\bv \in H^0(M)$, proving the claim.
\end{proof}

We also need the following related argument in the regular case.
\begin{prop} \label{P:descend2}
Let $M$ be a finite differential module over $R^\dagger_{n,m}(A)$,
such that $M \otimes_{R^\dagger_{n,m}(A)} R_{n,m}(K)$ is regular.
Then $M$ is regular, in the sense that there exists a basis of
$M$ on which $x_1 \del_1,\dots,x_n \del_n$ act via matrices
over $K$.
\end{prop}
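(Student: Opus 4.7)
The strategy is to descend a regular basis from $R_{n,m}(K)$ to $R^\dagger_{n,m}(A)$ by a horizontal-sections argument based on Proposition~\ref{P:descend1}. Since $R_{n,m}(K)$ is the localization of the complete nondegenerate local ring $K\llbracket x_1,\dots,x_n\rrbracket$ at $x_1\cdots x_m$, Theorem~\ref{T:regular} applies to $M \otimes_{R^\dagger_{n,m}(A)} R_{n,m}(K)$ and yields a basis $\bv_1,\dots,\bv_d$ on which each $x_i\del_i$ ($i \le m$) acts via a commuting $d \times d$ matrix $N_i$ over $K$ with prepared eigenvalues, while each $\del_i$ ($i > m$) acts as zero.

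I would next show that for every derivation $\partial$ coming from $\Delta_A$, the matrix $P_\partial$ of $\partial$ on this basis also has entries in $K$. The relation $[\partial,\del_i]=0$ for $i>m$ forces $P_\partial$ to be independent of $x_{m+1},\dots,x_n$, while $[\partial,x_i\del_i]=0$ for $i\le m$ translates into
\[
x_i\del_i(P_\partial) = \partial(N_i) - [N_i,P_\partial].
\]
Writing $P_\partial = \sum_{\alpha \in \ZZ^m} P_{\partial,\alpha}\,x_1^{\alpha_1}\cdots x_m^{\alpha_m}$ and comparing coefficients, a nonzero multi-index $\alpha$ yields $\mathrm{ad}(N_i)(P_{\partial,\alpha}) = -\alpha_i P_{\partial,\alpha}$ for every $i \le m$. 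Since some $\alpha_i$ is a nonzero integer and $N_i$ has prepared eigenvalues, this forces $P_{\partial,\alpha}=0$, leaving $P_\partial = P_{\partial,0}$ with entries in $K$.

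With all derivations acting on $\bv_1,\dots,\bv_d$ through matrices over $K$, I would then build a model differential module $L$ over $R^\dagger_{n,m}(A)$ of rank $d$ with standard basis $\be_1,\dots,\be_d$, declaring that $x_i\del_i$ (for $i \le m$), $\del_j$ (for $j>m$), and each $\partial \in \Delta_A$ act on this basis via the matrices $N_i$, $0$, and $P_\partial$ just computed; the commutator identities required for this to define a differential module are inherited directly from the known actions on $M \otimes R_{n,m}(K)$, and the entries $(N_i)_{kj}/x_i$ occurring when one rewrites the $x_i\del_i$-action as a $\del_i$-action lie in $K[x_i^{-1}] \subseteq R^\dagger_{n,m}(A)$. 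By construction, $\be_j \mapsto \bv_j$ is a horizontal isomorphism $L \otimes R_{n,m}(K) \xrightarrow{\sim} M \otimes R_{n,m}(K)$, i.e., a horizontal element of $\mathrm{Hom}(L,M) \otimes R_{n,m}(K)$. Proposition~\ref{P:descend1} applied to the finite differential module $\mathrm{Hom}(L,M)$ descends it to a differential-module homomorphism $\phi \colon L \to M$; a symmetric application to $\mathrm{Hom}(M,L)$ produces $\psi \colon M \to L$ descending the inverse; and Proposition~\ref{P:descend1} applied to $\End(L)$ and $\End(M)$ then forces $\phi\psi = 1_M$ and $\psi\phi = 1_L$, so $\phi$ is an isomorphism. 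The images $\phi(\be_1),\dots,\phi(\be_d)$ give the desired basis of $M$.

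The main obstacle is the middle step: showing that derivations from $\Delta_A$ automatically act via $K$-matrices on the regular basis over $R_{n,m}(K)$, since this is what permits $L$ to be defined over $R^\dagger_{n,m}(A)$ in the first place. This relies crucially on the prepared-eigenvalue clause of Theorem~\ref{T:regular}; without it, the Laurent-coefficient comparison in the second paragraph breaks down and $P_\partial$ could carry honest $x_1,\dots,x_m$-dependence, obstructing the descent.
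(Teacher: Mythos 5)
Your proof is correct, but it takes a genuinely different route from the paper's. The paper embeds $R^\dagger_{n,m}(A)$ into the iterated field $R_n^\dagger(A)$ and constructs \emph{two} suitable bases of $M \otimes R_n(K)$: one extended from the regular basis over $R_{n,m}(K)$ furnished by Theorem~\ref{T:regular}, and one obtained by repeatedly applying Lemma~\ref{L:dagger descend} over $R_n^\dagger(A)$. It then argues (as in the uniqueness of regulating lattices with prepared eigenvalues, \cite[Proposition~2.2.13]{kedlaya-goodformal1}) that the two bases have the same $K[x_1,\dots,x_n][x_1^{-1},\dots,x_m^{-1}]$-span, hence both lie in the intersection identified in \eqref{eq:intersect}, which equals $M$. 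You, by contrast, work with a single regular basis over $R_{n,m}(K)$, and the key extra input is your Laurent-coefficient computation showing that each componentwise derivation $\partial\in\Delta_A$ also acts by a $K$-matrix on that basis: after using $[\partial,\del_i]=0$ for $i>m$ to kill $x_{m+1},\dots,x_n$-dependence, the relations $x_i\del_i(P_\partial)=\partial(N_i)-[N_i,P_\partial]$ together with the prepared-eigenvalue condition force $\mathrm{ad}(N_i)(P_{\partial,\alpha})=-\alpha_i P_{\partial,\alpha}$ to have only the trivial solution for $\alpha\ne 0$. This lets you build a model $L$ over $R^\dagger_{n,m}(A)$ and descend the horizontal isomorphism $L\otimes R_{n,m}(K)\cong M\otimes R_{n,m}(K)$ via Proposition~\ref{P:descend1}. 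Your argument replaces the span-comparison step by the explicit control of $P_\partial$; it is slightly more self-contained at the level of exposition (the iterated-dagger machinery enters only through the black box of Proposition~\ref{P:descend1}), while the paper's version re-runs the iterated descent directly. Both are valid, and the two routes illuminate the same underlying phenomenon — rigidity of prepared lattices — from complementary angles.
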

\begin{proof}
Again, embed $R^\dagger_{n,m}(A)$ into $R^\dagger_n(A)$.
We can construct bases of $M \otimes_{R^\dagger_{n,m}(A)} R_n(K)$ 
having the desired
property in two different fashions. One is to apply
\cite[Theorem~4.1.4]{kedlaya-goodformal1} to construct a suitable basis
of $M \otimes_{R^\dagger_{n,m}(A)} R_{n,m}(K)$, 
and then extend scalars to $R_n(K)$. The other is to
construct a suitable basis of $M \otimes_{R^\dagger_{n,m}(A)} R_n^\dagger(A)$
by repeated application of Lemma~\ref{L:dagger descend}, and then extend
scalars to $R_n(K)$.

The resulting bases must have the same 
$K[x_1,\dots,x_n][x_1^{-1},\dots,x_m^{-1}]$-span (as in the proof
of \cite[Proposition~2.2.13]{kedlaya-goodformal1}). 
They thus both consist of elements of the intersection
of $M \otimes_{R^\dagger_{n,m}(A)} R_{n,m}(K)$ with 
$M \otimes_{R^\dagger_{n,m}(A)} R_n^\dagger(A)$. By 
\eqref{eq:intersect},
this intersection equals $M$;
this yields the desired result.
\end{proof}

Putting these arguments together yields the following.
\begin{theorem} \label{T:descend}
Let $M$ be a finite differential module over $R^\dagger_{n,m}(A)$.
Then any admissible (resp.\ good) decomposition of 
$M \otimes_{R^\dagger_{n,m}(A)} R_{n,m}(K)$
descends to an admissible (resp.\ good) decomposition of $M$.
\end{theorem}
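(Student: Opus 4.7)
The plan is to carry out the three-step strategy suggested in Remark~\ref{R:descend good ring}: first descend the projectors of the minimal admissible decomposition, then descend the exponents $\phi_\alpha$ using Proposition~\ref{P:descend good ring}, and finally descend the regularity of each residual factor via Proposition~\ref{P:descend2}.

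By Remark~\ref{R:minimal}, I may replace the given decomposition of $N := M \otimes_{R^\dagger_{n,m}(A)} R_{n,m}(K)$ by the minimal one $\bigoplus_\alpha E(\phi_\alpha) \otimes \calR_\alpha$, in which the $\phi_\alpha$ are pairwise inequivalent modulo $R_{n,0}(K)$. For $\alpha \neq \beta$, the differential module $E(\phi_\beta - \phi_\alpha) \otimes \calR_\alpha^\dual \otimes \calR_\beta$ admits no nonzero horizontal section, since any such section would exhibit a regular submodule inside a tensor product whose irregular $E$-factor cannot be cancelled by the regular factors. Thus the summands of the minimal decomposition are canonical as $R_{n,m}(K)$-submodules of $N$, and so are the projectors $\pi_\alpha$ onto them. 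Each $\pi_\alpha$ is a horizontal element of $\End(N) = \End(M) \otimes_{R^\dagger_{n,m}(A)} R_{n,m}(K)$, and Proposition~\ref{P:descend1} applied to $\End(M)$ lifts it to an element of $\End(M)$. The idempotency and orthogonality relations transfer through the injection $\End(M) \hookrightarrow \End(M) \otimes R_{n,m}(K)$, yielding a direct sum decomposition $M = \bigoplus_\alpha M_\alpha$ of $R^\dagger_{n,m}(A)$-modules with $M_\alpha \otimes R_{n,m}(K) \cong E(\phi_\alpha) \otimes \calR_\alpha$.

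Next I would verify that $R^\dagger_{n,0}(A)$ is a henselian local ring with maximal ideal $(x_1,\dots,x_n)$ and residue field $K$ (since $K \subseteq R^\dagger_{n,0}(A)$, any element with nonzero constant term is a unit). Its $(x_1,\dots,x_n)$-adic completion is $R_{n,0}(K) = K \llbracket x_1,\dots,x_n \rrbracket$, and consequently $R_{n,m}(K)$ plays the role of $\widehat{S}$ in Hypothesis~\ref{H:local nondegenerate} relative to $R = R^\dagger_{n,0}(A)$, $S = R^\dagger_{n,m}(A)$. Proposition~\ref{P:descend good ring} then applies to the minimal decomposition (viewed as a filtration), giving $\phi_\alpha = \psi_\alpha + r_\alpha$ with $\psi_\alpha \in R^\dagger_{n,m}(A)$ and $r_\alpha \in R_{n,0}(K)$. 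In the good case, when $\phi_\alpha = u_\alpha x_1^{-i_1}\cdots x_m^{-i_m}$ with $u_\alpha \in R_{n,0}(K)^\times$, the element $w_\alpha := \psi_\alpha x_1^{i_1}\cdots x_m^{i_m}$ lies in $R^\dagger_{n,m}(A) \cap R_{n,0}(K) = R^\dagger_{n,0}(A)$ (a direct check on the supports of power series coefficients) and shares the nonzero constant term of $u_\alpha$; hence $w_\alpha$ is a unit in the local ring $R^\dagger_{n,0}(A)$, and $\psi_\alpha$ retains the good form. Condition~(b) of a good decomposition, for pairs $\alpha, \beta$, follows by the same reasoning applied to $\phi_\alpha - \phi_\beta$.

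Finally, for each $\alpha$ I would set $\calR'_\alpha := E(-\psi_\alpha) \otimes_{R^\dagger_{n,m}(A)} M_\alpha$. Its base change to $R_{n,m}(K)$ is $E(r_\alpha) \otimes \calR_\alpha$, a tensor of regular modules, hence regular. Proposition~\ref{P:descend2} then forces $\calR'_\alpha$ itself to be regular over $R^\dagger_{n,m}(A)$, so $M = \bigoplus_\alpha E(\psi_\alpha) \otimes \calR'_\alpha$ is the desired admissible (respectively good) decomposition. The hard part of the plan will be the identification of $R_{n,0}(K)$ with the maximal-ideal completion of $R^\dagger_{n,0}(A)$, which is what unlocks Proposition~\ref{P:descend good ring}; once this is in place, preservation of the good form reduces to a short unit check in the local ring, and everything else is furnished by the previously established descent results.
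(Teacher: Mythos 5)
Your proof is correct and follows the same three-step strategy as the paper: descend the horizontal projectors via Proposition~\ref{P:descend1} applied to $\End(M)$, descend the exponents $\phi_\alpha$ via Proposition~\ref{P:descend good ring} after identifying $R_{n,0}(K)$ as the maximal-ideal completion of the henselian local ring $R^\dagger_{n,0}(A)$, and conclude regularity of the residual factors via Proposition~\ref{P:descend2}. The paper's own proof is terse and leaves implicit both the reduction to the minimal decomposition and the verification that the good form of the $\phi_\alpha$ is preserved (your unit check $w_\alpha \in R^\dagger_{n,m}(A) \cap R_{n,0}(K) = R^\dagger_{n,0}(A)$), so your write-up is a faithful and slightly more complete rendering of the intended argument.
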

\begin{proof}
Given an admissible decomposition of 
$M \otimes_{R^\dagger_{n,m}(A)} R_{n,m}(K)$,
the projectors onto the summands are horizontal sections
of $\End(M) \otimes_{R^\dagger_{n,m}(A)} R_{n,m}(K)$.
These descend to $\End(M)$ by Proposition~\ref{P:descend1}.
With notation as in \eqref{eq:local model1}, 
the $\phi_\alpha$ can be chosen in $R^\dagger_{n,m}(A)$
by Proposition~\ref{P:descend good ring}.
Hence the $\calR_\alpha$ can be defined over $R^\dagger_{n,m}(A)$;
they are regular by Proposition~\ref{P:descend2}.
\end{proof}
\begin{cor} \label{C:descend}
Let $M$ be a finite differential module over 
$A\llbracket x_1,\dots,x_n \rrbracket 
[x_1^{-1},\dots,x_m^{-1}]$.
Then any admissible (resp.\ good) decomposition of 
$K\llbracket x_1,\dots,x_n \rrbracket 
[x_1^{-1},\dots,x_m^{-1}]$
descends to an admissible (resp.\ good) decomposition of 
$A_f\llbracket x_1,\dots,x_n \rrbracket 
[x_1^{-1},\dots,x_m^{-1}]$
for some nonzero $f \in A$.
\end{cor}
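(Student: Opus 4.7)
The plan is to use Theorem~\ref{T:descend} as a black box: first I would upgrade the decomposition over $K\llbracket x_1,\dots,x_n\rrbracket[x_1^{-1},\dots,x_m^{-1}]$ to one over the intermediate ring $R_{n,m}^\dagger(A)$, then I would exploit the filtered union structure of $R_{n,m}^\dagger(A)$ to descend the decomposition to a single finite level, and finally I would transport along a natural ring inclusion into $A_f\llbracket x_1,\dots,x_n\rrbracket[x_1^{-1},\dots,x_m^{-1}]$.

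For the first step, note that $A\llbracket x_1,\dots,x_n\rrbracket[x_1^{-1},\dots,x_m^{-1}]$ sits inside $R_{n,m}^\dagger(A)$ as the $f=1$ piece of the defining filtered union, and that the further base change $R_{n,m}^\dagger(A) \to R_{n,m}(K)$ identifies the given decomposition of $M \otimes K\llbracket x_1,\dots,x_n\rrbracket[x_1^{-1},\dots,x_m^{-1}]$ with a decomposition of $N \otimes_{R_{n,m}^\dagger(A)} R_{n,m}(K)$, where $N = M \otimes R_{n,m}^\dagger(A)$. Theorem~\ref{T:descend} then produces an admissible (resp.\ good) decomposition of $N$ itself. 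Since $M$ has finite rank, I may pass to the minimal refinement (Remark~\ref{R:minimal}), so the index set $I$ is finite, and the decomposition is encoded by finitely many projectors in $\End(N)$, finitely many scalars $\phi_\alpha$, and regulating lattices for finitely many regular factors $\calR_\alpha$. All of this is a finite collection of elements of the filtered union $R_{n,m}^\dagger(A)$, so there exists a single nonzero $f \in A$ such that the entire decomposition is already defined over
\[
B_f = A\llbracket x_1/f,\dots,x_n/f\rrbracket[(x_1/f)^{-1},\dots,(x_m/f)^{-1}][f^{-1}].
\]

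For the final step, I would use the differential ring inclusion $B_f \hookrightarrow A_f\llbracket x_1,\dots,x_n\rrbracket[x_1^{-1},\dots,x_m^{-1}]$ (well-defined because $f$ is a unit on the right-hand side, and compatible with all the $\del_i$ and with the derivations from $A$), and base-change the decomposition of $M \otimes B_f$ along it. Admissibility is preserved because the $\phi_\alpha$ remain in the target localization and each $\calR_\alpha$, being given by a $\Delta^{\log}$-stable lattice over $B_f$, base-changes to a $\Delta^{\log}$-stable lattice over $A_f\llbracket x_1,\dots,x_n\rrbracket[x_1^{-1},\dots,x_m^{-1}]$. Goodness is preserved because units in $B_f$ map to units in $A_f\llbracket x_1,\dots,x_n\rrbracket$, and an expression of the form $u x_1^{-i_1}\cdots x_m^{-i_m}$ with some $i_j > 0$ remains outside the power series subring on the right. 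The only real obstacle in this proof is the simultaneous realization at a single level $f$ in the second step, but this is automatic once finiteness of $I$ is noted, so I expect no serious difficulty beyond careful bookkeeping.
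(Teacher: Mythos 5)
Your proposal is correct and follows the route the paper intends: Corollary~\ref{C:descend} is stated without separate proof precisely because it is meant to be the immediate consequence of Theorem~\ref{T:descend} that you spell out, namely base-changing to $R_{n,m}^\dagger(A)$, descending the decomposition there, and then observing that the finitely many projectors, the $\phi_\alpha$ (with the relevant units and their inverses), and bases for the regular factors all live at a single finite level $A\llbracket x_1/f,\dots,x_n/f\rrbracket[(x_1/f)^{-1},\dots,(x_m/f)^{-1}][f^{-1}]$, which embeds into $A_f\llbracket x_1,\dots,x_n\rrbracket[x_1^{-1},\dots,x_m^{-1}]$. The only caveat is bookkeeping you already acknowledge (e.g.\ enlarging $f$ so that the units occurring in the goodness conditions are invertible at that level), which does not affect the argument.
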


\subsection{Good formal structures}

One application of Theorem~\ref{T:descend} is to relate 
ramified good decompositions over complete rings to good formal structures
over noncomplete rings.

\begin{prop} \label{P:good formal}
Let $R$ be a nondegenerate differential local ring
with completion $\widehat{R}$.
Let $x_1,\dots,x_n$ be a regular sequence of parameters for $R$,
and put $S = R[x_1^{-1},\dots,x_m^{-1}]$ for some $m$.
Let $M$ be a finite differential module over $S$. Then any
ramified good decomposition of $M \otimes_R \widehat{R}$ 
induces a good formal structure of $M$.
\end{prop}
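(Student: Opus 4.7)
My plan is to exploit the structural description of $R'$ as an iterated power series ring over a smaller nondegenerate local ring, and then reduce the descent to a variant of Corollary~\ref{C:descend}.

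First, I would identify $R'$ structurally. By construction $R'$ is $x_i$-adically complete for each $i \in \{1,\dots,m\}$, so iterating Lemma~\ref{L:complete nondegenerate}(c) yields an isomorphism $R' \cong B\llbracket x_1,\dots,x_m \rrbracket$ in which each $\del_i$ corresponds to $\frac{\del}{\del x_i}$, where $B$ denotes the joint kernel of $\del_1,\dots,\del_m$ on $R'[x_1^{-1},\dots,x_m^{-1}]$; equivalently $B \cong R/(x_1,\dots,x_m)$. Then $B$ is a nondegenerate differential local ring of Krull dimension $n-m$ with maximal ideal $\gothm_B$ generated by the images of $x_{m+1},\dots,x_n$, and its $\gothm_B$-adic completion $\widehat{B}$ satisfies $\widehat{R} \cong \widehat{B}\llbracket x_1,\dots,x_m \rrbracket$. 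The inclusion $R' \hookrightarrow \widehat{R}$ is induced by $B \hookrightarrow \widehat{B}$.

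Second, I would track the ramified cover. By Definition~\ref{D:local model1}, the hypothesized ramified good decomposition lives over an extension $\widehat{R}''_1[x_1^{1/h},\dots,x_m^{1/h}]$ with $\widehat{R}''_1$ finite \'etale over $\widehat{R}$. Since $\widehat{R}''_1$ is complete local and finite over $\widehat{R}$, the Cohen structure theorem identifies $\widehat{R}''_1 \cong \widehat{B}''\llbracket x_1,\dots,x_m \rrbracket$ for some finite \'etale extension $\widehat{B}''/\widehat{B}$. Invoking Remark~\ref{R:regular local} (elements of $\widehat{B}$ algebraic over $B$ generate finite \'etale extensions of $B$ within $\widehat{B}$ with the same residue field), I descend $\widehat{B}''$ to a finite \'etale extension $B''/B$ with $B'' \otimes_B \widehat{B} \cong \widehat{B}''$. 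Setting $R''_1 = B''\llbracket x_1,\dots,x_m \rrbracket$ and $R'' = R''_1[x_1^{1/h},\dots,x_m^{1/h}]$ produces a ramified extension of $R'$ of the form required by a good formal structure.

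Finally, I would descend the good decomposition from $M \otimes_R \widehat{R}''_1[x_1^{1/h},\dots,x_m^{1/h}]$ down to $M \otimes_R R''$. The key step invokes Corollary~\ref{C:descend} with $A = B''$, $K = \Frac(B'')$, and power series variables $x_1^{1/h},\dots,x_m^{1/h}$, all inverted. The main obstacle is that the given decomposition resides over the \emph{completion} ring $\widehat{B}''\llbracket x_1^{1/h},\dots,x_m^{1/h}\rrbracket[\cdots]$ while Corollary~\ref{C:descend} descends decompositions that start over the \emph{fraction field} ring $K\llbracket x_1^{1/h},\dots,x_m^{1/h}\rrbracket[\cdots]$; these rings sit as incomparable subrings of the common overring $\Frac(\widehat{B}'')\llbracket x_1^{1/h},\dots,x_m^{1/h}\rrbracket[\cdots]$. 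To bridge this, I would extend scalars to that overring (the good decomposition persists under field extension), then apply the uniqueness of minimal admissible decompositions (Remark~\ref{R:minimal}) together with the intersection identity of Remark~\ref{R:locally free} to realize the decomposition already over $K\llbracket x_1^{1/h},\dots,x_m^{1/h}\rrbracket[\cdots]$; this last step is where a further finite \'etale enlargement of $B''$ may be needed to absorb a localization factor $f \in B''$ returned by Corollary~\ref{C:descend}. The resulting good decomposition over $R''$, together with Proposition~\ref{P:descend1} and Proposition~\ref{P:descend2} to descend the projectors and verify regularity of the residual modules $\calR_\alpha$, then constitutes the required good formal structure of $M$.
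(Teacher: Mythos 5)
Your overall framing (identify $R' \cong B\llbracket x_1,\dots,x_m\rrbracket$ via Lemma~\ref{L:complete nondegenerate} with $B$ the joint kernel of $\del_1,\dots,\del_m$, then descend the cover and the decomposition) mirrors the paper's setup, but Step~3 has a genuine gap that the paper avoids by invoking Theorem~\ref{T:criterion}, which you never use.

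Concretely: Corollary~\ref{C:descend} (and the underlying Theorem~\ref{T:descend}) takes as \emph{input} a good decomposition over $\Frac(B'')\llbracket x_1^{1/h},\dots,x_m^{1/h}\rrbracket[\cdots]$, and you do not have one. What you have lives over $\widehat{B}''\llbracket\cdots\rrbracket[\cdots]$. Your proposed fix --- extend to the common overring $\Frac(\widehat{B}'')\llbracket\cdots\rrbracket[\cdots]$, then use uniqueness of minimal decompositions (Remark~\ref{R:minimal}) and the intersection identity (Remark~\ref{R:locally free}) --- cannot produce a decomposition over $\Frac(B'')\llbracket\cdots\rrbracket[\cdots]$: the intersection identity lets you descend a decomposition that you \emph{already} have over two subrings to their intersection, and uniqueness guarantees the two would agree if both existed, but neither gives you \emph{existence} over the second ring. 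From the single given decomposition over $\widehat{B}''\llbracket\cdots\rrbracket[\cdots]$ you can only conclude the projectors lie in the $\widehat{B}''\llbracket\cdots\rrbracket[\cdots]$-span; nothing forces them into the $\Frac(B'')\llbracket\cdots\rrbracket[\cdots]$-span. This is the whole difficulty of the descent.

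The paper's proof fills exactly this gap with the numerical criterion: numericality of $M$ and $\End(M)$ over $\widehat{R}$ (equivalent to the hypothesized ramified good decomposition, by Theorem~\ref{T:criterion}) yields numericality over the \emph{other} complete ring $K\llbracket x_1,\dots,x_m\rrbracket$ with $K = \Frac(R_m)$, and hence --- applying Theorem~\ref{T:criterion} again, this time in the other direction --- a ramified good decomposition of $M\otimes_S K'\llbracket x_1^{1/h},\dots,x_m^{1/h}\rrbracket[\cdots]$ for some finite $K'/K$. Only with \emph{both} decompositions in hand can Remark~\ref{R:locally free} descend the projectors to $T\llbracket x_1,\dots,x_m\rrbracket[\cdots]$ for $T = \widehat{R_m}\cap K'$, which is finite \'etale over $R_m$ by Remark~\ref{R:regular local}. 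A secondary remark: your Step~2 cites Remark~\ref{R:regular local} to descend $\widehat{B}''$ to $B''$, but that remark concerns elements of $\widehat{B}$ algebraic over $B$ and produces extensions with \emph{unchanged} residue field, so it does not apply when $\widehat{B}''$ has enlarged residue field; the correct route there is to lift the (separable) residue field extension directly to a finite \'etale cover of $B$ and use that $\widehat{B}$ is henselian. You should also be careful that the localization factor $f \in B''$ produced by Corollary~\ref{C:descend} cannot simply be ``absorbed by a finite \'etale enlargement'' --- $B''$ is local and inverting a non-unit $f$ leaves the local category; the paper sidesteps this entirely by working with the intersection ring $T$.
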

\begin{proof}
We may assume from the outset that $R$ is complete with respect
to the ideal $(x_1,\dots,x_m)$.
In addition,
by replacing $R$ with a finite integral extension $R'$ such 
that $R' \otimes_R S$
is \'etale over $S$, we may reduce to the case where $M \otimes_R
\widehat{R}$ admits
a good decomposition.

Choose derivations
$\del_1,\dots,\del_n \in \Delta_R$ of rational type with respect to $x_1,\dots,x_n$,
then identify $\widehat{R}$ with
$k \llbracket x_1,\dots,x_n \rrbracket$ as in Corollary~\ref{C:complete
nondegenerate}. Let $R_m$ be the joint kernel of $\del_1,\dots,\del_m$
on $R$; then by Lemma~\ref{L:complete nondegenerate}, 
we have an isomorphism $R \cong R_m \llbracket x_1,\dots,x_m \rrbracket$.
Put $K =\Frac(R_m)$.
By Theorem~\ref{T:criterion}, for some finite extension $K'$ of $K$
and some positive integer $h$,
\[
M_{K'} = M \otimes_S K' \llbracket x_1^{1/h},\dots,x_m^{1/h} \rrbracket [x_1^{-1/h},\dots,x_m^{-1/h}]
\]
admits a ramified good decomposition;
by enlarging $R$, we may reduce to the case $h=1$.

Put $L = \Frac(k\llbracket x_{m+1},\dots,x_n \rrbracket)$,
and let $L'$ be a component of $L \otimes_K K'$.
By Remark~\ref{R:locally free}, combining the minimal good decompositions
of $M \otimes_R \widehat{R}$ and $M_{K'}$ yields a good decomposition of 
\[
M \otimes_S T\llbracket x_1,\dots,x_m \rrbracket [x_1^{-1},\dots,x_m^{-1}]
\]
for $T$ equal to the intersection 
$k \llbracket x_{m+1},\dots,x_n \rrbracket \cap K'$ within $L'$.
By Remark~\ref{R:regular local}, $T$ is finite \'etale over $R$, yielding the desired result.
\end{proof}

\section{Good formal structures}
\label{sec:good formal define}

We collect some basic facts about good formal structures
on nondegenerate differential schemes, complex
analytic varieties,
and formal completions thereof.

\setcounter{theorem}{0}
\begin{hypothesis} \label{H:good formal define}
Throughout \S~\ref{sec:good formal define},
let $X$ be \emph{either} a nondegenerate differential scheme,
or a smooth (separated) complex analytic space (see \S~\ref{subsec:complex}).
For short, we distinguish these two options as the \emph{algebraic case} and 
the \emph{analytic case}.
Let $Z$ be a closed subspace of $X$ containing no irreducible component of
$X$.
Let $\widehat{X|Z}$ be the formal completion of $X$ along $Z$
(in the category of locally ringed spaces).
Let $\calE$ be a $\nabla$-module over $\calO_{\widehat{X|Z}}(*Z)$.
\end{hypothesis}

\subsection{Good formal structures}

\begin{defn}
Let $x \in Z$ be a point in a neighborhood of which 
$(X,Z)$ is a regular pair.
We say that $\calE$ admits an \emph{admissible decomposition}
(resp.\ a \emph{good decomposition}, a \emph{ramified good decomposition})
at $x$ if 
the restriction of $\calE$ to $\widehat{\calO_{X,x}}(*Z)$
admits an admissible decomposition (resp.\ a good decomposition,
a ramified good decomposition).
Let $Y$ be the intersection of the components of $Z$ passing through $x$;
by Proposition~\ref{P:good formal},
the restriction of $\calE$ to $\widehat{\calO_{X,x}}(*Z)$
admits a ramified good decomposition if and only if the restriction of
$\calE$ to $\calO_{\widehat{X|Y},x}(*Z)$ does so. We describe this
condition
by saying that $\calE$ admits a \emph{good formal structure} at $x$.

Suppose $(X,Z)$ is a regular pair.
We define the \emph{turning locus} of $\calE$ to be
the set of $x \in Z$ at which
$\calE$ fails to admit a good formal structure; 
this set may be equipped with the structure of a 
reduced closed subspace of $Z$, by Proposition~\ref{P:open locus}
below.
\end{defn}

\begin{remark}
One might consider the possibility that the restriction of
$\calE$ to $\calO_{\widehat{X|Z},x}(*Z)$ itself admits a ramified good
decomposition, or in Sabbah's language, that $\calE$ admits a 
\emph{very good formal structure} at $x$.
However, an argument of Sabbah \cite[Lemme~I.2.2.3]{sabbah} shows that 
one cannot
in general achieve very good formal structures even after blowing up. 
For this reason, we make no further study of very good formal structures.
\end{remark}

\begin{remark}
If $\calE$ is defined over $\calO_X(*Z)$ itself, one can also speak about
good formal structures at points outside of $Z$, but they trivially always
exist.
\end{remark}

\begin{prop} \label{P:open locus}
Suppose $(X,Z)$ is a regular pair. Then
the turning locus of $\calE$ is the underlying set of a unique reduced
closed subspace of $Z$, containing no irreducible component of $Z$.
\end{prop}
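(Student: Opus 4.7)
The proposition splits into two assertions: (i) the turning locus is closed in $Z$, and (ii) it contains no irreducible component of $Z$. Granting both, the reduced closed subspace structure is automatic, since any closed subset of a scheme carries a unique reduced closed subspace structure. I would prove them in order.

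For (ii), consider the generic point $\eta_0$ of any irreducible component $Z_0$ of $Z$. Only $Z_0$ passes through $\eta_0$, so by the regular-pair hypothesis $\calO_{X,\eta_0}$ is a discrete valuation ring with uniformizer $t$ cutting out $Z_0$, and $\widehat{\calO_{X,\eta_0}}(*Z) = \kappa(\eta_0)((t))$. The classical Hukuhara-Levelt-Turrittin theorem recalled in the introduction yields a ramified good decomposition of any differential module over such a field; the conditions (a), (b) of Definition~\ref{D:local model1} are vacuous with a single pole variable. Hence $\eta_0$ has good formal structure, so $Z_0 = \overline{\{\eta_0\}}$ is not contained in the turning locus.

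For (i), I would fix $x \in Z$ with good formal structure at $x$ and produce a Zariski-open neighborhood of $x$ in $Z$ on which this property persists. Pass to local coordinates $x_1,\dots,x_n$ so that $Z = V(x_1 \cdots x_m)$ with components $Z_i = V(x_i)$ near $x$, and set $Y := V(x_1,\dots,x_m)$, the intersection of components through $x$; shrink so that $Z$ has no other components. By Proposition~\ref{P:good formal}, the hypothesis yields a ramified good decomposition over $\calO_{\widehat{X|Y},x}(*Z) \cong \calO_{Y,x}\llbracket x_1,\dots,x_m \rrbracket[x_1^{-1},\dots,x_m^{-1}]$. Choose an affine open $V \subseteq Y$ about $x$ with $A := \Gamma(V,\calO_Y)$ and $K := \Frac(A)$; base-change the decomposition to $K\llbracket x_1,\dots,x_m \rrbracket[x_1^{-1},\dots,x_m^{-1}]$ and apply Corollary~\ref{C:descend} to descend it to a good decomposition over $A_f\llbracket x_1,\dots,x_m \rrbracket[x_1^{-1},\dots,x_m^{-1}]$ for some nonzero $f \in A$. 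Geometrically, $\calE$ now admits a good decomposition on the formal completion of $X$ along $D(f) \cap V \subseteq Y$.

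It remains to verify good formal structure at every $x' \in Z$ in a Zariski neighborhood of $x$. If $x' \in Y \cap D(f)$, every $Z_i$ passes through $x'$, so $Y_{x'} = Y$ and base-change of the descended decomposition to $\calO_{Y,x'}$ produces the ramified good decomposition needed over $\calO_{\widehat{X|Y},x'}(*Z)$. If $x'$ lies on only $\{Z_i\}_{i \in I}$ for some $I \subsetneq \{1,\dots,m\}$, each $x_j$ with $j \notin I$ is a unit in $\calO_{X,x'}$; absorbing $\prod_{j \notin I} x_j^{-a_j}$ into the unit factor turns each exponent $\phi_\alpha = u \prod_i x_i^{-a_i}$ into the unit-times-monomial form on $\{x_i\}_{i \in I}$ required at the stratum $Y_{x'}$. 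The analytic case proceeds in parallel, using that smooth complex analytic local rings are nondegenerate (Example~\ref{exa:nondegenerate}(b)) and working with Stein neighborhoods (cf.\ Corollary~\ref{C:Stein}) in place of affine opens. The main obstacle is precisely this last transition: converting the descended decomposition living on $\widehat{X|Y}$ into good formal structures at points $x' \in Z \setminus Y$ that lie on only a proper subset of the components $Z_i$, which demands careful bookkeeping with Definition~\ref{D:local model1}(a), (b) and with the ambient rings as they vary across strata of different codimension.
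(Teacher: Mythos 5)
Your part (ii) (no component of $Z$ lies in the turning locus) and your reduction of the analytic case to the algebraic case via Stein neighborhoods and Corollary~\ref{C:Stein} both match the paper exactly. The gap is in part (i). You try to show the complement of the turning locus is open by spreading out from a single point $x$: Proposition~\ref{P:good formal} gives a ramified good decomposition over $\calO_{\widehat{X|Y},x}(*Z)$ with $Y$ the deepest stratum through $x$, and Corollary~\ref{C:descend} descends that to $A_f\llbracket x_1,\dots,x_m\rrbracket[x_1^{-1},\dots,x_m^{-1}]$. But this produces data living only on the formal neighborhood of $Y\cap D(f)$, whose underlying space is contained in $Y$. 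A nearby point $x'\in Z$ lying on a strictly smaller set of components $\{Z_i\}_{i\in I}$ is not a point of this formal scheme, and its relevant completion $\calO_{\widehat{X|Y_{x'}},x'}(*Z)$ (along the \emph{larger} stratum $Y_{x'}\supsetneq Y$) receives no map from the $\widehat{X|Y}$ picture. So the final sentence about ``absorbing $x_j^{-a_j}$ into the unit'' never gets off the ground: there is nothing at $x'$ to rewrite. You flag this as ``careful bookkeeping,'' but it is a real missing step, not bookkeeping.

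The paper sidesteps this stratum problem entirely. Rather than spread out from an arbitrary closed point, it takes an arbitrary irreducible closed subset $W\subseteq Z$ with $W\not\subseteq T$, uses the numerical criterion (Theorem~\ref{T:criterion}, restriction of a linear $F(\calE,r)$ to a coordinate face) to pass \emph{up} to the generic point $\eta_W$ of $W$, and only then invokes Corollary~\ref{C:descend} at $\eta_W$ to conclude that $T\cap W$ lies in a proper closed subset of $W$. A short noetherian-induction or closure argument then gives that $T$ is closed, with each irreducible closed set handled against its own generic point, so there is no need to compare formal completions along different strata. If you want to repair your version, replace the ``spread out from $x$'' step with this generization-to-the-generic-point step, applied to the components of $\overline{T}$.
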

\begin{proof}
We first treat the algebraic case.
Suppose $W$ is an irreducible 
closed subset of $Z$ not contained in the turning locus.
By the numerical criterion from Theorem~\ref{T:criterion},
the generic point of $W$ also lies outside the turning locus.
Corollary~\ref{C:descend} then implies that the intersection of $W$ with the
turning locus is contained in some closed proper subset of $W$.
By noetherian induction, it follows that the turning locus is closed in 
$Z$; it thus carries a unique reduced subscheme structure.
Moreover, the turning locus cannot contain the generic point of any component
of $Z$, because at any such point we may apply the usual
Turrittin-Levelt-Hukuhara decomposition theorem (or equivalently,
because the numerical criterion of Theorem~\ref{T:criterion} is always
satisfied when the base ring is one-dimensional). Hence the turning locus
cannot contain
any whole irreducible component of $Z$.

We next reduce the analytic case to the algebraic case.
Recall that $X$ admits a neighborhood basis consisting of compact subsets
of Stein subspaces of $X$. Let $K$ be an element of this basis,
let $U$ be a Stein subspace of $X$ containing $K$,
and let $V$ be an open set contained in $K$.
By Corollary~\ref{C:Stein}, the localization $R$ of $\Gamma(U, \calO_U)$
at $K$ is noetherian, hence a nondegenerate differential ring.
Let $I$ be the ideal of $R$ defined by $Z$, and put $M = \Gamma(U, \calE) \otimes_{\Gamma(U, \calO_U)} R$ as a differential
module over $R$. By the previous paragraph, the turning
locus of $M$ may be viewed as a reduced closed subscheme of $\Spec(R/I)$
not containing any irreducible component.
Its inverse image under the map
$V \to \Spec(R)$ is then a reduced closed subspace of $V \cap Z$
not containing any irreducible component. Since we can choose 
$V$ to cover a neighborhood of any given point of $X$, we deduce the
desired result.
\end{proof}

\subsection{Irregularity and turning loci}
\label{subsec:irregularity}

It will be helpful to rephrase the
numerical criterion for good formal structures
(Theorem~\ref{T:criterion}) in geometric language.
For this, we must first formalize the notion of irregularity.

\begin{defn}
Let $E$ be an irreducible component of $Z$. We define the \emph{irregularity}
$\Irr_E(\calE)$ of $\calE$ along $E$ as follows.

Suppose first that we are in the algebraic case. Let $\eta$ be the generic point
of $E$. Let $L$ be the completion of
$\Frac(\calO_{X,\eta})$, equipped with its discrete valuation
normalized to have value group $\ZZ$.
We define $\Irr_E(\calE)$ as the irregularity
of the differential module over $L$ induced by $\calE$, in the sense of
\cite[Definition~1.4.9]{kedlaya-goodformal1}.

Suppose next that we are in the analytic case; in this case, we use a
``cut by curves'' definition.
We may assume that $(X,Z)$ is a regular pair by discarding its irregular locus
(which has codimension at least 2 in $X$).
Let $T$ be the turning locus of $\calE$; by Proposition~\ref{P:open locus},
$T \cap E$ is a proper closed subspace of $E$, so in particular
its complement is dense in $E$.
We claim that there exists a nonnegative integer $m$ with the following
property: for any curve $C$ in $X$
and any isolated point $z$ of $C \cap E$ not belonging to $T$,
the irregularity of the restriction of $\calE$ to $C$ at the point $z$ is equal
to $m$ times the intersection multiplicity of $C$ and $E$ at $z$.
Namely, it suffices to check this assertion on each element of a basis for the
topology of $X$, which may be achieved as in the proof of
Proposition~\ref{P:open locus}. We define $\Irr_E(\calE) = m$.
\end{defn}

\begin{defn}
An \emph{irregularity divisor} for $\calE$ is a Cartier divisor
$D$ on $X$ such that for any normal modification $f: Y \to X$
and any prime divisor $E$ on $Y$,  the irregularity
of $f^* \calE$ along $E$
is equal to the multiplicity of $f^* D$ along $E$. Such a divisor
is unique if it exists. Moreover, any $\QQ$-Cartier divisor satisfying
the definition must have all integer multiplicities, and so must be an 
integral Cartier divisor.
\end{defn}

\begin{prop} \label{P:geom numerical}
Suppose that $(X,Z)$ is a regular pair.
Then the following conditions are equivalent.
\begin{enumerate}
\item[(a)]
The turning locus of $\calE$ is empty.
\item[(b)]
Both $\calE$ and $\End(\calE)$ admit irregularity divisors.
\end{enumerate}
\end{prop}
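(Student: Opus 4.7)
The plan is to invoke the pointwise numerical criterion of Theorem~\ref{T:criterion} to rephrase condition (a): the turning locus is empty if and only if, at every $x \in Z$, both $\calE \otimes \widehat{\calO_{X,x}}$ and $\End(\calE) \otimes \widehat{\calO_{X,x}}$ are numerical, in the sense that the irregularity functions $F(\cdot, r)$ are linear in $r$. The proposition then reduces to the following auxiliary equivalence, applied separately to $\calE$ and $\End(\calE)$: a $\nabla$-module $\calF$ on $\calO_{\widehat{X|Z}}(*Z)$ is pointwise numerical if and only if it admits an irregularity divisor.

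For the direction ``pointwise numericality $\Rightarrow$ existence of an irregularity divisor'', I would define the Cartier divisor
\[
D_\calF := \sum_E \Irr_E(\calF) \cdot E
\]
summed over irreducible components $E$ of $Z$; it is integral since $(X,Z)$ is a regular pair and each coefficient is a nonnegative integer. To verify that $D_\calF$ is an irregularity divisor, I would argue locally: at each $x \in Z$, pass to a finite cover on which $\calF$ acquires a genuine good decomposition $\bigoplus_\alpha E(\phi_\alpha) \otimes \calR_\alpha$ with $\phi_\alpha = u_\alpha x_1^{-i_{\alpha,1}} \cdots x_m^{-i_{\alpha,m}}$, and compute both sides of the irregularity divisor identity along an arbitrary divisorial valuation with local weights $(w_1, \ldots, w_m)$. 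The irregularity of $\calF$ and the multiplicity of the pullback of $D_\calF$ both evaluate to $\sum_\alpha \rank(\calR_\alpha) \sum_j i_{\alpha, j} w_j$, with any ramification index appearing identically on the two sides and cancelling.

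For the converse, assuming $D_\calE$ and $D_{\End(\calE)}$ exist, I would fix $x \in Z$ and choose local coordinates with $Z = \{x_1 \cdots x_m = 0\}$ near $x$. For rational $r = (r_1, \ldots, r_n)$ with $r_j > 0$ for $j \leq m$, the Gauss norm $|\cdot|_r$ is the valuation attached to the exceptional divisor $E_r$ of a toric weighted blowup $f \colon Y \to X$; the irregularity divisor property then forces
\[
F(\calE, r) \;=\; \mathrm{mult}_{E_r}(f^* D_\calE) \;=\; \sum_{j=1}^{m} r_j \cdot \mathrm{mult}_{\{x_j = 0\}}(D_\calE),
\]
which is a single linear function of $r$ on a dense subset of $[0, +\infty)^n$. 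Continuity of $F(\calE, \cdot)$ from Theorem~\ref{T:convex} extends the linearity to all $r$, and the same argument handles $\End(\calE)$; Theorem~\ref{T:criterion} then produces a ramified good decomposition at $x$, so $x$ lies outside the turning locus.

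The main obstacle I anticipate is the pullback-compatibility check in the first direction: I need to confirm that $\Irr_{E'}(f^*\calF) = \mathrm{mult}_{E'}(f^*D_\calF)$ holds not just for toric blowups in the initially chosen coordinates but for \emph{arbitrary} normal modifications and arbitrary prime divisors thereon. This should follow because the monomial form of the $\phi_\alpha$'s persists after further blowing up, so the irregularity along any divisorial valuation can still be read off from the good decomposition; the delicate bookkeeping is tracking the cover and its ramification across coordinate changes. In the analytic case, I would first localize to compact subsets of Stein subspaces, as in the proof of Proposition~\ref{P:open locus}, so that the nondegenerate differential ring machinery applies verbatim to the corresponding coordinate rings and the algebraic argument transfers.
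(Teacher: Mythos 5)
Your proposal is correct and follows essentially the same route as the paper: one direction is exactly the numerical criterion of Theorem~\ref{T:criterion} (read through Gauss/monomial valuations and the convexity-continuity of $F$ from Theorem~\ref{T:convex}), and the other reduces locally to a ramified good decomposition with monomial $\phi_\alpha$ (via Proposition~\ref{P:descend good ring}) and reads off the irregularity divisor, which is the paper's local expression $-\sum_\alpha \rank(\calR_\alpha)\divis(\phi_\alpha)$ written in your form $\sum_E \Irr_E(\calF)E$. The pullback-compatibility and analytic-case checks you flag are handled in the paper the same way, by reduction to the argument of Proposition~\ref{P:open locus}.
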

\begin{proof}
Given (b), (a) follows by Theorem~\ref{T:criterion}. Given (a),
we may check (b) locally around a point $x \in Z$.
We make a sequence of reductions to successively more restrictive
situations, culminating in one where we can read off the claim.
Namely, we reduce so as to enforce the following hypotheses.
\begin{enumerate}
\item[(a)]
There exist a regular sequence of parameters $x_1,\dots,x_n \in \calO_{X,x}$
for $X$ at $x$ (by shrinking $X$).
\item[(b)]
We have $Z = V(x_1 \cdots x_m)$ (by shrinking $X$).
\item[(c)]
The module $\calE$ admits a good decomposition at $x$
(by shrinking $X$, then
replacing $X$ by a finite cover ramified along $Z$).
\item[(d)]
With notation as in \eqref{eq:local model1},
the $\phi_\alpha$ belong to $\calO_{X,x}[x_1^{-1},\dots,x_m^{-1}]$
(by applying Proposition~\ref{P:descend good ring}).
\end{enumerate}
In this case, we claim that in some neighborhood of $x$,
the irregularity divisor of $\calE$ is the sum of the
principal divisors $-\rank(\calR_\alpha) \divis(\phi_\alpha)$
over all $\alpha \in I$ with $\phi_\alpha \notin \calO_{X,x}$,
while the irregularity divisor of $\End(\calE)$ is the sum of the
principal divisors $-\rank(\calR_\alpha) \rank(\calR_\beta)
\divis(\phi_\alpha-\phi_\beta)$
over all $\alpha,\beta \in I$ with $\phi_\alpha - \phi_\beta \notin 
\calO_{X,x}$.
This may be checked as in the proof of Proposition~\ref{P:open locus}.
\end{proof}

\subsection{Deligne-Malgrange lattices}
\label{subsec:deligne-malgrange}

In the work of Mochizuki \cite{mochizuki2},
the approach to constructing good formal structures is via the analysis
of Deligne-Malgrange lattices. Since we use a different technique to construct
good formal structures, it is worth indicating how to recover information
about Deligne-Malgrange lattices.

\begin{defn}
Let $\calF$ be a coherent sheaf over 
$\calO_X(*Z)$ (resp.\ 
over $\calO_{\widehat{X|Z}}(*Z)$).
A \emph{lattice} of $\calF$ is a coherent 
$\calO_X$-submodule (resp.\ $\calO_{\widehat{X|Z}}$-submodule)
$\calF_0$ of $\calF$
such that the induced
map 
$\calF_0 \otimes_{\calO_X} \calO_{X}(*Z) \to \calF$ 
(resp.\ 
$\calF_0 \otimes_{\calO_{\widehat{X|Z}}} \calO_{\widehat{X|Z}}(*Z) \to
\calF$) is surjective. We make the following observations.
\begin{enumerate}
\item[(a)]
Let $\calF$ be a coherent sheaf over 
$\calO_X(*Z)$, and put $\widehat{\calF} = \calF \otimes_{\calO_X(*Z)}
\calO_{\widehat{X|Z}}(*Z)$. Then the map
\[
\calF_0 \mapsto \widehat{\calF}_0 = \calF \otimes_{\calO_X}
\calO_{\widehat{X|Z}}
\]
gives a bijection between lattices of $\calF$ and lattices of $\widehat{\calF}$,
as in \cite[Proposition~1.2]{malgrange-reseau}.
Moreover, $\calF_0$ is locally free if and only if $\widehat{\calF}$ is,
because the completion of a noetherian local ring is faithfully flat
(see Remark~\ref{R:regular local}).
\item[(b)]
In the analytic case,
a coherent sheaf over $\calO_X(*Z)$ or $\calO_{\widehat{X|Z}}(*Z)$ need not admit
any lattices at all. See \cite[Exemples~1.5, 1.6]{malgrange-reseau}.
\end{enumerate}
\end{defn}

By contrast, even in the analytic case,
a $\nabla$-module always admits a lattice which is nearly canonical.
It only depends on a certain splitting of the reduction modulo $\ZZ$ map.
\begin{defn}
In the algebraic case, let $K_0$ be a field containing each
connected component of the subring
of $\Gamma(X, \calO_X)$ killed by the action of all derivations.
(Each of those components is a field by Lemma~\ref{L:nondegenerate}(d).)
In the analytic case, put $K_0 = \CC$.
Let $\overline{K_0}$ be an algebraic closure of $K_0$.
Let $\tau: \overline{K_0}/\ZZ \to \overline{K_0}$ be a section of the quotient 
$\overline{K_0} \to \overline{K_0}/\ZZ$.
We say $\tau$ is \emph{admissible} if $\tau(0) = 0$,
$\tau$ is equivariant for the action of the absolute Galois group of $K_0$,
and for any $\lambda \in \overline{K_0}$ and
any positive integer $a$, we have
\begin{equation} \label{eq:admissible}
\tau(\lambda) - \lambda = 
\left\lceil \frac{\tau(a\lambda) - a\lambda}{a} \right\rceil.
\end{equation}
Such a section always exists by \cite[Lemma~2.4.3]{kedlaya-goodformal1}.
For instance, if $K_0 = \CC$, one may take $\tau$ to have image
$\{s \in \CC: \Real(s) \in [0,1)\}$.
\end{defn}
For the remainder of \S~\ref{subsec:deligne-malgrange}, fix a choice
of an admissible section $\tau$.

\begin{defn}
Suppose that $(X,Z)$ is a regular pair.
A \emph{Deligne-Malgrange lattice} of the $\nabla$-module $\calE$ 
over $\calO_{\widehat{X|Z}}(*Z)$ 
is a lattice $\calE_0$ of $\calE$
such that for each point $x \in Z$, the restriction
of $\calE_0$ to $\widehat{\calO_{X,x}}$ is the Deligne-Malgrange
lattice of the restriction of $\calE$ to $\widehat{\calO_{X,x}}(*Z)$,
in the sense of \cite[Definition~4.5.2]{kedlaya-goodformal1}
(for the admissible section $\tau$).
Such a lattice is evidently unique if it exists.
\end{defn}

\begin{theorem} \label{T:DM1}
Suppose that $(X,Z)$ is a regular pair
and that $\calE$ has empty turning locus.
Then the Deligne-Malgrange lattice $\calE_0$ 
of $\calE$ exists and is locally free over $\calO_{\widehat{X|Z}}$.
\end{theorem}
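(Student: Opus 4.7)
The plan is to reduce to a local construction on a suitable ramified cover and then descend. Since the Deligne-Malgrange lattice at each point $x \in Z$ is uniquely characterized by \cite[Definition~4.5.2]{kedlaya-goodformal1}, it suffices to build a candidate coherent sheaf in a neighborhood of each $x \in Z$ and verify pointwise that its formal stalk recovers the Deligne-Malgrange lattice there. Uniqueness will then force the locally constructed sheaves to agree on overlaps and glue into a global $\calE_0$ on $\widehat{X|Z}$.

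For the local construction, I would fix $x \in Z$ and choose regular parameters $x_1,\dots,x_n$ at $x$ with $Z = V(x_1 \cdots x_m)$ nearby. Since the turning locus is empty, $\calE$ admits a good formal structure at $x$: after extracting an $h$-th root of each $x_i$ with $1 \le i \le m$ and passing to a further finite \'etale cover $R'$, the pullback of $\calE$ acquires a good decomposition $\bigoplus_\alpha E(\phi_\alpha) \otimes \calR_\alpha$. I would define the candidate lattice on each summand as the tensor product of the canonical rank-one lattice $\calO \cdot \bv$ inside $E(\phi_\alpha)$ with the unique lattice of $\calR_\alpha$ for which a basis as in Theorem~\ref{T:regular}(b) makes $x_1\del_1,\dots,x_m\del_m$ act via commuting matrices whose eigenvalues all lie in the image of the admissible section $\tau$. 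The admissibility condition \eqref{eq:admissible} is precisely what pins down this normalization uniquely among the available integer basis shifts, so the resulting lattice on the cover is canonical.

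To descend from $R'$ back to $R = \calO_{X,x}[(x_1 \cdots x_m)^{-1}]$, I would combine Galois descent on the \'etale factor of $R'/R$ with Kummer descent on the root extraction. Uniqueness of the Deligne-Malgrange lattice forces the candidate to be stable under the full covering group, so its invariants yield a lattice of $\calE$ on the formal completion at $x$. To spread this construction from the single point $x$ to a Zariski neighborhood within $\widehat{X|Z}$, I would appeal to Corollary~\ref{C:descend}, which descends the good decomposition from the complete local ring to a localization $\calO_X[f^{-1}][x_1^{-1},\dots,x_m^{-1}]$ for some $f$ nonvanishing at $x$; the summand-wise construction above then produces a coherent $\calO_{\widehat{X|Z}}$-lattice in a neighborhood of $x$.

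The main obstacle I anticipate is verifying local freeness after the Kummer descent: the $\mu_h$-action introduced by the root extraction interacts with the good decomposition, and one must check that on taking invariants the resulting lattice splits into locally free isotypic pieces. Here the good-form condition on each $\phi_\alpha$, namely that its pole divisor is a monomial in $x_1,\dots,x_m$, is exactly what produces a clean character decomposition under $\mu_h$ and preserves local freeness on descending. On the cover itself local freeness is immediate, since each $E(\phi_\alpha)$ is free of rank one and each regular $\calR_\alpha$ lattice is free by the basis produced in Theorem~\ref{T:regular}(b); so once the Kummer step is controlled, local freeness of $\calE_0$ over $\calO_{\widehat{X|Z}}$ follows.
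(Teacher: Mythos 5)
Your approach is genuinely different from the paper's and, with some care, could be made to work; let me compare and flag the points where it needs sharpening.

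The paper does not go through a ramified cover at all. Instead, after reducing to $X = \Spec(R)$ with $R = \calO_{\widehat{X|Z},x}$ (the formal local ring, i.e.\ $R$ is already $(x_1\cdots x_m)$-adically complete), it builds $\calE_0$ by a pure intersection construction: for each $i=1,\dots,m$ let $F_i$ be the $x_i$-adic completion of $\Frac(R)$, take the one-dimensional Deligne-Malgrange lattice $\calE_{0,i}$ of $\calE\otimes F_i$, and define $\calE_0$ as those elements of $\calE$ whose image in each $F_i$ lands in $\calE_{0,i}$. That this is a lattice and agrees with the complete-local Deligne-Malgrange lattice upon $\gothm$-adic completion is exactly the content of \cite[Lemma~4.1.2]{kedlaya-goodformal1}; local freeness then follows from \cite[Proposition~4.5.4]{kedlaya-goodformal1} in the complete case together with faithfully flat descent along $R\to\widehat{R}$ (Remark~\ref{R:regular local}). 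This route requires no good decomposition at the closed point, no cover, and no Kummer descent. Your route reconstructs the lattice summand-by-summand on a ramified cover using the admissible section $\tau$, then descends. This is essentially re-deriving the definition of the Deligne-Malgrange lattice rather than using its characterization at the codimension-one points, and it is noticeably longer; it does have the virtue of being more self-contained, since it does not lean on \cite[Lemma~4.1.2]{kedlaya-goodformal1}.

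Two points in your write-up need to be fixed. First, you write $R = \calO_{X,x}[(x_1\cdots x_m)^{-1}]$, but $\calE$ lives over $\calO_{\widehat{X|Z}}(*Z)$, so the correct base is the formal local ring $\calO_{\widehat{X|Z},x}$ (which is $(x_1\cdots x_m)$-adically complete), not the uncompleted $\calO_{X,x}$; the good formal structure you invoke is a ramified good decomposition over the $(x_1,\dots,x_m)$-adic completion (see Definition~\ref{D:local model1} and Proposition~\ref{P:good formal}), and one must then descend from there to the $(x_1\cdots x_m)$-adic completion, a step your argument elides. Second, the appeal to Corollary~\ref{C:descend} is misdirected: that corollary concerns power series rings $A\llbracket x_1,\dots,x_n\rrbracket[x_1^{-1},\dots,x_m^{-1}]$ (completions along a linear center, related to $\widehat{X|Y}$ for $Y$ the stratum), not the structure sheaf of $\widehat{X|Z}$, so it does not directly spread the lattice over a Zariski neighborhood in $\widehat{X|Z}$. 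You would instead want to argue, as the paper implicitly does, that the construction is local on $\widehat{X|Z}$ and that coherence and local freeness can be verified stalkwise. With those corrections your Galois-plus-Kummer descent of the canonical lattice does go through (the lattice on the cover is free, descent of projectivity along the finite flat map $R\to R'$ is effective, and the $\mu_h^m$-equivariance is forced by uniqueness of the Deligne-Malgrange lattice and Galois-equivariance of $\tau$), but it reproves facts that the paper imports directly from \cite{kedlaya-goodformal1}.
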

\begin{proof}
It suffices to check both assertions in case $X$ is the spectrum of a local ring $R$,
$Z$ is the zero locus of $x_1 \cdots x_m$ for some regular sequence of parameters
$x_1,\dots,x_n$ of $R$, and $R$ is complete with respect to the $(x_1 \cdots x_m)$-adic 
topology (but not necessarily with respect to the $(x_1, \dots, x_m)$-adic topology).
For $i=1,\dots,m$,
let $F_i$ be the $x_i$-adic completion of $\Frac(R)$,
put $\calE_i = \calE \otimes_i F_i$, and let $\calE_{0,i}$ be the
Deligne-Malgrange lattice of $\calE_i$.
We define $\calE_0$ to be the $R$-submodule of $\calE$ consisting of elements
whose image in $\calE_i$ belongs to $\calE_{0,i}$ for $i=1,\dots,m$. 
As in
\cite[Lemma~4.1.2]{kedlaya-goodformal1}, we see that $\calE_0$ is a lattice in $\calE$
and that $\calE_0 \otimes_R \widehat{R}$ is the Deligne-Malgrange lattice of
$\calE \otimes_{R[x_1^{-1},\dots,x_m^{-1}]} \widehat{R}[x_1^{-1},\dots,x_m^{-1}]$.
In particular, $\calE_0 \otimes_R \widehat{R}$ is a finite free $\widehat{R}$-module
by \cite[Proposition~4.5.4]{kedlaya-goodformal1}, 
so $\calE_0$ is a finite free $R$-module by faithful flatness of completion 
(Remark~\ref{R:regular local} again).
\end{proof}

\begin{remark} \label{R:canonical lattice}
Let $U$ be the open (by Proposition~\ref{P:open locus})
subspace of $X$ on which $(X,Z)$ is a regular pair and $\calE$
has no turning locus.
Malgrange \cite[Th\'eor\`eme~3.2.1]{malgrange-reseau} constructed a Deligne-Malgrange
lattice over $U$; this construction is reproduced by our 
Theorem~\ref{T:DM1}. Malgrange then went on to establish the much deeper fact that
this lattice extends over all of $X$
\cite[Th\'eor\`eme~3.2.2]{malgrange-reseau}. We will only reproduce this result after
establishing existence of good formal structures after blowing up; see
Theorem~\ref{T:deligne-malgrange}.
\end{remark}

The following property of Deligne-Malgrange lattices follows immediately from
Proposition~\ref{P:good formal}; we formulate it to make a link with Mochizuki's
work. See Remark~\ref{R:mochizuki}.

\begin{prop} \label{P:good DM}
Suppose that $(X,Z)$ is a regular pair
and that $\calE$ has empty turning locus.
Choose any $x \in Z$. For $U$ an open neighborhood of $x$ in $X$,
$f: U' \to U$ a finite cover ramified over $Z$,
and $y \in f^{-1}(x)$, put $Z' = f^{-1}(Z)$ and let $Y$ denote the
intersection of the irreducible components of $Z'$ passing through $y$.
Then we can choose $U$ and $f$ so that for any $y \in f^{-1}(x)$,
any admissible decomposition of the restriction of $f^* \calE$ to
$\widehat{\calO_{U',y}}(*Z')$
induces a corresponding decomposition of the
restriction to $\calO_{\widehat{U'|Y},y}$ of the
Deligne-Malgrange lattice $\calE'_0$ of $f^* \calE$.
\end{prop}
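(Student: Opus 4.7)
The plan is to reduce to an unramified good decomposition on a finite cover and then descend along two axes — from the full completion to the partial completion along $Y$ (via Proposition~\ref{P:good formal}), and from the meromorphic connection to its Deligne-Malgrange lattice (via the termwise construction of the latter). First I would use the empty turning locus hypothesis at $x$: by definition $\calE$ admits a good formal structure at $x$, which by Abhyankar's lemma (cf.\ Definition~\ref{D:local model1}) comes from a cover of the form $R''[x_1^{1/h},\dots,x_m^{1/h}]$ with $R''$ a connected finite \'etale extension of the relevant completed local ring. Since $(X,Z)$ is a regular pair near $x$, this cover can be realized as an honest finite $f: U' \to U$ ramified only along $Z$ after possibly shrinking $U$: first descend the finite \'etale piece to a finite \'etale cover of $U \setminus Z$ and extend it across $Z$ (via purity in the algebraic case, or the standard branched-cover construction along a normal crossings divisor in the analytic case), then adjoin the required $h$th roots of the defining equations of the components of $Z$. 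After shrinking $U$ further, $f^*\calE$ admits an \emph{unramified} good decomposition over $\widehat{\calO_{U',y}}(*Z')$ at every $y \in f^{-1}(x)$.

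Next I would apply Proposition~\ref{P:good formal} at each such $y$, with $R = \calO_{U',y}$, to descend the good decomposition of $f^*\calE$ from $\widehat{\calO_{U',y}}(*Z')$ to a good decomposition over the partial completion $\calO_{\widehat{U'|Y},y}(*Z')$. For the lattice-level statement I would invoke the termwise construction of the Deligne-Malgrange lattice from \cite[Definition~4.5.2]{kedlaya-goodformal1}: the DM lattice of $\bigoplus_\alpha E(\phi_\alpha) \otimes \calR_\alpha$ is $\bigoplus_\alpha E(\phi_\alpha) \otimes \calR_{\alpha,0}$, where $\calR_{\alpha,0}$ is the DM lattice of the regular module $\calR_\alpha$. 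By Remark~\ref{R:minimal} any admissible decomposition refines the canonical minimal good decomposition by further splitting the regular summands, and such splittings of a regular module induce compatible splittings of its DM lattice (by Theorem~\ref{T:regular}, since the DM lattice is characterized there by the prepared-eigenvalue condition on $x_i \del_i$ and vanishing on $\del_j$ for $j > m$, both of which pass to direct summands). Combining this with the descent of Proposition~\ref{P:good formal} produces the decomposition of $\calE'_0$ restricted to $\calO_{\widehat{U'|Y},y}$ that the proposition asserts.

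The main obstacle is the analytic case of the first step: realizing the abstract ramified extension of $\calO_{\widehat{X|Y},x}$ by a genuine finite cover $f: U' \to U$ on some classical neighborhood of $x$, rather than just at the formal level. The regular-pair hypothesis is essential here because it confines the ramification to a normal crossings divisor, so the cover can be built locally as a combination of root-extractions along the components of $Z$ with a finite \'etale cover of the complement. Everything after this reduction — the descent of the good decomposition from $\widehat{\calO_{U',y}}$ to $\calO_{\widehat{U'|Y},y}$ and the termwise compatibility of the DM lattice with admissible decompositions — is essentially mechanical given the results already at hand.
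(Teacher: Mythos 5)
The paper dispatches this proposition with a single line --- it states just before the proposition that the result ``follows immediately from Proposition~\ref{P:good formal}'' --- so there is no detailed official proof to compare against. Your argument is a correct and careful elaboration of the reduction the paper has in mind: realize the ramified cover supplied by Abhyankar's lemma as a genuine finite $f\colon U' \to U$ on a small enough neighborhood, use Proposition~\ref{P:good formal} to descend the good decomposition from the full completion $\widehat{\calO_{U',y}}(*Z')$ down to the partial completion $\calO_{\widehat{U'|Y},y}(*Z')$, and then check termwise compatibility of the Deligne--Malgrange lattice with the minimal good decomposition and its admissible refinements. Two small imprecisions worth noting: the characterization of the DM lattice of a regular module by prepared eigenvalues is \cite[Definition~4.5.2]{kedlaya-goodformal1} rather than Theorem~\ref{T:regular} (which characterizes regularity of the module, not the canonical lattice), and the appeal to purity when descending the \'etale piece is superfluous, since Abhyankar's lemma in Definition~\ref{D:local model1} already places the \'etale extension $R''$ over the full ring $R$ rather than over its localization away from $Z$; the remaining step is spreading out from the partial completion to an actual neighborhood, which you do correctly flag as the analytic case's main obstacle.
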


\begin{remark} \label{R:mochizuki}
Suppose that $(X,Z)$ is a regular pair, and that both 
$\calE$ and $\End(\calE)$ have empty turning locus.
(The restriction on $\End(\calE)$ is needed to overcome the discrepancy
between our notion of a good decomposition and Mochizuki's definition
of a good set of irregular values; see 
\cite[Remark~4.3.3, Remark~6.4.3]{kedlaya-goodformal1}.)
The conclusion of Proposition~\ref{P:good DM}
asserts that
$\calE'_0$ is an \emph{unramifiedly good Deligne-Malgrange lattice}
in the language of Mochizuki \cite[Definition~5.1.1]{mochizuki2}.

By virtue of the definition of Deligne-Malgrange
lattices in the one-dimensional case 
\cite[Definition~2.4.4]{kedlaya-goodformal1},
it is built into the definition of Deligne-Malgrange lattices in general
that $f_* \calE'_0 = \calE_0$.
Hence $\calE_0$ is a \emph{good Deligne-Malgrange lattice} in the language
of Mochizuki; that is,
Proposition~\ref{P:good DM} fulfills a promise made in
\cite[Remark~4.5.5]{kedlaya-goodformal1}.
\end{remark}

\section{The Berkovich unit discs}
\label{sec:berkovich}

In \cite[\S 5]{kedlaya-goodformal1}, we introduced the Berkovich closed
and open unit discs 
over a complete discretely valued field of equal characteristic $0$,
and used their geometry to make a fundamental finiteness argument 
as part of the proof of Sabbah's conjecture. Here, we need the analogous construction over an arbitrary complete nonarchimedean
field of characteristic $0$. To achieve this level of generality, 
we must recall some results
from \cite[\S 2]{kedlaya-part4}, and make some arguments as in
\cite[\S 4]{kedlaya-part4}.

\setcounter{theorem}{0}
\begin{hypothesis} \label{H:berkovich}
Throughout \S~\ref{sec:berkovich},
let $F$ be a field complete for a nonarchimedean norm $|\cdot|_F$,
of residual characteristic $0$.
Define the real valuation $v_F$ by $v_F(\cdot) = -\log |\cdot|_F$.
Let $\CC_F$ denote a completed algebraic closure of $F$, equipped
with the unique extensions of $|\cdot|_F$ and $v_F$.
\end{hypothesis}

\begin{notation}
For $A$ a subring of $F$
and $\rho > 0$, let $|\cdot|_\rho$ denote the $\rho$-Gauss norm on
$A[x, x^{-1}]$ with respect to $|\cdot|_F$. 
For $\alpha \leq \beta \in (0, +\infty)$,
define the following rings.
\begin{itemize}
\item
Let $A \langle \alpha/x \rangle$ denote the completion
of $A[x^{-1}]$ under $|\cdot|_{\alpha}$.
\item
Let $A \langle x/\beta \rangle$ denote the
completion of $A[x]$ under $|\cdot|_\beta$.
\item
Let $A \langle \alpha/x, x/\beta \rangle$
denote the Fr\'echet completion of $A[x,x^{-1}]$
under $|\cdot|_\alpha$ and $|\cdot|_\beta$
(equivalently, under $|\cdot|_\rho$ for all $\rho \in [\alpha,\beta]$).
\end{itemize}
For $\beta = 1$, we abbreviate $A \langle x/\beta \rangle,
A \langle \alpha/x, x/\beta \rangle$ to $A \langle x \rangle,
A \langle \alpha/x, x \rangle$.
Note that none of these rings changes if we replace $A$ by its
completion under $|\cdot|_F$.
\end{notation}

\subsection{The Berkovich closed unit disc}

We first recall a few facts about the Berkovich closed unit disc
over the field $F$. The case $F = \CC((x))$ was treated in
\cite[\S 5]{kedlaya-goodformal1}, but we need to reference the more
general treatment in \cite[\S 2.2]{kedlaya-part4}. (Note that the treatment
there allows positive residual characteristic, which we exclude here.)

\begin{defn} \label{D:closed disc}
The Berkovich closed unit disc $\DD = \DD_F$ consists of the 
multiplicative seminorms $\alpha$ on $F[x]$ which are compatible with the
given norm on $F$ and bounded above by
the 1-Gauss norm. 
For instance, for 
$z \in \gotho_{\CC_F}$ and $r \in [0,1]$, the function $\alpha_{z,r}: 
F[x] \to [0, +\infty)$ taking $P(x)$ to the $r$-Gauss norm
of $P(x+z)$ is a seminorm; it is in fact the supremum seminorm 
on the disc $D_{z,r} = \{z' \in \CC_F: |z-z'| \leq r\}$.
\end{defn}

\begin{lemma} \label{L:Berkovich surjective}
For any complete extension $F'$ of $F$, the restriction map
$\DD_{F'} \to \DD_F$ is surjective.
\end{lemma}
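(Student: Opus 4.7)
The plan is to realize each $\alpha \in \DD_F$ as the restriction of some $\alpha' \in \DD_{F'}$ constructed via the completed residue field at $\alpha$, coupled with Berkovich's nonemptiness theorem for Gelfand spectra of Banach rings.

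First I would form the \emph{completed residue field} $\mathcal{H}(\alpha)$: since $\alpha$ is multiplicative, $\mathfrak{p}_\alpha = \{P \in F[x] : \alpha(P) = 0\}$ is a prime ideal of $F[x]$, and $\alpha$ descends to a genuine nonarchimedean absolute value on $\Frac(F[x]/\mathfrak{p}_\alpha)$; let $\mathcal{H}(\alpha)$ be its completion. The natural $F$-algebra map $\phi \colon F[x] \to \mathcal{H}(\alpha)$ then recovers $\alpha$ as the pullback of $|\cdot|_{\mathcal{H}(\alpha)}$, and the image $t = \phi(x)$ satisfies $|t| = \alpha(x) \leq 1$.

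Next I would form the completed tensor product $B = F' \widehat{\otimes}_F \mathcal{H}(\alpha)$, a nonzero Banach $F'$-algebra whose norm restricts to $|\cdot|_{F'}$ on $F'$ and to $|\cdot|_{\mathcal{H}(\alpha)}$ on the (isometrically embedded) image of $\mathcal{H}(\alpha)$; nonzeroness follows from isometricity of the embedding of either factor. By Berkovich's nonemptiness theorem \cite[Theorem~1.2.1]{berkovich} for spectra of nonzero Banach rings, there exists a bounded multiplicative seminorm $\beta$ on $B$. Restricted to either of the two embedded fields, multiplicativity combined with the boundedness estimate forces $\beta$ to coincide with the original absolute value (any continuous multiplicative homomorphism from a nontrivial subgroup of $\RR_{>0}$ to $\RR_{>0}$ is a power map, and the bound $\beta \leq \|\cdot\|_B$ pins the power to $1$; the trivially-valued corner case is handled by direct inspection, or avoided by first adjoining a transcendental with Gauss norm to $F$).

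Finally, the seminorm $\alpha'$ on $F'[x]$ defined by $\alpha'(P) = \beta(P(1 \otimes t))$ --- using the $F'$-algebra map $F'[x] \to B$ sending $x$ to $1 \otimes t$ --- is multiplicative, and its restriction to $F[x]$ agrees with $\alpha$ because the composition $F[x] \to F'[x] \to B$ factors through $\phi$. The $1$-Gauss bound is automatic from the ultrametric inequality: for $P = \sum_i c_i x^i \in F'[x]$,
\[
\alpha'(P) \leq \max_i \beta(c_i \otimes 1)\,\beta(1 \otimes t)^i = \max_i |c_i|_{F'} \cdot \alpha(x)^i \leq \max_i |c_i|_{F'}.
\]
Hence $\alpha' \in \DD_{F'}$ restricts to $\alpha$. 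The main technical step is the middle one: verifying that the Berkovich point pulls back to the prescribed valuations on the two embedded subfields, which reduces to the standard fact that a bounded multiplicative seminorm on a nontrivially-valued complete nonarchimedean field coincides with its valuation.
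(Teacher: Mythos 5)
Your proof is correct, and it is essentially the argument underlying the reference the paper points to: the paper's entire proof of this lemma is the citation ``See \cite[Corollary~1.3.6]{berkovich},'' and what you have written out is precisely the standard proof of that corollary (passing to the completed residue field $\mathcal{H}(\alpha)$, forming $F' \widehat{\otimes}_F \mathcal{H}(\alpha)$, invoking nonemptiness of the Gelfand spectrum of a nonzero Banach ring, and then checking the induced seminorm restricts correctly). Two small remarks. First, the nonvanishing of $B = F' \widehat{\otimes}_F \mathcal{H}(\alpha)$ is a genuine (if standard) input --- it rests on the fact that the completed tensor product of nonzero nonarchimedean Banach spaces over a complete nonarchimedean field is nonzero --- so it deserves a citation (e.g.\ to \cite{berkovich} or \cite{bgr}) rather than being waved off as ``isometricity.'' Second, the ``power map'' discussion for pinning down $\beta$ on the embedded fields is more elaborate than necessary and your worry about the trivially valued case is unfounded: for any $a \in F'^\times$ one has $\beta(a)\beta(a^{-1}) = 1$ while both factors are $\leq |a|_{F'}$ and $\leq |a^{-1}|_{F'}$ respectively, which already forces $\beta(a) = |a|_{F'}$ in all cases, including when $|a|_{F'} = 1$.
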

\begin{proof}
See \cite[Corollary~1.3.6]{berkovich}.
\end{proof}

\begin{defn}
For $\alpha, \beta \in \DD$, we say that $\alpha$ \emph{dominates}
$\beta$, notated $\alpha \geq \beta$, if $\alpha(P) \geq \beta(P)$ for
all $P \in F[x]$. Define the \emph{radius} of $\alpha \in \DD$,
denoted $r(\alpha)$,
to be the infimum of $r \in [0,1]$ for which there exists $z \in \gotho_{\CC_F}$
with $\alpha_{z,r} \geq \alpha$.
\end{defn}

As in \cite[Proposition~5.2.2]{kedlaya-goodformal1}, we use the following
classification of points of $\DD$. See
\cite[1.4.4]{berkovich} for the case where $F$ is algebraically
closed, or \cite[Proposition~2.2.7]{kedlaya-part4} for the general case.
\begin{prop} \label{P:classify points}
Each element of $\DD$ is of exactly one of the following four types.
\begin{enumerate}
\item[(i)]
A point of the form $\alpha_{z,0}$ for some $z \in \gotho_{\CC_F}$.
\item[(ii)]
A point of the form $\alpha_{z,r}$ for some $z \in \gotho_{\CC_F}$
and $r \in (0,1] \cap |\CC_F^\times|_F$.
\item[(iii)]
A point of the form $\alpha_{z,r}$ for some $z \in \gotho_{\CC_F}$
and $r \in (0,1] \setminus
 |\CC_F^\times|_F$.
\item[(iv)]
The infimum of a sequence $\alpha_{z_i,r_i}$ in which the discs $D_{z_i,r_i}$
form a decreasing sequence
with empty intersection and positive limiting radius. 
\end{enumerate}
Moreover, the points which are minimal under domination
are precisely those of type (i) and (iv).
\end{prop}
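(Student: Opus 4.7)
The plan is to first reduce to the case $F = \CC_F$ (algebraically closed) via Lemma~\ref{L:Berkovich surjective}, then classify points of $\DD_{\CC_F}$ via a tree-theoretic analysis, and finally identify the minimal points. For the reduction, given $\alpha \in \DD_F$, lift it to some $\tilde{\alpha} \in \DD_{\CC_F}$. Since each $\alpha_{z,r}$ in the statement is defined using $z \in \gotho_{\CC_F}$, any classification of $\tilde{\alpha}$ descends to $\alpha$ upon restriction to $F[x]$.

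The central observation (working over $\CC_F$) is that the family $\mathcal{F}(\alpha) = \{(z, r) \in \gotho_{\CC_F} \times [0,1] : \alpha_{z,r} \geq \alpha\}$ yields a totally ordered chain of discs under inclusion. Indeed, for $(z_i, r_i) \in \mathcal{F}(\alpha)$ with $r_1 \leq r_2$, setting $s_i = \alpha(x - z_i) \leq r_i$, the ultrametric inequality applied to $(x-z_1) - (x-z_2) = z_2 - z_1$ gives $|z_1 - z_2| \leq \max(s_1, s_2) \leq r_2$, whence $D_{z_1, r_1} \subseteq D_{z_2, r_2}$. Letting $r^* = r(\alpha)$ and choosing $(z_i, r_i) \in \mathcal{F}(\alpha)$ with $r_i \searrow r^*$, the $D_{z_i, r_i}$ form a decreasing chain, and a polynomial-factorization argument over $\CC_F$ shows $\alpha = \inf_i \alpha_{z_i, r_i}$.

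The classification then follows by case analysis on $D = \bigcap_i D_{z_i, r_i}$. If $D$ contains some $z$, then $\alpha = \alpha_{z, r^*}$, yielding type (i) when $r^* = 0$, type (ii) when $r^* \in |\CC_F^\times|_F$, and type (iii) otherwise. If $D = \emptyset$, then necessarily $r^* > 0$, since otherwise the $z_i$ would form a Cauchy sequence in $\CC_F$ whose limit would lie in $D$; this places $\alpha$ in type (iv). The types are mutually exclusive because $r(\alpha)$ is intrinsic to $\alpha$ and the existence of a minimizing $z$ distinguishes types (i)--(iii) from (iv).

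For the minimality claim, points of types (ii) and (iii) are not minimal, since $\alpha_{z, r}$ strictly dominates $\alpha_{z, r'}$ whenever $r' < r$. A type (i) point $\alpha_{z, 0}$ is minimal because any multiplicative seminorm $\beta \leq \alpha_{z, 0}$ satisfies $\beta(x-z) = 0$, forcing $\beta = \alpha_{z, 0}$ by multiplicativity. For a type (iv) point $\alpha$, any $\beta \leq \alpha$ gives $\mathcal{F}(\alpha) \subseteq \mathcal{F}(\beta)$ and hence $r(\beta) \leq r^*$; the empty-intersection hypothesis of type (iv) precludes $\beta = \alpha_{z, r^*}$ for any $z$, so $\beta$ is itself of type (iv) with a chain of discs refining that of $\alpha$ yet having the same limiting radius $r^*$ and empty intersection, giving $\beta = \alpha$. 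The main subtlety will be the verification that $\alpha = \inf_i \alpha_{z_i, r_i}$ on polynomials, which ultimately reduces to tracking $\alpha(x - \zeta)$ for $\zeta \in \CC_F$ and using the ultrametric structure of the Berkovich tree.
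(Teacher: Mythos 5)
Your proposal outlines a plausible self-contained argument, whereas the paper simply cites Berkovich (for the algebraically closed case) and \cite[Proposition~2.2.7]{kedlaya-part4} (for the general case) rather than giving a proof. So comparing ``approaches'' is really comparing your sketch against what those references do, and the comparison reveals several genuine gaps in the sketch.

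First, the reduction to $\CC_F$ via Lemma~\ref{L:Berkovich surjective} only gives you exhaustivity of the four types in $\DD_F$ (and even there one must check that an infimum of a decreasing chain restricted to $F[x]$ is the infimum of the restrictions, which you do not verify). It does \emph{not} handle exclusivity or the minimality claim, both of which are intrinsic statements about $\DD_F$. In particular, you repeatedly need the implication ``$\alpha_{w,s} \geq \alpha_{z,r}$ as seminorms on $F[x]$ implies $D_{z,r} \subseteq D_{w,s}$ in $\CC_F$,'' e.g.\ to rule out $\beta = \alpha_{z,r^*}$ dominating a type (iv) $\alpha$, and to argue that a chain refining $\mathcal{F}(\alpha)$ cannot escape $r^*$. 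This implication is true but is precisely the kind of nontrivial decompletion lemma that \cite[Proposition~2.2.7]{kedlaya-part4} establishes; you use it implicitly without proof or citation.

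Second, your minimality argument for type (i) points is incorrect as written: you apply $\beta$ to $x - z$, but $z \in \gotho_{\CC_F}$ need not lie in $F$, so $x - z \notin F[x]$ and $\beta(x-z)$ is undefined. For $z$ algebraic over $F$ one should instead pass to the minimal polynomial $Q$ of $z$ and argue that $\beta(Q) = 0$ forces $\beta$ to factor through the field $F[x]/(Q)$; for $z$ transcendental one needs an approximation by algebraic elements. Similarly, the claim that $\alpha_{z,r}$ strictly dominates $\alpha_{z,r'}$ for $r' < r$ requires exhibiting a witness in $F[x]$, not merely in $\CC_F[x]$, so ``since $\alpha_{z,r}(x-z) = r > r'$'' is not available.

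Finally, you defer the key convergence claim $\alpha = \inf_i \alpha_{z_i,r_i}$ as ``the main subtlety.'' This is indeed the heart of the proof of exhaustivity (it is what lets you conclude from nonempty $D$ that $\alpha = \alpha_{z,r^*}$, and from empty $D$ that $\alpha$ is a genuine infimum of type (iv)), and a proof should not leave it as a promissory note. The reference \cite[Proposition~2.2.7]{kedlaya-part4} carries out exactly this kind of analysis, including the descent from $\CC_F$; you would do well to model the missing steps on that argument.
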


By \cite[Lemma~2.2.12]{kedlaya-part4}, we have the following.
\begin{lemma} \label{L:unique point}
For each $\alpha \in \DD$ and each $r \in [r(\alpha),1]$,
there is a unique point $\alpha_r \in \DD$ with $r(\alpha_r) = r$
and $\alpha_r \geq \alpha$. (By Proposition~\ref{P:classify points},
if $r \neq r(\alpha)$, we can always write $\alpha_r = \alpha_{z,r}$
for some $z \in \gotho_\CC$.)
\end{lemma}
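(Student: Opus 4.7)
The plan is to prove existence and uniqueness of $\alpha_r$ by a case analysis based on the classification in Proposition~\ref{P:classify points}, using the ultrametric fact that two closed discs in $\CC_F$ of the same radius either coincide or are disjoint, together with the minimality of type~(iv) points under domination.

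For existence, if $\alpha$ is of type~(i), (ii), or (iii), I would write $\alpha = \alpha_{z, r(\alpha)}$ for some $z \in \gotho_{\CC_F}$ and set $\alpha_r := \alpha_{z, r}$; the domination $\alpha_r \geq \alpha$ and the equality $r(\alpha_r) = r$ are immediate from the definition of the Gauss norm. If $\alpha$ is of type~(iv), write $\alpha = \inf_i \alpha_{z_i, r_i}$ for a decreasing sequence of discs $D_{z_i, r_i}$ of empty intersection and positive limiting radius $r(\alpha)$. I would take $\alpha_{r(\alpha)} := \alpha$, and for $r > r(\alpha)$ set $\alpha_r := \alpha_{z_i, r}$ for any index $i$ with $r_i \leq r$. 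Independence from the choice of such $i$ follows from the nesting: for $j \geq i$ we have $z_j \in D_{z_i, r_i} \subseteq D_{z_i, r}$, which by the ultrametric property forces $D_{z_j, r} = D_{z_i, r}$ and hence $\alpha_{z_j, r} = \alpha_{z_i, r}$.

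For uniqueness, suppose $\beta_1, \beta_2 \in \DD$ each satisfy $r(\beta_k) = r$ and $\beta_k \geq \alpha$. If some $\beta_k$ is of type~(iv), its minimality under domination forces $\beta_k = \alpha$, and consistency of $r(\beta_k) = r(\alpha)$ together with the previously constructed $\alpha_r$ gives the result in that case. Otherwise each $\beta_k$ equals $\alpha_{w_k, r}$ for some $w_k \in \gotho_{\CC_F}$, and since two closed discs of equal radius are either equal or disjoint, it suffices to exhibit a point common to $D_{w_1, r}$ and $D_{w_2, r}$. Evaluating $\beta_k \geq \alpha$ at $P(x) = x - w_k$ yields $r \geq \alpha(x - w_k)$. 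For $\alpha$ of type~(i)--(iii), writing $\alpha = \alpha_{z, r(\alpha)}$ gives $\alpha(x - w_k) = \max(|z - w_k|, r(\alpha))$, so $|z - w_k| \leq r$, and $z$ lies in both discs. For $\alpha$ of type~(iv), expanding $\alpha(x - w_k) = \lim_i \max(|z_i - w_k|, r_i)$ and using $r_i \leq r$ for $i$ large forces $|z_i - w_k| \leq r$, placing any sufficiently large $z_i$ in both discs.

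The main obstacle is the delicate subcase where $\alpha$ has type~(iv) and $r = r(\alpha)$: here a type-(ii) or (iii) candidate $\beta_k = \alpha_{w_k, r(\alpha)}$ must be ruled out. The ultrametric expansion of $\alpha(x - w_k)$ shows this quantity equals $r(\alpha)$ only when $w_k \in D_{z_i, r_i}$ for every $i$, which contradicts the empty intersection of the defining discs; otherwise $\alpha(x - w_k)$ strictly exceeds $r(\alpha) = r$, contradicting $r \geq \alpha(x - w_k)$. This leaves only type-(iv) $\beta_k$, which by minimality coincides with $\alpha$, completing the proof.
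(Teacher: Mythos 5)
The paper does not give an internal proof of this lemma; it appeals to \cite[Lemma~2.2.12]{kedlaya-part4}, so there is nothing to compare directly. That said, your proposal has a genuine gap in the uniqueness step. You evaluate the domination $\beta_k \geq \alpha$ at the polynomial $x - w_k$, but $\beta_k \geq \alpha$ is by definition only an inequality of seminorms on $F[x]$, and $x - w_k$ need not lie in $F[x]$ (the representative $w_k$ is only guaranteed to be in $\gotho_{\CC_F}$). Each of $\alpha$ and $\beta_k$ does acquire a natural extension to $\CC_F[x]$ once a representative $z$ (resp.\ $w_k$) is fixed, but the inequality on $F[x]$ does not transfer to these particular extensions. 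Concretely: take $F = \QQ_2$, $z = \sqrt{2}$, $\alpha = \alpha_{z,0}$, and $\beta_k = \alpha$ itself but with $w_k = -\sqrt{2}$ chosen as its representative. Then $\beta_k \geq \alpha$ on $F[x]$ (indeed with equality, by Galois invariance), yet the chosen extensions to $\CC_F[x]$ give $\beta_k(x - w_k) = 0$ while $\alpha(x - w_k) = |z - w_k| = |2\sqrt 2| = 2^{-3/2} > 0$, so the inference ``$r \geq \alpha(x - w_k)$'' is false. The same subtlety is hidden in your existence step when you assert that $r(\alpha_{z,r}) = r$ is ``immediate'': verifying the radius requires ruling out dominating Gauss seminorms using only polynomials in $F[x]$.

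What is missing is a compatibility statement along the lines of: if $\alpha \leq \beta$ in $\DD_F$, one can choose representatives $z$ for $\alpha$ and $w$ for $\beta$ whose associated extensions to $\CC_F[x]$ still satisfy $\alpha \leq \beta$; equivalently, one must understand how the fibers of the restriction $\DD_{\CC_F} \to \DD_F$ interact with the domination partial order. Lemma~\ref{L:Berkovich surjective} gives surjectivity of that restriction but not this order compatibility. Your overall strategy --- the ultrametric disc intersection argument, minimality of type~(i) and~(iv) points from Proposition~\ref{P:classify points}, and the limiting analysis for type~(iv) --- is sound and does prove the lemma directly when $F$ is algebraically closed. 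Once the descent issue is addressed (for instance by conjugating representatives so that the domination extends, which is true but requires justification), the rest of your argument would go through.
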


\begin{cor} \label{C:stabilize}
If $\alpha \in \DD$ is of type (iv) and is the infimum of the sequence
$\alpha_{z_i,r_i}$, then for any $r \in (0,1)$ with $\alpha_{0,r} \geq \alpha$
and any $P \in F\langle x/r \rangle$, there exists
an index $i_0$ such that $\alpha_{z_i,r_i}(P) = \alpha(P)$ for $i \geq i_0$.
\end{cor}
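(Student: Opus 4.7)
The plan is to reduce from a general $P \in F\langle x/r\rangle$ to the case of a polynomial via Weierstrass preparation, and then use the finiteness of zeros of a polynomial together with $\bigcap_i D_{z_i,r_i} = \emptyset$.

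First I would show that $\alpha_{z_i,r_i} \leq \alpha_{0,r}$ for all sufficiently large $i$, say $i \geq i_*$. The set of elements of $\DD$ that dominate a type (iv) point $\alpha$ is totally ordered: it is the unique path in the tree $\DD$ running from $\alpha$ toward the Gauss point. The sequence $\alpha_{z_i,r_i}$ moves monotonically down this path to $\alpha$, while $\alpha_{0,r}$ is a fixed point of the same path strictly dominating $\alpha$, so eventually $\alpha_{z_i,r_i} \leq \alpha_{0,r}$; equivalently, $D_{z_i,r_i}(\CC_F) \subseteq D_{0,r}(\CC_F)$ for $i \geq i_*$, which ensures $P$ has a well-defined sup norm on each $D_{z_i,r_i}(\CC_F)$.

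Next I would apply Weierstrass preparation in $F\langle x/r\rangle$ to factor $P = uQ$ with $u$ a unit and $Q \in F[x]$ a polynomial. Since a unit of $F\langle x/r\rangle$ has a dominant constant coefficient, $|u(w)|$ is constant, equal to some $c > 0$, on all of $D_{0,r}(\CC_F)$, so $\alpha_{z_i,r_i}(u) = c$ for $i \geq i_*$, and hence $\alpha(u) = c$ by taking the limit. Multiplicativity of the seminorms thus reduces the claim to the polynomial $Q$. Factor $Q = c'\prod_{j=1}^s (x-\zeta_j)$ over $\CC_F$. Because $\bigcap_i D_{z_i,r_i}(\CC_F) = \emptyset$, for each $j$ there is an index $i_j$ with $\zeta_j \notin D_{z_i,r_i}$ for $i \geq i_j$, so $|z_i - \zeta_j| > r_i$. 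Since the discs are nested, $z_{i+1} \in D_{z_i,r_i}$, and the ultrametric inequality then forces $|z_i - \zeta_j|$ to be constant in $i$ for $i \geq i_j$. Using the standard formula $\alpha_{z,s}(Q) = |c'|\prod_j \max(|z-\zeta_j|, s)$ for the Gauss norm of a factored polynomial, $\alpha_{z_i,r_i}(Q)$ is constant for $i \geq i_0 := \max(i_*, i_1, \dots, i_s)$, and this constant equals $\alpha(Q)$ by passing to the limit. Multiplying by $c$ yields $\alpha_{z_i,r_i}(P) = \alpha(P)$ for $i \geq i_0$.

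The main technical ingredient is Weierstrass preparation for $F\langle x/r\rangle$, which is standard but needs to be invoked over a general (possibly non-discretely-valued) complete nonarchimedean field $F$; once this is available, the rest is a direct ultrametric computation in which the finiteness of the zeros of $Q$ together with the emptiness of $\bigcap_i D_{z_i,r_i}$ forces stabilization. A minor side point is justifying that $\alpha_{z_i,r_i} \leq \alpha_{0,r}$ for large $i$, so that the estimates on $P$ and $u$ make sense uniformly on each $D_{z_i,r_i}$; this is handled by the tree structure of $\DD$ at a type (iv) point.
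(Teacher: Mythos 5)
Your proof is correct, but the reduction from $P \in F\langle x/r \rangle$ to a polynomial is done differently than in the paper. You invoke Weierstrass preparation in $F\langle x/r\rangle$ to write $P = uQ$ with $u$ a unit and $Q \in F[x]$, and treat the two factors separately; the paper instead uses the cheaper density argument of choosing $Q \in F[x]$ with $\alpha_{0,r}(P - Q) < \alpha(P)$, so that once $\alpha_{z_i,r_i} \leq \alpha_{0,r}$ the ultrametric inequality gives $\alpha_{z_i,r_i}(P) = \alpha_{z_i,r_i}(Q)$ and $\alpha(P) = \alpha(Q)$, reducing to the polynomial case. (This uses $\alpha(P) > 0$, which holds because type (iv) points induce genuine valuations.) For polynomials, the paper cites the proof of Proposition~2.2.7 of \cite{kedlaya-part4}, which is in substance the computation you carry out explicitly: factor over $\CC_F$, note each root eventually escapes the nested discs, and use the ultrametric inequality to stabilize each $|z_i - \zeta_j|$. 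Your route has the virtue of being self-contained and making the stabilization mechanism visible; the cost is invoking Weierstrass preparation over a possibly non-discretely-valued complete field. A small streamlining of your unit step: to see that $\beta(u)$ is constant over all $\beta \leq \alpha_{0,r}$, you need not identify the coefficients of $u$---just observe that $\beta$ extends by continuity to a multiplicative seminorm on $F\langle x/r\rangle$ still bounded by $\alpha_{0,r}$, so after normalizing $\alpha_{0,r}(u) = 1$ one has $\beta(u) \leq 1$, $\beta(u^{-1}) \leq 1$, and $\beta(u)\beta(u^{-1}) = 1$, forcing $\beta(u) = 1$.
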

\begin{proof}
The case $P \in F[x]$ follows from
the proof of \cite[Proposition~2.2.7]{kedlaya-part4}. The general
case follows by choosing $Q \in F[x]$ such that $\alpha_{0,r}(P-Q) <
\alpha(P)$ and applying the previous case to $Q$.
\end{proof}

Except for some points of type (i),
every point of $\DD$ induces
a valuation on $F(x)$. These valuations have the following numerical
behavior \cite[Lemma~2.2.16]{kedlaya-part4}.
\begin{lemma} \label{L:same corank}
Let $\alpha$ be a point of $\DD$ of type (ii) or (iii). Let $v(\cdot) = - \log \alpha(\cdot)$
be the corresponding real valuation on $F(x)$.
\begin{enumerate}
\item[(a)]
If $\alpha$ is of type (ii), then 
\[
\trdeg(\kappa_v / \kappa_{v_F}) = 1, \qquad \dim_\QQ ((\Gamma_v/\Gamma_{v_F}) \otimes_{\ZZ} \QQ) = 0.
\]
\item[(b)]
If $\alpha$ is of type (iii), then 
\[
\trdeg(\kappa_v / \kappa_{v_F}) = 0, \qquad \dim_\QQ ((\Gamma_v/\Gamma_{v_F}) \otimes_{\ZZ} \QQ) = 1.
\]
\end{enumerate}
\end{lemma}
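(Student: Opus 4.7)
The plan is to reduce to the case $z = 0$ via a finite scalar extension and change of variables, then to read off both invariants from the explicit Gauss-norm formula $v(\sum a_i y^i) = \min_i(v_F(a_i) + i \log(1/r))$. For the reduction, I use that $\alpha_{z,r} = \alpha_{z',r}$ on $F[x]$ whenever $|z-z'| \leq r$ and that $\overline F$ is dense in $\CC_F$ to arrange $z \in \overline F$; passing to the finite algebraic extension $\widetilde F = F(z)$ and to the valuation $\widetilde v$ on $\widetilde F(x)$ induced by the same formula $\alpha_{z,r}$, the residue-characteristic-zero fundamental equality $\sum e_i f_i = [\widetilde F(x):F(x)]$ shows that the extensions $\kappa_{\widetilde v}/\kappa_v$, $\kappa_{v_{\widetilde F}}/\kappa_{v_F}$ are finite and the value-group extensions $\Gamma_{\widetilde v}/\Gamma_v$, $\Gamma_{v_{\widetilde F}}/\Gamma_{v_F}$ are torsion. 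Hence both quantities in (a), (b) are preserved, and substituting $y = x - z$ brings me to $z = 0$.

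For case (iii) ($r \notin |\CC_F^\times|$), the element $\log(1/r)$ lies outside $\Gamma_{v_F} \otimes \QQ$, so the summands $v_F(a_i) + i\log(1/r)$ corresponding to the nonzero coefficients of $\sum a_i y^i$ lie in pairwise distinct cosets of $\Gamma_{v_F}$ in $\RR$, and the minimum is attained at a unique index $i^\ast$. This yields $\Gamma_v = \Gamma_{v_F} + \ZZ \log(1/r)$, so $\dim_\QQ((\Gamma_v/\Gamma_{v_F}) \otimes \QQ) = 1$. For any $P/Q \in F(y)^\times$ with $v(P/Q) = 0$, uniqueness of the leading index forces $i^\ast = j^\ast$ and $v_F(a_{i^\ast}) = v_F(b_{j^\ast})$, giving $\overline{P/Q} = \overline{a_{i^\ast}/b_{j^\ast}} \in \kappa_{v_F}$; thus $\kappa_v = \kappa_{v_F}$ and $\trdeg(\kappa_v/\kappa_{v_F}) = 0$.

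For case (ii) ($r \in |\CC_F^\times|$), residue characteristic zero identifies $|\CC_F^\times|$ with the divisible hull of $|F^\times|$, so after a further finite extension I may write $r^n = |c|$ with $c \in F^\times$ and $n \geq 1$. Setting $t = y^n/c$ gives $v(t) = 0$, and the computation $v(\sum \gamma_i t^i) = \min_i v_F(\gamma_i)$ for $\gamma_i \in F$ exhibits $\bar t$ as transcendental over $\kappa_{v_F}$ in $\kappa_v$, so $\trdeg(\kappa_v/\kappa_{v_F}) \geq 1$. Moreover $\Gamma_v \subseteq \tfrac{1}{n} \Gamma_{v_F}$, so $\dim_\QQ((\Gamma_v/\Gamma_{v_F}) \otimes \QQ) = 0$. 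Abhyankar's inequality (Theorem~\ref{T:Abhyankar}, in its relative form for $F(y)/F$) bounds the sum of the two invariants by $\trdeg(F(y)/F) = 1$, forcing $\trdeg = 1$.

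The main obstacle is the invariance in the reduction step, which hinges on the fundamental equality for extensions of valuations in residue characteristic zero; after reducing to $z = 0$, both cases collapse to elementary bookkeeping with the Gauss-norm formula and the uniqueness of the leading term.
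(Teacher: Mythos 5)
Your argument is correct, and it supplies a self-contained proof where the paper itself gives none (the paper simply cites \cite[Lemma~2.2.16]{kedlaya-part4}), so there is no internal proof to compare against. The reduction to $z=0$ via density of $\overline{F}$ in $\CC_F$, the invariance of both quantities under the finite extension $\widetilde{F}(x)/F(x)$, and the Gauss-norm computations in both cases are all sound.

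Three small imprecisions are worth flagging, none of them fatal. First, $|\CC_F^\times|$ is the divisible hull of $|F^\times|$ for \emph{any} valued field; the completed algebraic closure always has this value group, and residue characteristic is irrelevant to that fact (it matters for the residue field, and for the absence of defect). Second, for the reduction step you do not actually need the fundamental \emph{equality}: the fundamental inequality $e(\widetilde{v}/v)\, f(\widetilde{v}/v) \leq [\widetilde{F}(x):F(x)]$, which holds with no hypotheses on $F$, already gives finiteness of the ramification index and residue degree for your chosen extension $\widetilde{v}$, and that is all you use. Third, the Abhyankar bound you invoke in case (ii) is the \emph{relative} form for a field extension, namely $\trdeg(\kappa_w/\kappa_v) + \dim_\QQ((\Gamma_w/\Gamma_v)\otimes_\ZZ\QQ) \leq \trdeg(L/K)$; this is a standard but formally distinct statement from Theorem~\ref{T:Abhyankar} in the paper (which is phrased via codimension of the center in a noetherian scheme), so a citation to Bourbaki (\textit{Alg. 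Comm.} VI, \S 10.3) or to \cite[\S 5]{vaquie} would be cleaner. Alternatively, one can avoid the Abhyankar inequality in case (ii) entirely by showing directly that $\kappa_v = \kappa_{v_F}(\bar t)$ via a leading-term argument analogous to the one you give in case (iii), though your route is shorter.
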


\subsection{More on irrational radius}
\label{subsec:berk descent}

Let us take a closer look at the case of Proposition~\ref{P:classify points}
of type (iii), i.e., a disc of an irrational radius.

\begin{hypothesis}
Throughout \S~\ref{subsec:berk descent},
in addition to Hypothesis~\ref{H:berkovich},
choose $r \in (0,1) \setminus |\CC_F^\times|_F$,
so that $\alpha_{0, r} \in \DD$ is a point of type (iii).
(The case where $\alpha_{0,r}$ is of type (ii) is a bit more complicated,
and we will not need it here.)
\end{hypothesis}

\begin{lemma} \label{L:irrational factor}
Suppose $g \in F \langle r/x,x/r \rangle$
is such that $\alpha_{0,r}(g-1) < 1$.
Then $g$ can be factored uniquely as $g_1 g_2$ with
$g_1 \in 1 + x F \langle x/r \rangle^\times$,
$g_2 \in F \langle r/x \rangle^\times$, 
$\alpha_{0,r}(g_1-1) < 1$, and $\alpha_{0,r}(g_2 - 1) < 1$.
In particular, $g$ is a unit in $F \langle r/x, x/r \rangle$.
\end{lemma}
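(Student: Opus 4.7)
The plan is to apply Theorem~\ref{T:master factor} (Christol's factorization) to the nonarchimedean Banach $F$-algebra $R = F\langle r/x, x/r \rangle$ equipped with the $r$-Gauss norm $\alpha_{0,r}$, with input data $a = b = 1$ and $c = g$. First I would identify $R$ with the set of Laurent series $\sum_{i \in \ZZ} a_i x^i$ with $|a_i| r^i \to 0$ as $|i| \to \infty$, with norm $\max_i |a_i| r^i$. Setting $U = xF\langle x/r \rangle$ (strictly positive powers) and $V = F\langle r/x \rangle$ (non-positive powers), the uniqueness of the Laurent expansion gives a direct-sum decomposition $R = U \oplus V$ which is orthogonal:
\[
\alpha_{0,r}(u+v) = \max\{\alpha_{0,r}(u),\alpha_{0,r}(v)\} \qquad (u \in U,\, v \in V).
\]
In particular $U$ and $V$ are closed, hence complete, subspaces of $R$.

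Next I would verify the hypotheses of Theorem~\ref{T:master factor}. With $W = R$ and $f(u,v) = v + u$, condition (a) is trivial, condition (b) is the Laurent decomposition, and the orthogonality above yields condition (c) with $\lambda = 1$. Since $\alpha_{0,r}(g-1) < 1$, the ultrametric inequality gives $\alpha_{0,r}(g) = 1$, so $|ab - c| = \alpha_{0,r}(1 - g) < 1 = \lambda^2 |c|$, verifying (d). Theorem~\ref{T:master factor} then yields unique $u \in U$ and $v \in V$ with $g = (1+u)(1+v)$, $\alpha_{0,r}(u) < 1$, and $\alpha_{0,r}(v) < 1$; setting $g_1 = 1 + u$ and $g_2 = 1 + v$ produces the required factorization and uniqueness.

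For the unit assertions, note that in the Banach algebra $F\langle r/x \rangle$, the element $v$ has norm less than $1$, so the geometric series $\sum_{n \geq 0} (-v)^n$ converges to $g_2^{-1} \in F\langle r/x \rangle$, giving $g_2 \in F\langle r/x \rangle^\times$; the analogous argument in $F\langle x/r \rangle$ gives $g_1 \in F\langle x/r \rangle^\times$, and hence $g = g_1 g_2$ is a unit of $R$. The only nontrivial step is recognizing that the data fits Christol's Hensel-type framework via the Laurent splitting; once that orthogonality is in hand, no further analytic input is required, and in particular the special role of the type (iii) point $\alpha_{0,r}$ is invisible to this argument (it will matter only for subsequent applications of the factorization).
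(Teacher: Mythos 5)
Your proposal is correct and is essentially the paper's own proof: the paper likewise applies Theorem~\ref{T:master factor} with $U = xF\langle x/r\rangle$, $V = F\langle r/x\rangle$, $W = F\langle r/x, x/r\rangle$, $a=b=1$, $c=g$, leaving the verification of the hypotheses (which you carry out via the orthogonal Laurent splitting, with $\lambda = 1$) to the reader. Your geometric-series argument for the unit assertions and the observation that the irrationality of $r$ plays no role here are both consistent with the paper's treatment.
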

\begin{proof}
Apply Theorem~\ref{T:master factor} with
$U = x F \langle x/r \rangle$,
$V = F \langle r/x \rangle$,
$W = F \langle r/x,x/r \rangle$,
$a = b = 1$, and $c = g$.
(Compare the proof of Proposition~\ref{P:dagger factor}
or \cite[Theorem~2.2.1]{kedlaya-course}.)
\end{proof}

\begin{lemma} \label{L:irrational local}
Any nonzero $g \in F \langle r/x, x/r \rangle$ can be factored (not uniquely) as
$g = x^i g_1 g_2$ for some
$i \in \ZZ$, 
$g_1 \in F\langle x/r \rangle ^\times$,
and $g_2 \in F \langle r/x \rangle ^\times$.
\end{lemma}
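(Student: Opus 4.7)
The plan is to reduce to the situation of Lemma~\ref{L:irrational factor} by pulling out a monomial that carries the entire $\alpha_{0,r}$-norm of $g$. The key point, which is built into the hypothesis $r \notin |\CC_F^\times|_F$, is that the real numbers $\log r$ and $\log |F^\times|_F$ are $\QQ$-linearly independent; hence for nonzero $a, b \in F$ and integers $j \neq k$, the values $|a| r^j$ and $|b| r^k$ are unequal. Consequently the sup defining $\alpha_{0,r}$ on a Laurent expansion is attained at most once.

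First, I would expand $g \in F\langle r/x, x/r \rangle$ as a Laurent series $g = \sum_{i \in \ZZ} a_i x^i$ (convergent in the sense that $|a_i| r^i \to 0$ as $i \to \pm\infty$, which is how one describes the Fréchet completion for the single Gauss norm $\alpha_{0,r}$). Since $g \neq 0$, the set of indices $i$ with $a_i \neq 0$ is nonempty, and by the irrationality observation above the quantities $|a_i| r^i$ are pairwise distinct among indices with $a_i \neq 0$; since they tend to $0$ in both directions, the maximum is achieved at a unique index $i_0$. Thus $\alpha_{0,r}(g) = |a_{i_0}| r^{i_0}$.

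Next, I would write
\[
g = a_{i_0} x^{i_0}(1+h), \qquad h = \sum_{j \neq i_0} \frac{a_j}{a_{i_0}} x^{j - i_0},
\]
and observe that $\alpha_{0,r}(h) < 1$, since every individual term $|a_j/a_{i_0}| r^{j-i_0}$ is strictly less than $1$ and these terms still tend to $0$. I would then apply Lemma~\ref{L:irrational factor} to $1 + h \in F\langle r/x, x/r \rangle$ to obtain a factorization $1 + h = u_1 u_2$ with $u_1 \in 1 + xF\langle x/r \rangle$ a unit in $F\langle x/r \rangle$ and $u_2 \in F\langle r/x \rangle^\times$. Setting $g_1 = a_{i_0} u_1$ and $g_2 = u_2$ yields $g = x^{i_0} g_1 g_2$ with $g_1 \in F\langle x/r \rangle^\times$ (since $a_{i_0} \in F^\times$ remains a unit after extension of scalars) and $g_2 \in F\langle r/x \rangle^\times$, as required.

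There is no serious obstacle here: the only non-formal input is the uniqueness of the index of maximum norm, which is immediate from the hypothesis $r \notin |\CC_F^\times|_F$, and the factorization of the approximate-unit $1 + h$, which is exactly the content of Lemma~\ref{L:irrational factor}. The non-uniqueness asserted in the statement is clear as well, since one can always multiply $g_1$ by any unit of $F\langle x/r \rangle$ lying in $F\langle x \rangle^\times \cap F\langle r/x \rangle^\times = F^\times$ and compensate in $g_2$.
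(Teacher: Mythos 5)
Your proof is correct and follows essentially the same route as the paper: normalize by the monomial $a_{i_0}x^{i_0}$ carrying the unique maximal term (uniqueness coming from $r \notin |\CC_F^\times|_F$), then apply Lemma~\ref{L:irrational factor} to the resulting element of norm-$<1$ distance from $1$. The paper's proof is a terser version of exactly this argument.
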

\begin{proof}
(Compare \cite[Lemma~2.2.14]{kedlaya-part4}.)
Write $g = \sum_{i \in \ZZ} g_i x^i$ with $|g_i|_F r^i \to 0$ as $i \to \pm \infty$.
Since $r \notin |\CC_F^\times|_F$, there exists a unique index $j$
which maximizes $|g_j|_F r^j$.
Lemma~\ref{L:irrational factor} implies that $g_j^{-1} x^{-j} g$
factors as a unit in $F \langle x/r \rangle$ times a unit in
$F \langle r/x \rangle$,
yielding the claim.
\end{proof}
\begin{cor}
The completion of $F(x)$ under $\alpha_{0,r}$ is equal to $F \langle r/x,x/r
\rangle$.
\end{cor}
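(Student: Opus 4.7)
The plan is to sandwich the completion of $F(x)$ between two objects whose completions are already understood. By the definition of Fr\'echet completion, $F \langle r/x, x/r \rangle$ is the completion of $F[x, x^{-1}]$ under the single norm $|\cdot|_r = \alpha_{0,r}$, so it is complete and contains $F[x,x^{-1}]$ as a dense subring. Since $F[x,x^{-1}] \subseteq F(x)$, it therefore suffices to exhibit an inclusion $F(x) \hookrightarrow F \langle r/x, x/r \rangle$ compatible with $\alpha_{0,r}$: then $F(x)$ sits between a dense subring of $F \langle r/x, x/r \rangle$ and $F \langle r/x, x/r \rangle$ itself, and its completion under $\alpha_{0,r}$ must coincide with the latter.

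To produce this inclusion, the only thing to check is that every nonzero element of $F[x]$ is invertible in $F \langle r/x, x/r \rangle$. This is exactly what Lemma~\ref{L:irrational local} provides: any nonzero $g \in F[x] \subseteq F \langle r/x, x/r \rangle$ admits a factorization $g = x^i g_1 g_2$ with $g_1 \in F \langle x/r \rangle^\times$ and $g_2 \in F \langle r/x \rangle^\times$. Both $F \langle x/r \rangle$ and $F \langle r/x \rangle$ embed naturally into $F \langle r/x, x/r \rangle$ (each is a completion of $F[x]$ or $F[x^{-1}]$ under a norm that is dominated by or equal to $|\cdot|_r$), and these embeddings are ring maps, so units map to units. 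Hence each factor, and thus $g$ itself, is a unit in $F \langle r/x, x/r \rangle$.

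With this in hand, the map $F[x] \to F \langle r/x, x/r \rangle$ extends to a ring map $F(x) \to F \langle r/x, x/r \rangle$, necessarily isometric for $\alpha_{0,r}$ (since the seminorm is multiplicative and the target norm restricts to $|\cdot|_r$ on $F[x,x^{-1}]$). The image contains $F[x,x^{-1}]$, which is dense, so completing $F(x)$ yields all of $F \langle r/x, x/r \rangle$.

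There is essentially no obstacle once Lemma~\ref{L:irrational local} is invoked; the only subtle point worth verifying is that $\alpha_{0,r}$ really is a multiplicative norm (not merely a seminorm) on $F(x)$, so that the ``completion of $F(x)$'' is a well-defined field. This follows from the type (iii) classification in Proposition~\ref{P:classify points} together with Lemma~\ref{L:same corank}, which tell us that $\alpha_{0,r}$ corresponds to a genuine valuation on $F(x)$ of rational rank one over $\Gamma_{v_F}$.
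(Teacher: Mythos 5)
Your proof is correct and follows essentially the same route as the paper: the paper also observes that $F\langle r/x, x/r\rangle$ is complete, contains $F[x,x^{-1}]$ densely, and is a field by Lemma~\ref{L:irrational local}, which is exactly the invertibility input you use to embed $F(x)$. The only difference is cosmetic — you verify invertibility just for nonzero polynomials rather than stating that the whole ring is a field — so the two arguments coincide in substance.
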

\begin{proof}
The complete ring $F \langle r/x,x/r \rangle$ contains $F[x,x^{-1}]$
as a dense subring, and is a field by Lemma~\ref{L:irrational local}.
This proves the claim.
\end{proof}

\begin{lemma} \label{L:irrational normal2}
Suppose $F$ is integrally closed in the complete extension $F'$.
Then $F \langle r/x, x/r
\rangle$ is integrally closed in $F' \langle r/x, x/r \rangle$.
\end{lemma}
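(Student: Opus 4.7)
The plan is to reduce the statement about the annulus algebra $F\langle r/x, x/r \rangle$ to two simpler statements about one-sided pieces: that $F\langle x/r \rangle$ is integrally closed in $F'\langle x/r \rangle$, and likewise $F\langle r/x \rangle$ is integrally closed in $F'\langle r/x \rangle$. Given $g \in F'\langle r/x, x/r \rangle$ satisfying a monic polynomial $P(T) \in F\langle r/x, x/r \rangle[T]$, I would use the factorization machinery from Lemma~\ref{L:irrational local} (after suitable normalization via Lemma~\ref{L:irrational factor}) to write $g$ as a product of units drawn from the two disc algebras, and then transfer integrality factorwise.

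For the disc algebra statement, I would first prove the purely formal analogue: $F \llbracket x \rrbracket$ is integrally closed in $F' \llbracket x \rrbracket$. This follows by a straightforward induction on the $x$-adic expansion. If $g \in F' \llbracket x \rrbracket$ satisfies a monic relation over $F \llbracket x \rrbracket$, reducing modulo $x$ shows $g(0) \in F'$ is integral over $F$, hence lies in $F$ by hypothesis; then $(g - g(0))/x \in F' \llbracket x \rrbracket$ is integral over $F \llbracket x \rrbracket$ with respect to a derived monic polynomial, and one proceeds coefficient by coefficient. Since $F\langle x/r \rangle = F'\langle x/r \rangle \cap F \llbracket x \rrbracket$ inside $F'\llbracket x \rrbracket$, this yields the integral closedness of $F\langle x/r \rangle$ in $F'\langle x/r \rangle$; the case of $F\langle r/x \rangle$ follows by the symmetric substitution $x \leftrightarrow r^2/x$.

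For the main reduction, I would proceed as follows. After multiplying $g$ by an appropriate $cx^{-i}$ with $c \in F^\times$ and $i \in \ZZ$, I may arrange $\alpha_{0,r}(g) = 1$; here the fact that $r \notin |\CC_F^\times|$ forces the value $\alpha_{0,r}(g)$ to lie in $|F^\times| \cdot r^\ZZ$, so this normalization can be achieved while keeping the ambient ring unchanged. If in addition $\alpha_{0,r}(g - 1) < 1$, Lemma~\ref{L:irrational factor} provides a unique factorization $g = g_+ g_-$ with $g_\pm \in F'\langle x/r\rangle^\times, F'\langle r/x \rangle^\times$ and $\alpha_{0,r}(g_\pm - 1) < 1$. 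Applying the same unique factorization to each conjugate of $g$ in an algebraic closure and using the fact that the extension of $\alpha_{0,r}$ to any algebraic extension is unique, one deduces that $g_+$ is integral over $F\langle x/r \rangle$ and $g_-$ is integral over $F\langle r/x \rangle$, at which point the disc algebra case finishes the proof.

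The main obstacle lies in the last step: making rigorous the claim that the factors $g_\pm$ of an integral element are themselves integral over the corresponding one-sided algebras. The uniqueness of the near-$1$ factorization in Lemma~\ref{L:irrational factor} must be leveraged consistently across all algebraic conjugates of $g$, and these conjugates do not a priori sit inside $F'\langle r/x, x/r\rangle$; one has to argue instead inside a suitable complete algebraic extension and then descend. This is exactly where the hypothesis that $F$ is integrally closed in $F'$ enters: it ensures that the minimal polynomials of the factors, once shown to have coefficients algebraic over $F$, in fact have coefficients in $F$.
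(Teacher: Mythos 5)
Your proposal takes a genuinely different route from the paper, and the gap you flag at the end is real and, as written, unresolved.

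The paper's argument is much shorter and avoids factorization entirely. After reducing to the case where $F$ and $F'$ are both algebraically closed, one writes an integral element as a Laurent series $f = \sum_{i \in \ZZ} f_i x^i$ with $f_i \in F'$ and observes that any $\tau \in \Aut(F'/F)$ acts coefficientwise as an automorphism of $F'\langle r/x, x/r\rangle$ fixing $F\langle r/x, x/r\rangle$; hence $\sum_i \tau(f_i) x^i$ is again a root of the minimal polynomial $P(T)$ of $f$. Since $P$ has only finitely many roots, each $f_i$ has finite orbit under $\Aut(F'/F)$ and so lies in $F$. This works directly on the Laurent coefficients, with no need to pass through disc algebras.

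In your approach, the decisive step is the one you yourself identify as problematic: transferring integrality to the factors $g_\pm$ by ``applying the same unique factorization to each conjugate of $g$.'' The conjugates of $g$ live in a finite field extension of $F'\langle r/x, x/r\rangle$, and such an extension has no reason to be of the form $F''\langle r/x, x/r\rangle$ for some complete field $F''$; consequently Lemma~\ref{L:irrational factor} simply does not apply there, and the factorization of the conjugates is not defined. There are two secondary issues as well. The normalization $\alpha_{0,r}(g) = 1$ via multiplication by $c x^{-i}$ with $c \in F^\times$ is not generally available: $\alpha_{0,r}(g) \in |F'^\times|\cdot r^\ZZ$, and integrality over $F\langle r/x, x/r\rangle$ only pins down a \emph{power} of $\alpha_{0,r}(g)$ inside $|F^\times|\cdot r^\ZZ$, so the needed scalar may lie in $F'$ rather than $F$. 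And in the proposed induction for $F\llbracket x\rrbracket$ inside $F'\llbracket x\rrbracket$, the element $(g - g(0))/x$ need not be integral over $F\llbracket x\rrbracket$: substituting $g = g(0) + xh$ into a monic relation and clearing a single power of $x$ leaves a relation for $h$ with leading coefficient $x$, not $1$, so the inductive hypothesis does not apply verbatim. All of these obstructions disappear in the coefficientwise automorphism argument.
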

\begin{proof}
We may reduce to the case where both $F$ and $F'$ are algebraically closed.
Let $f = \sum_{i \in \ZZ} f_i x^i
\in F' \langle r/x, x/r \rangle$ be an element which is integral over
$F \langle r/x, x/r \rangle$.
Let $P(T)$ be the minimal polynomial of $f$ over $F \langle r/x, x/r \rangle$.
Then for each $\tau \in \Aut(F'/F)$, $\sum_{i \in \ZZ} \tau(f_i) x^i$ is also
a root of $P$. Hence each $f_i$ must have finite orbit under $\tau$, and so
must belong to $F$.
\end{proof}

\begin{prop} \label{P:irrational glueing}
Let $A$ be a subring of $F$.
Let $S$ be a complete subring of $F \langle r/x,x/r \rangle$
which is topologically finitely generated over $A \langle r/x,x/r \rangle$, 
such that $\Frac(S)$ is finite over $\Frac(A \langle r/x, x/r \rangle)$.
Then there exists a subring $A'$ of $F$ 
which is finitely generated over $A$,
such that $\Frac(A')$ is finite over $\Frac(A)$ and
$S \subseteq A' \langle r/x,x/r \rangle$.
\end{prop}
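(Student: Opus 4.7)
The plan is to reduce the claim to a single algebraic element and then descend it to a finite extension of $A$ by combining Newton polygon analysis at the irrational Gauss norm $|\cdot|_r$ with Hensel's lemma in the complete field $F$.

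Since $S$ is topologically finitely generated over $R_A := A\langle r/x, x/r\rangle$ by $s_1,\dots,s_n$, it will suffice to find for each $i$ a finitely generated $A$-subalgebra $A_i' \subseteq F$ with $\Frac(A_i')/\Frac(A)$ finite and $s_i \in A_i'\langle r/x, x/r\rangle$; the composite $A' := A[A_1',\dots,A_n']$ then satisfies both finiteness conditions, and $A'\langle r/x, x/r\rangle$ is complete and contains both $R_A$ and each $s_i$, hence contains $S$. Fix one such $s \in F\langle r/x, x/r\rangle$ algebraic over $R_A$, and take a monic polynomial $P(T) \in R_A[T]$ of positive degree $d$ annihilating $s$. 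By Lemma~\ref{L:irrational local} and $r \notin |\CC_F^\times|_F$, the Laurent expansion $s = \sum_k s_k x^k$ has a unique dominant index $k_0$ with $|s|_r = |s_{k_0}|_F r^{k_0}$. Moreover, for each index $i$ of the Newton polygon of $P$ at $|\cdot|_r$ achieving the leading slope, the dominant Laurent coefficient of $c_i$ must sit at index $k_0(d-i)$, since the alternative would put a nontrivial $\log r$ contribution into $v_F(s_{k_0})$, outside of $v_F(F)$. Isolating these leading contributions in the coefficient of $x^{k_0 d}$ on both sides of $P(s) = 0$ shows that $s_{k_0}$ is an approximate root of a polynomial $Q \in A[T]$ of degree $d$; Hensel's lemma in $F$, applied after a bounded base change to secure the simple-root condition (using separability in characteristic zero), identifies $s_{k_0}$ as an exact root and hence as an element algebraic over $A$. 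Adjoining $s_{k_0}$ to $A$ to obtain $A_1 := A[s_{k_0}]$ and normalizing $s \mapsto (s_{k_0} x^{k_0})^{-1}s$, we reduce to the case $|s|_r = 1$ with nonzero residue $\bar s$ in the residue field $\kappa_F$ of $|\cdot|_r$ on $F\langle r/x, x/r\rangle$.

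In this normalized situation, $\bar s$ is algebraic over $\kappa(A_1)$; lifting its minimal polynomial to $A_1[T]$ and applying Hensel's lemma in the complete field $F$ yields $t \in F$ algebraic over $A_1$ with $\bar t = \bar s$, and we set $A_2 := A_1[t]$. Writing $s = t + \delta$ with $|\delta|_r < 1$ and Taylor-expanding $P(s) = 0$ about $t$ gives $P(t) + P'(t)\delta + \delta^2 Q_1(\delta) = 0$ with $|P(t)|_r < 1$. After one final finite base change---factoring $\bar P$ into irreducibles over $\kappa(A_2)$ and lifting the factor whose reduction carries $\bar s$ back to a factor of $P$ over the complete ring $R_{A_2}$ via Hensel's lemma, thereby arranging $\bar s$ to be a simple root of the reduction of the replaced polynomial---we have $|P'(t)|_r = 1$. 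Theorem~\ref{T:master factor}, applied with $a = P'(t)$, $b = 1$, $c = -P(t)$, then produces a unique element of norm less than $1$ in $R_{A_2}$ satisfying the equation; by uniqueness inside $F\langle r/x, x/r\rangle$ this must be our $\delta$, so $s = t + \delta \in A_2\langle r/x, x/r\rangle$, and $A' := A_2$ suffices.

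The main obstacle is securing the simple-root condition at each application of Hensel's lemma: both in the extraction of $s_{k_0}$ from the leading coefficient of $x^{k_0 d}$ in $P(s) = 0$, and again when handling a possibly multiple residue $\bar s$ as a root of $\bar P$, one must perform bounded-degree base changes to isolate simple roots via factorization over the current residue field. Characteristic-zero separability ensures these procedures terminate in finitely many steps, so $A'/A$ remains finitely generated with finite fraction field, as required.
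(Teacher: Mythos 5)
Your overall plan shares the paper's starting point (reduce to a single element $s$ algebraic over $\Frac(A\langle r/x,x/r\rangle)$), but it then diverges, and two of the steps have genuine gaps.

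First, the extraction of $s_{k_0}$. You propose to isolate the $x^{k_0 d}$-coefficient of $P(s)=0$ to obtain a polynomial $Q$, and then claim $Q\in A[T]$. This is not so: the coefficients $c_i$ of $P$ lie in $A\langle r/x,x/r\rangle$, and the Laurent coefficients of such elements lie only in the $|\cdot|_F$-\emph{completion} of $A$ inside $F$ (this is what Fr\'echet completion of $A[x,x^{-1}]$ under $|\cdot|_r$ produces), not in $A$. So $Q$ has coefficients in $\widehat{A}$, and an application of Hensel's lemma inside the complete field $F$ yields only that $s_{k_0}\in F$ (trivially true), not that $s_{k_0}$ is algebraic over $\Frac(A)$. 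The whole point of the proposition is that one cannot simply absorb the coefficients of $s$ into $A$; they can only be \emph{approximated} by elements that are algebraic over $\Frac(A)$. That approximate algebraicity is exactly what the paper gets from Lemma~\ref{L:irrational normal2}: since $\widehat{\bar A}\langle r/x,x/r\rangle$ is integrally closed in $F\langle r/x,x/r\rangle$ (where $\bar A$ is the integral closure of $\Frac(A)$ in $F$), the coefficients of $g$ lie in the completion of $\bar A$, and can therefore be approximated arbitrarily well by elements of finitely generated $A$-subalgebras $A'$ with $\Frac(A')/\Frac(A)$ finite. You never invoke anything like \ref{L:irrational normal2}, and without it there is no way to turn "in the completion" into "approximately algebraic over $A$."

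Second, the final application of Theorem~\ref{T:master factor} is incorrectly set up. With $a=P'(t)$, $b=1$, $c=-P(t)$, condition (d) of the theorem requires $|ab-c|<\lambda^2|c|$; but $ab-c=P'(t)+P(t)$ has norm $1$ (since $|P'(t)|_r=1 > |P(t)|_r$), while $|c|=|P(t)|_r<1$, so the inequality fails outright. Theorem~\ref{T:master factor} is a noncommutative factorization statement (used in the paper only to factor elements, as in Lemma~\ref{L:irrational factor}), not a root-finder; what the paper actually does after approximating $g$ by $h$ is run an ordinary Newton iteration for $P(T)$ \emph{inside the complete ring} $A'\langle r/x,x/r\rangle$, which converges to a root forced by uniqueness to equal $g$, thereby showing $g\in A'\langle r/x,x/r\rangle$. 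Doing the iteration in $A'\langle r/x,x/r\rangle$ rather than in $F$ is the step that controls where the answer lives; your argument never arranges this.

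In short: the missing ingredient is Lemma~\ref{L:irrational normal2}, which gives the needed approximate algebraicity of the coefficients in one stroke, after which a single Newton iteration on $P$ inside $A'\langle r/x,x/r\rangle$ finishes. Your attempt to extract $s_{k_0}$ directly by a Newton-polygon argument cannot, by itself, produce an element algebraic over $A$, and the factorization theorem is applied outside its hypotheses.
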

\begin{proof}
It suffices to check the claim in case $S$ is the completion of $A \langle r/x,x/r
\rangle[g]$ for some $g \in F \langle r/x, x/r \rangle$ which is integral over
$\Frac(A \langle r/x,x/r \rangle)$, with minimal polynomial $P(T)$.
By Lemma~\ref{L:irrational normal2}, 
the coefficients of $g$ must belong to the completion of the integral closure of
$\Frac(A)$ within $F$. 
Consequently, for any $\epsilon >0$, we can choose a finitely generated $A$-subalgebra $A'$ of $F$ 
with $\Frac(A')$ finite over $\Frac(A)$, so that
there exists $h \in A' \langle r/x, x/r \rangle$ with $\alpha_{0,r}(g-h) < \epsilon$.
For $\epsilon$ suitably small, we may then perform a Newton iteration to compute
a root of $P(T)$ in $A' \langle r/x, x/r \rangle$ close to $h$, which will be forced to
equal $g$.
\end{proof}

\subsection{Differential modules on the open unit disc}
\label{subsec:diff mod disc}

We now collect some facts about differential modules on Berkovich discs,
particularly concerning their behavior in a neighborhood of a minimal point.
The hypothesis of residual characteristic $0$ will simplify matters greatly;
an analogous but more involved
treatment in the case of positive residual characteristic
is \cite[\S 4]{kedlaya-part4}.

\begin{hypothesis} \label{H:Berkovich nabla}
Throughout \S~\ref{subsec:diff mod disc}--\ref{subsec:extend hor},
let $M$ be a $\nabla$-module of rank $d$ over the \emph{open} 
Berkovich unit disc
\[
\DD_0 = \{\alpha \in \DD: \alpha_{0,r} \geq \alpha \mbox{ for some } r \in [0,1)\}.
\]
That is, for each $r \in [0,1)$, we must specify
a differential module $M_r$ of rank $d$ over $F \langle x/r \rangle$,
plus isomorphisms $M_r \otimes_{F \langle x/r \rangle} F \langle x/s \rangle
\cong M_s$
for $0 < s < r < 1$ satisfying the cocycle condition.
(Note that $\alpha_{0,0} \notin \DD_0$, contrary to the convention
adopted in \cite[Definition~5.3.1]{kedlaya-goodformal1}.)
\end{hypothesis}

\begin{defn} \label{D:berkovich radii}
For $\alpha \in \DD_0$,
put $I_\alpha = (0, +\infty)$ if $\alpha$ is of type (i)
and $I_\alpha = (0, -\log r(\alpha)]$ otherwise.
For $s \in I_\alpha$,
let $\alpha_s$ be the unique point of $\DD_0$
with $\alpha_s \geq \alpha$ and $r(\alpha_s) = e^{-s}$
(given by Lemma~\ref{L:unique point}).
Let $F_{\alpha,s}$ be the completion of $F(x)$ under $\alpha_s$.
Define $f_1(M,\alpha,s) \geq  \dots \geq f_d(M,\alpha,s) \geq s$
so that the scale multiset of $\frac{\del}{\del x}$
on $M \otimes F_{\alpha,s}$ 
consists of
$e^{f_1(M,\alpha,s)-s}, \dots, e^{f_d(M,\alpha,s)-s}$.
Beware that this is a different normalization than in the definition
of irregularity; the new normalization is such that
\[
\left|\frac{\del}{\del x}\right|_{\mathrm{sp}, M \otimes F_{\alpha,s}} 
 = e^{f_1(M,\alpha,s)}.
\]
Put $F_i(M,\alpha,s) = f_1(M,\alpha,s) + \cdots + f_i(M,\alpha,s)$.
\end{defn}

\begin{prop} \label{P:convex stuff}
The function $F_i(M,\alpha,s)$ is continuous, convex, and 
piecewise affine in $s$,
with slopes in $\frac{1}{d!}\ZZ$. Furthermore, the slopes of $F_i(M,\alpha,s)$
are nonpositive in a neighborhood of any $s$ for which $f_i(M,\alpha,s) > s$.
\end{prop}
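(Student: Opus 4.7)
The plan is to reduce this to the established theory of subsidiary spectral radii for differential modules on open discs and annuli, as developed in the author's book on $p$-adic differential equations (\cite{kedlaya-course}) and the joint paper \cite{kedlaya-xiao}. First I would trivialize the path $\{\alpha_s\}_{s \in I_\alpha}$: by Proposition~\ref{P:classify points}, the minimal element of the path $\alpha$ is either itself of type (i) or (iv), or we are in the range $s < -\log r(\alpha)$ where $\alpha_s$ is genuinely a Gauss point on some disc $D_{z, e^{-s}}$ with $z \in \gotho_{\CC_F}$ independent of $s$ (by Lemma~\ref{L:unique point}). After replacing $F$ by a complete extension $F'$ containing $z$ (using Lemma~\ref{L:Berkovich surjective} to guarantee the path of points extends) and translating $x \mapsto x - z$, I reduce to the case where $\alpha_s$ is the $e^{-s}$-Gauss point $\alpha_{0,e^{-s}}$; the subsidiary scales of $\del/\del x$ are preserved by this change of variables.

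In this normalized setting, for any closed subinterval $[s_0, s_1] \subset I_\alpha$, the restriction of $M$ to the corresponding closed annulus becomes a differential module of rank $d$ over $F'\langle e^{-s_1}/x, x/e^{-s_0}\rangle$. The continuity, convexity, and piecewise affinity of $F_i(M,\alpha,s)$, together with the $\tfrac{1}{d!}\ZZ$ bound on slopes, follow from the standard theorem on subsidiary radii over annuli; this is essentially \cite[Theorem~11.3.2]{kedlaya-course} (cf.\ the treatment in \cite{kedlaya-xiao}). The denominator $d!$ comes from a Newton polygon argument applied to a twisted polynomial of degree at most $d$ encoding the factorization of $\del^d$, in the spirit of Proposition~\ref{P:dagger factor}. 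Points of type (iv) are handled by Corollary~\ref{C:stabilize}, which allows one to compute spectral quantities at $\alpha_s$ via a suitable sequence of type-(ii) approximants, transferring the properties of $F_i$ from a neighborhood in the annulus picture.

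For the final nonpositivity claim, the condition $f_i(M,\alpha,s) > s$ means the $i$-th scale of $\del/\del x$ is strictly greater than $1$, equivalently the $i$-th generic radius of convergence $e^{-f_i(M,\alpha,s)}$ is strictly smaller than the radius $e^{-s}$ of the disc. This puts us in the regime where the Dwork--Robba transfer principle on \emph{open} discs (as opposed to annuli) applies: because $M$ extends to the whole open disc $\DD_0$ and not merely to an annulus, in this regime the subsidiary radius $e^{-f_i(M,\alpha,s)}$ is a nondecreasing function of the disc radius $e^{-s}$. Equivalently, $f_i(M,\alpha,s)$ is nonincreasing in $s$ in a neighborhood of such a point, and summing over $j \leq i$ gives that $F_i$ has nonpositive slope there. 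The required monotonicity statement is again taken from \cite[\S 11]{kedlaya-course}.

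The main obstacle I anticipate is bookkeeping: the formalism of the excerpt parametrizes points by $s = -\log r(\alpha_s)$, and one must carefully translate the slope and convexity bounds from the multiplicative conventions used in \cite{kedlaya-course} and \cite{kedlaya-xiao} (where everything is typically phrased in terms of $\log r$ and generic radii) into the $f_i$-normalization of Definition~\ref{D:berkovich radii}, while also tracking the effect of a potentially non-archimedean base field that need not be discretely valued. Once the reduction of Step~1 places us at Gauss points, these translations are routine, but writing them cleanly is where the bulk of the work will lie.
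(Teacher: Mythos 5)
Your proposal is correct and takes essentially the same route as the paper, whose entire proof is the citation chain you reconstruct: the reduction to the Gauss-point/annulus picture is done as in \cite[Proposition~4.6.4]{kedlaya-part4}, and the continuity, convexity, piecewise affinity, slope integrality, and the nonpositivity of slopes where $f_i(M,\alpha,s)>s$ are then all read off from \cite[Theorem~11.3.2]{kedlaya-course}. One small slip worth fixing: your paraphrase ``the subsidiary radius $e^{-f_i(M,\alpha,s)}$ is a nondecreasing function of the disc radius $e^{-s}$'' is the reverse of the statement you then (correctly) use --- the right formulation is that $f_i(M,\alpha,s)$ is nonincreasing in $s$ in that regime, i.e., the subsidiary radii do not decrease as the disc shrinks, which is what the cited monotonicity result actually gives.
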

\begin{proof}
As in \cite[Proposition~4.6.4]{kedlaya-part4}, this reduces to
\cite[Theorem~11.3.2]{kedlaya-course}.
\end{proof}

The following argument makes critical use of the hypothesis that $F$
has residual characteristic $0$.
The situation of positive residual characteristic is much subtler;
compare \cite[Proposition~4.7.5]{kedlaya-part4}.
\begin{prop} \label{P:terminal}
\begin{enumerate}
\item[(a)]
Suppose $\alpha \in \DD_0$ is of type (i).
Then in a neighborhood of $s = +\infty$,
for each $i \in \{1,\dots,d\}$, $f_i(M,\alpha,s) = s$ identically.
\item[(b)]
Suppose $\alpha \in \DD_0$ is of type (iv).
Then in a neighborhood of $s = -\log r(\alpha)$,
for each $i \in \{1,\dots,d\}$,
either $f_i(M,\alpha,s)$ is constant, or
$f_i(M,\alpha,s) = s$ identically.
\end{enumerate}
\end{prop}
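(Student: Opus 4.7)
The plan is to derive both statements from the transfer theorem for $\nabla$-modules in residual characteristic zero, which in our setting forces $f_i(M,\alpha,s)=s$ throughout $I_\alpha$ for every $i$. This already fulfills the $f_i(M,\alpha,s)=s$ alternative in each of (a) and (b); the ``constant'' alternative in (b) in fact does not occur in residual characteristic zero.

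For part (a), the hypothesis $\alpha=\alpha_{z,0}\in\DD_0$ forces $|z|_F<1$, so for every $s>0$ the closed disc $D_{z,e^{-s}}$ is contained in $\DD_0$, and $M$ restricts to a $\nabla$-module without interior singularities on a disc of radius $e^{-s}$. Applying the transfer theorem in residual characteristic zero (see, e.g., \cite[Theorem~9.6.1]{kedlaya-course}) to this restriction shows that the intrinsic generic radius of convergence at the central Gauss point $\alpha_s=\alpha_{z,e^{-s}}$ equals $e^{-s}$; translated through Definition~\ref{D:berkovich radii} this reads $\lvert\del/\del x\rvert_{\mathrm{sp},\,M\otimes F_{\alpha,s}}=e^s$ and hence $f_i(M,\alpha,s)=s$ for every $i=1,\dots,d$ and every $s>0$, which is stronger than the conclusion near $s=+\infty$.

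For part (b), set $s^*=-\log r(\alpha)$. For each $s<s^*$, combining Lemma~\ref{L:unique point} with the description of $\alpha$ as the infimum of the defining sequence $\{\alpha_{z_i,r_i}\}$ lets one identify $\alpha_s=\alpha_{z_{i_0},e^{-s}}$ for any index $i_0$ with $r_{i_0}<e^{-s}$, which is a Gauss point of the disc $D_{z_{i_0},e^{-s}}\subset\DD_0$. The same transfer-theorem argument then gives $f_i(M,\alpha,s)=s$ for all $s<s^*$, and continuity of $f_i$ at the endpoint (from Proposition~\ref{P:convex stuff}) extends the equality to $s=s^*$ itself; hence $f_i(M,\alpha,s)=s$ identically on $I_\alpha$.

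The main and essentially only obstacle is a careful reconciliation of the normalizations in Definition~\ref{D:berkovich radii} with those of \cite[Theorem~9.6.1]{kedlaya-course}, together with a check that the transfer theorem applies uniformly at every interior Berkovich point of $\DD_0$, including those of type (iii) whose radius is irrational over $|\CC_F^\times|_F$. For the latter case one may invoke the descent discussion of \S~\ref{subsec:berk descent} to relate such points to rational-radius Gauss points, or pass from rational to irrational radii by continuity of $s\mapsto f_i(M,\alpha,s)$ provided by Proposition~\ref{P:convex stuff}.
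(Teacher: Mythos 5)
Your proof rests on a false premise: you apply Dwork's transfer theorem in the wrong direction. The transfer theorem takes as \emph{input} the hypothesis that the intrinsic generic radius of convergence is maximal (equivalently, $f_1(M,\alpha,s)=s$) and \emph{concludes} the existence of convergent horizontal sections; that is exactly the content of Lemma~\ref{L:regular} in the paper. It does not compute the scale, and a $\nabla$-module over the full open disc $\DD_0$ can certainly have $f_i(M,\alpha,s)>s$ at interior points. Indeed, take $F=\CC((t))$ with $|t|=c<1$ and $M=E(\phi)$ with $\phi=t^{-1}/(x-1)$: this is a $\nabla$-module on all of $\DD_0$ (the pole lies on the boundary $|x|=1$), and at $\alpha_{0,r}$ one computes $|\del\phi|_{\alpha_{0,r}}=c^{-1}$, so the scale of $\del/\del x$ on $M\otimes F_{\alpha,s}$ is $\max(1,e^{-s}/c)$. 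Thus $f_1(M,\alpha_{0,0},s)=\max(s,-\log c)$, which is the constant $-\log c$ for all $s<-\log c$ and only equals $s$ for $s\geq -\log c$. This already shows that your claim ``$f_i(M,\alpha,s)=s$ for every $s>0$'' in part~(a) is false, and that the ``constant'' alternative in part~(b), which you assert ``does not occur in residual characteristic zero,'' is in fact the generic behaviour for modules with nontrivial irregularity. Accepting your claim would trivialise the remainder of \S~\ref{subsec:extend hor}, in particular the distinction between the two alternatives underlying Proposition~\ref{P:prepared} and the definition of ``terminal.''

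For contrast, the actual argument is quite different: part~(a) is a convexity argument using Proposition~\ref{P:convex stuff} (each $F_i(M,\alpha,\cdot)$ is convex and has nonpositive slope wherever $f_i>s$, so the decreasing quantity $f_i-s$ must eventually hit zero and, once it does, cannot come back up), while part~(b) uses a cyclic vector together with Corollary~\ref{C:stabilize} to see that the relevant Newton polygon stabilises near $s=-\log r(\alpha)$, so that each $f_i$ is read off as $\max(s,\text{const})$ there. Neither step can be replaced by an appeal to the transfer theorem, which only enters (via Lemma~\ref{L:regular}) after the structure of the $f_i$ has been established.
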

\begin{proof}
Suppose first that $\alpha$ is of type (i).
By Proposition~\ref{P:convex stuff}, $f_1(M,\alpha,s) = F_1(M,\alpha,s)$
is convex, and it cannot have a positive slope except in a stretch
where $f_i(M,\alpha,s) = s$ identically for all $i$. 
In particular, we cannot have
$f_i(M,\alpha,s) - s > 0$ for all $s$: otherwise, the left side
would be a convex function on $(0, +\infty)$
with all slopes less than or equal to $-1$, and so could not be 
positive on an infinite interval. We thus have $f_i(M,\alpha,s_0)= s_0$ for 
some $s_0$; the right slope of $f_i(M,\alpha,s)$ at $s = s_0$
must be at least 1 because
$f_i(M,\alpha,s) \geq s$ for all $s$. By convexity, the right slope of
$f_i(M,\alpha,s)$ at any $s \geq s_0$ must be at least 1.
If we ever encounter a slope strictly greater than 1,
then that slope is achieved at some point with $f_i(M,\alpha,s) > s$,
contradicting Proposition~\ref{P:convex stuff}.
It follows that $f_i(M,\alpha,s) = s$ identically for $s \geq s_0$.

Suppose next that $\alpha$ is of type (iv).
Pick some $r \in (r(\alpha),1)$ such that $\alpha_{0,r} \geq \alpha$, and put
$E  = \Frac(F \langle x/r \rangle)$.
By the cyclic vector theorem \cite[Lemma~1.3.3]{kedlaya-goodformal1},
there exists an isomorphism $M \otimes E
\cong E\{T\}/E\{T\}P(T)$
for some twisted polynomial
$P(T) = \sum_i P_i T^i \in E\{T\}$ with respect to the derivation
$\frac{\del}{\del x}$.
By Corollary~\ref{C:stabilize}, for each $i$, $\alpha_s(P_i)$
is constant for $s$ in a neighborhood of $-\log r(\alpha)$.
Hence the Newton polygon of $P$ measured using $\alpha_s$
is also constant for $s$ in a neighborhood of $-\log r(\alpha)$.
By \cite[Proposition~1.6.3]{kedlaya-goodformal1},
we can read off $f_i(M,\alpha,s)$ as the greater of
$s$ and the negation of the $i$-th smallest
slope of the Newton polygon of $P$.
This implies that in a neighborhood of $-\log r(\alpha)$,
$f_i(M,\alpha,s)$ is either constant or identically equal to $s$,
as desired.
\end{proof}

\subsection{Extending horizontal sections}
\label{subsec:extend hor}

We need some additional arguments that allow us, in certain cases,
to extend horizontal sections of $\nabla$-modules over $\DD_0$.
Throughout \S~\ref{subsec:extend hor}, 
retain Hypothesis~\ref{H:Berkovich nabla}.

\begin{prop} \label{P:prepared}
For $\alpha \in \DD_0$, the following conditions are equivalent.
\begin{enumerate}
\item[(a)]
For $i=1,\dots,d$, $f_i(M, \alpha,s)$ is constant until it becomes equal to
$s$ and then stays equal to $s$ thereafter
(see Figure~\ref{figure:prepared}).
\item[(b)]
There exists $s_1 \in (0, -\log r(\alpha))$
such that for $i=1,\dots,d$, on the range $s \in (0, s_1]$,
$f_i(M, \alpha,s)$ is either constant or identically
equal to $s$.
\item[(c)]
There exists a direct sum decomposition $M = \oplus_t M_t$
of $\nabla$-modules over $\DD_0$,
such that $f_i(M_t,\alpha,s)$ is equal to a constant value depending only on $t$
(not on $i$) until it becomes equal to $s$ and then stays equal to $s$ thereafter.
\end{enumerate}
\end{prop}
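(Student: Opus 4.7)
My plan is to prove the equivalence by the cycle $(a) \Rightarrow (b)$, $(b) \Rightarrow (a)$, $(c) \Rightarrow (a)$, $(a) \Rightarrow (c)$, with $(a) \Rightarrow (b)$ and $(c) \Rightarrow (a)$ being immediate bookkeeping. Namely, $(a) \Rightarrow (b)$ is obtained by choosing $s_1 > 0$ smaller than every nonzero constant value of an $f_i(M,\alpha,\cdot)$; and $(c) \Rightarrow (a)$ follows because the multiset $\{f_i(M,\alpha,s)\}_i$ is the disjoint union over $t$ of the hockey-stick multisets $\{f_i(M_t,\alpha,s)\}_i$, and sorting a finite family of hockey-stick functions that jointly meet the diagonal $s \mapsto s$ in a consistent manner preserves the hockey-stick shape.

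For $(b) \Rightarrow (a)$, I would use the ordering $f_1 \geq \cdots \geq f_d \geq s$ to write, on $(0, s_1]$, $f_i = c_i$ with $c_1 \geq \cdots \geq c_k \geq s_1$ for some $k$, and $f_i = s$ for $i > k$. On this initial interval each $F_i$ has known slope: zero for $i \leq k$, and $i - k$ for $i > k$. I would then extend the hockey-stick shape globally by invoking Proposition~\ref{P:convex stuff}: each $F_i$ is convex (so its slopes are nondecreasing) and its slope is nonpositive wherever $f_i > s$. Combining these constraints forces $F_i$ to retain its initial slope for $i \leq k$ as long as $f_i > s$, pinning $f_i = c_i$ on the whole interval $(0, c_i]$. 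Once $f_i$ meets the diagonal at $s = c_i$, the slope of $F_i$ picks up a $+1$ increment that by convexity never disappears; a further appeal to Proposition~\ref{P:convex stuff} shows that any subsequent interval on which $f_i > s$ would require slopes nonpositive, a contradiction, so $f_i = s$ must persist.

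The main task is $(a) \Rightarrow (c)$. Fix $s_1 > 0$ smaller than every nonzero $c_i$, and let $T$ be the finite set of distinct values among the $c_i$, so that the multiplicity of $t \in T$ in the multiset $\{c_i\}$ is some $d_t$ with $\sum_t d_t = d$. For each $s \in (0, s_1]$, the spectral scale multiset of $\partial/\partial x$ on $M \otimes F_{\alpha, s}$ consists of the distinct clusters $\{e^{c_t - s}\}_{t \in T}$ with multiplicities $d_t$. I would then invoke a decomposition-by-scales theorem for $\nabla$-modules on a Berkovich disc (in the style of \cite[Chapter~12]{kedlaya-course}, and paralleling the slope-factorization of twisted polynomials used in \cite[Proposition~1.6.3]{kedlaya-goodformal1}) to split the restriction of $M$ to an affinoid neighborhood of $\alpha_s$ in $\DD_0$ as $\bigoplus_{t \in T} M_{t}$, with $M_t$ of rank $d_t$ having all scales at $\alpha_s$ equal to $e^{c_t - s}$. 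Canonicity of the decomposition by distinct generic scales makes these local splittings compatible across $s \in (0, s_1]$, and together they cover a cofinal family of affinoid sub-discs in $\DD_0$, producing the desired global decomposition of $M$ over $\DD_0$. The hockey-stick shape of each $f_i(M_t, \alpha, \cdot)$ is then inherited from that of $f_i(M, \alpha, \cdot)$ and the constancy of the scale on $M_t$.

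The point requiring the most care is the decomposition step: specifically, that the spectral separation by distinct scales at the (non-classical) point $\alpha_s$ actually descends from the completion $F_{\alpha, s}$ to the affinoid ring in which $\alpha_s$ sits, and that these affinoid splittings glue as $s$ varies through $(0, s_1]$. Both ingredients follow from uniqueness of the decomposition by distinct scales together with faithful flatness of the relevant completions, but the bookkeeping has to handle uniformly the case where $\alpha$ is of type (i) (so $\alpha_s$ ranges through a tower of closed sub-discs approaching a classical point) and the case where $\alpha$ is of type (iv) (so $\alpha_s$ approaches the minimal enclosing ball of the nested sequence defining $\alpha$, and we must work with annular affinoids rather than discs).
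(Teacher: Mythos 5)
Your handling of the easy implications is fine, and your convexity argument for (b) $\Rightarrow$ (a) (using Proposition~\ref{P:convex stuff}: initial slopes $0$ or $i-k$, convexity, and nonpositivity of slopes wherever $f_i>s$) is correct, though the paper never needs it, since it deduces (a) from (c). The genuine gap is in the step you yourself identify as the crux, (a) $\Rightarrow$ (c). Producing the summands $M_t$ over $\DD_0$ from the scale separation at the points $\alpha_s$ is not a consequence of ``uniqueness of the decomposition by distinct scales together with faithful flatness of the relevant completions.'' The splitting you start from lives over the complete fields $F_{\alpha,s}$; descending it to a ring of analytic functions on any subspace of the disc is precisely the hard analytic content, and it is established (via Hensel-type factorization in the style of Theorem~\ref{T:master factor}) only under hypotheses on the subsidiary radii over the whole subspace in question, not at a single Gauss point. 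Uniqueness and flatness give you compatibility of splittings once they exist; they do not produce them.

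There is also a covering problem in your gluing step: affinoid neighborhoods of the points $\alpha_s$ only cover a neighborhood of the segment $\{\alpha_s\}$ inside $\DD_0$, so they do not ``cover a cofinal family of affinoid sub-discs,'' and the proposed gluing cannot yield a decomposition over the whole open disc. What is actually needed is a decomposition theorem over the closed sub-discs exhausting $\DD_0$, whose hypotheses control the radii at off-center points as well as along the central segment. This is exactly what the paper does: it quotes \cite[Theorem~12.4.1]{kedlaya-course} to obtain (b) $\Rightarrow$ (c) in one step (which also renders a separate proof of (b) $\Rightarrow$ (a) unnecessary). So your plan is in the same spirit as the paper's proof, but the decisive implication is not established by the argument you give; you should either invoke the precise disc-decomposition theorem, as the paper does, or prove a version of it, which requires substantially more than uniqueness and flatness.
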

\begin{proof}
It is clear that (a) implies (b) and that (c) implies (a). Given (b),
(c) holds by \cite[Theorem~12.4.1]{kedlaya-course}.
\end{proof}

\begin{figure}[htbp]
\begin{center}
\input{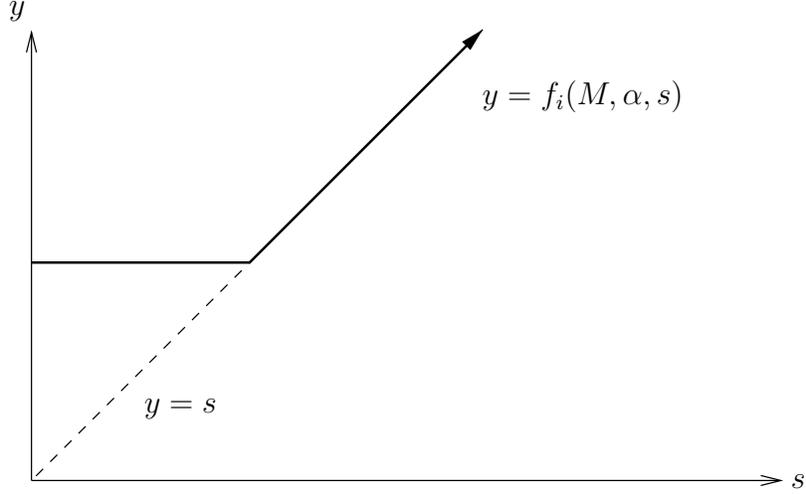}
\caption{Graph of a function satisfying condition (a) 
of Proposition~\ref{P:prepared}.}
\label{figure:prepared}
\end{center}
\end{figure}

\begin{defn}
We say that $M$ is \emph{terminal} if the equivalent conditions of
Proposition~\ref{P:prepared} are satisfied. Note that condition (b)
does not depend on $\alpha$.
For $s_0 \in [0, -\log r(\alpha))$, if
condition (a) only holds for $s > s_0$, we say that $M$ \emph{becomes 
terminal at $s_0$}; this condition depends on $\alpha$, but only via the
point $\alpha_{s_0}$.

For $I$ a closed subinterval of $I_\alpha \cup \{0\}$,
we say $M$ \emph{becomes strongly terminal on $I$} if 
for $i=1,\dots,d$, over the interior of $I$,
$f_i(M, \alpha,s)$ is either everywhere constant, or everywhere equal to $s$.
By Proposition~\ref{P:prepared}, this implies that $M$ becomes terminal
at the left endpoint of $I$.

\end{defn}

We have the following criterion for becoming strongly terminal.
\begin{prop} \label{P:preparedness}
Suppose that for some $j \in \{0,\dots,d+1\}$ and some
values $0 < s_1 < s_2 < s_3 < -\log r(\alpha)$, we have
\begin{gather*}
f_i(M, \alpha,s_1) = 
f_i(M, \alpha,s_2) = 
f_i(M, \alpha,s_3) \qquad (i=1,\dots,j) \\
f_i(M, \alpha,s_1) - s_1 =
f_i(M, \alpha,s_2) - s_2 =
f_i(M, \alpha,s_3) - s_3 = 0 \qquad (i=j+1,\dots,d).
\end{gather*}
Then $M$ becomes strongly terminal on $[s_1, s_3]$.
In particular, $M$ becomes terminal at $s_1$.
\end{prop}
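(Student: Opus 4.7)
The plan is to use convexity of the partial sums $F_i(M,\alpha,s) = f_1(M,\alpha,s) + \cdots + f_i(M,\alpha,s)$ (Proposition~\ref{P:convex stuff}) to promote the three-point equalities in the hypothesis to equalities throughout $[s_1,s_3]$. The key elementary fact is that a convex function on an interval whose values at three interior points are collinear must be affine between them; this is the equality case of Jensen's inequality, and in particular a convex function with equal values at three points is constant on the enclosing interval.

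First I would handle $i \le j$. Summing the first equality in the hypothesis from $l=1$ to $l=i$ gives $F_i(M,\alpha,s_1) = F_i(M,\alpha,s_2) = F_i(M,\alpha,s_3)$; by convexity, $F_i$ is constant on $[s_1,s_3]$. Writing $C_i$ for this constant value and setting $C_0 = 0$, the difference $f_i(M,\alpha,s) = C_i - C_{i-1}$ is constant on $[s_1,s_3]$ for $i = 1,\dots,j$. Next I would handle $j < i \le d$. Summing both halves of the hypothesis gives $F_i(M,\alpha,s_k) = C_j + (i-j)s_k$ for $k=1,2,3$, so the three values are collinear; convexity then forces $F_i(M,\alpha,s) = C_j + (i-j)s$ throughout $[s_1,s_3]$, and hence $f_i(M,\alpha,s) = F_i(s) - F_{i-1}(s) = s$ on $[s_1,s_3]$ for $i = j+1,\dots,d$.

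Combining, on the interior of $[s_1,s_3]$ each $f_i(M,\alpha,s)$ is either constant (when $i \le j$) or identically equal to $s$ (when $i > j$), which is exactly the condition for $M$ to become strongly terminal on $[s_1,s_3]$; the final clause that $M$ becomes terminal at $s_1$ is then immediate from the definition recorded just after Proposition~\ref{P:prepared}. There is no serious obstacle — the argument reduces to the standard Jensen-equality characterization of affineness for convex functions — and the compatibility of the constants with the ordering $f_1 \geq \cdots \geq f_d$ is automatic, since the hypothesis $f_j(M,\alpha,s_3) \geq s_3$ forces the constant value of $f_j$ on $[s_1,s_3]$ to exceed $s_3$, safely above $f_{j+1} \equiv s$.
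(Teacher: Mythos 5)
Your argument is correct and is essentially the paper's own proof: both pass to the partial sums $F_i(M,\alpha,s)$, note that the hypothesis makes each $F_i$ agree with an affine function at $s_1,s_2,s_3$, and use convexity (Proposition~\ref{P:convex stuff}) to force affineness on $[s_1,s_3]$, from which taking differences shows each $f_i$ is constant or identically $s$ there. The closing remark about compatibility with the ordering is unnecessary (and the constant value of $f_j$ need only be $\geq s_3$, not strictly exceed it), but this does not affect the proof.
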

\begin{proof}
The given conditions imply that for $i=1,\dots,d$,
$F_i(M, \alpha,s)$ agrees with a certain affine function at 
$s = s_1, s_2, s_3$.
Since it is convex by Proposition~\ref{P:convex stuff}, it must be
affine over the range $s \in [s_1, s_3]$. This proves the claim.
\end{proof}

\begin{lemma} \label{L:regular}
Suppose that $f_1(M,\alpha,s) = s$ identically.
Then $M$ admits a basis of horizontal sections.
\end{lemma}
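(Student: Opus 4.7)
The plan is to use the hypothesis to show $M$ is globally trivial, i.e., admits $d$ linearly independent horizontal sections on all of $\DD_0$; the residual characteristic zero assumption on $F$ will be critical.

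First, since $f_1 \ge f_2 \ge \cdots \ge f_d \ge s$ and $f_1(M,\alpha,s) = s$ identically, we have $f_i(M,\alpha,s) = s$ for every $i$ and every $s \in I_\alpha$. By Definition~\ref{D:berkovich radii}, this says the scale multiset of $\frac{\partial}{\partial x}$ on $M \otimes F_{\alpha,s}$ is $\{1,\dots,1\}$ throughout $I_\alpha$; equivalently, the intrinsic generic radius of convergence of $M$ at each $\alpha_s$ is maximal.

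Next, I would choose $r \in (r(\alpha), 1)$ with $\alpha_{0,r} \ge \alpha$ (which we may arrange by a translation, after base change via Lemma~\ref{L:Berkovich surjective} if needed). Apply the cyclic vector theorem as in the proof of Proposition~\ref{P:terminal}(b) to obtain a twisted polynomial $P(T) \in E\{T\}$ with $M \otimes E \cong E\{T\}/E\{T\}P(T)$ for $E = \Frac(F\langle x/r\rangle)$. By \cite[Proposition~1.6.3]{kedlaya-goodformal1}, the condition $f_i(M,\alpha,s) = s$ translates into the Newton polygon of $P$ measured with respect to $\alpha_s$ having all slopes $\ge -s$ for every $s$ in the relevant range. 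Because the residue characteristic of $F$ is zero, the Taylor expansion of a formal horizontal section of $M$ around any rigid point in the open disc of radius $r$ then converges on the full disc: the $|n!|^{-1}$ denominators that spoil convergence in residual characteristic $p$ are absent, so the recursion for Taylor coefficients is controlled by the Newton polygon data alone. This produces $d$ linearly independent horizontal sections of $M_r$ over $F\langle x/r'\rangle$ for every $r' < r$.

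Finally, varying $r$ so as to exhaust $\DD_0$ and invoking uniqueness of horizontal sections with prescribed value at a common rigid base point, the locally constructed bases glue into a single basis of horizontal sections of $M$ on all of $\DD_0$, as desired. The main obstacle is the convergence estimate in the second step, which is the place where residual characteristic zero enters decisively; in positive residual characteristic the analogous lemma fails without additional structure such as solvability or a Frobenius action, as one sees from the considerably more delicate parallel treatment of \cite[\S 4]{kedlaya-part4} invoked earlier in this section.
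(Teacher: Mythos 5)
The paper's proof of this lemma is a one-line citation of Dwork's transfer theorem, \cite[Theorem~9.6.1]{kedlaya-course}: the hypothesis $f_1(M,\alpha,s) = s$ identically says exactly that the intrinsic generic radius of convergence of $M$ is maximal at every Gauss point $\alpha_s$ of the disc, and Dwork's transfer theorem immediately converts this into convergence of a full basis of horizontal sections. Your proposal reaches the same destination, but the decisive convergence step --- passing from ``the scale multiset at every generic point is trivial'' to ``Taylor series of horizontal sections at rigid points converge out to the boundary'' --- is hand-waved (``the recursion for Taylor coefficients is controlled by the Newton polygon data alone''). That step \emph{is} Dwork's transfer theorem; it requires a genuine argument relating spectral bounds at generic points to bounds on the Taylor isomorphism at rigid points, and isn't obtained for free from the cyclic-vector/Newton-polygon setup. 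The detour through a cyclic vector and \cite[Proposition~1.6.3]{kedlaya-goodformal1} is also unnecessary here.

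Your framing of residual characteristic $0$ as the ``critical'' ingredient of the lemma, and the claim that ``the analogous lemma fails'' in positive residual characteristic, are misleading. Dwork's transfer theorem holds over complete nonarchimedean fields of any residual characteristic; what changes in positive residual characteristic is the normalization of what ``maximal intrinsic radius'' means (the constant $\omega = p^{-1/(p-1)}$ appears), not the validity of the transfer principle. The residual-characteristic-$0$ hypothesis carried throughout \S\ref{sec:berkovich} is used decisively elsewhere (e.g., in Proposition~\ref{P:terminal}, where the convexity and slope bounds on $f_i$ rely on it), but this particular lemma does not depend on it. Your preliminary observation that $f_1 = s$ identically forces $f_i = s$ for all $i$, and your final gluing by uniqueness of horizontal sections, are both fine.
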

\begin{proof}
This follows from Dwork's transfer theorem 
\cite[Theorem~9.6.1]{kedlaya-course}.
\end{proof}

\begin{prop} \label{P:extend section}
Suppose that for some $s_0 \in (0, -\log r(\alpha))$,
$M$ becomes strongly terminal on $[0, s_0]$.
Then for any $s \in (0,s_0)$, $H^0(M) = H^0(M \otimes F_{\alpha,s})$.
\end{prop}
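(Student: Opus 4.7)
The plan is to decompose $M$ into its regular and irregular parts via Proposition~\ref{P:prepared} and handle each separately. Since $M$ becomes strongly terminal on $[0,s_0]$, condition (b) of Proposition~\ref{P:prepared} holds with $s_1 = s_0$, so $M$ is terminal, and part (c) furnishes a direct sum decomposition $M = \bigoplus_t M_t$ of $\nabla$-modules over $\DD_0$ in which each $f_i(M_t,\alpha,s)$ equals a constant $c_t$ (depending only on $t$) until it reaches $s$, then stays at $s$. Strong terminality on $[0,s_0]$ forces each $c_t$ to lie outside $(0,s_0)$: either $c_t \leq 0$, so $f_i(M_t,\alpha,\cdot) = s$ identically on $I_\alpha$, or $c_t \geq s_0$, so $f_i(M_t,\alpha,s) = c_t > s$ throughout $(0,s_0)$. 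Regroup as $M = M' \oplus M''$, the regular and irregular parts respectively.

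For the regular summand $M'$, I would apply Lemma~\ref{L:regular} to produce a basis $\be_1,\dots,\be_{d'}$ of $M'$ consisting of horizontal sections over $\DD_0$. A horizontal section of $M' \otimes F_{\alpha,s}$ then has the form $\sum a_i \be_i$ with each $a_i$ killed by $\frac{\partial}{\partial x}$. Since $\frac{\partial}{\partial x}$ is continuous on $F_{\alpha,s}$ (its operator norm on $F(x)$ is bounded in terms of the radius of $\alpha_s$, using that the residual characteristic is zero) and $F(x)$ is dense, the constant subfield of $F_{\alpha,s}$ is the closure of the constant subfield of $F(x)$, namely $F$ itself. Hence $H^0(M') = F^{d'} = H^0(M' \otimes F_{\alpha,s})$.

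For the irregular summand $M''$, it suffices to show $H^0(M'' \otimes F_{\alpha,s}) = 0$, as $H^0(M'')$ embeds into it. Any nonzero horizontal vector $\bv \in M'' \otimes F_{\alpha,s}$ would generate a rank-one sub-$\nabla$-module $F_{\alpha,s} \cdot \bv$ isomorphic to $(F_{\alpha,s}, \partial/\partial x)$, whose scale equals $1$ (i.e., $f=s$). But by \cite[Theorem~12.4.1]{kedlaya-course}, $M'' \otimes F_{\alpha,s}$ splits as a direct sum of isoclinic pieces of scales $e^{c_t - s} > 1$, and any sub-$\nabla$-module is similarly isoclinic, so no sub can carry scale $1$. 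This contradiction gives $\bv = 0$.

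The main obstacle is the irregular case: verifying that a scale-$1$ sub-$\nabla$-module cannot sit inside a module whose scales strictly exceed $1$. This is a direct consequence of the scale decomposition theorem already invoked to prove Proposition~\ref{P:prepared}, so no new machinery is required. Combining the two summands then yields $H^0(M) = H^0(M \otimes F_{\alpha,s})$, as desired.
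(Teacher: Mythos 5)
Your proof is correct and takes essentially the same route as the paper: decompose $M$ via Proposition~\ref{P:prepared} into a regular part and a part with $f_i(\cdot,\alpha,s)>s$ on $(0,s_0)$ (using strong terminality), kill horizontal sections of the latter by a scale comparison, and handle the regular part via Lemma~\ref{L:regular} together with the fact that the horizontal elements of $F_{\alpha,s}$ are exactly $F$. The only quibble is that your density-plus-continuity justification of that last fact proves only the inclusion of the closure of the constants of $F(x)$ into the constants of $F_{\alpha,s}$, not the reverse; but the paper invokes the same fact without proof, so this is not a substantive difference.
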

\begin{proof}
The hypothesis implies that $M$ is terminal.
By Proposition~\ref{P:prepared}, there is a direct sum decomposition
$M = M_0 \oplus M_1$
such that $f_i(M_0,\alpha,s) > s$ for $s$ near $0$ for $i =1,\dots,\rank(M_0)$,
and $f_i(M_1,\alpha,s) = s$ identically for $i=1,\dots,\rank(M_1)$.
Since $M$ becomes strongly terminal on $[0, s_0]$,
we must have $f_i(M_0,\alpha,s) > s$ for $s \in (0, s_0)$
and $i=1,\dots,\rank(M_0)$.

Pick $s \in (0, s_0)$ and $\bv \in H^0(M \otimes F_{\alpha,s})$.
Since $f_i(M_0,\alpha,s) > s$ for $i=1,\dots,\rank(M_0)$,
the projection of $\bv$ onto $M_0 \otimes F_{\alpha,s}$ must be zero;
that is, $\bv \in H^0(M_1 \otimes F_{\alpha,s})$.
However, by Lemma~\ref{L:regular}, $M_1$ admits a basis of horizontal 
sections. If we write $\bv$ in terms of this basis, the coefficients
must be horizontal elements of $F_{\alpha,s}$, but the only
such elements belong to $F$. Hence
$\bv \in H^0(M_1) \subseteq H^0(M)$, as desired.
\end{proof}

\begin{remark}
The restriction that $f_i(M,\alpha,s) \geq s$ is in a certain sense a bit
artificial. Recent work of Baldassarri (in progress, but see
\cite{baldassarri, baldassarri-divizio})
seems to provide a better definition of $f_i(M,\alpha,s)$ that eliminates
this restriction, which would lead to some simplification above.
(Rather more simplification could be expected in the
analogous but more complicated arguments in \cite[\S 5]{kedlaya-part4}.)
\end{remark}

\section{Valuation-local analysis}
\label{sec:local analysis}

We now make the core technical calculations of the paper.
We give a higher-dimensional analogue of the Hukuhara-Levelt-Turrittin
decomposition theorem, in terms of a valuation on a nondegenerate
differential scheme. It will be convenient to state both the result and all of the
intermediate calculations in terms of 
the following running hypothesis.

\setcounter{theorem}{0}
\begin{hypothesis} \label{H:geometric}
Throughout \S~\ref{sec:local analysis},
let $X$ be a nondegenerate differential integral scheme,
let $Z$ be a reduced closed proper subscheme of $X$, and let
$\calE$ be a $\nabla$-module over $X \setminus Z$,
identified with a finite differential module over $\calO_X(*Z)$.
(Note that we do not pass to the formal completion.)
Let $v$ be a centered valuation on $X$, with generic center $z$.
\end{hypothesis}

\subsection{Potential good formal structures}

We introduce the notion of a potential good formal structure 
associated to the valuation $v$, 
and state the theorem we will be proving over the course of this
section. In order to lighten notation, we phrase everything
in terms of ``replacing the input data''.

\begin{defn} \label{D:replacing}
Given an instance of Hypothesis~\ref{H:geometric},
the operation of \emph{modifying/altering the input data} 
will consist of the following.
\begin{itemize}
\item
Let $f: Y \to X$ be a modification/alteration of $X$.
Replace $v$ with an extension $v'$ of $v$ to a centered valuation on $Y$;
such an extension exists by Lemma~\ref{L:proper}.
Since the scheme 
$X$ is excellent by Lemma~\ref{L:nondegenerate}(a),
we have
$\height(v') = \height(v)$,
$\ratrank(v') = \ratrank(v)$, and
$\trdefect(v') = \trdefect(v)$ by Lemma~\ref{L:extension}.
\item
Replace $X$ with an open subscheme $X'$ of $Y$ on which $v'$ is centered.
Replace $Z$ with $Z' = X' \cap f^{-1}(Z)$ (viewed as a reduced closed subscheme
of $X'$).
\item
Replace $\calE$ with the restriction of $f^* \calE$ to $\calO_{X'}(*Z')$.
\end{itemize}
Note that composing operations of one of these forms gives another operation
of the same form.
\end{defn}

\begin{lemma} \label{L:set notation}
Given any instance of Hypothesis~\ref{H:geometric}, after modifying
the input data, we can enforce the following conditions.
\begin{enumerate}
\item[(a)]
The field $\kappa_v$ is algebraic over the residue field $k$ of $X$ at $z$.
\item[(b)]
The pair $(X,Z)$ is regular.
\item[(c)]
The scheme $X = \Spec R$ is affine.
\item[(d)]
There exists a regular system of parameters $x_1,\dots,x_n$ of $R$ at $z$,
such that $Z = V(x_1\cdots x_m)$
for some nonnegative integer $m$.
\item[(e)]
There exists an isomorphism $\widehat{\calO_{X,z}}(*Z)
\cong k \llbracket x_1,\dots,x_n \rrbracket[x_1^{-1},\dots,x_m^{-1}]$
for $k$ as in (a), and derivations $\del_1,\dots,\del_n \in \Delta_R$
acting as $\frac{\del}{\del x_1},\dots,\frac{\del}{\del x_n}$.
\end{enumerate}
\end{lemma}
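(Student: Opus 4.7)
The plan is to enforce conditions (a)--(e) in sequence by a finite succession of modifications and Zariski shrinkings that monotonically improve certain numerical invariants attached to $v$ and $(X,Z)$.

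For (a), note that if $f : Y \to X$ is a modification with $v'$ an extension of $v$ and $z'$ the generic center of $v'$, then $\calO_{X,z} \hookrightarrow \calO_{Y,z'}$ gives $\kappa(z) \subseteq \kappa(z')$, so $\trdeg(\kappa_v/\kappa(z'))$ does not exceed $\trdeg(\kappa_v/\kappa(z))$. By Theorem~\ref{T:Abhyankar} this is a non-negative integer bounded by $\codim(\overline{\{z\}},X)$, so it suffices to show it can be strictly decreased whenever positive. Given $t \in \gotho_v$ with transcendental image in $\kappa_v/\kappa(z)$, write $t = a/b$ with $a,b \in \calO_{X,z}$ and blow up the ideal $(a,b)$; on the chart on which $a/b$ is regular (which contains the generic center of $v'$, since $v(t) \geq 0$), the new generic center has residue field containing the transcendental image of $t$, strictly enlarging $\kappa$. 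Iterating finitely many times achieves (a), and the condition persists under all later modifications by the monotonicity above.

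Next apply Theorem~\ref{T:desing1} to produce a modification $f : Y \to X$ with $(Y,f^{-1}(Z))$ a regular pair, and replace $Z$ by the reduction of $f^{-1}(Z)$; this achieves (b) while preserving (a). The subtlety is that the definition of normal crossings is only \'etale-local, so the components of $Z$ meeting the generic center need not be globally distinct (e.g.\ an irreducible nodal curve at its node). To remedy this, blow up the generic center: because normal crossings forces the \'etale-local branches of $Z$ at that point to have pairwise distinct tangent directions, their strict transforms meet the exceptional divisor $E$ at pairwise distinct points, and a direct local computation in \'etale coordinates (of the form $V(t_1 \cdots t_s) \mapsto V(t_1 u_2 \cdots u_s)$) shows that the reduction of the preimage is again a normal crossings divisor. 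At the new generic center above the blown-up point, the components of the reduced preimage passing through it are therefore smooth there, hence Zariski-locally distinct.

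Shrink $X$ to an affine open $\Spec R$ around the generic center, small enough that the only irreducible components of $Z$ meeting it are those through that center, yielding (c). Each such component is principal in the regular (hence factorial) local ring $\calO_{X,z}$, and after a further shrinking its generator $x_i$ may be taken in $R$; the $x_1,\dots,x_m$ so obtained have linearly independent images in $\gothm_z/\gothm_z^2$ by normal crossings, so extend to a regular system of parameters $x_1,\dots,x_n$ for $\calO_{X,z}$, and a last Zariski shrinking brings them all into $R$. This gives (d), since $Z$ and $V(x_1 \cdots x_m)$ agree as reduced subschemes. For (e), Corollary~\ref{C:regular sequence} supplies derivations $\del_1,\dots,\del_n$ of rational type with respect to $x_1,\dots,x_n$ in $\Delta_{R_z}$, obtained from any pre-existing rational-type derivations by a base change whose determinant is a unit at $z$; shrinking $R$ to invert that determinant places these in $\Delta_R$. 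Corollary~\ref{C:complete nondegenerate} then produces the identification $\widehat{\calO_{X,z}} \cong k\llbracket x_1,\dots,x_n\rrbracket$ with $k = \kappa(z)$ (the field singled out in (a)) and $\del_i$ acting as $\partial/\partial x_i$; inverting $x_1 \cdots x_m$ gives the asserted description in (e).

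The main obstacle is the branch-separation step: the \'etale-local character of the normal crossings condition in Theorem~\ref{T:desing1} means desingularization alone is insufficient for (d), and one must verify that the supplementary blowup simultaneously separates branches Zariski-locally, preserves the regular pair property, and does not damage condition (a).
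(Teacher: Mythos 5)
Your treatment of (a) is a correct, fleshed-out version of the paper's one-line argument (blow up so as to force an element of $\gotho_v$ with transcendental residue into $\calO_{X,z}$, with termination guaranteed by the Zariski--Abhyankar inequality and persistence by monotonicity of residue fields under domination), and your handling of (c)--(e) by shrinking and invoking Corollary~\ref{C:regular sequence} and Corollary~\ref{C:complete nondegenerate} is exactly what the paper does. The problem is the inserted ``branch separation'' step through which you route condition (d). The claim that after blowing up the generic center the strict transforms of the \'etale-local branches meet the exceptional divisor at pairwise distinct points is a surface phenomenon: in dimension at least $3$, strict transforms of two normal-crossings hypersurface branches still intersect after a point blowup, and, more to the point, a Zariski-irreducible component of $Z$ that is formally reducible at $z$ can remain irreducible and singular at the new center. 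Concretely, take $X = \mathbb{A}^3$ with coordinates $x,y,z$, let $Z = V\bigl(y^2 - x^2(x+1)\bigr)$ (the ``nodal cylinder''), and let the generic center be the origin; $(X,Z)$ is a regular pair in the \'etale-local sense. Blowing up the origin and passing to the chart $x = x'z'$, $y = y'z'$, $z = z'$, the reduced preimage of $Z$ is $V\bigl(z'\,(y'^2 - x'^2(x'z'+1))\bigr)$, and the second factor is again Zariski-irreducible and singular at the origin of this chart. For the monomial valuation with weights $v(x)=v(y)=2$, $v(z)=1$, the new generic center is precisely that origin, so one blowup does not yield (d); and for the height-two valuation with $v(x)=v(y)=(1,0)$, $v(z)=(0,1)$ (ordered lexicographically), the same local picture reproduces itself after every blowup of the center, so your iteration never terminates. (A smaller issue: the closure of the generic center need not be regular, so blowing it up can destroy regularity of $X$ unless one shrinks first.)

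The concern you raise about the \'etale-local formulation of ``normal crossings'' is legitimate as a reading of the definitions, but the remedy is not an extra blowup of the center. The paper's proof simply shrinks after applying Theorem~\ref{T:desing1}, which is justified because the desingularization theorem being cited produces a preimage divisor whose irreducible components are regular (strict normal crossings); with that strengthening, the components through $z$ are cut out by elements extending to a regular system of parameters, and (c)--(e) follow by shrinking exactly as you do afterwards. If one truly only knew \'etale-local normal crossings, separating branches Zariski-locally would require further blowups along the closures of the non-snc strata together with a termination argument against the given valuation; your proposal supplies neither, so as written the step establishing (d) has a genuine gap, even though the remainder of your argument coincides with the paper's.
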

\begin{proof}
To enforce (a), we may decrease $\trdeg(\kappa_v/k)$ by 
picking $g \in \gotho_v$ whose image in $\kappa_v$ is transcendental
over $k$, then blowing up to force one of $g$ or $g^{-1}$ into $\calO_{X,z}$.
This condition persists under all further modifications,
so we may additionally enforce
(b) using Theorem~\ref{T:desing1}. We can enforce the other
conditions by shrinking $X$, and in the case of (e) invoking
Corollary~\ref{C:complete nondegenerate}.
\end{proof}

\begin{prop} \label{P:potential good}
The following conditions are equivalent.
\begin{enumerate}
\item[(a)]
After modifying the input data,
$\calE$ admits a good formal structure at $z$
(or equivalently a ramified good decomposition, by
Proposition~\ref{P:good formal}).
\item[(b)]
After altering the input data,
$\calE$ admits a good decomposition at $z$.
\item[(c)]
After altering the input data,
$\calE$ admits an admissible decomposition at $z$.
\item[(d)]
After altering the input data,
the restriction of $f^* \calE$ to $\widehat{\calO_{X,z}}(*Z)$
admits a filtration with successive quotients of rank $1$.
\end{enumerate}
\end{prop}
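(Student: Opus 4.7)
The implications (a) $\Rightarrow$ (b) $\Rightarrow$ (c) $\Rightarrow$ (d) are direct. A modification is in particular an alteration, and the ramified cover $R' = R''[x_1^{1/h},\dots,x_m^{1/h}]$ witnessing a ramified good decomposition (cf.\ Definition~\ref{D:local model1}) may be realized as a finite cover of a neighborhood of $z$ that is \'etale outside $Z$; composing with the modification in (a) gives an alteration on which $\calE$ acquires a non-ramified good decomposition, proving (b). The step (b) $\Rightarrow$ (c) is trivial. For (c) $\Rightarrow$ (d), given an admissible decomposition $\bigoplus_\alpha E(\phi_\alpha)\otimes\calR_\alpha$, Theorem~\ref{T:regular} (after enlarging the residue field via a further \'etale alteration to split all relevant eigenspaces) reduces each regular summand $\calR_\alpha$ to a situation where Engel's theorem supplies a compatible complete flag, exactly as in the proof of Proposition~\ref{P:filtration}.

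The nontrivial direction is (d) $\Rightarrow$ (a). Starting from a filtration $0 = M_0 \subset \cdots \subset M_d = \calE \otimes \widehat{\calO_{X,z}}(*Z)$ with rank-$1$ quotients $Q_i$, each $Q_i$ is automatically twist-regular, so by Proposition~\ref{P:twist-regular} we may write $Q_i \cong E(\phi_i) \otimes R_i$ with $\phi_i \in \widehat{\calO_{X,z}}(*Z)$ and $R_i$ regular of rank $1$. After a further \'etale alteration (so that the local ring becomes henselian), Proposition~\ref{P:descend good ring} applied to the filtration --- whose rank-$1$ successive quotients trivially admit admissible decompositions --- lets us take $\phi_i \in \calO_{X,z}(*Z) + \widehat{\calO_{X,z}}$; absorbing the purely regular contribution into $R_i$, we may arrange $\phi_i \in \calO_{X,z}(*Z)$.

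The principal step is now a monomialization: apply Theorem~\ref{T:desing1} to further modify the input data so as to principalize, jointly with $Z$, the finitely many divisors $\divis(\phi_i)$ and $\divis(\phi_i - \phi_j)$ on a neighborhood of $z$. After this, every $\phi_i$ and every $\phi_i - \phi_j$ is either regular at $z$ or a good monomial $u\, x_1^{-a_1}\cdots x_m^{-a_m}$. Each $F(Q_i, r)$ is consequently a linear function of $r$; by additivity of irregularity in short exact sequences, $F(\calE, r) = \sum_i F(Q_i, r)$ is linear, so $\calE$ is numerical. The same argument, applied to the induced filtration on $\End(\calE)$ whose rank-$1$ quotients are governed by the monomialized differences $\phi_i - \phi_j$, shows $\End(\calE)$ is numerical. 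Theorem~\ref{T:criterion} then supplies a ramified good decomposition of $\calE \otimes \widehat{\calO_{X,z}}(*Z)$, i.e., a good formal structure at $z$. Finally, to reconcile the alteration implicit in this argument with the modification required by (a), use Lemma~\ref{L:set notation} to assume $\kappa_v$ is algebraic over the residue field at $z$; Lemma~\ref{L:extension} then forces the codimension of the center to be preserved under our alterations. Factoring the alteration as the normalization of $X$ in the enlarged function field (a finite cover providing the ramification in the ramified good decomposition) composed with a birational modification (which, by codimension invariance, is an isomorphism in a neighborhood of the center), we transport the good decomposition from the alteration to a ramified good decomposition on a modification of $X$ itself.

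The main obstacle is the monomialization step, which rests on embedded resolution of singularities for excellent $\QQ$-schemes (Theorem~\ref{T:desing1}). Once the $\phi_i$ and their pairwise differences have been placed in good monomial form, the remainder of the argument is a routine combination of the additivity of irregularity in short exact sequences with the numerical criterion Theorem~\ref{T:criterion}.
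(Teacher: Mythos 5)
Your chain (a)$\Rightarrow$(b)$\Rightarrow$(c)$\Rightarrow$(d) and the bulk of (d)$\Rightarrow$(a) — expressing the rank-one quotients as $E(\phi_i)\otimes R_i$ via Proposition~\ref{P:twist-regular}, descending the $\phi_i$ via Proposition~\ref{P:descend good ring}, monomializing, and then concluding numericality of $\calE$ and $\End(\calE)$ so that Theorem~\ref{T:criterion} applies — runs parallel to the paper's proof (the paper phrases the last step via semisimplification rather than additivity of irregularity, which is the same fact). But the final reduction from ``alteration'' to ``modification'' contains a genuine gap. You factor the alteration as (finite cover) $\circ$ (modification) and claim that, because the codimension of the center is preserved (Lemma~\ref{L:extension} plus $\kappa_v$ algebraic over the residue field), the birational part is an isomorphism near the center. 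That implication is false: for a monomial valuation on $\mathbb{A}^2$ with irrationally related weights, centered at the origin, the blowup of the origin preserves the codimension of the center (it is again a closed point, on the exceptional divisor), yet the blowup is certainly not an isomorphism in any neighborhood of that center. Indeed the blowups you performed to monomialize the $\phi_i$ are typically centered exactly at (or through) the center of $v$, so the exceptional locus cannot be avoided. Moreover, even granting the factorization, the finite part (normalization of $X$ in the enlarged function field) need not be \'etale away from $Z$, so it does not by itself witness a \emph{ramified} good decomposition in the sense of Definition~\ref{D:local model1}, which requires $R'\otimes_R S$ \'etale over $S$.

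The paper closes this gap differently, and you need some version of its argument: given a good decomposition after an alteration $f$, apply Raynaud--Gruson flatification (Lemma~\ref{L:flatify}) to find a modification $g\colon X'\to X$ over which the proper transform of $f$ is \emph{finite flat}, enlarge $Z'$ to include $g^{-1}(Z)$ together with the branch locus of that finite cover, and use Theorem~\ref{T:desing1} to make $(X',Z')$ a regular pair; the finite flat cover, now \'etale outside $Z'$, supplies the ramification, so $g^*\calE$ acquires a ramified good decomposition at the generic center of $v$ on the modification $X'$. Two smaller points: a finite \'etale alteration does not make the local ring henselian — Proposition~\ref{P:descend good ring} should be applied over the henselization, after which the $\phi_i$, being algebraic over $\calO_{X,z}$, are captured by a further alteration; and normal crossings for $\divis(\phi_i)$, $\divis(\phi_i-\phi_j)$ jointly with $Z$ is not quite enough (a local expression $u\,x_1^{-1}x_2$ is not good), so one must also blow up to separate numerators from denominators before the $\phi$'s and their differences satisfy conditions (a) and (b) of Definition~\ref{D:local model1}.
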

\begin{proof}
Given (a), (b) is evident. Given (b), let $f: Y \to X$ be an alteration
such that $f^* \calE$ admits a good decomposition at the generic
center of some extension
of $v$. By Lemma~\ref{L:flatify}, we can find a modification $g: X' \to X$
such that the proper transform $h$ of $f$ under $g$ is finite flat.
By Theorem~\ref{T:desing1}, we can choose $X'$ so that $(X', Z')$ is a regular
pair, for $Z'$ the union of $g^{-1}(Z)$ with the branch locus of $h$.
Then $g^* \calE$ admits a ramified
good decomposition at the generic center of $v$ on
$X'$, yielding (a).

Given (b), (c) is evident. Given (b), (d)
holds by Proposition~\ref{P:filtration}.
It remains to show that each of (c) and (d) implies (b); we give the argument
for (d), as the argument for (c) is similar but simpler.

Given (d), set notation as in Lemma~\ref{L:set notation}.
By Proposition~\ref{P:twist-regular}, each quotient of the filtration
has the form $E(\phi_\alpha) \otimes \calR_\alpha$ for some $\phi_\alpha \in 
\widehat{\calO_{X,z}}(*Z)$ and some regular differential module $\calR_\alpha$
over $\widehat{\calO_{X,z}}(*Z)$.
By Proposition~\ref{P:descend good ring}, we can choose the $\phi_\alpha$
in $\calO_{X,z}(*Z)$.

After altering the input data, we can ensure that the $\phi_\alpha$
obey conditions (a) and (b) of Definition~\ref{D:local model1}.
This does not give a good decomposition directly, because
we do not have a splitting of the filtration.
On the other hand, if $M$ is the restriction of $\calE$ to
$\widehat{\calO_{X,z}}(*Z)$, and $M^{\semis}$ denotes the semisimplification
of $M$, then $F(M,r) = F(M^{\semis},r)$ 
and $F(\End(M),r) = F(\End(M^{\semis}),r)$ for all $r$,
and $M^{\semis}$ does admit a good decomposition.
Using Theorem~\ref{T:criterion}, we deduce that $M$ admits a good formal
structure, yielding (b).
\end{proof}

\begin{defn} \label{D:potential good}
Under Hypothesis~\ref{H:geometric},
we say that $\calE$ admits a \emph{potential good formal structure} at $v$
if any of the equivalent conditions of Proposition~\ref{P:potential good}
are satisfied. 
\end{defn}

\begin{remark} \label{R:reduce point}
Note that $\calE$ admits a potential good formal structure
(as a meromorphic differential module over $X$)
if and only if $\calE_z$ does so
(as a meromorphic differential module over $\Spec(\calO_{X,z})$). Thus for the
purposes of checking the existence of potential good formal structures,
we may always reduce to the case where $X$ is the spectrum of a local ring
and $z$ is the closed point.

On the other hand, we may not replace
$\Spec(\calO_{X,z})$ by its completion, because not every alteration of
the completion corresponds to an alteration of $\Spec(\calO_{X,z})$.
For instance, if $X = \Spec (k[x_1,x_2,x_3])$ and $z$ is the origin,
then blowing up the completion of $X$ at $z$ at the ideal
$(x_1 - f(x_2), x_3)$ fails to descend if $f \in k \llbracket x_2 \rrbracket$
is transcendental over $k(x_2)$.
\end{remark}

\begin{remark} \label{R:enlarge Z}
Note that if $Z'$ is another closed proper subscheme of $X$ containing $Z$,
and the restriction of $\calE$ to $\calO_X(*Z')$ admits a potential good
formal structure at $v$, then $\calE$ also admits a potential good
formal structure at $v$. This is true because the numerical
criterion for good formal structures (Theorem~\ref{T:criterion},
or Proposition~\ref{P:geom numerical})
is insensitive to adding extra singularities.
\end{remark}

The rest of this section is devoted to proving the following theorem.
\begin{theorem} \label{T:higher HLT}
For any instance of Hypothesis~\ref{H:geometric},
$\calE$ admits a potential good formal structure at $v$.
\end{theorem}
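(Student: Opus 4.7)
The approach is to induct on the lexicographic invariant $(\trdefect(v), \height(v) + \ratrank(v))$. By Lemma~\ref{L:extension}, all three components are preserved under alterations of the excellent scheme $X$, so the operations in Definition~\ref{D:replacing} do not disrupt the induction. The trivial-valuation case is immediate: if $\height(v) = 0$, then $z$ is the generic point of $X$, $\calE$ lives over a field, and a ramified good decomposition exists vacuously. So one may assume $v$ has positive height.

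For the Abhyankar case $\trdefect(v) = 0$, I would first normalize using Lemma~\ref{L:set notation} to a regular pair with coordinates $x_1,\dots,x_n$ and $Z = V(x_1 \cdots x_m)$. The plan is then to apply repeated toroidal modifications (in the style of Zariski's local uniformization, which is unobstructed for Abhyankar valuations in characteristic zero) to arrange that $v$ is a monomial valuation with respect to $x_1,\dots,x_n$. Under such an identification, $v$ extends uniquely to a Gauss norm on $\widehat{\calO_{X,z}}$ corresponding to a ray $r = tr_0$ with $t \to 0^+$ in the parameter space of Theorem~\ref{T:convex}. The functions $F(\calE,r)$ and $F(\End(\calE),r)$ are then convex, piecewise linear; iteratively blowing up along the breaking loci of these functions eliminates the kinks along the ray of $v$, producing a complete local ring on which both functions are linear. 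The numerical criterion of Theorem~\ref{T:criterion} then yields a ramified good decomposition after completion, and Proposition~\ref{P:good formal} upgrades this to a good formal structure in the sense of Definition~\ref{D:potential good}.

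For the inductive step with $\trdefect(v) > 0$, the plan is to fiber in Berkovich discs. After modifying the input data, I would choose a smooth morphism $X \to Y$ of relative dimension one (locally, a projection along a well-chosen regular parameter $t$ at $z$) such that $v$ restricts to a centered valuation $v'$ on $Y$ with $\trdefect(v') = \trdefect(v) - 1$; the drop in defect is forced by the fact that $t$ can be taken to account for one of the ``missing'' generators in the Zariski-Abhyankar inequality. Completing along $v'$ and passing to its fraction field $F$, the valuation $v$ itself corresponds to a point $\alpha$ of the open Berkovich unit disc $\DD_0$ over $F$, necessarily of type (iii) or (iv) (rather than (i) or (ii)) because $v$ strictly increases the defect over $v'$. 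The relevant piece of $\calE$ becomes a $\nabla$-module $M$ over $\DD_0$ in the sense of Hypothesis~\ref{H:Berkovich nabla}. Applying the inductive hypothesis to $v'$ and invoking Theorem~\ref{T:descend}, one obtains an admissible decomposition of $M$ over the generic disc; further modifications are then used to arrange (via Proposition~\ref{P:preparedness}) that $M$ becomes strongly terminal on an interval containing $\alpha$, at which point Proposition~\ref{P:extend section} propagates horizontal projectors from the generic disc across $\alpha$. Descent via Proposition~\ref{P:descend good ring} and Theorem~\ref{T:descend} then realizes the decomposition over $\calO_X(*Z)$, verifying condition (c) of Proposition~\ref{P:potential good}.

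The main obstacle will be the fibration step: one has to produce the smooth relative curve $X \to Y$, identify the correct point $\alpha$ of $\DD_0$, and control the degeneracy type of $\alpha$ finely enough to apply the terminality criterion of Proposition~\ref{P:preparedness}. In particular, eliminating kinks in the functions $f_i(M,\alpha,s)$ via modification requires translating blowups of the Berkovich disc (which encode finitely many combinatorial breakpoints) back into geometric modifications of $X$, and checking that these operations preserve the inductive hypothesis on $v'$. Once this geometric-to-analytic dictionary is in place, the remainder of the argument is bookkeeping: threading together the numerical criterion of \S\ref{subsec:criterion}, the Berkovich estimates of \S\ref{sec:berkovich}, and the descent results of \S\ref{sec:descent}.
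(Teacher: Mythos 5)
Your overall architecture is recognizably that of the paper: treat the trivial case, use a geometric/toric argument for Abhyankar valuations, then fiber in Berkovich discs and exploit descent for the inductive step. That said, there are two concrete problems, one of which is a fatal error and the other a missing piece.

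First, the type-of-$\alpha$ claim is backwards. You assert that $\alpha$ is necessarily of type (iii) or (iv) ``because $v$ strictly increases the defect over $v'$.'' Lemma~\ref{L:same corank} says the opposite: a point of type~(iii) adds $1$ to the rational rank and adds nothing to the transcendence degree of the residue field, so (bookkeeping through Definition~\ref{D:transcendence defect}) a type-(iii) point \emph{preserves} the transcendence defect rather than increasing it, and the same holds for type~(ii). The points that strictly increase the defect, and hence the ones that can arise here, are exactly those of types (i) and (iv). This is not a cosmetic issue: the entire terminality analysis (Proposition~\ref{P:terminal}) is formulated precisely for points of type (i) and (iv), where the radius tends to $0$ or to an irrational limit with empty intersection, and the convexity/slopes argument behind it does not literally apply to a type-(iii) Gauss point of fixed positive radius. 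Your plan as written invokes the wrong half of the classification and the subsequent steps would not go through.

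Second, your argument never addresses valuations of height greater than $1$. Both the Abhyankar argument (monomial valuations, rays $r = t r_0$ in the parameter space of Theorem~\ref{T:convex}, piecewise-linear $F(\calE,r)$) and the Berkovich-disc argument (Proposition~\ref{P:extend section}, Proposition~\ref{P:preparedness}) are intrinsically about \emph{real} valuations, i.e., height $1$. The lexicographic induction you propose on $(\trdefect(v), \height(v) + \ratrank(v))$ would in principle allow a reduction of the higher-height case: decompose $v$ as a composite of $v'$ and $\overline{v}$ as in Definition~\ref{D:composite}, both of strictly smaller $\height + \ratrank$, apply the hypothesis to $v'$ to get a good decomposition on the completion along the center of $v'$, descend the regular pieces to the subring $R_1$ killed by the relevant derivations, and apply the hypothesis to $\overline{v}$ there; this is exactly Lemma~\ref{L:local decomp3}. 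But your writeup never performs this reduction, and without it the induction never leaves the $\height = 1$ slice.

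A smaller point worth flagging: the restriction $v_n$ of $v$ to $R_n$ is in general only a \emph{semivaluation} (Remark~\ref{R:extend valuation}); the ideal $\gothp_{v_n} = v_n^{-1}(+\infty)$ need not vanish, so one cannot directly ``apply the inductive hypothesis to $v'$.'' The paper sidesteps this by applying the induction to the height-$1$ Gauss valuations $v^s$ on $R$ itself (Lemma~\ref{L:invoke hypothesis}), which are honest valuations with $\trdefect(v^s) = e - 1$, and then importing that horizontal information back to the fibre over $v_n$. Your smooth-relative-curve formulation elides this, and the coordinate normalization needed to make the fibration behave (Lemma~\ref{L:not abhyankar}, which uses Perron's Lemma~\ref{L:perron}) is nontrivial and does not follow merely from ``choosing a well-chosen regular parameter~$t$.''
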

\begin{proof}[Outline of proof]
We prove the theorem by induction on the height and transcendence defect
of $v$, as follows.
\begin{itemize}
\item
We first note that Theorem~\ref{T:higher HLT} holds trivially 
for $v$ trivial. This is the only case for which $\height(v) = 0$.
\item
We next prove that Theorem~\ref{T:higher HLT} holds in all cases where
$\height(v) = 1$ and $\trdefect(v) = 0$. See Lemma~\ref{L:local decomp1}.
\item
We next prove that for any positive integer $e$,
if Theorem~\ref{T:higher HLT} holds in all cases where
$\height(v) = 1$ and $\trdefect(v) <  e$, then it also holds in all cases where
$\height(v) = 1$ and $\trdefect(v) =  e$.
See Lemma~\ref{L:local decomp2}.
\item
We finally prove that for any integer $h> 1$,
if Theorem~\ref{T:higher HLT} holds in all cases where
$\height(v) < h$, then it also holds in all cases where
$\height(v) = h$. See Lemma~\ref{L:local decomp3}.
\end{itemize}
\end{proof}

\subsection{Abyhankar valuations}

We begin the proof of Theorem~\ref{T:higher HLT} by analyzing valuations
of height 1 and transcendence defect 0, i.e.,
all real Abhyankar valuations.
This analysis relies on the simple description of such valuations
in local coordinates.

\begin{lemma} \label{L:abhyankar}
Suppose that $\height(v) = 1$ and $\trdefect(v) = 0$. 
After modifying the input data, 
in addition to the conditions of Lemma~\ref{L:set notation},
we can ensure that the following conditions hold.
\begin{enumerate}
\item[(f)]
The value group of $v$ is freely generated by
$v(x_1),\dots,v(x_n)$.
\item[(g)]
The valuation $v$ is induced by the $(v(x_1),\dots,v(x_n))$-Gauss valuation on
$\widehat{\calO_{X,z}}$.
\end{enumerate}
\end{lemma}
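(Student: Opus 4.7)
The plan is to enforce additional numerical and residue-field constraints from the Zariski-Abhyankar equality, achieve (f) by monomializing $v$ through birational modifications, and finally deduce (g) via a direct comparison with the Gauss valuation on $\widehat{\calO_{X,z}}$.

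I would first extract the structure of $\Gamma_v$ and $\kappa_v$. After Lemma~\ref{L:set notation}, $\overline z$ is a closed point, so $\codim(\overline z, X) = n$. Combined with $\trdefect(v) = 0$ and the algebraicity in Lemma~\ref{L:set notation}(a), Theorem~\ref{T:Abhyankar} forces $\ratrank(v) = n$, makes $\Gamma_v$ finitely generated, and makes $\kappa_v/k$ a finite extension. Since $\height(v) = 1$ embeds $\Gamma_v$ into $\RR$, $\Gamma_v$ is torsion-free, hence free abelian of rank exactly $n$. A finite étale alteration of $X$ at $z$, chosen to split the finite extension $\kappa_v/k$, lets us additionally assume $\kappa_v = k$ while preserving all conditions of Lemma~\ref{L:set notation}.

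I would next achieve (f) by monomializing $v$. Choose a $\ZZ$-basis $\gamma_1,\dots,\gamma_n$ of $\Gamma_v$ and write each $\gamma_i = v(c_i/d_i)$ with $c_i,d_i \in \calO_{X,z}$. By repeatedly blowing up the ideals involved and applying Theorem~\ref{T:desing1} to restore the regular pair condition (a local-uniformization technique analogous to those underlying \cite{kedlaya-part1,temkin-unif}), one arrives at a model on which some regular system of parameters $y_1,\dots,y_n$ at the new center of $v$ satisfies $v(y_i) = \gamma_i$; by matching ranks, the $v(y_i)$ then freely generate $\Gamma_v$. Relabeling $y_i$ as $x_i$ and shrinking, the conditions of Lemma~\ref{L:set notation} (including $\kappa_v = k$) persist and (f) holds. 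I expect this monomialization to be the main obstacle: it is classical for Abhyankar valuations but requires careful sequencing of blowups to simultaneously improve the value group and preserve the regular pair structure.

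Finally, (g) is immediate. By Remark~\ref{R:extend valuation}, $v$ extends by continuity to a genuine real valuation $\hat v$ on $\widehat R := \widehat{\calO_{X,z}} = k\llbracket x_1,\dots,x_n\rrbracket$, with $\hat v(x_i) = \gamma_i$ and $\hat v(a) = 0$ for $a \in k^\times$. Since (f) makes the $\gamma_i$ even $\QQ$-linearly independent in $\RR$, the formula $v'(\sum_I a_I x^I) = \min\{\sum_j I_j \gamma_j : a_I \neq 0\}$ defines a real Gauss valuation on $\widehat R$ whose minimum is attained at a unique multi-index $I^*$. The ultrametric inequality applied to finite truncations, combined with $\gothm$-adic continuity of both $\hat v$ and $v'$, gives $\hat v \geq v'$ throughout $\widehat R$. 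For any $f \in \widehat R$, write $f = a x^{I^*} + g$ with $a \in k^\times$ and $v'(g) > v'(f)$; then $\hat v(a x^{I^*}) = v'(f)$ while $\hat v(g) \geq v'(g) > v'(f)$, so the strict-inequality rule forces $\hat v(f) = \hat v(a x^{I^*}) = v'(f)$. Thus $\hat v = v'$, and restricting to $\calO_{X,z}$ yields (g).
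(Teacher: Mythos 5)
Your overall strategy matches the paper's: invoke the equality case of Theorem~\ref{T:Abhyankar} to pin down $\Gamma_v$ and $\kappa_v$, then appeal to the known monomialization (local uniformization) of Abhyankar valuations, and finally read off (g) once (f) is in place. Your explicit verification that (f) implies (g) --- using $\QQ$-linear independence of the $v(x_i)$ to isolate a unique leading term and then applying the ultrametric inequality to the extension $\hat v$ of Remark~\ref{R:extend valuation} --- is a correct and welcome expansion of the paper's terse citation to Knaf--Kuhlmann.

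There is, however, a concrete flaw in the intermediate step where you try to force $\kappa_v = k$ by ``a finite \'etale alteration of $X$ at $z$, chosen to split the finite extension $\kappa_v/k$.'' This step is wrong on two counts. First, the lemma's statement only licenses \emph{modifying} the input data, and per Definition~\ref{D:replacing} ``modifying'' means modification, not alteration; Lemma~\ref{L:not abhyankar} is similarly stated for modifications, and this is the reading Lemma~\ref{L:local decomp1} relies on. Second, and more fundamentally, a finite \'etale cover $Y \to X$ replaces $K(X)$ by a finite extension $K(Y)$, and an extension of $v$ to $K(Y)$ will in general have a \emph{larger} residue field than $\kappa_v$, not a smaller one; there is no ``splitting'' to be had. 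What actually makes the residue field issue disappear is that the desired equality is a \emph{consequence} of the uniformization, not a hypothesis to be arranged beforehand: once one achieves a model on which $v(x_1),\dots,v(x_n)$ freely generate $\Gamma_v$ and are $\QQ$-linearly independent, the residue field of the Gauss valuation is $k$, while the residue field of $\hat v$ contains $\kappa_v$, so (g) forces $\kappa_v=k$; this equality arises because under \emph{modification} the residue field of the center of $v$ grows to its maximum possible value $\kappa_v$, which is exactly what Knaf--Kuhlmann delivers. If you simply delete the alteration step and observe that $k=\kappa_v$ comes out of the uniformization, the rest of your argument goes through as written, leaving only the (acknowledged) gap that the monomialization itself is cited rather than proved, as in the paper.
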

\begin{proof}
This is a consequence of the equality case of Theorem~\ref{T:Abhyankar}.
See \cite{knaf-kuhlmann} for details. (See Lemma~\ref{L:not abhyankar} for
a similar argument.)
\end{proof}

\begin{lemma} \label{L:local decomp1}
For any instance of Hypothesis~\ref{H:geometric}
in which $\height(v) = 1$ and $\trdefect(v) = 0$,
$\calE$ admits a potential good formal structure at $v$.
\end{lemma}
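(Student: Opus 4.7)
The plan is to reduce, via Lemma~\ref{L:abhyankar}, to an explicit monomial setup in which $v$ is a Gauss valuation with respect to a regular system of parameters, and then to straighten the piecewise-linear irregularity function by a toric modification so that the numerical criterion of Theorem~\ref{T:criterion} applies.

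First I would apply Lemma~\ref{L:abhyankar} to arrange that $X = \Spec R$ is affine, $Z = V(x_1\cdots x_m)$ for a regular system of parameters $x_1,\dots,x_n$ at $z$, and $v$ is the Gauss valuation in these parameters with weights $r_i = v(x_i) > 0$. Combining $\trdefect(v) = 0$ with condition (a) of Lemma~\ref{L:set notation} (which gives $\trdeg(\kappa_v/k) = 0$) forces $\ratrank(v) = \codim(z,X) = n$, so the $r_i$ are $\QQ$-linearly independent. By Remark~\ref{R:enlarge Z} I may also enlarge $Z$ to $V(x_1\cdots x_n)$, so that thereafter $m = n$.

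Next I would study the two irregularity functions $F(\calE,r)$ and $F(\End(\calE),r)$. By Theorem~\ref{T:convex} they are continuous, convex, and piecewise-linear on $[0,\infty)^n$, so their common domain of linearity is a rational polyhedral subdivision $\Sigma_0$ of the first orthant. I would pick a smooth rational refinement $\Sigma$ of $\Sigma_0$ and let $f: Y \to X$ be the associated toric modification; this is a genuine modification of $X$, not merely of $\Spec\widehat R$, because toric blowups are defined by ideals generated by monomials in the parameters $x_i$, which live in $R$. Because the $r_i$ are $\QQ$-linearly independent, $r_0 = (r_1,\dots,r_n)$ lies in the interior of a unique top-dimensional cone $\tau \in \Sigma$, and $\tau$ sits inside a cone of $\Sigma_0$ on which both $F(\calE,\cdot)$ and $F(\End(\calE),\cdot)$ are linear.

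Any extension $v'$ of $v$ to $Y$ is then centered at the closed point $z'$ of the affine toric chart of $\tau$, which carries a regular system of parameters $y_1,\dots,y_n$. The change-of-variables linear map sending the new orthant isomorphically onto $\tau$ identifies new-coordinate Gauss valuations with old-coordinate ones; in particular $v'$ is again a Gauss valuation, and the irregularity functions in the new coordinates are the pullbacks of the old ones and hence linear on all of $[0,\infty)^n$. Therefore $f^*\calE$ and $\End(f^*\calE)$ are both numerical at $z'$. By Theorem~\ref{T:criterion} this gives a ramified good decomposition of $f^*\calE$ at $z'$, which by Proposition~\ref{P:potential good} is the desired potential good formal structure of $\calE$ at $v$. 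The main technical obstacle will be to verify the change-of-variables formula for the irregularity functions under the toric transformation — that is, to check that the Gauss valuation in the new parameters computes the same irregularity as the corresponding Gauss valuation in the old ones. This is essentially the statement that Gauss valuations on $\Frac(\widehat R)$ are intrinsic valuations on the fraction field, so the associated $F$-values depend only on the valuation and $\calE$, not on any particular presentation of $\widehat R$ as a power series ring.
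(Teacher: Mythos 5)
Your proposal is correct and follows essentially the same route as the paper's proof: reduce via Lemma~\ref{L:abhyankar} to a Gauss valuation with $\QQ$-linearly independent weights, use Theorem~\ref{T:convex} to place $\alpha=(v(x_1),\dots,v(x_n))$ in the interior of a common domain of linearity of $F(\calE,r)$ and $F(\End(\calE),r)$, perform a toric modification making that domain the whole orthant, and conclude with Theorem~\ref{T:criterion} and Proposition~\ref{P:potential good}. The change-of-variables compatibility you flag (that monomial/Gauss valuations and hence the $F$-values are intrinsic to the valuation, checked on the interior and extended by continuity) is indeed the only point to verify, and the paper treats it just as implicitly.
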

\begin{proof}
Set notation as in Lemma~\ref{L:abhyankar}.
Normalize the embedding of the value group of $v$ into $\RR$ so that
$v(x_1 \cdots x_n) = 1$, and put  $\alpha = (v(x_1),\dots,v(x_n))$,
so that the components of $\alpha$ are linearly independent over $\QQ$
(viewing $\RR$ as a vector space over $\QQ$).

By Theorem~\ref{T:convex}, $F(\calE,r)$ and $F(\End(\calE),r)$
are piecewise integral linear in  $r$. 
Since $\alpha$ lies on no rational hyperplane,
it lies in the interior of a simultaneous
domain of linearity for  $F(\calE,r)$ and $F(\End(\calE),r)$.
Write this domain as the intersection of finitely many closed rational
halfspaces. We modify the input data as follows: for each
of these halfspaces, choose a defining inequality $m_1 r_1 + \cdots m_n r_n
\geq 0$ with $m_1,\dots,m_n \in \ZZ$, then ensure that
$x_1^{m_1} \cdots x_n^{m_n}$ becomes regular.
(This amounts to making a toric blowup in $x_1,\dots,x_n$.)

After this modification of the input data,
$F(\calE,r)$ and $F(\End(\calE),r)$ 
become linear functions of $r$.
By Theorem~\ref{T:criterion}, $\calE$ has a good formal structure at $z$,
as desired.
\end{proof}

\subsection{Increasing the transcendence defect}
\label{subsec:increasing}

We now take the decisive step from real Abhyankar valuations to
real valuations of higher transcendence defect.
For this, we need to invoke the analysis of $\nabla$-modules on Berkovich discs
made in \S~\ref{sec:berkovich}.
The overall structure of the argument is inspired 
directly by \cite[\S 5]{kedlaya-part4},
and somewhat less directly by \cite{temkin-unif}; see
Remark~\ref{R:sketch} for a summary in terms of the relevant notations.
(Note that this is the only step where we make essential use of alterations rather than
modifications.)

\begin{lemma} \label{L:perron}
Let $r \leq s$ be positive integers.
Let $c_1, \dots, c_s$ be positive real numbers such that
$c_1, \dots, c_r$ form a basis for the $\QQ$-span of 
$c_1, \dots, c_s$.
Then there exists a matrix $A \in \mathrm{GL}_s(\ZZ)$ such that $A^{-1}$ has 
nonnegative entries, and 
\[
\sum_{j=1}^s A_{ij} c_j > 0 \qquad (i=1, \dots, r),
\qquad
\sum_{j=1}^s A_{ij} c_j = 0 \qquad (i=r+1, \dots, s).
\]
\end{lemma}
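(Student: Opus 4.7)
The plan is to construct the inverse matrix $B = A^{-1}$ directly, then set $A = B^{-1}$. Writing $b^{(1)}, \dots, b^{(s)}$ for the columns of $B$, the required conditions on $A$ translate to: $B \in \mathrm{GL}_s(\ZZ)$ has all entries nonnegative, and $c = \sum_{i=1}^r d_i b^{(i)}$ for some positive reals $d_1, \dots, d_r$. Geometrically, I seek a $\ZZ$-basis of $\ZZ^s$ lying in the nonnegative orthant $\RR^s_{\geq 0}$ whose first $r$ elements span a simplicial cone containing $c$ in its relative interior, with $c$ having zero coefficients against the remaining $s-r$ basis vectors.

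First I identify the rational subspace containing $c$. Set $K = \{a \in \ZZ^s : a \cdot c = 0\}$, a saturated sublattice of $\ZZ^s$ of rank $s - r$ by the hypothesis on the $\QQ$-rank of the $c_j$. Let $W_\RR \subseteq \RR^s$ be the $\RR$-orthogonal complement of $K$, an $r$-dimensional subspace containing $c$, and set $\Lambda := W_\RR \cap \ZZ^s$, a saturated rank-$r$ sublattice that is $\QQ$-rational inside $W_\RR$. The intersection $C := W_\RR \cap \RR^s_{\geq 0}$ is then a rational polyhedral cone of full dimension $r$ in $W_\RR$ containing $c$ in its relative interior, since $c$ has strictly positive coordinates in $\RR^s$.

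Next I construct a unimodular simplicial subcone of $C$ with $c$ in its open interior. By the standard existence of smooth (unimodular) refinements of rational polyhedral cones in toric geometry (provable by iterated star-subdivision at interior lattice points, which strictly decreases the index of the sublattice generated by the vertices of a maximal simplex), I obtain such a refinement of $C$. The key observation is that $c$ avoids every wall of the refinement: each wall lies in a proper $\QQ$-rational subspace of $W_\RR$, yet $c$ belongs to no such subspace, as otherwise an integer relation $\sum_j \alpha_j c_j = 0$ with $\alpha \notin K$ would follow, contradicting the definition of $K$. Hence $c$ lies in the open interior of a maximal simplicial piece $C'$, whose primitive generators $b^{(1)}, \dots, b^{(r)}$ form a $\ZZ$-basis of $\Lambda$ and satisfy $c = \sum_{i=1}^r d_i b^{(i)}$ with all $d_i > 0$.

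Finally, I extend $b^{(1)}, \dots, b^{(r)}$ to a $\ZZ$-basis of $\ZZ^s$, possible since $\Lambda$ is saturated. The extension vectors $b^{(r+1)}, \dots, b^{(s)}$ may have negative entries, but the vector $\tau := b^{(1)} + \cdots + b^{(r)}$ has strictly positive entries: for each coordinate $k$, the identity $c_k = \sum_i d_i b^{(i)}_k$ with $d_i > 0$ and $b^{(i)}_k \geq 0$ forces $b^{(i)}_k > 0$ for some $i$. Replacing $b^{(j)}$ by $b^{(j)} + N_j \tau$ for sufficiently large positive integers $N_j$ (for $j > r$) produces nonnegative entries while preserving the $\ZZ$-basis property, as this is an elementary column operation within the first $r$ columns. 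Setting $B = (b^{(1)} \mid \cdots \mid b^{(s)})$ and $A = B^{-1}$ then yields the required matrix: $A \in \mathrm{GL}_s(\ZZ)$, $A^{-1} = B$ has nonnegative entries, and $Ac = (d_1, \dots, d_r, 0, \dots, 0)^T$ has the prescribed form. The main obstacle is the unimodular refinement step, which I handle by appealing to standard toric geometry.
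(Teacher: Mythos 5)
Your argument is correct, but it is genuinely different from what the paper does: the paper disposes of this lemma in one line, by reducing the general case to the case $r=s-1$ and citing the classical lemma of Perron as presented in Zariski's local uniformization paper (\cite[Theorem~1]{zariski-local}), whereas you give a self-contained lattice-geometric construction. Your route --- pass to the rational subspace $W_\RR$ orthogonal to the relation lattice $K$, observe that $C = W_\RR \cap \RR^s_{\geq 0}$ is a full-dimensional pointed rational cone with $c$ in its relative interior, take a unimodular subdivision, note that $c$ avoids all walls because any integral functional vanishing at $c$ lies in $K$ and hence kills $W_\RR$, extract a $\ZZ$-basis $b^{(1)},\dots,b^{(r)}$ of the saturated lattice $\Lambda = W_\RR \cap \ZZ^s$ with $c$ a strictly positive combination, extend to a basis of $\ZZ^s$, and clear negative entries by adding large multiples of $\tau = b^{(1)}+\cdots+b^{(r)}$ (which is strictly positive because $c$ is) --- checks out at every step, including the preservation of the expansion of $c$ under the column operations on the last $s-r$ columns, and the identity $Ac = (d_1,\dots,d_r,0,\dots,0)^T$ for $A = B^{-1}$. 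What your approach buys is independence from the Perron--Zariski reference and from the (unstated) induction reducing to $r = s-1$, at the cost of invoking the existence of smooth refinements of rational cones from toric geometry; the paper's approach is shorter but entirely dependent on the cited classical result. One could also view your proof as making explicit the toric-blowup interpretation that the paper uses implicitly when it applies the lemma in Lemma~\ref{L:not abhyankar}.
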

\begin{proof}
The general case follows from the case $r = s-1$,
which is due to Perron. See \cite[Theorem~1]{zariski-local}.
\end{proof}

The following statement and proof,
a weak analogue of Lemma~\ref{L:abhyankar}, are close to those of
\cite[Lemma~2.3.5]{kedlaya-part4}.
(Compare also \cite[Lemma~5.1.2]{kedlaya-part4}.)
\begin{lemma} \label{L:not abhyankar}
Suppose that $\height(v) = 1$.
After modifying the input data and enlarging $Z$,
in addition to the conditions of Lemma~\ref{L:set notation},
we can ensure that the following condition holds.
\begin{enumerate}
\item[(f)]
We have $m = \ratrank(v)$,
and $v(x_1),\dots,v(x_m)$ are linearly independent
over $\QQ$.
\end{enumerate}
\end{lemma}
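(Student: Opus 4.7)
The plan is to combine Perron's lemma (Lemma~\ref{L:perron}) with a toric modification, after first arranging that the parameters lying in $Z$ have $v$-values of full rational rank. Starting from the conditions of Lemma~\ref{L:set notation}, condition (a) gives $\kappa_v/\kappa_z$ algebraic, so $\gothm_v \cap \calO_{X,z} = \gothm_z$, and the rational rank of $v(\gothm_z \setminus \{0\})$ equals $r := \ratrank(v)$. Let $r' \leq r$ denote the rational rank of $v(x_1), \dots, v(x_m)$.

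If $r' < r$, choose $g \in \gothm_z$ with $v(g)$ outside the $\QQ$-span of $v(x_1), \dots, v(x_m)$, enlarge $Z$ to include $V(g)$, and apply Theorem~\ref{T:desing1} (followed by a further blowup to principalize $(g)$ if necessary) to produce a modification on which $(X,Z)$ is again a regular pair and the pullback of $g$ factors, near the new generic center of $v$, as a unit times a monomial in the local parameters cutting out the components of $Z$. Since $v(g)$ is then a nonnegative integer combination of the $v$-values of those parameters, at least one of them contributes a $\QQ$-value outside the old span, strictly increasing $r'$. Iterating at most $r$ times (each time re-invoking Lemma~\ref{L:set notation}) yields $r' = r$.

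With $r' = r$, apply Lemma~\ref{L:perron} with $(c_1, \dots, c_m) = (v(x_1), \dots, v(x_m))$, reindexed so $c_1, \dots, c_r$ form a $\QQ$-basis of the span, to obtain $A \in \mathrm{GL}_m(\ZZ)$ with $A^{-1}$ nonnegative such that $y_i := \prod_j x_j^{A_{ij}}$ satisfy $v(y_1), \dots, v(y_r) > 0$ $\QQ$-linearly independent and $v(y_{r+1}) = \dots = v(y_m) = 0$. The nonnegativity of $A^{-1}$ expresses each $x_j$ as a nonnegative-exponent monomial in the $y_i$, so the $y_i$ define a proper toric modification $X' \to X$ supported on $Z$, and the new generic center $z'$ of $v$ lies in the distinguished affine chart where $y_1, \dots, y_m$ are regular. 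The prime $\gothm_v \cap k[y_1, \dots, y_m]$ cutting out $z'$ contains $y_1, \dots, y_r$ (positive $v$-value) together with, for each $i > r$, the minimal polynomial over $k$ of the image of $y_i$ in $\kappa_v$ (algebraic over $k$ by condition (a)). Thus $\kappa_{z'}$ is algebraic over $k$ and the regular system of parameters at $z'$ begins with $y_1, \dots, y_r$, whose $v$-values are $\QQ$-linearly independent by construction.

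To finish, replace $Z$ by $V(y_1 \cdots y_r)$: the components $V(y_{r+1}), \dots, V(y_m)$ drop out because $y_{r+1}, \dots, y_m$ are units in $\gotho_v$, hence in $\calO_{X',z'}$, so these loci do not pass through $z'$. Re-invoking Lemma~\ref{L:set notation} restores the remaining conditions, and condition (f) now holds with $m = r$. The principal technical obstacle is the rational-rank-increasing step: one must verify that after enlarging $Z$ by $V(g)$ and principalizing, the new $\QQ$-value $v(g)$ genuinely surfaces as a $\QQ$-linearly independent contribution in the divisor data, rather than being expressible via $\ZZ$-combinations of values already available from the old parameters and exceptional divisors.
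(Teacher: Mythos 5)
Your proposal is correct and, in its second half, is essentially the paper's own argument: apply Lemma~\ref{L:perron} to $(v(x_1),\dots,v(x_m))$, perform the corresponding toric modification, observe that the monomials $y_{r+1},\dots,y_m$ with $v(y_i)=0$ are units at the new generic center $z'$ (since $\calO_{X',z'}$ is dominated by $\gotho_v$, $v(y_i)=0$ forces $y_i\notin\gothm_{z'}$), so after shrinking the new $Z$ becomes $V(y_1\cdots y_r)$ and one takes $m=r$. The only real divergence is the first stage. The paper chooses $a_1,\dots,a_r\in\gotho_v$ with $\QQ$-linearly independent values all at once (one may take them in $\calO_{X,z}$, since values of elements of $\calO_{X,z}$ already $\QQ$-span $\Gamma_v\otimes_\ZZ\QQ$), enlarges $Z$ by all their zero loci, and applies Lemma~\ref{L:set notation} a single time; each $a_i$ is then a unit times a monomial in the resulting parameters, so full rational rank is achieved in one step. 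You instead raise the rank one element $g$ at a time, and the ``principal technical obstacle'' you flag is not actually an obstacle: after the modification the old parameters $x_1,\dots,x_m$ also have zero loci contained in the new $Z$, hence are themselves units times monomials in the new parameters, so the $\QQ$-span of the new parameter values contains both the old span and $v(g)$, which lies outside it; thus $r'$ strictly increases, while $\ratrank(v)$ is preserved under modification (Lemma~\ref{L:extension}), so the iteration terminates. Two cosmetic points: the prime at $z'$ lives in the chart's coordinate ring, which need not contain $k$, so speaking of ``the minimal polynomial over $k$'' is loose, though the facts you need ($y_i$ a unit for $i>r$, and $\kappa_{z'}$ algebraic over $k$ because $\kappa_{z'}\subseteq\kappa_v$) are immediate; and when you re-invoke Lemma~\ref{L:set notation} at the end, conditions (a) and (b) already hold at $z'$, so only shrinking is required and condition (f) is not disturbed by further blowups.
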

\begin{proof}
Put $r = \ratrank(v)$.
We first choose $a_1,\dots,a_r \in \gotho_v$ whose
valuations are linearly independent over $\QQ$.
We shrink $X$ to ensure that $a_1,\dots,a_r \in \Gamma(X, \calO_X)$,
then enlarge $Z$ to ensure that $a_1,\dots,a_r \in \Gamma(X \setminus Z,
\calO_X^\times)$.

Modify the input data as in Lemma~\ref{L:set notation},
then change notation by replacing the labels
$x_1,\dots,x_n$ with $x'_1,\dots,x'_n$ and the label $m$ with $s$.
In this notation, each $a_i$ generates the same ideal as some monomial in
$x'_1,\dots,x'_s$.
This implies that $v(x'_1),\dots,v(x'_s)$ must also be linearly
independent over $\QQ$. Since it is harmless to reorder the indices
on $x'_1,\dots,x'_s$, we can ensure that in fact
$v(x'_1),\dots,v(x'_r)$ are linearly independent over $\QQ$.

Fix an embedding of $\Gamma_v$ into $\RR$.
Apply Lemma~\ref{L:perron} with $(c_1,\dots,c_s) = (v(x'_1),\dots,v(x'_s))$,
then put
\[
y_i = \prod_{j=1}^s x_j^{A_{ij}} \qquad (i=1,\dots,s).
\]
By modifying the input data (again with a toric blowup), we
end up with a new ring $R$ with local coordinates $y_1,\dots,y_s,x'_{s+1},
\dots,x'_n$ at the center of $v$.
Note that for $i=r+1,\dots,s$, we have $v(y_i) = 0$.
Since $\trdeg(\kappa_v/k) = 0$, $y_i$ must generate an element of $\kappa_v$
which is algebraic over $k$. Hence $y_i \in \calO_{X,z}^\times$,
so $Z$ must now be the zero locus of $y_1\cdots y_r$.
We may now achieve the desired result by taking $m = r$
and using any regular sequence of parameters starting with
$y_1,\dots,y_m$.
\end{proof}

We now state the desired result of this subsection, giving the
induction on transcendence defect.
\begin{lemma} \label{L:local decomp2}
Let $e > 0$ be an integer.
Suppose that for any instance of Hypothesis~\ref{H:geometric}
with $\height(v) = 1$ and $\trdefect(v) < e$,
$\calE$ admits a potential good formal structure at $v$.
Then for any instance of Hypothesis~\ref{H:geometric}
with $\height(v) = 1$ and $\trdefect(v) = e$,
$\calE$ admits a potential good formal structure at $v$.
\end{lemma}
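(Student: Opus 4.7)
My plan is to fiber the problem through the Berkovich open unit disc over a complete field on which the induced valuation has transcendence defect $e-1$, apply the Berkovich analysis of Section~\ref{sec:berkovich} to produce a local decomposition, descend that decomposition to an alteration of $X$, and then invoke the inductive hypothesis on the summands.

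First, I normalize via Lemma~\ref{L:not abhyankar}: after modifying the input data, $R = \calO_{X,z}$ has a regular system of parameters $x_1, \ldots, x_n$, $Z = V(x_1 \cdots x_m)$, the values $v(x_1), \ldots, v(x_m)$ are $\QQ$-linearly independent, and $\kappa_v$ is algebraic over the residue field $k$ of $z$ (Lemma~\ref{L:set notation}(a)). Since $\height(v) = 1$, $\ratrank(v) = m$ and $\trdeg(\kappa_v/k) = 0$, we have $\trdefect(v) = n - m = e > 0$, so the coordinate $x_n$ is ``free'' of the pole divisor $Z$.

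Second, I build the Berkovich fibration. The goal is to construct a complete nonarchimedean field $(F,w)$ of residual characteristic $0$, sitting inside an appropriate completion of $\Frac R$, such that $F$ does not contain $x_n$, $w$ is of height $1$, and the restriction of $v$ to (the inverse image of) $F$ has transcendence defect exactly $e - 1$. Under the natural embedding of a localization of $R$ into a power-series ring over $F$ in $x_n$, the valuation $v$ is identified with a point $\alpha$ of the Berkovich open unit disc $\DD_{0,F}$ (with disc coordinate a suitable translate of $x_n$). Because $\trdeg(\kappa_v/\kappa_w) = 0$ and $\dim_\QQ((\Gamma_v/\Gamma_w) \otimes_\ZZ \QQ) = 0$, Lemma~\ref{L:same corank} rules out types (ii) and (iii); by construction $\alpha$ is not of type (i) either, so by Proposition~\ref{P:classify points}, $\alpha$ is of type (iv), given as the infimum of a nested sequence of closed discs with empty intersection and positive limiting radius $r(\alpha)$.

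Third, I apply the Berkovich analysis. Let $M$ be the $\nabla$-module on $\DD_{0,F}$ pulled back from $\calE$ via the derivation $\del_n$. By Proposition~\ref{P:convex stuff}, the functions $f_i(M, \alpha, s)$, and similarly those attached to $\End(M)$, are continuous, convex, and piecewise affine; by Proposition~\ref{P:terminal}(b), each is either constant or identically equal to $s$ in a left-neighborhood of $-\log r(\alpha)$. I choose $s_0 < -\log r(\alpha)$ irrational, close enough to $-\log r(\alpha)$ that these properties hold on $[s_0, -\log r(\alpha))$ for both $M$ and $\End(M)$. Proposition~\ref{P:preparedness} then shows that $M$ and $\End(M)$ become strongly terminal on this interval, hence terminal at $s_0$; by Proposition~\ref{P:prepared}(c), $M$ admits a direct sum decomposition over the irrational annulus $F\langle r_0/x_n, x_n/r_0\rangle$ (with $r_0 = e^{-s_0} \notin |\CC_F^\times|_F$) whose summands are separated by their slopes in the $x_n$-direction.

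Fourth, I descend and conclude. The projectors cutting out the summands are horizontal sections of $\End(M)$ over the irrational annulus; by Proposition~\ref{P:irrational glueing}, they are already defined over $A'\langle r_0/x_n, x_n/r_0\rangle$ for some finitely generated subring $A' \subset F$ with $\Frac(A')$ finite over the fraction field of a geometric model of the base. Replacing $X$ by an alteration that realizes the extension $A'$ and applying Theorem~\ref{T:descend} together with Proposition~\ref{P:descend1}, I transfer the decomposition from the Berkovich annulus back to a decomposition of $\calE$ over an alteration of $X$ after completing at the generic center of the lifted valuation. On each summand, after twisting by an appropriate $E(\phi)$ with $\phi \in \calO_{X,z}(*Z)$, the irregularity in the $x_n$-direction vanishes, so the existence of a potential good formal structure at $v$ reduces to the same question at $w$. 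By the inductive hypothesis applied to each twisted summand against $w$ (of transcendence defect $e-1$), each admits a potential good formal structure after a further alteration; reassembling via Proposition~\ref{P:potential good} (e.g.\ condition (c) or (d)) yields a potential good formal structure for $\calE$ at $v$.

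The principal obstacle is the fourth step: Proposition~\ref{P:irrational glueing} only descends the decomposition to a finitely generated subring of $F$, and converting this into a genuine alteration of $X$ compatible with $v$, and then threading the result through the formal-completion descent of Theorem~\ref{T:descend}, requires careful bookkeeping. A secondary subtlety, critical to the whole strategy, is the construction of the intermediate field $F$ and the verification that $\alpha$ is of type (iv); this relies essentially on the algebraicity of $\kappa_v$ over $k$ and on the $\QQ$-linear independence conditions secured by the normalization step.
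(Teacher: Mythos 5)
Your proposal correctly identifies the overall strategy — fiber through a Berkovich open disc over a complete field whose induced valuation has transcendence defect $e-1$, and use the terminality analysis of \S~\ref{sec:berkovich}. However, there are two genuine gaps that the paper's proof is specifically engineered to avoid.

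First, the invocation of the inductive hypothesis is misplaced. You decompose $N_{v_n}$ by $\del_n$-scale via Proposition~\ref{P:prepared}(c), twist each summand by some $E(\phi)$, and then claim the twisted summand reduces to an instance of the hypothesis for the valuation $w$ on the $(n-1)$-dimensional base. But the $\del_n$-scale decomposition only groups the module by the size of the $\del_n$-singularity; it does \emph{not} give a decomposition of the form $\oplus E(\phi_\alpha) \otimes \calR_\alpha$, so there is no ``appropriate $\phi$'' that makes a summand $x_n$-regular after a single twist. The paper instead invokes the hypothesis at the Gauss-point valuation $v^s$ (which has transcendence defect $e-1$ but is still a \emph{centered valuation on $X$ itself}) in Lemma~\ref{L:invoke hypothesis}, producing a filtration of $\widehat{\calE}_{z^s}$ with rank-one quotients realized as images of endomorphisms $\bv_j$; it then descends the $\bv_j$ and concludes via Proposition~\ref{P:potential good}(d). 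Since the $\bv_j$ and the filtration live on $X$, no ``transfer'' between dimensions is needed, and Proposition~\ref{P:potential good}(d) accepts a filtration directly without requiring a good decomposition.

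Second, you descend the decomposition only at $v_n$. The obstruction, flagged in the paper's remark after Lemma~\ref{L:local decomp2}, is that the real valuation $v$ need not extend to a valuation on the completion of $R_n$ at $z_n$ (Remark~\ref{R:extend valuation}), so one cannot work purely at the fiber over $v_n$. The paper handles this by upgrading terminality at $v_n$ (Lemma~\ref{L:make prepared1}) to terminality at \emph{every} centered real valuation on $R_n$ with generic center $z_n$ (Lemma~\ref{L:make prepared2}, using the refined horizontal control in Lemma~\ref{L:invoke hypothesis2}), and then carrying out the actual descent at an auxiliary height-one Abhyankar valuation $w$ on $R^s$ in Lemma~\ref{L:target}. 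Without Lemma~\ref{L:make prepared2}, Proposition~\ref{P:extend section} cannot be applied at the auxiliary Abhyankar point, and your use of Proposition~\ref{P:irrational glueing} has nothing to act on: that proposition only decompletes a decomposition already sitting over an irrational-radius annulus attached to a genuine valuation on the completion. A smaller point: your claim that $\alpha$ cannot be of type (i) is unwarranted; both types (i) and (iv) occur, and the paper's Lemma~\ref{L:make prepared1} treats both via Proposition~\ref{P:terminal}(a) and (b).
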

We will break up the proof of Lemma~\ref{L:local decomp2}
into several individual lemmata. These will all be
stated in terms of the following running hypothesis.

\begin{hypothesis} \label{H:not abhyankar}
During the course of proving Lemma~\ref{L:local decomp2}, we will carry
hypotheses as follows.
Let $e$ be a positive integer such that for any instance of
Hypothesis~\ref{H:geometric} with $\height(v) = 1$ and $\trdefect(v) < e$,
$\calE$ admits a good formal structure at $v$.

Choose an instance of Hypothesis~\ref{H:geometric}
in which $\height(v) = 1$ and $\trdefect(v) = e$.
Fix an embedding of $\Gamma_v$ into $\RR$.
Put $d = \rank(\calE)$.
Set notation as in Lemma~\ref{L:not abhyankar} (after possibly enlarging $Z$,
which is harmless thanks to Remark~\ref{R:enlarge Z}).
\end{hypothesis}

In terms of this hypothesis, we may set some more notation.
\begin{defn} \label{D:not abhyankar}
Put $R_n = R/x_n R$
and let $\widehat{R}$ be the $x_n$-adic completion of $R$;
note that by Lemma~\ref{L:complete nondegenerate},
we may identify $\widehat{R}$ with $R_n \llbracket x_n \rrbracket$.
Extend $v$ by continuity to a real semivaluation on $\widehat{R}$
(see Remark~\ref{R:extend valuation}), then let
$v_n$ be the restriction to $R_n$.

For any real semivaluation $w$ on $R_n$, put $\gothp_w = w^{-1}(+\infty)$,
so that $w$ induces a true valuation on $R_n/\gothp_w$.
Let $\ell(w)$ denote the completion of $\Frac(R_n/\gothp_w)$ under $w$, 
carrying the norm $e^{-w(\cdot)}$.
By extending scalars to $\ell(w) \llbracket x_n \rrbracket$,
form the restriction $N_{w}$ of $\calE$ to the Berkovich open unit disc
$\DD_{0,w}$ over $\ell(w)$.

Let $z_n$ be the center of $v_n$ on $\Spec(R_n/\gothp_{v_n})$.
Let $\alpha_v \in \DD_{0,v_n}$ be the seminorm $e^{-v(\cdot)}$.
Define $\alpha_s$ for $s \in (0, -\log r(\alpha_v))$ as in
Definition~\ref{D:berkovich radii}.
\end{defn}

\begin{remark} \label{R:sketch}
In terms of the notation from Definition~\ref{D:not abhyankar}, we can now give
a possibly helpful summary of the rest of the proof of Lemma~\ref{L:local decomp2}.
The basic idea is to view the formal spectrum of $R_n\llbracket x_n \rrbracket$ as a family of
formal discs over $\Spec(R_n)$; however, one can give a more useful description of the situation
in Berkovich's language of nonarchimedean analytic spaces.

In Berkovich's theory, one associates to a commutative nonarchimedean Banach algebra
its \emph{Gel'fand transform}, which consists of all multiplicative seminorms
bounded above by the Banach norm. For instance, for $F$ a complete nonarchimedean field,
this construction applied to $F \langle x \rangle$ (the completion of $F[x]$ for the Gauss
norm) produces the closed unit disc $\DD_F$ as in Definition~\ref{D:closed disc}.

Equip $R_n \llbracket x_n \rrbracket$ with the $\rho$-Gauss norm for some $\rho \in (0,1)$.
The Gel'fand transform of $R_n \llbracket x_n \rrbracket$ then fibres over the Gel'fand
transform of $R_n$ for the trivial norm. The fibre over a semivaluation $w$ (or rather,
over the corresponding multiplicative seminorm $e^{-w(\cdot)}$) is a closed disc over
$\ell(w)$; taking the union over all $\rho$ gives the open unit disc over $\ell(w)$.

Imagine a two-dimensional picture in which the Gel'fand transform of $R_n$ is oriented horizontally,
while the fibres over it are oriented vertically.
Using the analysis of $\nabla$-modules on Berkovich discs from \S~\ref{sec:berkovich}, 
we can control the spectral behavior of $\del_n$ on a single fibre. In particular, within the fibre over $v_n$, we obtain good control in a neighborhood of $v$. To make more progress, however,
we must combine this vertical information with some horizontal information. We do this by picking another point in the fibre over $v_n$ at which we have access to the induction hypothesis.
This gives good horizontal control not just of the irregularity, but of the variation of the
individual components of the scale multiset of $\del_n$. (This is needed because we may not have
enough continuous derivations on $\ell(v_n)$ to control the irregularity along the fibre over $v_n$.) We ultimately combine the horizontal and vertical information to control the behavior
of $\calE$ over a neighborhood of $v_n$ in the Gel'fand transform of 
$R_n \llbracket x_n \rrbracket$. This control leads to a proof of Lemma~\ref{L:local decomp2}.
\end{remark}

We now set about the program outlined in Remark~\ref{R:sketch}. We first give 
a refinement of Definition~\ref{D:replacing} which
respects the notation of Definition~\ref{D:not abhyankar}.
\begin{defn} \label{D:replacing2}
Given an instance of Hypothesis~\ref{H:not abhyankar},
by \emph{modifying/altering the input data on $R_n$}, we will mean
performing a sequence of operations of the following form.
\begin{itemize}
\item
Let $R_n \to R'_n$ be a morphism of finite type  to another nondegenerate
differential domain such that $\Frac(R'_n)$ is finite over $\Frac(R_n)$,
$\gothp_{v_n}$ has a positive but finite number of preimages in $\Spec(R'_n)$,
and the conclusion of Lemma~\ref{L:not abhyankar} holds
for some extension $v'_n$ of $v_n$. That is, there must exist
a regular sequence of parameters $x'_1,\dots,x'_{n-1}$ in $R'_n$ at 
the center of $v'_n$, such that for $m = \ratrank(v)$,
the inverse image of $Z$ in $\Spec(R'_n)$ is the zero locus of
$x'_1 \cdots x'_m$, and $v(x'_1),\dots,v(x'_m)$ are linearly independent over $\QQ$.
\item
Choose a finite list of generators $y_1,\dots,y_h$ of $R'_n$ over $R_n$. For each generator $y_j$,
choose a lift $\tilde{y}_j$ of $y_j$ in $R'_n \llbracket x_n \rrbracket$
which is integral over $\Frac(R)$. Let $R'$ be the ring obtained by
adjoining $\tilde{y}_1,\dots,\tilde{y}_h$ to $R$; we may identify the $x_n$-adic completion
of $R'$ with $R'_n \llbracket x_n \rrbracket$. 
Apply Lemma~\ref{L:Berkovich surjective}
to obtain an extension $v'$ of the semivaluation $v$ to $R'_n \llbracket x_n \rrbracket$.
By restriction, we obtain a true valuation on $R'$.
\item
Replace $R$ with $R'$ and $v$ with $v'$. Replace $x_1,\dots,x_{n-1}$
with any lifts of $x'_1,\dots,x'_{n-1}$ to $R'$. 
\end{itemize}
We will distinguish this operation from \emph{modifying/altering
the input data on $R$},
as the latter 
must be done carefully in order to have any predictable effect on $R_n$
and the other structures introduced in Definition~\ref{D:not abhyankar}.
\end{defn}

We next collect some horizontal information,
by extracting consequences from the induction hypothesis
on transcendence defect.
\begin{lemma} \label{L:invoke hypothesis}
Assume Hypothesis~\ref{H:not abhyankar}.
Choose $s \in (0, +\infty) \setminus (\Gamma_v \otimes_\ZZ \QQ)$.
Let $v^s$ be the valuation on $R$ induced from the
$s$-Gauss semivaluation on $R_n \llbracket x_n \rrbracket$
(relative to $v_n$).
\begin{enumerate}
\item[(a)]
There exists a finitely generated integral $R$-algebra
$R^s$ with $\Frac(R^s)$ finite over $\Frac(R)$,
such that $v^s$ admits a centered extension to $R^s$ with
generic center $z^s$,
and $\calE$ admits a good decomposition at $z^s$.
\item[(b)]
For a suitable choice of $R^s$, there exist
$\psi_1,\dots,\psi_{d^2} \in \Frac(R^s)$ such that
for any real valuation $w$ on $R$ admitting a centered extension to $R^s$
with generic center $z^s$,
the scale multiset of $\del_n$ on $\End(\calE)$ computed with respect
to $e^{-w(\cdot)}$ equals $e^{-w(\psi_1)}, \dots, e^{-w(\psi_{d^2})}$.
\item[(c)]
For a suitable choice of $R^s$,
the completion $\widehat{\calE}_{z^s}$ of $\calE$ at $z^s$
admits a filtration whose successive quotients are of rank $1$,
such that for $i =1,\dots,\rank(\calE)-1$,
the step of the filtration having rank $i$ has top exterior power
equal to the image of some endomorphism of $\wedge^i \widehat{\calE}_{z^s}$.
\end{enumerate}
\end{lemma}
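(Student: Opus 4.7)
The plan is to use the induction hypothesis on transcendence defect: the auxiliary valuation $v^s$ has strictly smaller transcendence defect than $v$, so the induction applies and furnishes a good decomposition after alteration. First I would verify $\trdefect(v^s) \leq e - 1$. The value group of $v^s$ contains $\Gamma_v$ together with $v^s(x_n) = s$, and by the choice $s \notin \Gamma_v \otimes_\ZZ \QQ$ the latter is $\QQ$-independent from the former, so $\ratrank(v^s) \geq m + 1$; meanwhile $v^s$ is centered at the closed point $z$ (since every $x_i$ has positive $v^s$-value) and its residue field has transcendence degree at least $0$ over $k$, giving by Definition~\ref{D:transcendence defect} the bound $\trdefect(v^s) \leq n - (m+1) = e - 1$. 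The induction hypothesis applies to $(\calE, v^s)$.

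For (a), by Proposition~\ref{P:potential good}(b), after altering the input data for $v^s$, $\calE$ admits a good decomposition at the generic center of an extension of $v^s$. The alteration corresponds to a proper, generically finite map onto a neighborhood of $z$; passing to a suitable affine open gives a finitely generated integral $R$-algebra $R^s$ with $\Frac(R^s)$ finite over $\Frac(R)$, a centered extension of $v^s$ to $R^s$, and generic center $z^s$ at which $\calE$ acquires its good decomposition. For (c), I would apply Proposition~\ref{P:filtration} to this good decomposition: after enlarging $R^s$ by the finite \'etale extension appearing there (which preserves the hypotheses on $R^s$), $\widehat{\calE}_{z^s}$ acquires a filtration with rank-$1$ quotients enjoying the stated endomorphism-image property.

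For (b), write the good decomposition at $z^s$ as $\widehat{\calE}_{z^s} \cong \bigoplus_{\alpha \in I} E(\phi_\alpha) \otimes \calR_\alpha$, and descend the $\phi_\alpha$ into $\Frac(R^s)$ via Proposition~\ref{P:descend good ring} (passing, if needed, to the henselization of $R^s$ at $z^s$ before descending, and absorbing the resulting finite \'etale cover into a slightly enlarged $R^s$). The induced decomposition of $\End(\calE)$ is $\bigoplus_{\alpha, \beta} E(\phi_\alpha - \phi_\beta) \otimes (\calR_\alpha^\dual \otimes \calR_\beta)$, and the scale formalism of \cite[\S 1.4]{kedlaya-goodformal1} identifies the scale of $\del_n$ on each summand, with respect to any norm $e^{-w(\cdot)}$ coming from a real valuation $w$ on $R$ with centered extension to $R^s$ having generic center $z^s$, as $e^{-w(\del_n(\phi_\alpha - \phi_\beta))}$. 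Listing these derivatives with multiplicity $\rank(\calR_\alpha) \rank(\calR_\beta)$ produces $\psi_1, \dots, \psi_{d^2} \in \Frac(R^s)$ as required. The main obstacle is the uniformity of this scale formula across all admissible $w$: when $\phi_\alpha - \phi_\beta$ is bounded (regular), one must check that the scale is genuinely read off from $\del_n(\phi_\alpha - \phi_\beta)$ rather than being dominated by some intrinsic scale of the regular factor $\calR_\alpha^\dual \otimes \calR_\beta$. This ultimately follows by combining the goodness conditions in Definition~\ref{D:local model1} with the spectral computations underlying \cite[Theorem~4.4.2]{kedlaya-goodformal1}.
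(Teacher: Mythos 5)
Your proposal is correct and follows essentially the same route as the paper: invoke the induction hypothesis on transcendence defect for $v^s$ (the paper computes $\trdefect(v^s)=e-1$ exactly by passing through $v_n$ and Lemma~\ref{L:same corank}, whereas you bound it directly via the center and rational rank of $v^s$, which suffices once one also observes $\height(v^s)=1$ since $v^s$ is real-valued by construction), then deduce (b) from Proposition~\ref{P:descend good ring} and (c) from Proposition~\ref{P:filtration}. This is precisely the paper's argument, whose proof consists of these same three steps.
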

\begin{proof}
To deduce (a), 
note that $\height(v_n) = 1$ and $\ratrank(v_n) = \ratrank(v)$
by Lemma~\ref{L:not abhyankar}, so $\trdefect(v_n) = e-1$.
Then note that $\height(v^s) = 1$ and $\ratrank(v^s) = \ratrank(v_n) + 1$
by Lemma~\ref{L:same corank},
so $\trdefect(v^s) = e-1$. Hence the induction hypothesis
on transcendence defects may be invoked, yielding (a).
To deduce (b), use Proposition~\ref{P:descend good ring}.
To deduce (c), use Proposition~\ref{P:filtration}.
\end{proof}

We next collect some vertical information
from the analysis of $\nabla$-modules on Berkovich discs.
\begin{lemma} \label{L:make prepared1}
Assume Hypothesis~\ref{H:not abhyankar}.
After altering the input data, $\End(\wedge^j N_{v_n})$ is terminal
for $j=1,\dots,\rank(\calE)-1$.
\end{lemma}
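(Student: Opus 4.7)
The plan is to apply the three-point criterion of Proposition~\ref{P:preparedness} to $M = \End(\wedge^j N_{v_n})$ for each $j \in \{1, \dots, \rank(\calE) - 1\}$, using data supplied by Lemma~\ref{L:invoke hypothesis} at three well-chosen values of $s$.

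Since $\Gamma_v \otimes_\ZZ \QQ$ is a countable subset of $\RR$, I would first choose $s_1 < s_2 < s_3$ in $(0, -\log r(\alpha_v)) \setminus (\Gamma_v \otimes_\ZZ \QQ)$. For each $k$, Lemma~\ref{L:invoke hypothesis}(a) produces an alteration on which $\calE$ admits a good decomposition at the generic center $z^{s_k}$ of $v^{s_k}$. Good decompositions of $\calE$ propagate to each $\End(\wedge^j \calE)$ by functoriality of the $E(\phi_\alpha) \otimes \calR_\alpha$ structure (since differences of the $\phi_\alpha$ and regular modules are closed under tensor, dual, and exterior power). Taking a common alteration $R^\ast$ dominating the three via Lemma~\ref{L:flatify}, I realize good decompositions for every $\End(\wedge^j \calE)$ at every $z^{s_k}$ simultaneously.

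Next, Lemma~\ref{L:invoke hypothesis}(b) applied to $\End(\wedge^j \calE)$ at each $z^{s_k}$ produces fixed elements $\psi_{j,i,k} \in \Frac(R^\ast)$ whose $v^{s_k}$-valuations determine the scale multiset of $\del_n$ at $\alpha_{v, s_k}$. A further toric blowup (following Lemma~\ref{L:perron} as in the proof of Lemma~\ref{L:not abhyankar}) renders each $\psi_{j,i,k}$ a monomial in the chosen regular parameters up to a unit, so that $v^{s_k}(\psi_{j,i,k})$ is an explicit $\ZZ$-affine function of $s_k$ and the fixed quantities $v(x_1), \dots, v(x_{n-1})$. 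Since the latter are $\QQ$-linearly independent from $s_1, s_2, s_3$ by our choice, the value pattern across the three points is forced: for each $i$, the monomial $\psi_{j,i,k}$ either has trivial exponent on the parameter dual to $x_n$, in which case $f_i(\End(\wedge^j N_{v_n}), \alpha_v, s_k)$ is constant in $k$, or has a nontrivial such exponent, in which case the bound $f_i \geq s$ and the shape of the resulting affine function together force $f_i(\End(\wedge^j N_{v_n}), \alpha_v, s_k) = s_k$.

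After reordering indices so that the constant case comes first and the $f_i = s$ case comes second, the hypotheses of Proposition~\ref{P:preparedness} are satisfied, and we conclude that $\End(\wedge^j N_{v_n})$ becomes strongly terminal on $[s_1, s_3]$, hence terminal. The main obstacle lies in arranging a single alteration on $R$ that simultaneously monomializes every $\psi_{j,i,k}$ for every $j \in \{1, \dots, \rank(\calE) - 1\}$ while preserving Hypothesis~\ref{H:not abhyankar}. This demands a careful interleaving of toric blowups with resolution of singularities (Theorem~\ref{T:desing1}) to maintain regularity of the pair $(X, Z)$ and the coordinate structure underlying the Berkovich disc analysis, as well as confirming that the resulting $\psi_{j,i,k}$ remain well-defined after each additional modification on $R$.
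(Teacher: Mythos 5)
There is a genuine gap, and it lies exactly where your proposal replaces the paper's key mechanism. The dichotomy you need in order to invoke Proposition~\ref{P:preparedness} --- that at the three chosen points each $f_i$ is either equal to a \emph{common} constant or equal to $s_k$ --- is not forced by monomializing the $\psi_{j,i,k}$. Monomialization only gives $f_i(\End(\wedge^j N_{v_n}),\alpha_v,s_k) = -c_k + (1-m_k)s_k$ with $c_k \in \Gamma_{v_n}\otimes_\ZZ\QQ$ and $m_k\in\QQ$; the irrationality of $s_k$ pins down $(c_k,m_k)$, but nothing forces $m_k\in\{0,1\}$ (with $c_k=0$ when $m_k=0$). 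Such behavior really occurs: taking $\calE \supseteq E(x_n^2/x_1)$ (poles only along $Z$, so allowed, and $N_{v_n}$ is still a $\nabla$-module on the disc), $\End(\calE)$ has a component with $f_i(s) = v(x_1)-s$ on the initial range $s < v(x_1)/2$, which at your $s_k$ is neither constant nor equal to $s_k$ (here $\psi \sim x_n^2/x_1$ is already a monomial, so your reduction does not exclude it); note Proposition~\ref{P:convex stuff} only forbids positive slopes where $f_i>s$, not negative ones. Moreover, even for indices where the shape is right, the three constants come from three different alterations $R^{s_1},R^{s_2},R^{s_3}$, and you give no argument that they agree, which is what Proposition~\ref{P:preparedness} requires. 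The paper gets the needed shape from an entirely different source: since $\trdefect(v_n)=e-1<e=\trdefect(v)$, Lemma~\ref{L:same corank} forces $\alpha_v$ to be a point of type (i) or (iv), and Proposition~\ref{P:terminal} (convexity for type (i); cyclic vector plus Corollary~\ref{C:stabilize} for type (iv), using residual characteristic $0$) shows each $f_i$ is constant or diagonal near the minimal radius. Your proposal never uses the type of $\alpha_v$ at all; the three-point criterion is reserved in the paper for the later transfer step (Lemma~\ref{L:invoke hypothesis2} feeding Lemma~\ref{L:make prepared2}), where terminality at $v_n$ is \emph{already known} and is being propagated to nearby $w_n$ --- using it to establish terminality at $v_n$ itself has no input to feed it.

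A second, smaller defect: even if strong terminality on $[s_1,s_3]$ were established, that only says $\End(\wedge^j N_{v_n})$ \emph{becomes terminal at} $s_1>0$, whereas the lemma asserts it \emph{is} terminal, a condition on an interval $(0,s']$ adjacent to the origin. One must still recenter and rescale the coordinate as in the paper's proof: replace $x_n$ by $x_n - z$ (after altering the input data on $R_n$ to make $z$ available in $R\cap R_n$) and then by $x_n/h$ with $h\in R\cap R_n$ of value equal to the left endpoint; this in particular requires that endpoint to lie in $\Gamma_v\otimes_\ZZ\QQ$, which conflicts with your choice $s_1\notin\Gamma_v\otimes_\ZZ\QQ$ and is nowhere addressed in the proposal.
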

\begin{proof}
Note that $\trdefect(v_n) < \trdefect(v)$, so
by Lemma~\ref{L:same corank}, $\alpha_v$ is a point of $\DD_{0,v_n}$ of type (i) or (iv).
By Proposition~\ref{P:terminal}, 
for some $s_0 \in [0, -\log r(\alpha_v))$,
$\End(\wedge^j N_{v_n})$ becomes terminal at $s_0$
for $j=1,\dots,\rank(\calE)-1$.

Choose $s_1 \in (s_0, -\log r(\alpha_v))$.
Write $\alpha_{s_1} = \alpha_{z,r}$ for $r = e^{-s_1}$ and 
some $z$ in a finite extension
of $\Frac(R_n)$ with $v_n(z) > 0$. By altering the input data on $R_n$, we 
can force $z \in R_n$.
Note that $R \cap R_n$ is the kernel of $\del_n$ on $R$,
which is dense in $R_n$ with respect to $v_n$ because
it contains $x_1,\dots,x_{n-1}$.
Thus we can in fact choose $z \in R \cap R_n$,
then modify 
the input data on $R$ by replacing $x_n$ with $x_n - z$. At this point,
we may now take $z = 0$.

After altering the input data on $R_n$, we can produce $h \in R \cap R_n$
with $v(h) = s_0$.
We may then modify the input data on $R$, by replacing $x_n$ by $x_n/h$,
to achieve the desired result.
\end{proof}

We now begin to mix the horizontal and vertical information.
We first use vertical information to refine our last horizontal statement
(Lemma~\ref{L:invoke hypothesis}), as follows.
\begin{lemma} \label{L:invoke hypothesis2}
Assume Hypothesis~\ref{H:not abhyankar},
and assume that $\End(N_{v_n})$ is terminal.
Choose $s \in (0, +\infty) \setminus (\Gamma_v \otimes_\ZZ \QQ)$
and an open neighborhood $I$ of $s$ in $(0, +\infty)$.
After altering the input data on $R_n$,
we may choose $R^s$ and $\psi_1,\dots,\psi_{d^2}$
as in the conclusions of Lemma~\ref{L:invoke hypothesis},
satisfying the following additional conditions.
\begin{enumerate}
\item[(a)]
We have $\psi_1,\dots,\psi_{d^2} \in (R \cap R_n) \cup \{x_n\}$.
\item[(b)]
For any centered real semivaluation $w_n$ on $R_n$
with generic center $z_n$, normalized so that $w_n(x_1 \cdots x_{n-1}) = v_n(x_1 \cdots x_{n-1})$,
there exists $s' \in I$ such that
\[
f_i(N_{w_n}, s') = \begin{cases} w_n(\psi_i) & (\psi_i \in R \cap R_n) \\
s' & (\psi_i = x_n)
\end{cases} \qquad (i=1,\dots,d^2).
\]
\end{enumerate}
\end{lemma}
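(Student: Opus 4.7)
The plan is to refine the $\psi_i$ provided by Lemma~\ref{L:invoke hypothesis} using the terminality of $\End(N_{v_n})$, so that each $\psi_i$ ends up in one of the two elementary forms demanded by condition~(a). First apply Lemma~\ref{L:invoke hypothesis} to produce an initial alteration giving $R^s$, the center $z^s$, a good decomposition of $\calE$ at $z^s$, and elements $\psi_1,\ldots,\psi_{d^2} \in \Frac(R^s)$ controlling the scale multiset of $\del_n$ on $\End(\calE)$ at centered extensions of $v^s$. By Proposition~\ref{P:descend good ring}, after possibly a further alteration, take the irregular values $\phi_\alpha$ in the decomposition at $z^s$ to lie in the localization at $z^s$; consequently the $\psi_i$, which arise as the differences $\phi_\alpha - \phi_\beta$, lie in that localization as well.

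Next, invoke the hypothesis that $\End(N_{v_n})$ is terminal together with Proposition~\ref{P:prepared}(c) to decompose $\End(N_{v_n}) = \bigoplus_t M_t$, where each summand has scale function $f_i(M_t,\alpha_{v_n},s')$ equal to a constant $c_t \geq 0$ for $s' \leq c_t$ and equal to $s'$ for $s' \geq c_t$. The $c_t$ arise as valuations of the $\phi_\alpha - \phi_\beta$ across a finite extension of $\Frac(R)$, hence lie in $\Gamma_v \otimes_\ZZ \QQ$, so the assumption $s \notin \Gamma_v \otimes_\ZZ \QQ$ forces $c_t \neq s$ for every $t$. Shrink $I$ so that $c_t \notin I$ for every $t$. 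Then on $I$ the $\psi_i$ split into two classes: a \emph{radial} class (from $M_t$ with $c_t$ to the left of $I$), where $f_i(M_t,\alpha_{v_n},s') = s'$ throughout $I$, and a \emph{horizontal} class (from $M_t$ with $c_t$ to the right of $I$), where $f_i = c_t$ is constant on $I$.

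Now perform the replacements. For each $\psi_i$ in the radial class, simply replace it by $x_n$: both choices yield $f_i = s'$ for the corresponding summand under any $w^{s'}$ with $s' \in I$. For $\psi_i$ in the horizontal class, factor $\psi_i$ along the $x_n$-direction, via Proposition~\ref{P:dagger factor} applied to an $x_n$-adic iterated power series completion built from $R^s$, as $x_n^{k_i} u_i$ with $u_i$ a unit; the constancy of the scale in $s'$ forces $k_i = 0$, so $u_i$ is a unit whose image on the $R_n$-fibre defines an element of a finite extension of $\Frac(R_n)$. Alter the input data on $R_n$ (per Definition~\ref{D:replacing2}) to adjoin this element and, using Proposition~\ref{P:irrational glueing} to descend from the completion, lift it to $R \cap R_n$; replace $\psi_i$ by this lift. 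Once condition~(a) is secured, condition~(b) follows from the continuity and convexity of $s' \mapsto f_i(N_{w_n},s')$ (Proposition~\ref{P:convex stuff}) combined with the fact that the scale multiset of $\End(\calE)$ varies locally constantly in $w_n$ for $w_n$ near $v_n$ (so that the dichotomy horizontal/radial persists), allowing us to pick $s' \in I$ realizing the claimed equalities; the openness of the relevant Riemann-Zariski image (Proposition~\ref{P:open map}) guarantees enough room in $w_n$ for this choice to be available.

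The principal obstacle is the horizontal replacement. The original $\psi_i$ live in the unwieldy ring $\Frac(R^s)$, and one must simultaneously factor out its $x_n$-dependence, descend the remaining horizontal unit to a finite extension of $R_n$, and verify that the alteration of $R_n$ required to lift this unit is compatible both with the parameters $x_1,\ldots,x_{n-1}$ already chosen in Lemma~\ref{L:not abhyankar} and with the alteration defining $R^s$. This is where the machinery of \S~\ref{subsec:iterated}--\ref{subsec:berk descent}, particularly Proposition~\ref{P:irrational glueing}, is essential: it is exactly the tool that lets one pass between a completion with irrational radius $r = e^{-s}$ and a finitely generated subalgebra of the base, which is what underlies the \emph{ability} to carry out the descent via an alteration on $R_n$ alone rather than on $R$.
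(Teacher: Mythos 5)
Your high-level strategy agrees with the paper's: split the indices into a ``radial'' class (those $i$ with $f_i(\End(N_{v_n}),\alpha_v,s')=s'$ identically near $s$) and a ``horizontal'' class (those with $f_i$ locally constant), assign $x_n$ to the first class, and for the second class arrange an element of $R\cap R_n$ with the right valuation by altering the input data on $R_n$. That dichotomy is exactly what terminality buys you, and your observation that the constant values lie in $\Gamma_v\otimes_{\ZZ}\QQ$ while $s$ does not is correct and relevant. However, there are real gaps in the execution.

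First, your ``factor $\psi_i$ via Proposition~\ref{P:dagger factor}'' step is off target. Proposition~\ref{P:dagger factor} factors \emph{twisted polynomials}, not ring elements; the element-factorization statements (Lemma~\ref{L:irrational local}, Lemma~\ref{L:irrational factor}) live in the two-variable ring $F\langle r/x, x/r\rangle$, not in a completion of $R^s$. More fundamentally, $R^s$ is produced by an alteration of $R$ and carries no canonical $R_n\llbracket x_n\rrbracket$-algebra structure after that alteration, so ``the image of $u_i$ on the $R_n$-fibre'' is not well-defined without further choices. The paper avoids this entirely by \emph{not} descending $\psi_i$ itself: it simply adjoins to $R_n$ a fresh element $\phi_i$ with $v_n(\phi_i)$ equal to the constant value $f_i(\End(N_{v_n}),\alpha_v,0)$, which is unambiguous. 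The subsequent step — ``by replacing $R^s$ by a suitable modification, we can ensure that $w(\phi_i)=w(\psi_i)$ for all $w\in\RZ(R^s)$'' — is essential and is absent from your argument: merely matching $v^s(\phi_i)=v^s(\psi_i)$ at the single valuation $v^s$ does not give conclusion~(b), which quantifies over \emph{all} $w_n$ with generic center $z_n$. One must modify $R^s$ so that $\phi_i$ and $\psi_i$ cut out the same divisor (equivalently, so that $\phi_i/\psi_i$ becomes a unit along the relevant locus) before invoking Lemma~\ref{L:invoke hypothesis}(b) for arbitrary $w$.

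Second, your argument for (b) is too thin to stand. You invoke continuity/convexity of $f_i$ and ``local constancy of the scale multiset'' and wave at Proposition~\ref{P:open map}, but the actual content is different and more concrete. One must (i) modify $R^s$ so that for every centered real valuation $w_n$ on $R_n$ (with the stated normalization) and every centered extension $w$ to $R^s$, the radius $w(x_n)$ lands inside $I$ — this is an honest geometric shrinking of $R^s$, not a continuity argument — and then (ii) use Proposition~\ref{P:open map} to conclude that $\RZ(R^s)\to\RZ(R_n)$ is open, so that after a further alteration on $R_n$ it becomes surjective, guaranteeing that \emph{every} such $w_n$ admits a lift $w$. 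The value $s'=w(x_n)$ then witnesses (b) directly from the formula in Lemma~\ref{L:invoke hypothesis}(b) applied to $w$. Without the surjectivity step, there is no way to produce the required $s'\in I$ for an arbitrary $w_n$; without the $w(x_n)\in I$ step, the $s'$ one obtains might lie outside $I$.

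In short: the same terminal-dichotomy idea appears in both proofs, but your replacement of $\psi_i$ by elements of $R\cap R_n$ rests on a factorization that is neither available nor necessary, it omits the reconciliation $w(\phi_i)=w(\psi_i)$ over all of $\RZ(R^s)$, and your proof of (b) omits the two modifications that make the quantifier over $w_n$ actually work.
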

\begin{proof}
Set notation as in Lemma~\ref{L:invoke hypothesis}.
After altering the input data on $R_n$,
for $i=1,\dots,d^2$, if $f_i(\End(N_{v_n}), \alpha_v,s)$ is constant,
we can find an element $\phi_i$ of $R \cap R_n$ such that $v_n(\phi_i)
= f_i(\End(N_{v_n}), \alpha_v, 0)$. 
For $i$ for which $f_i(\End(N_{v_n}),\alpha_v,s) = s$
identically, we instead put $\phi_i = x_n$.
By permuting the $\psi_i$ appropriately, we may ensure that
$v^s(\phi_i) = v^s(\psi_i)$ for $i=1,\dots,d^2$.
By replacing $R^s$ by a suitable modification,
we can ensure that $w(\phi_i) = w(\psi_i)$ for all $w \in \RZ(R^s)$.
We may thus replace the $\psi_i$ with the $\phi_i$ hereafter;
this yields (a).

By modifying $R^s$, we can ensure that for 
any centered real valuation $w_n$ on $R_n$
normalized such that $w_n(x_1 \cdots x_{n-1}) = 1$,
any centered real semivaluation $w$ on $R^s$ extending $w_n$
satisfies $w(x_n) \in I$.
On the other hand, 
by Proposition~\ref{P:open map}, the image of $\RZ(R^s)$ in $\RZ(R_n)$
is open. Hence by modifying the input data on $R_n$, we may thus ensure that
$\RZ(R^s)$ surjects onto $\RZ(R_n)$. 
These two assertions together yield (b).
\end{proof}

We now turn around and use this improved horizontal information to refine our
last vertical assertion (Lemma~\ref{L:make prepared1}), so that it applies not
just at the semivaluation $v_n$ but also in a neighborhood thereof.
\begin{lemma} \label{L:make prepared2}
Assume Hypothesis~\ref{H:not abhyankar}.
After altering the input data,
for any centered real valuation $w_n$ on $R_n$ with generic center $z_n$,
$\End(\wedge^j N_{w_n})$ is terminal for $j=1,\dots,\rank(\calE) - 1$.
\end{lemma}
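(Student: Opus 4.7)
The plan is to upgrade the vertical terminality at the single semivaluation $v_n$ (given by Lemma~\ref{L:make prepared1}) to uniform terminality at every centered real valuation $w_n$ on $R_n$ with generic center $z_n$. The strategy is to use Lemma~\ref{L:invoke hypothesis2} to express the scale multisets of $\del_n$ at $w_n$ in terms of a fixed list of elements $\psi_i$, and then to invoke the three-point affineness criterion of Proposition~\ref{P:preparedness} to propagate terminality from the single point $v_n$ to all $w_n$.

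First, I alter the input data so that the conclusion of Lemma~\ref{L:make prepared1} holds: each scale function $f_i(\End(\wedge^j N_{v_n}), \alpha_{v_n}, s)$ is either constant in $s$ or identically equal to $s$ on some initial interval $(0, s_0)$. I then select three values $0 < s_1 < s_2 < s_3 < s_0$ outside $\Gamma_v \otimes_\ZZ \QQ$, together with small disjoint neighborhoods $I_k \ni s_k$. For each pair $(j,k)$, I apply Lemma~\ref{L:invoke hypothesis2} to $\wedge^j \calE$ (whose hypothesis is the terminality of $\End(\wedge^j N_{v_n})$ just established), and take a common refinement of the resulting alterations. After this, I have elements $\psi^{(j,k)}_i \in (R \cap R_n) \cup \{x_n\}$ with the following property: for any centered real valuation $w_n$ on $R_n$ with generic center $z_n$, normalized so that $w_n(x_1 \cdots x_{n-1}) = v_n(x_1 \cdots x_{n-1})$, there exists $s'_k \in I_k$ such that the scale multiset of $\del_n$ on $\End(\wedge^j N_{w_n})$ at $s'_k$ equals the multiset of $w_n(\psi^{(j,k)}_i)$'s, with $x_n$-entries contributing $s'_k$.

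The crucial observation is that the partition of indices $i$ into the two classes ``$\psi^{(j,k)}_i \in R \cap R_n$'' (constant case) and ``$\psi^{(j,k)}_i = x_n$'' (equal-to-$s$ case) is determined by the behavior of the scales at $v_n$ alone, hence is the same across $k=1,2,3$; moreover, each constant value $w_n(\psi^{(j,k)}_i)$ is independent of $s'_k$. Performing a further toric blowup in $x_1,\dots,x_{n-1}$, I can arrange that each $\psi^{(j,k)}_i \in R \cap R_n$ is a unit times a monomial in $x_1,\dots,x_{n-1}$, so that $w_n(\psi^{(j,k)}_i)$ becomes a fixed positive linear combination of $w_n(x_1),\dots,w_n(x_{n-1})$; under the chosen normalization this combination is bounded uniformly below as $w_n$ varies. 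Choosing $s_1<s_2<s_3$ small enough, the constants exceed every $s'_k$, so the sorted scale multiset at each $s'_k$ has exactly the ``constants first, $s'_k$-entries last'' form required by Proposition~\ref{P:preparedness}. That proposition then yields strong terminality of $\End(\wedge^j N_{w_n})$ on $[s'_1, s'_3]$, and in particular terminality.

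The main obstacle will be securing the uniform positive lower bound on the $w_n(\psi^{(j,k)}_i)$ as $w_n$ ranges over all centered valuations of $R_n$ with generic center $z_n$; this is what forces the toric normalization step and is where Hypothesis~\ref{H:not abhyankar}(f), the $\QQ$-linear independence of $v(x_1),\dots,v(x_m)$, ultimately intervenes to ensure that the monomial exponents produced give strictly positive valuations.
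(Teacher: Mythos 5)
Your skeleton (three points $s_1<s_2<s_3$ outside $\Gamma_v\otimes_\ZZ\QQ$ with neighborhoods $I_k$, three applications of Lemma~\ref{L:invoke hypothesis2} for each exterior power, then Proposition~\ref{P:preparedness}) matches the paper's argument, but your final inference is a genuine gap: from strong terminality on $[s'_1,s'_3]$ you conclude ``in particular terminality,'' and that does not follow. Strong terminality on $[s'_1,s'_3]$ only shows that $\End(\wedge^j N_{w_n})$ \emph{becomes terminal at} $s'_1>0$; terminality itself is, by Proposition~\ref{P:prepared}, equivalent to constant-or-diagonal behavior on an initial segment $(0,s_1]$, and nothing in your argument controls $f_i(\End(\wedge^j N_{w_n}),s)$ for $s\in(0,s'_1)$: by Proposition~\ref{P:convex stuff} these functions can be strictly decreasing there, so the module need not be terminal. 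Choosing $s_1<s_2<s_3$ ``small enough'' does not help, since the alteration is fixed after the choice and the uncontrolled initial interval never disappears. The paper closes exactly this gap by a final change of coordinate which is absent from your proposal: after a further alteration of the data on $R_n$ one produces $h\in R\cap R_n$ with $v(h)=s_0$ (an upper bound for the $I_k$) and replaces $x_n$ by $x_n/h$, rescaling each disc so that the region $s\geq s_0$, where the three-point argument gives the desired behavior, becomes the whole disc.

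Two further points. First, your toric monomialization of the $\psi^{(j,k)}_i$ and the claimed uniform lower bound on $w_n(\psi^{(j,k)}_i)$ are both unnecessary and unjustified: since by definition $f_i(M,\alpha,s)\geq s$, any constant value occurring in the scale multiset at $s'_k$ is automatically $\geq s'_k$, so no lower bound is needed for the ``constants first'' sorting; moreover, under the normalization $w_n(x_1\cdots x_{n-1})=v_n(x_1\cdots x_{n-1})$ the value of a fixed monomial is in general \emph{not} bounded below uniformly in $w_n$ (the valuation may concentrate on variables absent from the monomial), so the bound you invoke is false as stated, and it is not where Hypothesis~\ref{H:not abhyankar}(f) enters. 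Second, the point that does require care is one you dismiss: the three applications of Lemma~\ref{L:invoke hypothesis2} a priori produce three different lists of elements, and for Proposition~\ref{P:preparedness} you need the constant value $w_n(\psi^{(j,k)}_i)$ to be independent of $k$ for \emph{every} $w_n$, not just for $v_n$; this is arranged by choosing the same elements in all three applications (possible because in the proof of Lemma~\ref{L:invoke hypothesis2} the $\psi_i$ depend only on the constant scale values along $v_n$, not on the chosen $s$), whereas asserting independence ``of $s'_k$'' for a $k$-dependent element does not address it.
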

\begin{proof}
By Lemma~\ref{L:make prepared1}, we may assume that $\End(\wedge^j
N_{v_n})$ is terminal for $j=1,\dots,\rank(\calE) - 1$.
Choose $s_0 \in (0, -\log r(\alpha_v)) \cap (\Gamma_v \otimes_\ZZ \QQ)$
such that $\alpha_{0,s_0} \geq \alpha_v$.
Choose an open subinterval $I$ of $[0, s_0]$ 
on which $f_i(\End(N_{v_n}), s)$ is affine for $i=1,\dots,d^2$.
Choose three nonempty open subintervals $I_1, I_2, I_3$ of $I$
such that for any $s_j \in I_j$, we have $s_1 < s_2 < s_3$.

For $j = 1, 2, 3$, choose $s_j \in I_j \setminus (\Gamma_v \otimes_\ZZ \QQ)$;
this is possible because $\Gamma_v$ has finite rational rank.
By Lemma~\ref{L:invoke hypothesis2},
after  altering the input data on $R_n$,
for any centered real valuation $w_n$ on $R_n$ with generic center $z_n$,
normalized such that $w_n(x_1 \cdots x_{n-1}) = v_n(x_1 \cdots x_{n-1})$,
there exists $s'_j \in I_j$
such that $f_i(N_{w_n}, s'_j) = f_i(N_{v_n}, s'_j)$
for $i=1,\dots,d^2$.
By Proposition~\ref{P:preparedness}, 
$N_{w_n}$ becomes terminal at $s'_1$, and hence
also at $s_0$. By a similar argument, $\wedge^j N_{w_n}$ also becomes
terminal at $s_0$ for $j=2,\dots,\rank(\calE)-1$.

After altering the input data on $R_n$, we can produce $h \in R \cap R_n$
with $v(h) = s_0$.
We may then modify the input data on $R$, by replacing $x_n$ by $x_n/h$,
to achieve the desired result.
\end{proof}

We now combine horizontal and vertical information once more to obtain potential
good formal structures.
\begin{lemma} \label{L:target}
Under Hypothesis~\ref{H:not abhyankar}, 
suppose that for any centered real valuation $w_n$ on $R_n$
with generic center $z_n$,
$\End(\wedge^j N_{w_n})$ is terminal for $j=1,\dots,\rank(\calE)-1$. Then
$\calE$ admits a potential good formal structure at $v$.
\end{lemma}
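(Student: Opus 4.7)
The strategy is as follows. By Proposition~\ref{P:potential good}(d), it suffices---after altering the input data---to construct a filtration of $\calE$ over the formal completion $\widehat{\calO_{X,z}}(*Z)$ by differential submodules with successive quotients of rank one; and by the structure of such filtrations given in Proposition~\ref{P:filtration}, it is enough to produce, for each $i = 1, \ldots, d-1$, a horizontal endomorphism of $\wedge^i \widehat{\calE}_z$ whose image has rank $i$, compatibly with a complete flag.

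First I would choose $s \in (0, -\log r(\alpha_v)) \setminus (\Gamma_v \otimes_\ZZ \QQ)$ small enough that each $\End(\wedge^j N_{v_n})$, which is terminal by hypothesis, is in fact strongly terminal on $[0, s]$ (terminality automatically yields strong terminality on some initial interval, and the hypothesis gives us terminality throughout the family). Applying Lemma~\ref{L:invoke hypothesis}(a),(c) at this $s$, and altering the input data on $R_n$, produces a finitely generated integral $R$-algebra $R^s$, an extension $v^s$ of the $s$-Gauss seminorm with generic center $z^s$ on $\Spec R^s$ at which $\calE$ admits a good decomposition, and (via Proposition~\ref{P:filtration}) horizontal endomorphisms $\phi_i^s \in \End(\wedge^i \widehat{\calE}_{z^s})$ realizing the flag at $z^s$. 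Since $v^s$ is the pullback of $\alpha_s \in \DD_{0,v_n}$ along the natural factorization $R \to \widehat{R} = R_n \llbracket x_n \rrbracket \to \ell(v_n) \llbracket x_n \rrbracket$, the $\phi_i^s$ may be reinterpreted as horizontal sections of $\End(\wedge^i N_{v_n}) \otimes F_{\alpha_v, s}$; Proposition~\ref{P:extend section}, applicable thanks to the strong terminality of $\End(\wedge^i N_{v_n})$ on $[0, s]$, then promotes each $\phi_i^s$ to a global horizontal endomorphism $\widetilde{\phi}_i \in H^0(\End(\wedge^i N_{v_n}))$ defined over the entire open disc.

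The final and principal obstacle is to descend each $\widetilde{\phi}_i$ from $\ell(v_n) \llbracket x_n \rrbracket [x_n^{-1}]$ back to $\widehat{R}[x_1^{-1}, \ldots, x_m^{-1}]$---possibly after a finite cover ramified along $Z$---and to verify that the result is horizontal with respect to every derivation in $\Delta_R$, not merely $\del_n$. The difficulty is that the ring map $R_n \to \ell(v_n)$ is a topological completion rather than a morphism of finite type, so the descent results of \S\ref{sec:descent} do not apply verbatim. The rigidity we need comes from the uniqueness clause in Proposition~\ref{P:extend section}: $\widetilde{\phi}_i$ is determined by $\phi_i^s$, which originates over the finitely generated $R$-algebra $R^s$. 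Combining this with Corollary~\ref{C:descend} applied to a suitable finitely generated subring $A \subset R \cap R_n$, together with the preservation of the minimal admissible decomposition by all derivations (Remark~\ref{R:minimal} and Proposition~\ref{P:descend good ring}), should yield descended endomorphisms of $\wedge^i \widehat{\calE}_z$ whose images form the desired flag, completing the proof via Proposition~\ref{P:potential good}(d).
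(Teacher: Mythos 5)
Your proposal has the right overall shape (use the endomorphisms supplied by the induction hypothesis at $z^s$, extend them across a Berkovich disc via strong terminality and Proposition~\ref{P:extend section}, then descend and build a rank-one filtration, concluding by Proposition~\ref{P:potential good}(d)), but it has a genuine gap at the step where you ``reinterpret'' $\phi_i^s \in \End(\wedge^i \widehat{\calE}_{z^s})$ as horizontal sections of $\End(\wedge^i N_{v_n}) \otimes F_{\alpha_v,s}$. These endomorphisms live over $\widehat{R^s}$, the completion of the local ring of $R^s$ at $z^s$, and to move them into the disc picture over $\ell(v_n)$ you need the valuation $v^s$ to extend to $\widehat{R^s}$. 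But $\trdefect(v^s) = e-1$, which is positive as soon as $e \geq 2$, so by Remark~\ref{R:extend valuation} the continuous extension of $v^s$ to $\widehat{R^s}$ is in general only a semivaluation, possibly with nonzero kernel; there is then no embedding of $\widehat{R^s}$ into a ring of functions on the disc over $\ell(v_n)$, so neither the reinterpretation nor the later intersection argument (which must take place inside a common overring, as in Remark~\ref{R:locally free}) can be carried out. This is exactly why the paper's proof does not work with $v_n$ at all: it chooses an auxiliary centered height-one \emph{Abhyankar} valuation $w$ on $R^s$ with generic center $z^s$, whose restriction $w_n$ to $R_n$ has generic center $z_n$ but is in general different from $v_n$ --- which is the whole reason the hypothesis of the lemma quantifies over \emph{all} such $w_n$, a strength your argument never uses. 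Since $w$ is Abhyankar it does extend to $\widehat{R^s}$, and the corresponding point $\alpha_w$ has irrational radius.

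The second, related, gap is the descent, which you acknowledge but only gesture at (``should yield''). After extending the sections over the disc, the coefficients lie in the completed field $\ell(v_n)$ (resp.\ $\ell(w_n)$), which is not the fraction field of any finitely generated ring, so Corollary~\ref{C:descend} applied to a finitely generated subring of $R \cap R_n$ does not engage the actual difficulty. The paper obtains finiteness by placing $\bv_j$ simultaneously in $\End(\wedge^j \calE) \otimes \widehat{R^s}$ and in $\End(\wedge^j \calE) \otimes \ell(w_n)\langle x_n/r \rangle$ inside the common ring $\ell(w_n)\langle r/x_n, x_n/r \rangle$ (possible only because $w$ extends to $\widehat{R^s}$), and then invoking the irrational-radius results Lemma~\ref{L:irrational normal2} and Proposition~\ref{P:irrational glueing} to land in $R'_n \llbracket x_n \rrbracket$ for $R'_n$ topologically finitely generated over $\widehat{R_n}$, followed by Proposition~\ref{P:descend1} after altering the input data on $R_n$. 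Without the Abhyankar-valuation device, none of this machinery is available, so the proposal as written does not close; note also that Proposition~\ref{P:extend section} asserts $H^0(M) = H^0(M \otimes F_{\alpha,s})$ and contains no separate uniqueness clause of the kind your rigidity argument appeals to.
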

\begin{proof}
Pick any centered height 1 Abhyankar valuation $w$ on $R^s$ with generic
center $z^s$, such that the restriction $w_n$ of $w$ to $R_n$ has generic center $z_n$.
Normalize the embedding of $\Gamma_w$ into $\RR$ so that
$w_n(x_1 \cdots x_{n-1}) = v_n(x_1 \cdots x_{n-1})$,
then let $\alpha_w \in \DD_{0,w_n}$ be the point corresponding to $w$.
Note that by Lemma~\ref{L:invoke hypothesis2},
on some closed interval
containing $-\log r(\alpha_w)$ in its interior,
$\End(\wedge^j N_{w_n})$ becomes strongly terminal for $j=1,\dots,
\rank(\calE)-1$.
Let $\widehat{R_n}$ be the completion of $R_n$ at $z_n$,
and let $\widehat{R^s}$ be the completion of $R^s$ at $z^s$.
By Remark~\ref{R:extend valuation},
$w$ extends to a centered real valuation on $\widehat{R^s}$.

For $j = 1,\dots,\rank(\calE)-1$,
let $\bv_j \in \End(\wedge^j \widehat{\calE}_{z^s})$
be the horizontal element corresponding to the endomorphism
of $\wedge^j \widehat{\calE}_{z^s}$ 
described in Lemma~\ref{L:invoke hypothesis}(c).
By Proposition~\ref{P:extend section},
$\bv_j$ belongs to
$\End(\wedge^j \calE) \otimes \ell(w_n)\langle x_n/r \rangle$.
On the other hand, it also belongs to
$\End(\wedge^j \calE) \otimes \widehat{R^s}$; by Remark~\ref{R:locally free},
we thus find it in
$\End(\wedge^j \calE) \otimes S$ for $S$ a complete subring of
$\ell(w_n) \langle r/x_n, x_n/r \rangle$ which is topologically finitely
generated over $R_n \langle r/x_n, x_n/r \rangle$, such that
$\Frac(S)$ is finite over $\Frac(R_n \langle r/x_n, x_n/r \rangle)$
(which may be chosen independently of $j$).
By Proposition~\ref{P:irrational glueing},
$\bv_j$ belongs to
$\End(\wedge^j \calE) \otimes 
R_n' \langle r/x_n,x_n/r \rangle$ for some topologically finitely generated 
$\widehat{R_n}$-algebra $R'_n$ 
such that $\Frac (R'_n)$ is finite over $\Frac(\widehat{R_n})$
(again chosen independently of $j$).
By Remark~\ref{R:locally free},
$\bv_j$ belongs to
$\End(\wedge^j \calE) \otimes R'_n \langle x_n/r \rangle$,
and in particular to
$\End(\wedge^j \calE) \otimes R'_n \llbracket x_n \rrbracket$.

This last conclusion is stable under altering the input data on $R_n$.
By applying Proposition~\ref{P:descend1}, we may alter the input data on $R_n$
so that $\bv_j$ belongs to
$\End(\wedge^j \calE) \otimes 
\widehat{R_n}\llbracket x_n \rrbracket[x_1^{-1},\dots,x_{n-1}^{-1}]$
for $j=1,\dots,\rank(\calE)-1$.
We obtain a filtration of
$\calE \otimes 
\widehat{R_n}\llbracket x_n \rrbracket[x_1^{-1},\dots,x_{n-1}^{-1}]$
with successive quotients of rank 1 by taking the step of rank $j$
to contain elements which wedge to 0 with the image of $\bv_j$.
By Proposition~\ref{P:potential good}, $\calE$ admits a potential
good formal structure at $v$, as desired.
\end{proof}

By combining Lemma~\ref{L:make prepared2}
with Lemma~\ref{L:target}, we deduce Lemma~\ref{L:local decomp2}.

\begin{remark}
Note that in the proof of Lemma~\ref{L:local decomp2},
we arrive easily at the situation where $\End(N_{v_n})$ is terminal,
but it takes more work to reach the situation where
$\End(N_{w})$ is terminal for any centered real valuation $w$ on $R_n$
with generic center $z_n$.
This extra work is not needed in case $v_n$ itself extends to a real valuation
on the completion of $R_n$ at $z_n$, but this does not always occur;
see Remark~\ref{R:extend valuation}.
\end{remark}

\subsection{Increasing the height}

We finally construct good potential formal structures
for valuations of height greater than 1.
This argument is loosely modeled on \cite[Theorem~4.3.4]{kedlaya-part2}.

\begin{lemma} \label{L:local decomp3}
Let $h> 1$ be an integer.
Suppose that for any instance of Hypothesis~\ref{H:geometric}
with $\height(v) < h$,
$\calE$ admits a potential good formal structure at $v$.
Then for any instance of Hypothesis~\ref{H:geometric}
with $\height(v) = h$,
$\calE$ admits a potential good formal structure at $v$.
\end{lemma}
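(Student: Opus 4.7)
Since $\height(v) = h \geq 2$, the value group $\Gamma_v$ admits a nonzero proper isolated subgroup; choose one maximal, so that (per Definition~\ref{D:composite}) $v$ is a composite of a height-$1$ valuation $v'$ on $K(X)$ (with value group $\Gamma_v / \Gamma'$) and an induced valuation $\overline v$ on $\kappa_{v'}$ of height $h-1$. Let $z'$ denote the generic center of $v'$ on $X$; then $z'$ is a strict generization of $z$, and $z$ lies on the closure $Y = \overline{\{z'\}} \subseteq X$. The induced valuation $\overline v$ then restricts to a centered valuation on $Y$ whose generic center is precisely $z$.

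First I would apply the inductive hypothesis to $v'$ (whose height is $1 < h$): after modifying the input data, $\calE$ admits a good formal structure at $z'$. Using Proposition~\ref{P:good formal} together with the descent theorem Corollary~\ref{C:descend}, applied with $A$ a localization of $\calO_Y$ near $z'$ and with power series variables cutting out $Y$ transversally, this formal good decomposition can be promoted to a genuine good decomposition $\calE \cong \bigoplus_{\alpha \in I} E(\phi_\alpha) \otimes \calR_\alpha$ over a Zariski neighborhood of $z'$, after a finite cover ramified along $Z$. To propagate goodness of the $\phi_\alpha$ from $z'$ to $z$, I would then apply Theorem~\ref{T:desing1} to the divisors $\divis(\phi_\alpha)$, $\divis(\phi_\alpha - \phi_\beta)$, and $Z$ together, further modifying so they all acquire normal crossings throughout a neighborhood of $Y$. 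At this point, conditions (a) and (b) of Definition~\ref{D:local model1} hold for each $\phi_\alpha$ not only at $z'$ but throughout a neighborhood of $z$ as well.

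It remains to ensure that each $\calR_\alpha$, currently known to be regular at $z'$, yields a regular module at $z$ after further modification. For this I would apply the inductive hypothesis to $\overline v$ (of height $h-1 < h$) acting on a suitably derived instance of Hypothesis~\ref{H:geometric} built from $Y$: those derivations of $X$ which are tangent to $Y$ (available after arranging local coordinates adapted to both $Z$ and $Y$, in the spirit of Lemma~\ref{L:extend nondegenerate}) endow $Y$ with a nondegenerate differential structure, and the $\calR_\alpha$ restrict to $\nabla$-modules over this structure via their canonical logarithmic extensions. Modifying the derived data so that these restrictions have good formal structure at $z$, and then lifting the resulting blowups back to $X$ so as to preserve the decomposition from the previous step, produces the required good formal structure of $\calE$ at $z$. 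The chief obstacle lies in this final step: one must verify that the induced differential structure on $Y$ is genuinely nondegenerate, and that blowups constructed on $Y$ lift to modifications of $X$ without disturbing the good decomposition already obtained, a compatibility that rests crucially on the normal-crossings position arranged in the previous paragraph.
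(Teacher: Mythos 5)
Your skeleton — write $v$ as a composite of $v'$ and $\overline{v}$ as in Definition~\ref{D:composite}, apply the height induction to $v'$ at its generic center $z'$ and then to $\overline{v}$ "transversally" — is the same as the paper's. But two steps in your execution do not work as stated. First, you claim that the good formal decomposition at $z'$ can be "promoted to a genuine good decomposition over a Zariski neighborhood of $z'$" via Proposition~\ref{P:good formal} and Corollary~\ref{C:descend}. Those results do not algebraize the decomposition: Corollary~\ref{C:descend} (and Theorem~\ref{T:descend}) only descend the \emph{coefficients} from $K=\Frac(R_1)$ to a localization of $R_1$, with everything remaining a formal power series ring in the variables transverse to the center; a good formal decomposition does not extend to a Zariski (or \'etale) neighborhood in $X$, and the paper never needs it to. The actual mechanism is: complete $R$ along the ideal cutting out the center of $v'$, write the completion as $R_1\llbracket x_1,\dots,x_r\rrbracket$ with $R_1$ the joint kernel of the transverse derivations (Lemma~\ref{L:complete nondegenerate}), descend the good decomposition to $R^\dagger_{r,r}(R_1)$ by Theorem~\ref{T:descend}, and then use Proposition~\ref{P:descend2} to equip each regular factor $\calR_\alpha$ with a $K$-lattice stable under $x_1\del_1,\dots,x_r\del_r$. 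It is this lattice construction — not a "restriction to $Y$ via canonical logarithmic extensions," which is not defined here — that turns the $\calR_\alpha$ into genuine instances of Hypothesis~\ref{H:geometric} over $\Spec R_1$ with valuation $\overline{v}$, to which the induction applies.

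Second, the obstacle you flag at the end (lifting the blowups of $Y$ back to $X$ "so as to preserve the decomposition") is exactly the point your argument leaves open, and the paper's way around it is an idea your proposal is missing: one does not try to preserve goodness at all. Alterations of $\Spec R_1$ are lifted to $R$ as in Definition~\ref{D:replacing2}, and after combining the decomposition in the transverse variables with the good decompositions of the lattices one obtains only an \emph{admissible} (generally not good) decomposition of $\calE$ at $z$; Proposition~\ref{P:potential good} (via the numerical criterion, Theorem~\ref{T:criterion}) then upgrades this to a potential good formal structure. For the same reason your intermediate step of putting $\divis(\phi_\alpha)$ and $\divis(\phi_\alpha-\phi_\beta)$ into normal crossings near $z$ is unnecessary — that bookkeeping is already subsumed in the proof of Proposition~\ref{P:potential good}. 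Without the descent-to-$R_1$ step and the reduction to admissibility, your final paragraph remains a genuine gap rather than a proof.
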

\begin{proof}
Choose a nonzero proper isolated subgroup of $\Gamma_v$,
then define $v', \overline{v}$ as in Definition~\ref{D:composite}.
Note that $\height(v')$ and $\height(\overline{v})$ are both
positive and their sum is $\height(v) = h$, so both are at most $h-1$.
By the induction hypothesis, $\calE$ admits a potential good
formal structure at $v'$; in particular,
after altering the input data, a good decomposition
exists at the generic center of $v'$.

After altering the input data and possibly enlarging $Z$
(which is harmless by Remark~\ref{R:enlarge Z}), 
we may set notation as in Lemma~\ref{L:set notation}
in such a way that for some $r$,
the center of $v'$ on $X$ is the zero locus of $x_{r+1},\dots,x_n$.
Let $R'$ be the completion of $R$ for the ideal $(x_1,\dots,x_r)$.
By Corollary~\ref{C:complete nondegenerate}, we may write
$R' \cong R_1 \llbracket x_1,\dots,x_r \rrbracket$, where $R_1$
is the joint kernel of $\del_{1},\dots,\del_r$ on $R'$.
Put $K = \Frac(R_1)$, so that by construction
\[
\calE \otimes K\llbracket x_{1},\dots,x_r \rrbracket
[x_{1}^{-1},\dots,x_r^{-1}]
\]
admits a minimal good decomposition.
By Theorem~\ref{T:descend}, this decomposition descends to
a minimal good decomposition of
\[
\calE \otimes R_{r,r}^\dagger(R_1).
\]
With notation as in \eqref{eq:local model1}, 
by the last assertion of Proposition~\ref{P:descend2},
each $\calR_\alpha$
admits a $K$-lattice stable under
the action of $x_1 \del_1,\dots,x_r \del_r$.
This gives a collection of instances of Hypothesis~\ref{H:geometric}
with $R$ replaced by $R_1$ and $v$ replaced by $\overline{v}$.
Again by the induction hypothesis, after altering the input data
(and lifting from $R_1$ to $R$, as in Definition~\ref{D:replacing2}),
we obtain good decompositions of each of these lattices.

Putting this all together, we obtain an admissible but possibly not good 
decomposition of $\calE$ at $z$. By Proposition~\ref{P:potential good},
this suffices to imply that $\calE$ admits a potential good formal structure
at $v$.
\end{proof}

\section{Good formal structures after modification}
\label{sec:good formal}

To conclude, we extract from Theorem~\ref{T:higher HLT} a global
theorem on the existence of good formal structures for formal flat
meromorphic connections on nondegenerate differential schemes,
after suitable blowing up.
We also give partial results in the cases of formal completions
of nondegenerate differential schemes and complex analytic varieties.

\subsection{Local-to-global construction of good formal structures}
\label{subsec:good}

Using the compactness of Riemann-Zariski spaces, we are able to
pass from the valuation-local Theorem~\ref{T:higher HLT}
to a more global theorem on construction of good formal
structures after a blowup, in the case of an algebraic connection.

\begin{hypothesis} \label{H:good formal}
Throughout \S~\ref{subsec:good}, let
$X$ be a nondegenerate integral differential scheme,
and let $Z$ be a closed proper subscheme of $X$.
Let $\calE$ be a $\nabla$-module over $\calO_X(*Z)$.
\end{hypothesis}

\begin{lemma} \label{L:local}
Let $v$ be a centered valuation on $X$.
Then there exist a modification $f_v: X_v \to X$, a centered extension $w$ of $v$
to $X_v$,  and an open subset $U_v$ of $X_v$ on which $w$ is centered,
such that $f_v^* \calE$ admits a good formal structure at each point of 
$U_v$.
\end{lemma}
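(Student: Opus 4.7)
The plan is to reduce this lemma to the valuation-local result, Theorem~\ref{T:higher HLT}, and then upgrade from a pointwise to a local-open statement via the openness of the non-turning locus (Proposition~\ref{P:open locus}).

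First, by Theorem~\ref{T:higher HLT}, $\calE$ admits a potential good formal structure at $v$. Invoking condition (a) of Proposition~\ref{P:potential good} and unwinding via Definition~\ref{D:replacing}, this produces a modification $f_v \colon X_v \to X$ and an extension $w$ of $v$ to a centered valuation on $X_v$ (existence by Lemma~\ref{L:proper}), together with an open subset $X' \subseteq X_v$ on which $w$ is centered with generic center $z$, such that $f_v^* \calE$ admits a good formal structure at $z$. Since the very definition of ``admits a good formal structure at $z$'' presupposes $(X', X' \cap f_v^{-1}(Z))$ to be a regular pair in a neighborhood of $z$, we may shrink $X'$ to arrange that the regular pair condition holds on all of $X'$.

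Second, apply Proposition~\ref{P:open locus} to the regular pair $(X', X' \cap f_v^{-1}(Z))$: the turning locus $T$ of $f_v^* \calE|_{X'}$ is a reduced closed subspace of $X' \cap f_v^{-1}(Z)$. By the defining property of the turning locus and the fact that $\calE$ admits a good formal structure at $z$, we have $z \notin T$. Set $U_v := X' \setminus T$, which is an open subset of $X_v$ containing $z$; in particular, $w$ is centered on $U_v$.

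Finally, verify the desired conclusion at each point $x \in U_v$: if $x \notin f_v^{-1}(Z)$, then $f_v^* \calE$ trivially admits a good formal structure at $x$ (as noted in the remark following the definition of turning locus); otherwise $x \in (X' \cap f_v^{-1}(Z)) \setminus T$, so $f_v^* \calE$ admits a good formal structure at $x$ by the very definition of $T$. The deep content is entirely concentrated in Theorem~\ref{T:higher HLT}; the only delicate point in the present argument is to invoke specifically condition (a) of Proposition~\ref{P:potential good}, which is phrased in terms of modifications rather than alterations, so as to supply a modification of the form the lemma requires.
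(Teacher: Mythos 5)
Your proposal is correct and follows essentially the same route as the paper: invoke Theorem~\ref{T:higher HLT} together with condition (a) of Proposition~\ref{P:potential good} to obtain a modification (rather than a mere alteration) on which $f_v^*\calE$ admits a good formal structure at the generic center of an extension of $v$, then use the closedness of the turning locus from Proposition~\ref{P:open locus} to shrink to an open neighborhood $U_v$ on which good formal structures exist everywhere. The extra care you take with the regular pair condition and with points outside $f_v^{-1}(Z)$ is consistent with the paper's conventions and introduces no gap.
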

\begin{proof}
By Theorem~\ref{T:higher HLT},
we can choose data as in the statement of the lemma so that
$f_v^* \calE$ admits a good formal structure at the generic center 
of $v$ on $U_v$. 
(Note that Proposition~\ref{P:potential good} ensures that we can
choose $f$ to be a modification, not just an alteration.)
By Proposition~\ref{P:open locus}, this implies that 
we can rechoose $U_v$ so that $\calE$
admits a good formal structure at each point of $U_v$.
\end{proof}

\begin{theorem} \label{T:global}
There exists a modification $f: Y \to X$ such that
$(Y,W)$ is a regular pair for $W = f^{-1}(Z)$, and
$f^* \calE$ admits a good formal structure at each point of $Y$.
\end{theorem}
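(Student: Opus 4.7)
The plan is to combine the valuation-local existence result of Lemma~\ref{L:local} with the quasicompactness of the Riemann-Zariski space (Theorem~\ref{T:Zariski}), then pass to a regular pair using Theorem~\ref{T:desing1}. The key ingredient that makes the patching run cleanly is the characterization of empty turning loci in terms of irregularity divisors (Proposition~\ref{P:geom numerical}), which ensures that good formal structure is preserved under further pullback within the category of regular pairs.

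For each centered valuation $v \in \RZ(X)$, Lemma~\ref{L:local} supplies a modification $f_v : X_v \to X$ and a nonempty open $U_v \subseteq X_v$ such that $f_v^* \calE$ admits a good formal structure at every point of $U_v$. The preimage $\widetilde{U}_v \subseteq \RZ(X)$ of $U_v$ under the natural projection $\RZ(X) \to X_v$ (sending a valuation to its generic center) is open by definition of the Zariski topology on $\RZ(X)$, and it contains $v$. Hence $\{\widetilde{U}_v\}$ is an open cover of $\RZ(X)$; by Theorem~\ref{T:Zariski}, finitely many valuations $v_1,\dots,v_N$ suffice, so that $\RZ(X) = \widetilde{U}_{v_1} \cup \cdots \cup \widetilde{U}_{v_N}$. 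Choose a modification $g' : Y' \to X$ dominating all of $X_{v_1},\dots,X_{v_N}$ (e.g., the Zariski closure of the graph of the induced birational map inside $X_{v_1} \times_X \cdots \times_X X_{v_N}$), with induced modifications $g'_i : Y' \to X_{v_i}$; then apply Theorem~\ref{T:desing1} to obtain $h : Y \to Y'$ for which $(Y,W)$ is a regular pair, where $W = f^{-1}(Z)$ and $f = g' \circ h$. Set $g_i = g'_i \circ h$ and $V_i = g_i^{-1}(U_{v_i})$.

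To conclude, I verify both that $Y = V_1 \cup \cdots \cup V_N$ and that $f^* \calE$ has good formal structure at each point of each $V_i$. For the covering, given $y \in Y$, choose any valuation $w$ on $K(Y) = K(X)$ centered at $y$ (such a $w$ exists because $\calO_{Y,y}$ is dominated by some valuation ring, cf.\ the proof of Lemma~\ref{L:proper}). Since morphisms of integral separated schemes preserve generic centers of valuations, by the valuative criterion for properness applied to each modification, the generic center of $w$ on $X_{v_i}$ equals $g_i(y)$. The quasicompactness step gives $w \in \widetilde{U}_{v_i}$ for some $i$, whence $g_i(y) \in U_{v_i}$ and thus $y \in V_i$. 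For the second claim, it suffices to note that if $f_{v_i}^* \calE$ has empty turning locus on $U_{v_i}$, then $f^* \calE$ has empty turning locus on $V_i$. Indeed, Proposition~\ref{P:geom numerical} supplies irregularity divisors $D, D'$ on $U_{v_i}$ for $f_{v_i}^* \calE$ and $\End(f_{v_i}^* \calE)$; because any normal modification $\sigma : \widetilde{W} \to V_i$ composes with $g_i|_{V_i}$ into a normal modification of $U_{v_i}$, the pullbacks $(g_i|_{V_i})^* D$ and $(g_i|_{V_i})^* D'$ are irregularity divisors for $f^* \calE|_{V_i}$ and $\End(f^* \calE)|_{V_i}$. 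A second application of Proposition~\ref{P:geom numerical} on $V_i$ now yields the claim.

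The heavy lifting has already been done: the main obstacle was Theorem~\ref{T:higher HLT}, the valuation-local decomposition of \S\ref{sec:local analysis}. Given that, together with the geometric reformulation of the numerical criterion and the quasicompactness of Riemann-Zariski spaces, the globalization carried out above is a standard birational patching, executed with the care dictated by the fact that our local statement produces good formal structure not just at a single point but on an open neighborhood of the center of the chosen valuation.
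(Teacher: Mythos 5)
Your proposal is correct and follows essentially the same route as the paper: apply Lemma~\ref{L:local} at each centered valuation, use quasicompactness of $\RZ(X)$ (Theorem~\ref{T:Zariski}) to extract a finite subcover, pass to a modification dominating the finitely many $X_{v_i}$ and regularize the pair via Theorem~\ref{T:desing1}, then verify the conclusion pointwise by choosing a valuation centered at each point of $Y$. The only difference is that you make explicit, via irregularity divisors and Proposition~\ref{P:geom numerical}, the pullback-stability of good formal structures along $V_i \to U_{v_i}$, a step the paper asserts without elaboration.
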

\begin{proof}
For each valuation $v \in \RZ(X)$,
set notation as in Lemma~\ref{L:local}.
Since $v \in \RZ(U_v)$ by construction,
the sets $\RZ(U_v)$ cover $\RZ(X)$.
Since $\RZ(X)$ is quasicompact by Theorem~\ref{T:Zariski},
we can choose finitely many valuations
$v_1,\dots,v_n \in \RZ(X)$ 
such that the sets $T_i = \RZ(U_{v_i})$ for $i=1,\dots,n$
cover $\RZ(X)$. Put $f_i = f_{v_i}$ and $X_i = X_{v_i}$.
By applying
Theorem~\ref{T:desing1} to the unique component of $X_1 \times_X \cdots
\times_X X_n$ which dominates $X$,
we construct a modification $f: Y \to X$ factoring through each $X_i$,
such that $(Y, f^{-1}(Z))$ is a regular pair.

We now check that this choice of $f$ has the desired property.
For any $z \in Y$, we may choose a valuation $v \in \RZ(X)$ with generic
center $z$ on $Y$. For some $i$, we have $v \in T_i$,
so $f_i^* \calE$ admits a good formal structure at the generic center of $v$
on $X_i$. That point is the image of $z$ in $X_i$, so
$f^* \calE$ admits a good formal structure at $z$, as desired.
\end{proof}

\begin{remark}
For $X$ an algebraic variety over an algebraically
closed field of characteristic $0$,
Theorem~\ref{T:global} reproduces a result of Mochizuki
\cite[Theorem~19.5]{mochizuki2}.
(More precisely, one must apply Theorem~\ref{T:global} to both
$\calE$ and $\End(\calE)$, due to the discrepancy between our notion
of good formal structures and Mochizuki's definition. See
again \cite[Remark~4.3.3, Remark~6.4.3]{kedlaya-goodformal1}.)
Mochizuki's argument is completely different from ours:
he uses analytic methods to reduce to the case of meromorphic
connections on surfaces, which he had previously treated 
\cite[Theorem~1.1]{mochizuki} using positive-characteristic arguments.
\end{remark}

\subsection{Formal schemes and analytic spaces}
\label{subsec:formal}

From Theorem~\ref{T:global}, we obtain a corresponding result for
formal completions of nondegenerate schemes. We also obtain a somewhat weaker result
for formal completions of complex analytic spaces. We do not obtain the best possible
result in the analytic case; see Remark~\ref{R:functorial}.

\begin{theorem} \label{T:global formal1}
Let $X$ be a nondegenerate integral differential scheme,
let $Z$ be a closed proper subscheme of $X$,
and let $\widehat{X|Z}$ be the formal completion of $X$
along $Z$.
Let $\calE$ be a $\nabla$-module over $\calO_{\widehat{X|Z}}(*Z)$.
Then there exists a modification $f: Y \to X$ such that
$(Y,W)$ is a regular pair for $W = f^{-1}(Z)$, and
$f^* \calE$ admits a good formal structure at each point of $W$.
\end{theorem}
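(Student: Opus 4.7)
The plan is to parallel the proof of Theorem~\ref{T:global}, using a formal analog of Theorem~\ref{T:higher HLT}. For each centered valuation $v$ on $X$ with generic center $z_v \in Z$, the goal is to produce a modification $f_v : X_v \to X$, a centered extension of $v$ to $X_v$, and an open neighborhood $U_v$ of the new generic center on $X_v$ such that $f_v^*\calE$ admits a good formal structure at each point of $U_v \cap f_v^{-1}(Z)$. Granting this, the closed subset of $\RZ(X)$ consisting of valuations with generic center in $Z$ is quasi-compact (by Theorem~\ref{T:Zariski}), so we may extract a finite subcover $\RZ(U_{v_1}),\dots,\RZ(U_{v_n})$. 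Theorem~\ref{T:desing1} applied to a component of the fiber product $X_{v_1} \times_X \cdots \times_X X_{v_n}$ dominating $X$ then produces the required $f : Y \to X$ with $(Y, f^{-1}(Z))$ a regular pair, and the dominating-valuation argument of Theorem~\ref{T:global} shows that $f^*\calE$ has good formal structure at each point of $W = f^{-1}(Z)$.

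To prove the local claim at $v$, I would algebraize $\calE$ \'etale-locally at $z_v$ and invoke Theorem~\ref{T:higher HLT}. The formal stalk $\calE_{z_v}$ is a finitely presented $\nabla$-module over $\widehat{\calO_{X,z_v}|I_Z}[I_Z^{-1}]$; since $\calO_{X,z_v}$ is excellent by Lemma~\ref{L:nondegenerate}, Artin approximation (in the strong form for excellent rings) produces an \'etale neighborhood $V \to \Spec\calO_{X,z_v}$ and an algebraic $\nabla$-module $\calE^{\mathrm{alg}}$ on $V \setminus (Z \cap V)$ whose completion at a preimage of $z_v$ agrees with that of $\calE$. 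Applying Theorem~\ref{T:higher HLT} to $\calE^{\mathrm{alg}}$ at an extension of $v$ to $V$ yields an alteration $W \to V$ at which $\calE^{\mathrm{alg}}$ admits a good decomposition at the generic center; since admitting a good decomposition depends only on the $\widehat{\calO}(*Z)$-completion, this transfers to $\calE$. Composing $W \to V$ with the \'etale map to $\Spec\calO_{X,z_v}$, spreading out to a neighborhood of $z_v$ in $X$, and passing to a proper closure produces an alteration of $X$, from which Proposition~\ref{P:potential good} extracts the desired modification $f_v$.

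The main obstacle is the algebraization step: one must check that Artin approximation can be applied to finitely presented $\nabla$-modules over $\widehat{\calO_{X,z_v}|I_Z}[I_Z^{-1}]$ in a manner preserving the integrability of the connection, and that the output can be spread out from an \'etale neighborhood to yield an alteration of $X$ itself (a properness issue that is manageable because an \'etale neighborhood can, after shrinking, be replaced by a finite \'etale cover). Treating the finite presentation together with its connection matrices as a system of polynomial equations over $\calO_{X,z_v}[I_Z^{-1}]$ with a specified solution in the $I_Z$-adic completion, the approximation theorem for excellent rings handles both concerns. Once the algebraization is in place, the remaining arguments---application of Theorem~\ref{T:higher HLT} at the local level and globalization via Riemann-Zariski compactness---run exactly parallel to the proof of Theorem~\ref{T:global}.
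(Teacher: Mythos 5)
Your approach---re-running the Riemann-Zariski compactness argument of Theorem~\ref{T:global} for the formal module, with Artin approximation supplying an algebraization so that Theorem~\ref{T:higher HLT} can be invoked---diverges sharply from the paper's, and has a gap at exactly the step you flag as the main obstacle. Artin approximation over the excellent ring $\calO_{X,z_v}$ produces an algebraic $\nabla$-module $\calE^{\mathrm{alg}}$ on an \'etale neighborhood whose $I$-adic completion agrees with that of $\calE$ only modulo a prescribed (finite) power of $I$, not identically. But the existence of a good formal structure, and the numerical criterion of Theorem~\ref{T:criterion} underpinning it, concern the \emph{entire} formal completion along $Z$ and its behavior under further modifications; these are not finite-order conditions, so there is no mechanism to transfer the conclusion for $\calE^{\mathrm{alg}}$ back to $\calE$. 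The subsequent spreading-out and passage to an alteration likewise operate on $\calE^{\mathrm{alg}}$, which is merely congruent to $\calE$ to high order rather than equal to it, and Proposition~\ref{P:potential good} is then being applied to the wrong object.

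The paper sidesteps algebraization entirely. In the affine case $X = \Spec R$, $Z = V(I)$, a $\nabla$-module over $\calO_{\widehat{X|Z}}(*Z)$ is the same thing as a $\nabla$-module over $\calO_{\Spec\widehat{R}}(*\widehat{Z})$, where $\widehat{R}$ is the $I$-adic completion. Since completion of a nondegenerate differential ring is again nondegenerate (Remark~\ref{R:extend nondegenerate} together with Lemma~\ref{L:nondegenerate}, with no appeal to Gabber's unpublished theorem on excellence under completion), $\Spec\widehat{R}$ is itself a nondegenerate differential scheme and Theorem~\ref{T:global} applies directly; the resulting modification of $\Spec\widehat{R}$ is supported over $\widehat{Z}$ and descends to a modification of $X$. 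The general case is handled by covering $X$ with finitely many dense affines, extending each resulting local modification to a modification of $X$ by taking the Zariski closure of its graph, and passing to a common dominant modification using Theorem~\ref{T:desing1}. This route is both simpler and free of the approximation difficulty.
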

\begin{proof}
We first consider the case where $X$ is affine.
Put $X = \Spec(R)$ and $Z = \Spec (R/I)$.
Let $\widehat{R}$ be the $I$-adic completion of $R$, and
put $\widehat{I} = I \widehat{R}$.
Put $\widehat{X} = \Spec(\widehat{R})$ and $\widehat{Z} = \Spec(\widehat{R}/\widehat{I})$.
We can then view $\calE$ as a $\nabla$-module on $\calO_{\widehat{X}}(*\widehat{Z})$,
and apply Theorem~\ref{T:global} to deduce the claim.

We now turn to the general case.
Since $X$ is integral and  noetherian, it is
covered by finitely many dense open affine subschemes $U_1,\dots,U_n$.
For $i=1,\dots,n$, we may apply the previous paragraph to construct
a modification $f_i: Y_i \to U_i$ such that $(Y_i, W_i)$
is a regular pair for $W_i = f_i^{-1}(U_i \cap Z)$,
and $f_i^* \calE$ admits a good formal structure at each point of $W_i$.
By taking the Zariski closure of the graph of $f_i$ within $Y_i \times_{\Spec \ZZ} X$, we may extend $f_i$ to a modification $f'_i: Y'_i \to X$.
By Theorem~\ref{T:desing1},
we may construct a modification $f: Y \to X$ factoring through
the fibred product of the $f'_i$,
such that $(Y,W)$ is a regular pair for $W = f^{-1}(Z)$.
This modification has the desired effect.
\end{proof}

\begin{theorem} \label{T:global formal2}
Let $X$ be a smooth (separated) complex analytic space.
Let $Z$ be a closed subspace of $X$ containing no irreducible component of $X$.
Let $\widehat{X|Z}$ be the formal completion of $X$ along $Z$.
Let $\calE$ be a $\nabla$-module over $\calO_{\widehat{X|Z}}(*Z)$.
For each $z \in Z$, there exist an open neighborhood $U$ of $z$
in $X$ and a modification $f: Y \to U$ such that
$(Y,W)$ is a regular pair for $W = f^{-1}(U \cap Z)$, and
$f^* \calE$ admits a good formal structure at each point of $W$.
\end{theorem}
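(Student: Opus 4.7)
The plan is to reduce Theorem \ref{T:global formal2} to its algebraic counterpart, Theorem \ref{T:global formal1}, by algebraizing the problem in a neighborhood of $z$ via Corollary \ref{C:Stein}. Since the conclusion is local around $z$ and modifications compose, I would first apply Theorem \ref{T:desing complex} to a Stein neighborhood of $z$ in $X$ to reduce to the case where $(X,Z)$ is already a regular pair in a neighborhood of $z$; this also allows me to assume that an analytic local coordinate system $t_1,\dots,t_n$ is defined in a neighborhood of $z$.

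Next, choose a Stein open neighborhood $V$ of $z$ in $X$ (small enough that the coordinate system is defined on $V$) and a compact neighborhood $K$ of $z$ contained in $V$. Let $R$ be the localization of $\Gamma(V,\calO_V)$ at the multiplicative set of functions non-vanishing on $K$. By Corollary \ref{C:Stein}, $R$ is noetherian and excellent, and partial differentiation with respect to the coordinate system gives commuting derivations of $R$ of rational type, so $R$ is a nondegenerate differential ring in the sense of Definition \ref{D:nondegenerate scheme}. Let $I \subseteq R$ be the ideal of $Z$, and set $Z_a = V(I) \subseteq \Spec R$. At each point of $K \cap Z$, the analytic and algebraic completed local rings coincide (both isomorphic to $\CC\llbracket t_1,\dots,t_n\rrbracket$ for suitable coordinates), so $\calE$ restricts to a $\nabla$-module $\calE_a$ over $\calO_{\widehat{\Spec R\,|\,Z_a}}(\ast Z_a)$. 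Applying Theorem \ref{T:global formal1} to $\calE_a$ produces a modification $\tilde f: \tilde Y \to \Spec R$ such that $(\tilde Y, \tilde W)$ is a regular pair for $\tilde W = \tilde f^{-1}(Z_a)$ and $\tilde f^* \calE_a$ admits a good formal structure at every point of $\tilde W$.

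The main obstacle, and the final step, is to analytify $\tilde f$ in a neighborhood of $z$. Since the output of Theorem \ref{T:desing1} is an iterated blow-up along regular centers, one may arrange $\tilde f$ to be projective (replacing by a dominating projective modification via Chow's lemma if necessary; by Theorem \ref{T:criterion}, the good-formal-structure property is preserved by further blowups away from the turning locus, which here is already empty). Then $\tilde Y$ is cut out in a projective bundle over $\Spec R$ by finitely many homogeneous equations with coefficients in $R$. Shrinking to a smaller open neighborhood $U \subseteq V$ of $z$ on which the finitely many denominators appearing in these coefficients do not vanish, one takes the analytification of the same equations to obtain a proper surjective birational analytic morphism $f: Y \to U$; then $W = f^{-1}(U \cap Z)$ and $(Y,W)$ is a regular pair. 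Finally, for each $y \in W$, whether $f^* \calE$ admits a good formal structure at $y$ depends only on the completed local ring of $Y$ at $y$ together with the induced $\nabla$-module, by Theorem \ref{T:criterion} and Proposition \ref{P:geom numerical}; these data match their algebraic counterparts at the corresponding point of $\tilde W$ under the smooth analytic/algebraic comparison of completions, so the conclusion is inherited from $\tilde f^* \calE_a$.
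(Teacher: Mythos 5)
Your overall strategy matches the paper's: algebraize a neighborhood of $z$ via the Stein localization $R$ of Corollary~\ref{C:Stein}, invoke the algebraic theorem, and analytify the resulting modification. (The paper's proof is a one-liner citing the reduction technique in the proof of Proposition~\ref{P:open locus}; the paper reduces to Theorem~\ref{T:global}, you to Theorem~\ref{T:global formal1}, which amounts to the same thing.) Your explicit handling of the analytification via projectivity is a genuine expansion of what the paper leaves implicit, and it is a reasonable argument; an alternative that stays closer to the paper's toolkit is to dominate the modification of $\Spec R$ by a \emph{blowup} along an ideal of $R$ (via Theorem~\ref{T:flatification}/Lemma~\ref{L:flatify}) rather than invoking Chow's lemma, since blowups of analytic spaces along coherent ideal sheaves always exist, after which one shrinks $U$ so that a finite set of generators of the ideal are defined on $U$.

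The one genuine misstep is your opening reduction. First, the claim that applying Theorem~\ref{T:desing complex} ``allows me to assume that an analytic local coordinate system $t_1,\dots,t_n$ is defined in a neighborhood of $z$'' is misplaced: local coordinates at $z$ come from smoothness of $X$, which is assumed, and have nothing to do with resolving $Z$. Second, and more seriously, applying Theorem~\ref{T:desing complex} replaces $z$ by a whole compact fibre $g^{-1}(z)$ in the modification $g\colon Y\to U$, and the conclusion of the theorem is then needed simultaneously at every point of that fibre, with the resulting local modifications glued; but the inability to glue such local modifications is precisely the obstruction the theorem is shaped to avoid (see Remark~\ref{R:functorial}). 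Fortunately this step is entirely unnecessary: Theorem~\ref{T:global formal1} does not require $(X,Z)$ to be a regular pair as input, so one should simply omit the application of Theorem~\ref{T:desing complex} and pass directly to the Stein localization around $z$. With that deletion, the rest of your argument goes through as written.
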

\begin{proof}
We may reduce to Theorem~\ref{T:global} as in the proof of
Proposition~\ref{P:open locus}.
\end{proof}

In both the algebraic and analytic cases, we recover Malgrange's construction
of canonical lattices 
\cite[Th\'eor\`eme~3.2.2]{malgrange-reseau}.
\begin{theorem} \label{T:deligne-malgrange}
Let $X$ be either a nondegenerate integral differential scheme or a smooth
irreducible complex analytic space.
Let $Z$ be a closed proper subspace of $X$,
and let $\widehat{X|Z}$ be the formal completion of $X$
along $Z$.
Let $\calE$ be a $\nabla$-module over $\calO_{\widehat{X|Z}}(*Z)$.
Let $U$ be the open (by Proposition~\ref{P:open locus})
subspace of $X$ over which $(X,Z)$ is a regular pair
and $\calE$ has empty turning locus.
Then the Deligne-Malgrange lattice of $\calE|_U$ extends uniquely to a lattice of
$\calE$.
\end{theorem}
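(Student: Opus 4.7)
The strategy is to reduce to the case of empty turning locus, where Theorem~\ref{T:DM1} applies, by performing a blow-up and then pushing the resulting lattice back down; uniqueness is then used to globalize (crucial in the analytic case) and to identify this push-forward with the desired extension.

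First, in the algebraic case, I would apply Theorem~\ref{T:global formal1} to obtain a proper modification $f: Y \to X$ with $(Y, f^{-1}(Z))$ a regular pair and $f^*\calE$ having empty turning locus at every point of $f^{-1}(Z)$. By Theorem~\ref{T:DM1}, the Deligne-Malgrange lattice $\widetilde{\calE}_0$ of $f^*\calE$ then exists as a locally free $\calO_{\widehat{Y|f^{-1}(Z)}}$-module. I would then set $\calE_0 := f_* \widetilde{\calE}_0$, which is coherent because $f$ is proper. A routine check with the projection formula, together with the fact that $\widetilde{\calE}_0 \otimes \calO_{\widehat{Y|f^{-1}(Z)}}(*f^{-1}(Z)) = f^*\calE$ and that $f_* f^*\calE = \calE$ on the formal completion (since $f$ is birational and $\calE$ is already a sheaf on $\widehat{X|Z}$), shows that $\calE_0$ is a lattice of $\calE$.

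Next I would check that $\calE_0$ restricts on the open set $U$ to the Deligne-Malgrange lattice $\calE_0^U$ produced by Theorem~\ref{T:DM1} directly. By Proposition~\ref{P:open locus} the turning locus of $\calE$ has codimension at least two in $X$ and so does the non-regular locus of $(X,Z)$, hence $X \setminus U$ has codimension at least two in $X$. Over $U$ itself, $f^{-1}(U) \to U$ is again a modification satisfying the hypotheses of Theorem~\ref{T:DM1}, so $\widetilde{\calE}_0|_{f^{-1}(U)}$ is the pointwise Deligne-Malgrange lattice upstairs; the compatibility of Deligne-Malgrange lattices with proper push-forward (as in Proposition~\ref{P:good DM}, and ultimately coming from the one-dimensional definition \cite[Definition~2.4.4]{kedlaya-goodformal1}, which is preserved by push-forward along any finite cover ramified over $Z$) then yields $f_*\widetilde{\calE}_0|_U = \calE_0^U$.

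For uniqueness, if $\calE_0$ and $\calE_0'$ are two lattices of $\calE$ both restricting to $\calE_0^U$ on $U$, their difference is supported over $X \setminus U$, a codimension-at-least-two locus; since both are push-forwards of locally free sheaves on a regular model, they are reflexive and so the equality on $U$ forces equality globally. In the analytic case, Theorem~\ref{T:global formal2} only supplies local modifications around each $z \in Z$; I would apply the above construction in each such neighborhood to produce local extensions, and then use the uniqueness just established to patch them on overlaps into a global lattice $\calE_0$ on $\widehat{X|Z}$.

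The main obstacle is the identification in the second paragraph, namely that $f_*\widetilde{\calE}_0$ realizes the pointwise Deligne-Malgrange lattice of $\calE$ at every point of $U$ (and, more to the point, that this can be read off at points of $X \setminus U$ too, to recover the pointwise property in \cite[Definition~4.5.2]{kedlaya-goodformal1}). Concretely, at $x \in Z$ the stalk $(f_*\widetilde{\calE}_0)_x$ must be computed from sections of $\widetilde{\calE}_0$ over the formal fibre $f^{-1}(x)$ and matched against the intersection of one-dimensional Deligne-Malgrange lattices along the branches of $Z$ through $x$ used to define the Deligne-Malgrange lattice in higher dimensions. I expect this matching to follow by combining the compatibility of the one-dimensional Deligne-Malgrange lattice with finite covers (as invoked in Remark~\ref{R:mochizuki}) with the descent argument underlying Theorem~\ref{T:DM1}, but it is the step that requires the most care.
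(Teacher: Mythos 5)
Your core strategy---remove the turning locus by a modification (Theorems~\ref{T:global formal1} and~\ref{T:global formal2}), apply Theorem~\ref{T:DM1} upstairs, and use properness of the modification to obtain coherence of a pushforward---is the same as the paper's. The difference lies in the choice of the global candidate, and this is where your argument has a genuine gap. The paper takes as candidate $j_*\calE_0$, where $j\colon \widehat{U|Z} \to \widehat{X|Z}$ is the inclusion and $\calE_0$ is the Deligne--Malgrange lattice over $U$; the only thing to prove is that $j_*\calE_0$ is coherent, and since coherence is local on $X$, one may shrink to a neighborhood of a point of $Z$, apply Theorem~\ref{T:global formal1} or~\ref{T:global formal2} there, and identify $j_*\calE_0$ with $f_*\calE_1$ for $\calE_1$ the Deligne--Malgrange lattice of $f^*\calE$. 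In particular the merely local modifications available in the analytic case cause no difficulty, and no gluing or uniqueness argument enters the construction at all. Your version, which takes $f_*\widetilde{\calE}_0$ as the extension, forces a gluing of local constructions in the analytic case, and the uniqueness you invoke for this is not justified as written: the claim that push-forwards of locally free sheaves along a modification of a regular model are reflexive is false in general (for the blowup $f$ of a point $x$ in a smooth threefold with exceptional divisor $E$, $f_*\calO_Y(-E)$ is the ideal sheaf of $x$, which is not reflexive). Moreover, uniqueness among \emph{all} lattices restricting to $\calE_0$ on $U$ cannot hold: if $x \in Z \setminus U$ lies on a one-codimensional component of $Z$, then multiplying any lattice extension by the ideal sheaf $\calI_x$ produces a different lattice with the same restriction to $U$. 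So the uniqueness has to be pinned down as uniqueness of the extension $\calF_0$ satisfying $\calF_0 = j_*(\calF_0|_U)$ (equivalently, of the reflexive extension across the codimension-$\geq 2$ set $X \setminus U$), which is exactly what the paper's choice $j_*\calE_0$ builds in and what your gluing step would need in order to work; once this is done, one may as well define the global object as $j_*\calE_0$ from the outset, which is the paper's proof.

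On the step you single out as the main obstacle: the identification of the pushed-forward Deligne--Malgrange lattice with the Deligne--Malgrange lattice of $\calE|_U$ over $U$ is indeed the crux, and in the paper it appears as the asserted equality $j_*\calE_0 = f_*\calE_1$ (justified by the compatibility of Deligne--Malgrange lattices with push-forward built into their definition). Note, however, that you do not need to recover any pointwise Deligne--Malgrange property at points of $X \setminus U$---the theorem asserts only that the extension is a lattice there---so the difficulty is smaller than your last paragraph suggests; but the comparison over $U$ does need to be carried out rather than deferred, since both the lattice property of your $f_*\widetilde{\calE}_0$ and its agreement with $\calE_0$ on $U$ rest on it.
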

\begin{proof}
Let $j: \widehat{U|Z} \to \widehat{X|Z}$ be the inclusion. Let $\calE_0$ be the Deligne-Malgrange
lattice of $\calE_U$; it is sufficient to check
that $j_* \calE_0$ is coherent over $\calO_{\widehat{X|Z}}$. This may be checked locally
around a point $z \in Z$. After replacing $X$ by a suitable neighborhood of $z$,
we may apply Theorem~\ref{T:global formal1} or Theorem~\ref{T:global formal2} to 
construct a modification $f: Y \to X$ such that $(Y, f^{-1}(Z))$ is a regular pair
and $f^* \calE$ has empty turning locus. By Theorem~\ref{T:DM1}, $f^* \calE$ admits
a Deligne-Malgrange lattice $\calE_1$.
We then have $j_* \calE_0 = f_* \calE_1$, which is coherent because 
$f$ is proper (by \cite[Th\'eor\`eme~3.2.1]{ega3-1} in the algebraic case,
and \cite[\S 6, Hauptsatz I]{grauert} in the analytic case),
This proves the claim.
\end{proof}

\begin{remark}
As noted in \cite[Remarque~3.3.2]{malgrange-reseau},
the lattice constructed in Theorem~\ref{T:deligne-malgrange} is reflexive, and hence
locally free in codimension 2.
\end{remark}

\begin{remark} \label{R:functorial}
In Theorem~\ref{T:global formal2},
we would prefer to give a \emph{global} modification rather than a local modification
around each point of $Z$.
The obstruction to doing so is that while Theorem~\ref{T:global} gives
a procedure for constructing a suitable modification, the procedure is not functorial
for open immersions. Such functoriality is necessary to glue the local modifications;
we cannot instead imitate the proof of Theorem~\ref{T:global formal1}
because the complex analytic topology
is too fine to admit Zariski closures.

In a subsequent paper, we plan to describe a modification procedure which is
functorial for all \emph{regular} morphisms of nondegenerate differential schemes.
For this, one needs a form of embedded resolution of singularities for excellent schemes
which is functorial for regular morphisms. Fortunately, such a result has recently been
given by Temkin \cite{temkin2}, based on earlier work of 
Bierstone, Milman, and Temkin \cite{bmt, temkin1}.
\end{remark}

\end{document}